\definecolor{541}{rgb}{1,0.8,0.2}
\definecolor{530}{rgb}{1,0.6,0.0}
\definecolor{light-blue}{rgb}{0.6,0.65,1}
\definecolor{204}{rgb}{0.4,0.0,0.8}
\definecolor{purple}{rgb}{0.4,0.0,0.8}
\definecolor{035}{rgb}{0,0.6,1}
\definecolor{005}{rgb}{0,0.0,1}
\definecolor{red}{rgb}{1,0,0}
\definecolor{mediumelectricblue}{rgb}{0.01, 0.31, 0.59}
\definecolor{jonquil}{rgb}{0.98, 0.85, 0.37}
\definecolor{airforceblue}{rgb}{0.36, 0.54, 0.66}
\definecolor{amber}{rgb}{1.0, 0.75, 0.0}
\definecolor{navyblue}{rgb}{0.0, 0.0, 0.5}
\definecolor{darkcerulean}{rgb}{0.03, 0.27, 0.49}
\definecolor{cornflowerblue}{rgb}{0.39, 0.58, 0.93}
\definecolor{corn}{rgb}{0.98, 0.93, 0.36}
\definecolor{myblue}{named}{cornflowerblue}
\definecolor{myyellow}{named}{corn}
\theoremstyle{plain}
\newtheorem{thm}{Theorem}[section]
\newtheorem{lem}[thm]{Lemma}
\newtheorem{cor}[thm]{Corollary}
\newtheorem{conj}[thm]{Conjecture}
\newtheorem{prop}[thm]{Proposition}
\theoremstyle{definition}
\newtheorem{example}[thm]{Example}
\newtheorem{defn}[thm]{Definition}
\newtheorem{question}[thm]{Question}
\theoremstyle{remark}
\newtheorem{remark}[thm]{Remark}
\newcommand{\given}{\, | \,}
\newcommand{\p}[1]{\mathcal #1}
\newcommand{\x}{{\bf x}}
\newcommand{\N}{{\mathbb N}}
\DeclareMathOperator{\asc}{Asc}
\DeclareMathOperator{\tethers}{Tethers}
\DeclareMathOperator{\wharfs}{Wharfs}
\DeclareMathOperator{\rafts}{Rafts}
\DeclareMathOperator{\floats}{Floats}
\DeclareMathOperator{\ropes}{Ropes}
\DeclareMathOperator{\branch}{branch}
\DeclareMathOperator{\Des}{Des}
\DeclareMathOperator{\des}{Des}
\DeclareMathOperator{\inv}{inv}
\newcommand{\Cmin}{{w_{C\text{-min}}}}
\newcommand{\Cmax}{{w_{C\text{-max}}}}
\newcommand\EE{{\mathbb{E}}}
\newcommand{\mc}[1]{\mathcal #1}
\newtheorem*{repp@ex}{\repp@title (continued)}
\newcommand{\newreppex}[2]{
\newenvironment{repp#1}[1]{
 \def\repp@title{#2~\ref{##1}}
 \begin{repp@ex}}
 {\end{repp@ex}}}
\newtheorem*{repp@thm}{\repp@title}
\newcommand{\newreppthm}[2]{
\newenvironment{repp#1}[1]{
 \def\repp@title{#2~\ref{##1}}
 \begin{repp@thm}}
 {\end{repp@thm}}}
\newcommand{\naunion}{\overset{\not\sim}{\sqcup}}
\newcommand{\ri}[2]{{\overset{\mathbf{R#1}}{\scriptstyle #2}}}
\newcommand\Tstrut{\rule{0pt}{3.5ex}}
\tikzset{cross/.style={cross out, draw=black, minimum size=2*(#1-\pgflinewidth), inner sep=0pt, outer sep=0pt},cross/.default={1pt}}
\newcommand{\oo}{\raisebox{-0.45ex}{{\begin{tikzpicture} \draw (-.125,-.125) rectangle (.125,.325); \draw (0,0.2) circle (0.05);\draw (0,0) circle (0.05); \end{tikzpicture}}}}
\newcommand{\ox}{\raisebox{-0.45ex}{\begin{tikzpicture} \draw (-.125,-.125) rectangle (.125,.325); \draw (0,0.2) circle (0.05);\draw[fill] (0,0) circle (0.05); \end{tikzpicture}}}
\newcommand{\xo}{\raisebox{-0.45ex}{\begin{tikzpicture} \draw (-.125,-.125) rectangle (.125,.325); \draw[fill] (0,0.2) circle (0.05);\draw (0,0) circle (0.05); \end{tikzpicture}}}
\newcommand{\oX}{\raisebox{-0.45ex}{\begin{tikzpicture} \draw (-.125,-.125) rectangle (.125,.325); \draw (0,0.2) -- (0,0.05); \fill[white] (0,0.2) circle (0.05); \draw (0,0.2) circle (0.05); \draw[fill] (0,0) circle (0.04); \draw (0,0) circle (0.075); \end{tikzpicture}}}
\newcommand{\Ox}{\raisebox{-0.45ex}{\begin{tikzpicture} \draw (-.125,-.125) rectangle (.125,.325); \draw (0,0.2) -- (0,0.05); \fill[white] (0,0.2) circle (0.04); \draw (0,0.2) circle (0.04); \draw[fill] (0,0) circle (0.05); \draw (0,0.2) circle (0.075); \end{tikzpicture}}}
\newcommand{\xx}{\raisebox{-0.45ex}{\begin{tikzpicture} \draw
(-.125,-.125) rectangle (.125,.325);   \draw[fill] (0,0.2) circle
(0.05);\draw[fill] (0,0) circle (0.05); \end{tikzpicture}}}
\newcommand{\axx}{\raisebox{-0.45ex}{\begin{tikzpicture} \draw[rounded corners=3pt]
(-.125,-.125) rectangle (.125,.325); \draw[fill] (0,0.2) circle
(0.05);\draw[fill] (0,0) circle (0.05); \end{tikzpicture}}}
\newcommand{\aox}{\raisebox{-0.45ex}{\begin{tikzpicture} \draw[rounded corners=3pt]
(-.125,-.125) rectangle (.125,.325); \draw (0,0.2) circle
(0.05);\draw[fill] (0,0) circle (0.05); \end{tikzpicture}}}
\newcommand{\Xx}{\raisebox{-0.45ex}{\begin{tikzpicture} \draw (-.125,-.125) rectangle (.125,.325); \draw[fill] (0,0.2) circle (0.04); \draw (0,0.2) -- (0,0.05); \draw[fill] (0,0) circle (0.05); \draw (0,0.2) circle (0.075); \end{tikzpicture}}}
\newcommand{\xX}{\raisebox{-0.45ex}{\begin{tikzpicture} \draw (-.125,-.125) rectangle (.125,.325); \draw[fill] (0,0.2) circle (0.05); \draw (0,0.2) -- (0,0.05); \draw[fill] (0,0) circle (0.04); \draw (0,0) circle (0.075); \end{tikzpicture}}}
\newcommand{\XX}{\raisebox{-0.45ex}{\begin{tikzpicture} \draw (-.125,-.125) rectangle (.125,.325); \draw[fill] (0,0.2) circle (0.05); \draw (0,0.2) -- (0,0.05); \draw[fill] (0,0) circle (0.05); \end{tikzpicture}}}
\newcommand{\omo}{\raisebox{-0.45ex}{\begin{tikzpicture} \draw (-.125,-.125) rectangle (.125,.325);  \draw (0,0.2) circle (0.05);\draw (0,0) circle (0.05); \draw (0,0.15) -- (0,0.05); \end{tikzpicture}}}
\newcommand{\omx}{\raisebox{-0.45ex}{\begin{tikzpicture} \draw (-.125,-.125) rectangle (.125,.325);  \draw (0,0.2) circle (0.05);\draw[fill] (0,0) circle (0.05); \draw (0,0.15) -- (0,0.05); \end{tikzpicture}}}
\newcommand{\xmo}{\raisebox{-0.45ex}{\begin{tikzpicture} \draw (-.125,-.125) rectangle (.125,.325); \draw[fill] (0,0.2) circle (0.05);\draw (0,0) circle (0.05); \draw (0,0.15) -- (0,0.05); \end{tikzpicture}}}
\newcommand{\xmx}{\raisebox{-0.45ex}{\begin{tikzpicture} \draw (-.125,-.125) rectangle (.125,.325); \draw[fill] (0,0.2) circle (0.05);\draw[fill] (0,0) circle (0.05); \draw (0,0.15) -- (0,0.05); \end{tikzpicture}}}
\newcommand{\nodot}{\raisebox{.2ex}{\begin{tikzpicture}
\draw (0,0) circle (0.05); \end{tikzpicture}}}
\newcommand{\yesdot}{\raisebox{.2ex}{\begin{tikzpicture} \draw[fill] (0,0) circle (0.05); \end{tikzpicture}}}
\newcommand{\bothdot}{\raisebox{.1ex}{\begin{tikzpicture} \draw[fill] (0,0) circle (0.04); \draw(0,0) circle (0.075); \end{tikzpicture}}}
\newcommand{\bo}{\raisebox{-0.45ex}{{\begin{tikzpicture} \draw (-.125,-.125) rectangle (.125,.325); \draw[fill] (0,0.2) circle (0.04); \draw(0,0.2) circle (0.075); \draw (0,0) circle (0.05); \end{tikzpicture}}}}
\newcommand{\ob}{\raisebox{-0.45ex}{{\begin{tikzpicture} \draw (-.125,-.125) rectangle (.125,.325); \draw (0,0.2) circle (0.05);\draw(0,0) circle (0.075); \draw[fill] (0,0) circle (0.04); \end{tikzpicture}}}}
\newcommand{\bx}{\raisebox{-0.45ex}{{\begin{tikzpicture} \draw (-.125,-.125) rectangle (.125,.325); \draw[fill] (0,0.2) circle (0.04); \draw(0,0.2) circle (0.075); \draw[fill] (0,0) circle (0.05); \end{tikzpicture}}}}
\newcommand{\xb}{\raisebox{-0.45ex}{{\begin{tikzpicture} \draw (-.125,-.125) rectangle (.125,.325); \draw[fill] (0,0.2) circle (0.05);\draw(0,0) circle (0.075);\draw[fill] (0,0) circle (0.04); \end{tikzpicture}}}}
\newcommand{\bb}{\raisebox{-0.45ex}{{\begin{tikzpicture} \draw (-.125,-.125) rectangle (.125,.325); \draw[fill] (0,0.2) circle (0.04);\draw(0,0.2) circle (0.075); \draw(0,0) circle (0.075);\draw[fill] (0,0) circle (0.04); \end{tikzpicture}}}}
\author[Billey]{Sara C.  Billey}%
\address{Department of Mathematics, University of Washington,
Seattle, WA, USA}
\email{billey@math.washington.edu}%
\author[Konvalinka]{Matja\v z Konvalinka}%
\address{Department of Mathematics, 
Faculty for Mathematics and Physics,
University of Ljubljana,
Institute for Mathematics, Physics and Mechanics, 
Ljubljana, Slovenia}
\email{matjaz.konvalinka@fmf.uni-lj.si}%
\author[Petersen]{T. Kyle Petersen}%
\address{Department of Mathematical Sciences, DePaul University, Chicago, IL, USA}
\email{tpeter21@depaul.edu}%
\author[Slofstra]{William Slofstra}%
\address{
Institute for Quantum Computing, 
Department of Pure Mathematics, 
University of Waterloo, 
Waterloo, Ontario, Canada}
\email{weslofst@uwaterloo.ca}%
\author[Tenner]{Bridget E. Tenner}%
\address{Department of Mathematical Sciences, DePaul University, Chicago, IL, USA}
\email{bridget@math.depaul.edu}%
\thanks{SB was partially supported by the National Science Foundation grant DMS-1101017, MK was supported by Research Program Z1-5434 and Research Project BI-US/14-15-026 of the Slovenian Research Agency, TKP and BET were partially supported by Simons Foundation Collaboration Grants for Mathematicians, and BET was partially supported by a DePaul University Faculty Summer Research Grant.}
\title[Parabolic double cosets]{Parabolic double cosets in Coxeter groups}
\keywords{Coxeter group, parabolic subgroup, double cosets, enumeration}
\begin{document}

\begin{abstract}
Parabolic subgroups $W_I$ of Coxeter systems $(W,S)$, as well as their ordinary and
double quotients $W / W_I$ and $W_I \backslash W / W_J$, appear in many contexts in
combinatorics and Lie theory, including the geometry and topology of
generalized flag varieties and the symmetry groups of regular polytopes.  The
set of ordinary cosets $w W_I$, for $I \subseteq S$, forms the Coxeter complex of $W$,
and is well-studied. In this article we look at a less studied object: the set of
all double cosets $W_I w W_J$ for $I, J \subseteq S$.
Double cosets are not uniquely presented by triples
$(I,w,J)$.  We describe what we call the lex-minimal presentation, and
prove that there exists a unique such object for each double coset.
Lex-minimal presentations are then used to enumerate double cosets via a finite automaton depending on the Coxeter graph for $(W,S)$.
As an example, we present a formula for the number of parabolic double
cosets with a fixed minimal element when $W$ is the symmetric group
$S_n$ (in this case, parabolic subgroups are also known as Young
subgroups).  Our formula is almost always linear time computable in
$n$, and we show how it can be generalized to any Coxeter
group with little additional work.  We spell out formulas for all finite and
affine Weyl groups in the case that $w$ is the identity element.
\end{abstract}

\maketitle

\tableofcontents

\section{Introduction}

Let $G$ be a group with subgroups $H$ and $K$. The group $G$ is partitioned by the \emph{double quotient}
$$H\backslash G/K=\{H g K \given g \in G\},$$
i.e., the collection of \emph{double cosets} $HgK$. If $G$ is finite, then the number of double cosets in $H\backslash G/K$ is the inner product of the characters of the two trivial representations on $H$ and $K$ respectively, induced up to $G$ \cite[Exercise 7.77a]{ec2}.  Double cosets are usually more complicated than one-sided cosets.  For instance, unlike one-sided cosets, two double cosets need not have the same size.

In this article, we investigate the parabolic double cosets of a finitely generated Coxeter group. That is, given a Coxeter system $(W,S)$ of finite rank $|S|$, we consider cosets
$$W_{I} w W_{J},$$
where $I$ and $J$ are subsets of the generating set $S$, and
$$W_I=\langle s : s \in I\rangle$$
denotes the standard \emph{parabolic subgroup} of $W$ generated by the subset $I$. These cosets are elements of the double quotient $W_I\backslash W/W_J$, though a given coset can be a member of more than one such quotient. Throughout this paper, we will compare elements of $W$ using strong Bruhat order, and indicate these comparisons using ``$\le$.''

The parabolic double cosets are natural objects of study in many contexts. For example, they play a prominent role in the paper of Solomon that first defines the descent algebra of a Coxeter group \cite{Solomon}. For finite Coxeter groups, Kobayashi showed that these double cosets are intervals in Bruhat order, and these intervals have a rank-symmetric generating function with respect to length \cite{kobayashi}. (While rank-symmetric, there exist parabolic double cosets in $S_5$ that are not self-dual.) Geometrically, these intervals correspond to the cell decomposition of certain rationally smooth Richardson varieties.

If we fix $I$ and $J$, then the structure of the double quotient $$W_I \backslash W / W_J$$ is also well-studied. For example, Stanley \cite{Stanley.1980} shows the Bruhat order on such a double quotient is strongly Sperner (for finite $W$), and Stembridge \cite{Stembridge.2004} has characterized when the natural root coordinates corresponding to elements in the quotient give an order embedding of the Bruhat order (for any finitely generated $W$). As cited above, the number of elements in the double quotient is a product of characters,
\begin{equation}\label{eq:fixedIJ}
 \left|W_{I}\backslash W / W_{J}\right|
= \left\langle \mathrm{ind}_{W_{I}}^{W}1_{W_{I}}, \mathrm{ind}_{W_{J}}^{W}1_{W_{J}} \right\rangle,
\end{equation}
where $1_{W_J}$ denotes the trivial character on $W_J$.  For fixed $I$ and varying $J$, Garsia and Stanton connect parabolic double cosets to basic sets for the Stanley-Reisner rings of Coxeter complexes \cite{garsia-stanton.1984}.

In this paper, we are interested in a basic problem about parabolic double
cosets that appears to have been unexamined until now: how many distinct double
cosets $W_I w W_J$ does $W$ have as $I$ and $J$ range across subsets of $S$?
This question is partly motivated by the analogous problem for ordinary cosets,
where the set $\{w W_I : I \subseteq S, w \in W\}$ is equal to the set of cells of the
Coxeter complex. When $W$ is the symmetric group $S_n$, the number
of such cells is the $n$th ordered Bell number \cite[A000670]{oeis}. One fact
that makes the one-sided case substantially simpler than the two-sided version
is that each ordinary parabolic coset has the form $w W_I$ for a unique subset
$I \subseteq S$. If we take $w$ to be the minimal element in the coset, then
the choice of $w$ is also unique.  While double cosets do have unique minimal
elements, different pairs of sets $I$ and $J$ often give the same double coset.
For example, if $e$ denotes the identity element of $W = (W,S)$, we have $W =
W_S e W_J = W_I e W_S$ for \emph{any} subsets $I$ and $J$ of $S$. Thus we
cannot count distinct double parabolic cosets simply by summing
Equation~\eqref{eq:fixedIJ} over all $I$ and $J$.

As mentioned earlier, every double coset has a unique minimal element, and we use this fact to recast our motivating question: for any $w \in W$, how many distinct double cosets does $W$ have with minimal element $w \in W$?

\begin{defn}
Let $(W,S)$ be a Coxeter system of finite rank $|S|$, and fix an element $w \in W$. Set
$$c_w := \# \{\text{double cosets with minimal element } w\}.$$
\end{defn}

Because $W$ has finite rank, $c_w$ is always finite. Our first result
 is a formula for $c_w$ when $W = S_n$, based on what we call the \emph{marine model}, introduced in Section~\ref{sec:marinemodelforSn}.

\begin{thm}\label{main}
There is a finite family of sequences of positive integers $b^K_m$, $m \geq 0$,
such that the number of parabolic double cosets with minimal element $w \in
S_n$ is
\[
 c_w = 2^{|\floats(w)|} \sum_{T \subseteq \tethers(w) } \prod_{\p R \in \rafts(w)} b_{|\p R|}^{K(\p R,T)}.
\]
\end{thm}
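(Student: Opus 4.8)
The plan is to translate the enumeration of double cosets into a count of admissible presentations and then show that this count factors according to the marine model. First I would invoke the standard characterization of minimal double-coset representatives: $w$ is the minimal-length element of $W_I w W_J$ if and only if $I$ avoids the left descent set and $J$ avoids the right descent set, i.e. $I \cap \Des(w^{-1}) = \emptyset$ and $J \cap \Des(w) = \emptyset$ under the identification $S = \{s_1,\dots,s_{n-1}\} \leftrightarrow \{1,\dots,n-1\}$. Combined with the uniqueness of the lex-minimal presentation established earlier, this reduces computing $c_w$ to counting the \emph{distinct} cosets $W_I w W_J$ as $(I,J)$ ranges over these admissible pairs. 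The governing algebraic fact is that the two sides interact only through relations of the form $s_i w = w s_j$, which in $S_n$ hold exactly when the values $i, i+1$ occupy adjacent positions of $w$ in increasing order — the \emph{small ascents} of $w$. Away from small ascents a choice to include a generator in $I$ or in $J$ cannot be transferred between the two sides, whereas at a small ascent it can, and it is precisely this trading that identifies different admissible pairs with a single double coset.

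Second, I would organize the admissible data by the connected structure of the small ascents. The maximal runs of consecutive small ascents are the rafts $\p R \in \rafts(w)$; the admissible generators not coupled into any run contribute independent binary choices and account for the factor $2^{|\floats(w)|}$. The central analytic step is a \emph{locality} statement: after fixing, for each raft, the behavior of the generators at its two endpoints — encoded by whether the adjacent elements of $\tethers(w)$ lie in $T$ — the number of admissible configurations internal to that raft depends only on the length $|\p R|$ and on this endpoint data $K(\p R,T)$, and is independent of every other raft. Granting locality, the number of double cosets compatible with a fixed endpoint assignment $T \subseteq \tethers(w)$ is the product $\prod_{\p R \in \rafts(w)} b_{|\p R|}^{K(\p R,T)}$, and summing over the finitely many assignments $T$ yields the stated formula.

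Third, I would analyze a single raft in isolation to define and study the sequences $b^K_m$. For a raft of length $m$ with prescribed boundary type $K$, I would set up a transfer-matrix (equivalently, finite-automaton) recursion that scans the raft one position at a time, tracking which left and right generators have been committed and which trades across small ascents remain available; $b^K_m$ is then the number of accepting runs. Because a raft has exactly two ends, the boundary type $K = K(\p R,T)$ ranges over a fixed finite set, so $\{b^K_m\}_{m\ge 0}$ is genuinely a finite family of integer sequences, and each entry is positive since the configuration committing no internal generator is always admissible. This is also the point at which I would check that the automaton built from the Coxeter graph — a path, in the case of $S_n$ — specializes precisely to these per-raft recursions, so that the count matches the enumeration framework of the earlier sections.

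The main obstacle is the locality/factorization step: proving rigorously that distinct rafts do not interact once the tether states in $T$ are fixed, and that the equivalence $W_I w W_J = W_{I'} w W_{J'}$ respects this decomposition. The delicacy is that a single generator can sit between two rafts, or between a raft and a float, so I must verify that the boundary bookkeeping recorded by $T$ and $K(\p R,T)$ is \emph{exactly} enough to decouple neighboring rafts, with neither over- nor under-counting. Once the relation $s_i w = w s_j$ is recast as a move on presentations and its orbits are shown to be confined to single rafts up to these boundary corrections, the product form follows, and the remaining work — extracting the explicit sequences $b^K_m$ — is the routine automaton computation.
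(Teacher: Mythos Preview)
Your outline follows the paper's strategy closely: count lex-minimal pairs $(I,J)$, observe that the interaction between $I$ and $J$ happens only at small ascents, decompose by rafts, and use a finite automaton on each raft.  Two points deserve sharpening, and one of them is a genuine omission.

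\textbf{Ropes are missing.}  You partition the large ascents into floats (giving the $2^{|\floats(w)|}$ factor) and tethers (giving the sum over $T$), but the paper has a third kind: \emph{ropes}, large ascents adjacent to exactly one raft.  A rope is a free binary choice, like a float, but unlike a float its state changes the boundary condition of the adjacent raft.  In the paper the boundary datum $K(\p R,T)$ is a $4$-tuple with entries in $\{\{0\},\{1\},\{0,1\}\}$: the value $\{0,1\}$ records ``this corner is rope-adjacent,'' and the $b$-sequence $b_m^K$ is then the \emph{sum over rope selections} of the underlying automaton counts (the $a$-sequences).  Your description of $K(\p R,T)$ as ``encoded by whether the adjacent elements of $\tethers(w)$ lie in $T$'' omits this, and without it your per-raft count is not well-defined and the formula would not factor correctly at corners where a rope sits.

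\textbf{The locality step.}  You flag this as the main obstacle, and that is right.  In the paper it is resolved not by analyzing the orbit of the moves $s_iw=ws_j$ directly, but by first giving an explicit local characterization of lex-minimal pairs (Theorem~\ref{T:minimalsym}): a pair $(I,J)$ fails to be lex-minimal exactly when it contains one of three forbidden local patterns along a raft.  Once you have that list, lex-minimality is checkable raft-by-raft (Lemma~\ref{lem:restriction}), the factorization in Proposition~\ref{prop:raft.rope.tether.formula} follows at once, and the automaton in Figure~\ref{fig:aut} is literally the pattern-avoidance automaton for those forbidden configurations.  Your proposal would be complete once you state and use this characterization; the ``trading moves'' picture you sketch is made precise later in the paper (Theorem~\ref{thm:equivalence.moves}), but the route through forbidden patterns is what actually drives the $S_n$ proof.
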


The sets $\floats(w)$, $\tethers(w)$, and $\rafts(w)$ are subsets of the
left and right ascent sets of $w$, and will be defined precisely in
Section~\ref{sec:symmetric group}, as will the sequences $b^K_m$.
Given a particular $w \in S_n$, Theorem~\ref{main} enables
fast computations for two reasons.  First, the sequences
$b_m^K$ satisfy a linear recurrence, and thus can be
easily computed in time linear in $m$.  Second, tethers are rare,
and hence the sum typically has only one term, which corresponds to the
empty set. In fact, the expected number of tethers in $S_n$ is
approximately $1/n$.

\begin{prop}\label{prop:expected}
For all $n \geq 1$, the expected value of $|\tethers(w)|$, over all $w
\in S_n$ chosen uniformly, is given by
\[
 \frac{1}{n!}\sum_{w \in S_n} |\tethers(w)| = \frac{(n-3)(n-4)}{n(n-1)(n-2)}.
\]
\end{prop}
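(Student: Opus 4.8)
The natural approach is linearity of expectation over a uniformly random $w \in S_n$. Regarding $\tethers(w)$ as a set of indices (a subset of the ascents, per Section~\ref{sec:symmetric group}), I would write $|\tethers(w)| = \sum_i \mathbb{1}[\,i \in \tethers(w)\,]$, where $i$ ranges over all candidate indices, and exchange the sum with the average:
\[
\frac{1}{n!}\sum_{w \in S_n} |\tethers(w)| = \sum_i \Pr_w[\,i \in \tethers(w)\,].
\]
Thus the problem reduces to computing, for each candidate index $i$, the probability that $i$ is a tether of a random permutation.

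The heart of the argument is to unwind the definition of a tether into an explicit event on the one-line notation of $w$. Since a tether records a specific adjacency between a raft and the neighboring structure, the event $\{i \in \tethers(w)\}$ is governed by the relative (and, where the definition forces extremal behavior, absolute) order of a bounded number of entries of $w$ near position $i$, together with the entries anchoring the tether. I would translate this into a conjunction of inequalities among those entries and count the permutations satisfying it. The shape of the answer is a strong hint: since $n!/\bigl(n(n-1)(n-2)\bigr) = (n-3)!$, the claimed identity is equivalent to
\[
\sum_{w \in S_n} |\tethers(w)| = (n-3)(n-4)\,(n-3)!,
\]
and I would aim to produce exactly this product combinatorially, with the factor $(n-3)!$ arising from the free arrangement of the entries irrelevant to the tether condition and the factor $(n-3)(n-4)$ enumerating the admissible anchored local configurations.

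The main obstacle is the boundary bookkeeping. Because certifying $i \in \tethers(w)$ requires a full local window relating the raft to its neighbors, the per-index probability is not constant: it degenerates for indices $i$ lying too close to $1$ or to $n$, where the window cannot form. I would therefore compute the common interior probability first, treat the boundary indices as separate (typically vanishing) cases, and finally sum and simplify; the falling-factorial ratio $\frac{(n-3)(n-4)}{n(n-1)(n-2)}$ should appear only after these corrections are combined. As consistency checks I would confirm that the formula vanishes at $n = 3$ and $n = 4$, in agreement with the fact that no permutation that short can carry a tether, and I would verify the first genuine case $n = 5$, where the formula predicts $\frac{1}{30}$, i.e.\ exactly four tethered pairs $(w,i)$ across all of $S_5$, which is checkable by hand.
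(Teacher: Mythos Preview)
Your approach is correct and is essentially what the paper does: it uses linearity of expectation, identifies that a right tether at position $k$ corresponds exactly to the pattern $(w(k-1),w(k),w(k+1),w(k+2))=(i,i+1,j,j+1)$ with $j>i+2$, computes the per-position probability as $\binom{n-3}{2}(n-4)!/n!$ for each $k\in\{2,\dots,n-2\}$, sums over $k$, and then doubles by symmetry to include left tethers. Your sanity check at $n=5$ is on the mark: the four tethered pairs come from the right tethers in $12453$ and $31245$ together with their mirror left tethers.
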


Theorem~\ref{main} allows us to calculate
\begin{equation*}
    p_n = \sum_{w \in S_n} c_w,
\end{equation*}
the total number of distinct double cosets in $S_n$. Although this requires summing
$n!$ terms, the approach seems to be a significant improvement over what
was previously known. The initial terms of the sequence $\{p_n\}$ are
\begin{eqnarray}\label{eqn:total in S_n}
&1, 3, 19, 167, 1791, 22715, 334031, 5597524, 105351108,2200768698,&\\
\nonumber &50533675542, 1265155704413, 34300156146805.&
\end{eqnarray}
The sequence $\{p_n\}$ did not appear in the OEIS before our work, and it can now be found at \cite[A260700]{oeis}. We also make the following conjecture.

\begin{conj}
 There exists a constant $K$ so that
 $$\frac{p_n}{n!} \sim \frac{K}{\log^{2n} 2}.$$
\end{conj}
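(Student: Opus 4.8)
The plan is to derive the asymptotics of $p_n$ directly from the formula of Theorem~\ref{main}, packaged into a generating function and analyzed near its dominant singularity in the manner of Flajolet and Odlyzko. The first step is to replace $c_w$ by a tractable leading term. Proposition~\ref{prop:expected} shows that the expected number of tethers is $\Theta(1/n)$, so all but a vanishing fraction of $w \in S_n$ satisfy $\tethers(w)=\emptyset$; for such $w$ the inner sum in Theorem~\ref{main} has only the term $T=\emptyset$, and
\[
 c_w = 2^{|\floats(w)|}\prod_{\p R \in \rafts(w)} b_{|\p R|}^{K(\p R,\emptyset)}.
\]
I would first show that the permutations carrying at least one tether contribute only a lower-order term to $p_n=\sum_{w\in S_n} c_w$: a tether replaces an otherwise free binary decision by a choice from a bounded set, so it changes $c_w$ by a bounded factor, while Proposition~\ref{prop:expected} forces the number of such $w$ down by $\Theta(1/n)$. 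It then suffices to find the asymptotics of the tether-free sum.

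Next I would build the generating function for the tether-free sum. The weight $2^{|\floats(w)|}\prod_{\p R}b_{|\p R|}^{K(\p R,\emptyset)}$ depends only on the marine structure of $w$, which in turn is read off from the ascent patterns of $w$ and of $w^{-1}$; floats, rafts, and raft lengths are determined by how the left and right ascent runs of $w$ are arranged. I would therefore organize the sum by the pair consisting of the descent composition of $w$ and that of $w^{-1}$, count permutations with prescribed two-sided descent data, and record the marine weight as a product over the blocks of this data. The resulting object should be a rational modification --- with the rafts contributing factors governed by the linear recurrence satisfied by $b^K_m$ --- of the classical two-sided Eulerian generating function of Carlitz--Roselle--Scoville and Garsia--Gessel for $\sum_{w} s^{\mathrm{des}(w)}\,t^{\mathrm{des}(w^{-1})}$, specialized so that each ascent is weighted by $2$.

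The asymptotic rate should then emerge from singularity analysis. In the one-sided analogue, $\sum_{w\in S_n}2^{|\asc_R(w)|}$ is the ordered Bell number, with exponential generating function $1/(2-e^x)$; its simple pole at $x=\log 2$ yields $a_n/n!\sim (2\log 2)^{-1}(\log 2)^{-n}$. Because the two-sided marine weight factors, up to bounded corrections, as a left ascent contribution times a right one, I expect the relevant generating function to carry a $(2-e^x)$-type vanishing twice, so that its dominant singularity is again controlled by $e^x=2$ but now produces the squared rate. Transferring to coefficients should give $p_n/n!\sim K(\log 2)^{-2n}$, and the leading coefficient at the singularity would identify the constant $K$ explicitly, upgrading the conjecture to a theorem. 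Confirming that the transfer leaves no polynomial prefactor --- i.e., that the singularity is of exactly the order that yields a pure constant $K$ --- is a specific point to be checked.

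The main obstacle is the genuinely two-sided, non-product character of $c_w$: the left and right descents of a permutation are correlated, and rafts, floats, and tethers are precisely the bookkeeping for how the left and right ascent runs are glued together. The delicate part is to prove that this correlation and the $b^K_m$ boundary conditions perturb only the constant $K$ and not the exponential rate, and to control the error incurred when passing from the tether-free approximation to the exact $p_n$. A safer intermediate target, which I would pursue first, is to establish matching exponential bounds $p_n/n!=\Theta\big((\log 2)^{-2n}\big)$ by sandwiching $c_w$ between constant multiples of $2^{|\asc_R(w)|+|\asc_L(w)|}$ and analyzing $\sum_{w}2^{|\asc_R(w)|+|\asc_L(w)|}$ through the two-sided descent generating function; the precise constant $K$ would then follow by sharpening the singularity analysis.
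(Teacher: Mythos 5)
The statement you are trying to prove is Conjecture~1.4 of the paper: the authors give \emph{no proof}, only numerical evidence from the exact values of $p_n$ for $n \leq 13$ (whence $K \approx 0.4$). So there is no argument of theirs to compare against; a complete version of your program would be new mathematics, and it should be assessed as such. Your heuristic is the right one, and it is surely the one behind the conjecture: $\frac{1}{n!}\sum_w 2^{|\asc_R(w)|}$ is (up to normalization) the ordered Bell sequence with exponential rate $(\log 2)^{-n}$ via the pole of $1/(2-e^x)$, and approximate independence of left and right descents suggests the squared rate. The upper-bound half of your ``safer intermediate target'' is in fact sound: every lex-minimal pair satisfies $I \subseteq \asc_L(w)$, $J \subseteq \asc_R(w)$, so $c_w \leq 2^{|\asc_L(w)|+|\asc_R(w)|}$, and the Carlitz--Roselle--Scoville/Garsia--Gessel identity can be used to show $\frac{1}{n!}\sum_w 2^{|\asc_L(w)|+|\asc_R(w)|} = O\bigl((\log 2)^{-2n}\bigr)$.

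However, the lower-bound half of your sandwich is false as stated: $c_w$ is \emph{not} bounded below by a constant multiple of $2^{|\asc_L(w)|+|\asc_R(w)|}$. Take $w = e \in S_n$: then $2^{|\asc_L(w)|+|\asc_R(w)|} = 4^{n-1}$, but $c_e = a_{n-1}^0$ satisfies the recurrence $a_m = 5a_{m-1} - 7a_{m-2} + 4a_{m-3}$ of Theorem~3.19, whose dominant characteristic root is $\rho \approx 3.20$; so $c_e/4^{n-1} \to 0$ exponentially fast. This is precisely the content of the paper's automaton: along a raft the lex-minimal conditions kill a positive fraction of pairs, so long runs of small ascents degrade the ratio exponentially. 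A repairable version would restrict the lower bound to succession-free permutations (no small ascents on either side), for which $c_w = 2^{|\floats(w)|} = 2^{|\asc_L(w)|+|\asc_R(w)|}$ exactly, and then prove that this restricted two-sided sum retains the rate $(\log 2)^{-2n}$ --- but that is exactly the hard tilted-measure statement, not something you get for free.

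Two further steps have genuine gaps. First, your tether-removal argument applies Proposition~1.2, a \emph{uniform-measure} expectation, to control a $c_w$-weighted sum; since $c_w$ ranges over exponentially many scales, Markov under the uniform measure says nothing about the weighted contribution, and the measure that $2^{|\floats(w)|}\prod b^K_{|\p R|}$ induces is tilted toward atypical permutations. Moreover ``a tether changes $c_w$ by a bounded factor'' is wrong in general: the sum in Theorem~1.2 has $2^{|\tethers(w)|}$ terms and $|\tethers(w)|$ can be linear in $n$ (the paper's Example~3.31 has $22$ tethers and $2^{22}$ terms). Second, your proposed generating function is organized by the pair of descent compositions of $w$ and $w^{-1}$, but the marine weight is not a function of that pair: rafts are maximal runs of \emph{successions} $w(j+1) = w(j)+1$, and neither which ascents are small nor how the planks glue a left ascent to a right ascent is determined by $(\Des(w), \Des(w^{-1}))$. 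So the weight does not factor through the two-sided Eulerian refinement; you would need the joint distribution of successions (on both sides) with two-sided descents, which is not available off the shelf and is where the real analytic work lies.
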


From the enumeration for $n \leq 13$, we observe that the constant $K$
seems to be close to $0.4$.

As predicted, summing Equation~\eqref{eq:fixedIJ} over $I$ and $J$ overcounts the double cosets. In particular, this would produce
\begin{equation*}
    1, 5, 33, 281, 2961, 37277, \ldots,
\end{equation*}
which counts ``two-way contingency tables'' (\cite[A120733]{oeis}, \cite[Exercise~7.77]{ec2}, \cite{Diaconis-Gangoli-1994}, \cite[Section 5]{DHT}). This sequence also enumerates cells in a two-sided analogue of the Coxeter complex recently studied by the third author \cite{Pe15}.

The key to proving the formula in Theorem~\ref{main} is a condition on
pairs of sets $(I,J)$ guaranteeing that each double coset $W_I w W_J$
arises exactly once. In other words, we identify a canonical
presentation $W_I w W_J$ for each double coset with minimal element
$w$.  The canonical presentation we found most useful for enumeration
is what we call the \emph{lex-minimal presentation}.  This
presentation can be easily defined for any Coxeter group.

\begin{defn}[Lex-minimal presentation]\label{defn:lex-minimal}
    Let $C$ be a parabolic double coset of some Coxeter system $(W,S)$. A
    \emph{presentation} of $C$ is a choice of $I, J \subseteq S$ and $w \in W$
    such that $C = W_I w W_J$. A presentation is \emph{lex-minimal} if
    $w$ is the minimal element of $C$, and $(|I|,|J|)$ is lexicographically minimal
    among all presentations of $C$. When $w$ is fixed, we will abuse terminology slightly to call $(I,J)$ a \emph{lex-minimal pair} for $w$ if $W_IwW_J$ is a lex-minimal presentation.
\end{defn}

In other words, if $C = W_I w W_J$ is lex-minimal, and $C = W_{I'} w' W_{J'}$ is another presentation, then $w \leq w'$. Furthermore, either $|I| < |I'|$, or $|I| = |I'|$ and $|J| \leq |J'|$.

In Section~\ref{sec:lex-min in symmetric group} and Section~\ref{sec:general groups}, we show that every parabolic
double coset has a unique lex-minimal presentation (the former section treats only the symmetric group, while the latter studies general Coxeter groups). Thus we focus our
attention on counting the lex-minimal presentations.  The main point
of this paper is to show that there exists a finite state automaton that encodes the lex-minimal conditions along rafts as allowable words in the automaton. Moreover, \emph{this same automaton can be used for all Coxeter systems}.

By the transfer matrix method (see \cite[Section 4.7]{ec1}), the number of allowable words of a given length in a language given by such an automaton has a rational generating function. Hence the sequences we use to count lex-minimal presentations satisfy finite linear recurrence relations. (In fact, we will see that there are only four different recurrences, the longest of which has six terms.) As in Theorem~\ref{main}, this allows us to compute $c_w$ more efficiently, and we
get an enumerative formula that generalizes Theorem~\ref{main} to any Coxeter group.

\begin{thm}\label{main.general.case}
There exists a finite family of sequences of positive integers
$b_m^{K}$ for $m \geq 0$, each determined by a
linear homogeneous constant-coefficient recurrence relation,  such that
for any Coxeter group $W$ and any $w \in W$, the number of parabolic
double cosets with minimal element $w$ is
\[
 c_w = 2^{|\floats(w)|} \sum_{T \subseteq \tethers(w) } \sum_{U \subseteq  \wharfs(w) }
  \prod_{\p R \in \rafts(w)} b_{|\p R|}^{K(\p R, T, U)}.
\]
\end{thm}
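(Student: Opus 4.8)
The plan is to reduce the enumeration of double cosets to a local word-counting problem along the rafts of $w$, exactly as in the proof of Theorem~\ref{main}, and then to show that the only genuinely new phenomena arising for a general Coxeter group $(W,S)$ — branch vertices and bonds of order at least $4$ in the Coxeter graph — can be absorbed into a finite set of boundary conditions. First I would appeal to the uniqueness of lex-minimal presentations for arbitrary Coxeter groups, established in Section~\ref{sec:general groups}. Since each double coset with minimal element $w$ has exactly one lex-minimal presentation $W_I w W_J$, the number $c_w$ equals the number of lex-minimal pairs $(I,J)$ for $w$ in the sense of Definition~\ref{defn:lex-minimal}, so it suffices to count admissible pairs of subsets. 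The organizing tool is the marine model for $w$: its left and right ascents decompose, according to the local structure of the Coxeter graph, into the disjoint sets $\floats(w)$, $\tethers(w)$, $\wharfs(w)$, and the rafts collected in $\rafts(w)$, where each raft $\p R$ is a maximal path segment of ascents all of whose internal bonds equal $3$, i.e., a subpath that looks locally like type $A$.

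Next I would argue that lex-minimality is essentially local with respect to this decomposition. A float is an ascent that interacts with nothing else, and each one contributes two independent choices to $(I,J)$, accounting for the prefactor $2^{|\floats(w)|}$. Tethers and wharfs are the generators at which rafts communicate with the rest of the graph: a tether joins a raft to another feature, while a wharf sits at a branch vertex or at an endpoint of a high-order bond adjacent to a raft. Fixing a subset $T \subseteq \tethers(w)$ and a subset $U \subseteq \wharfs(w)$ severs these interactions and renders the admissibility condition independent across distinct rafts. Hence, for each choice of $(T,U)$, the number of ways to complete a lex-minimal pair factors as $\prod_{\p R \in \rafts(w)} b_{|\p R|}^{K(\p R, T, U)}$, where the boundary type $K(\p R, T, U)$ records how the two ends of $\p R$ are constrained by the selected tethers and wharfs. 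Summing over all $T$ and $U$ then yields the stated formula.

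The heart of the argument is the per-raft count. Along the interior of a raft every bond is $3$ and every vertex has degree $2$, so the local lex-minimal conditions coincide verbatim with those analyzed for the symmetric group in Section~\ref{sec:lex-min in symmetric group}; only the two endpoints feel the ambient group. I would encode an admissible assignment along a raft of length $m$ as a word in a finite alphabet, recording position by position the membership data in $I$ and $J$, so that the lex-minimal rules become a regular constraint on adjacent letters. The decisive claim is that the resulting automaton is \emph{universal}: it does not depend on $W$, and the finitely many ways a raft can terminate — governed by whether adjacent tethers and wharfs lie in $T$ or $U$ and by the type of the bordering graph feature — are captured by finitely many choices of initial and final states, indexed by $K$. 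Then $b_m^K$ is precisely the number of accepted words of length $m$, and by the transfer matrix method (\cite[Section 4.7]{ec1}) each such counting sequence has a rational generating function, hence satisfies a linear homogeneous constant-coefficient recurrence. Because there are only finitely many boundary types $K$, this produces the required finite family of positive-integer sequences.

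I expect the main obstacle to be establishing this locality and universality in the presence of branch vertices and bonds of order at least $4$. In $S_n$ the Coxeter graph is a single path, so rafts have at most trivial boundary interactions and there are no wharfs; for general $W$ one must verify carefully that every non-path-like complication of the graph is confined to the endpoints of rafts and is fully summarized by the finite data $(T,U)$, so that the interior count is genuinely the same type-$A$ automaton. Checking that the braid and lex-minimality relations do not propagate past a wharf, and classifying the resulting finite list of boundary types $K$, is where the real work lies, and is precisely the step that upgrades Theorem~\ref{main} to Theorem~\ref{main.general.case}.
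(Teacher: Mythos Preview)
Your overall architecture matches the paper's: count lex-minimal pairs using Theorem~\ref{T:minimal}, decompose via the marine model, and run a finite automaton along each raft so that the transfer matrix method yields recurrences. Two points, however, need correction.

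First, a definitional issue. Wharfs have nothing to do with bond labels, and rafts are not ``segments with internal bonds equal to $3$.'' The $w$-ocean depends only on which ascents are small (i.e., satisfy $wsw^{-1}\in S$) and on the adjacency structure of the Coxeter graph; edge labels $m(s,t)\ge 4$ are irrelevant (see the Remark following the definition of the $w$-ocean). A wharf is a \emph{plank} (a pair of small ascents, one on each row) at a branch point, so each wharf admits four fillings, not two.

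Second, and more seriously, your key locality claim --- that fixing $T$ and $U$ ``severs these interactions and renders the admissibility condition independent across distinct rafts'' --- is not true as stated. When a wharf is filled as {\omx} or {\xmx}, condition~(b) of Theorem~\ref{T:minimal} concerns the entire connected component of the selected bottom node, and that component typically extends through the wharf into several adjacent rafts. Thus fixing the wharf filling alone does \emph{not} decouple the rafts; you must also decide which adjacent raft supplies the witness (the selected or adjacent top node, or the large ascent) required by lex-minimality. The paper handles this with the harbor graph $H_C$ and its \emph{legal labelings} (Definitions~\ref{defn:harbor}--\ref{defn:a.sequence}): the extra half-edge labels {\oX}, {\Ox}, {\Xx}, {\xX}, {\XX}, {\aox}, {\axx} record, raft by raft, where the witness lies, and it is only after summing over legal labelings that the count factors over rafts (Theorem~\ref{main.coxeter}). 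Accordingly, the set $\wharfs(w)$ appearing in Theorem~\ref{main.general.case} is not merely the set of wharf planks but, as the Remark following Theorem~\ref{main.coxeter} explains, the set of possible wharf fillings together with the legal labels on the emanating half-edges. Your proposal gestures at this difficulty in the last paragraph but does not supply the mechanism that actually achieves the decoupling.
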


Here $\wharfs(w)$ is another subset of ascents of $w$, defined in
Section~\ref{sec:general groups}.  As evidence that the method is effective, in
Section~\ref{sec:general groups} we give formulae for $c_w$ when $w=e$ is the
identity element in each irreducible Weyl group of finite and affine
type.

As a byproduct of the proofs of Theorems~\ref{main} and~\ref{main.general.case}, we
introduce the $w$-ocean graph for each $w \in W$.  This graph encodes
all presentations $(I,w,J)$ for parabolic double cosets with minimal
element $w$.  This graph is independent of the edge labels of the Coxeter graph, as discussed in Remark~\ref{rem:oceans and edge labels}. Thus, for example, Theorem~\ref{main} applies exactly to type $B_n$.

In Theorem~\ref{thm:equivalence.moves}, we show that
$W_I w W_J = W_{I'} w W_{J'}$ if and only if $(I,J)$ can be obtained
from $(I',J')$ by plank moves which are defined in terms of moving
connected components on the $w$-ocean.  The lex-minimal presentations
are also characterized in terms of plank moves.

The paper is organized as follows. In Section~\ref{sec:background} we
give an overview of parabolic double cosets for Coxeter groups. The
enumeration and the marine model for $S_n$ is described in
Section~\ref{sec:symmetric group}.  In Section~\ref{sec:general
  groups}, we develop the marine model including the $w$-ocean in the general Coxeter group setting.

We finish with Section~\ref{sec:restricted}, which describes a way to think
about enumeration of cosets with minimal element $w \in W$ as equivalent to
enumeration of cosets with minimal element equal to the identity in a larger Coxeter group. The trade-off is that we may have to restrict our allowable set
of reflections, which poses some interesting questions. 

\section{Background} \label{sec:background}

In this section, we give definitions and relevant background information on the objects of interest in this paper. We begin with a discussion of Coxeter groups, and focus our attention first on parabolic cosets, and then on parabolic double cosets of these groups.

Coxeter groups are a broad family containing the symmetric groups,
as well as Weyl groups of Kac-Moody groups and all finite real reflection groups.

\begin{defn}
A \emph{Coxeter group} is a group $W$ with a presentation
\begin{equation*}
    \langle s \in S : s^2 = e \text{ for all } s \in S, \text{ and }
        (st)^{m(s,t)} = e \text{ for all } s,t \in S \rangle,
\end{equation*}
where $e$ is the identity element, and $m(s,t) = m(t,s)$ are values in the set $\{2,3,\ldots\} \cup \{+\infty\}$. (If $m(s,t) = +\infty$, then the corresponding relation is omitted.) The elements of $S$ are the
\emph{simple reflections} of $W$. The \emph{reflections} of $W$ are the
conjugates of elements of $S$. The set of reflections is usually denoted by $T$, so $T
= \{ wsw^{-1}: s \in S, w \in W\}$.
\end{defn}

In this paper, we study \emph{finitely generated} Coxeter groups. That is, we assume that the set $S$ is finite. In this case, $|S|$ is called the \emph{rank} of the group.

A Coxeter group $W$
can have more than one presentation. However, once a set $S$ of simple
reflections is chosen, the presentation is unique.  Consequently,
a pair $(W,S)$ is
referred to as a \emph{Coxeter system}.  The data of the presentation
corresponding to a Coxeter system can be encoded in an edge-labeled
graph.

\begin{defn}
Given a Coxeter system $(W,S)$, the \emph{Coxeter graph} has vertex set $S$, and edge set $\{\{s,t\} : m(s,t) > 2\}$. If $m(s,t) > 3$ then the edge $\{s,t\}$ is
labeled by $m(s,t)$.
\end{defn}

Note that pairs of nonadjacent vertices in a Coxeter graph correspond to pairs of commuting simple reflections.

\begin{defn}
A \emph{parabolic subgroup} $W_{I} \subseteq W$ is a subgroup generated by a subset $I
\subseteq S$.
\end{defn}

Parabolic subgroups are Coxeter groups in their own right.

\begin{example}\label{ex:symmetric group background}
For the symmetric group $S_n$, the simple reflections are usually chosen to
be the adjacent transpositions $(i\leftrightarrow i+1)$ for $1 \leq i
<n$.  The Coxeter graph for $S_n$ is a path with $n-1$ vertices labeled $1$, $2$, $\ldots$, $n-1$, consecutively.  The reflections in $S_n$ are the
transpositions $(i\leftrightarrow j)$ for $1\leq i < j\leq n$.
The parabolic subgroups of $S_{n}$, which are also known as \emph{Young subgroups},
are always products of smaller symmetric groups.
\end{example}

Because the simple reflections in a Coxeter system generate the Coxeter group, every element in the group can be written as a product of these generators.

\begin{defn}
For a Coxeter system $(W,S)$ and an element $w \in W$, the
\emph{length} $\ell(w)$ of $w$ is the minimum number of simple reflections needed to produce $w$; that is, $\ell(w)$ is minimal so that $w = s_1 \cdots s_{\ell(w)}$ for $s_i \in S$. Such a word $s_1 \cdots s_{\ell(w)}$ is a \emph{reduced
expression} for $w$.
\end{defn}

The \emph{Bruhat order} on $W$ is defined by taking the transitive closure of the relations $w < wt$, where $t \in T$ is a reflection of $W$ and $\ell(w) < \ell(wt)$. Bruhat order is ranked by the length function, and can be understood in terms of subwords of reduced expressions:
if $v = s_1 \cdots s_k$ is a reduced expression, then $u \leq v$ if and only if
$u = s_{i_1} \cdots s_{i_l}$ for some $1 \leq i_1 < \cdots < i_l \leq k$.

The computations in this paper are stated in terms of the ``marine model'', which we introduce in
the next section. A marine model is in turn made up of the following objects:

\begin{defn}
A simple reflection $s \in S$ is a \emph{right ascent} of $w$ if $\ell(ws) > \ell(w)$. Similarly, if $\ell(sw) > \ell(w)$, then $s$
is a \emph{left ascent}. By default, \emph{ascents} will refer to right ascents. We denote the set of (right) ascents by
\[
\asc(w) := \asc_R(w) = \{ s \in S : \ell(ws) > \ell(w)\}.
\]
Similarly, $\asc_L(w) = \{s \in S : \ell(sw) > \ell(w)\}$. Every reduced expression
for $w^{-1}$ is the reverse of a reduced expression for $w$, so $\asc_L(w)=\asc_R(w^{-1})$.
An element of $S$ that is not an ascent is a \emph{descent}. Again, by default \emph{descents} will refer to right descents. We denote the set of (right) descents by
\[
\Des(w) := \Des_R(w) = S \setminus \asc_R(w) =  \{ s \in S : \ell(ws) < \ell(w)\},
\]
where the relationship between $\Des_R$ and $\asc_R$ follows from the fact that $\ell(ws) \neq \ell(w)$ for all $w \in W$ and $s \in S$.
Similarly, the set of left descents is denoted $\Des_L(w)$, and $\Des_L(w) =
\Des_R(w^{-1})$.
\end{defn}

We refer the reader to \cite{b-b,Hum} for further background on Coxeter groups.

\subsection{Parabolic cosets of Coxeter groups}

In this work, we are concerned with parabolic double cosets. To give that study some context, and to emphasize the complexity of those objects, we first briefly state some facts about one-sided parabolic cosets.

Let $W_J$ be a parabolic subgroup of $W$. The left cosets in the quotient
$W/W_J$ each have a unique minimal-length element, and thus $W / W_J$ can be
identified with the set $W^{J}$ of all minimal-length left $W_J$-coset
representatives.  An element $w \in W$ belongs to $W^J$ if and only if $J$
contains no right descents of $w$; that is, if and only if $J \subseteq \asc(w)$. When
$S$ is fixed, we write $J^c = S-J$ for the complement of $J$. Thus we can also
say that $w \in W^J$ if and only if $\Des(w) \subseteq J^c$.

Every element $w \in W$ can be written uniquely as $w = uv$, where $u \in W^J$
and $v \in W_J$. This is the \emph{parabolic decomposition} of $w$ \cite[Section
5.12]{Hum}. The product $w =uv$ is a \emph{reduced factorization}, meaning
that $\ell(w) = \ell(u) + \ell(v)$. Moreover, as a poset under Bruhat order, every coset
$w W_J$ is isomorphic to $W_J$. If $W_J$ is finite, then every coset $w W_J$ is also finite.
In addition, $W_J$ has unique minimal and maximal elements, and $wW_{J}$ is a Bruhat interval.

Analogous statements can be made for right cosets.  We use the notation ${}^I
W$ for the set of minimal-length right coset representatives for $W_I
\backslash W$.

\subsection{Parabolic double cosets of Coxeter groups}

A \emph{parabolic double coset} is a subset $C \subseteq W$ of the form $C
= W_I w W_J$ for some $w \in W$ and $I, J \subseteq S$. Parabolic
double cosets inherit some of the nice properties of one-sided parabolic cosets
mentioned previously, including the following:

\begin{prop}\label{P:standard1}
  Let $(W,S)$ be a Coxeter system, and fix $I,J \subseteq S$.

\begin{enumerate}[(a)]
\item Every parabolic double coset in $W_I \backslash W / W_J$ has a
  unique minimal element with respect to Bruhat order. As Bruhat order is graded by length, this element is also the unique element of minimal length.

\item An element $w \in W$ is the minimal-length element of a
    double coset in $W_I \backslash W / W_J$ if and only if
    $w$ belongs to both ${}^I W$ and $W^J$. Thus $W_I \backslash W /
    W_J$ can be identified with $${}^I W^J:= {}^I W \cap W^J = \{ w \in W : I \subseteq \asc_L(w) \mbox{ and } J \subseteq \asc_R(w)\}.$$

\item  The parabolic double cosets in $W_I \backslash W / W_J$ are
  finite if and only if $W_I$ and $W_J$ are both finite.  In this
  case, each $C \in W_I \backslash W / W_J$ has a unique maximal-length element which is also the unique maximal element with respect
  to Bruhat order. In particular, if $C$ is finite then it is a Bruhat
  interval.
\end{enumerate}
\end{prop}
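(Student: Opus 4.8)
The plan is to prove the three parts largely independently, reducing each to the theory of one-sided cosets together with the parabolic decomposition, rather than re-deriving everything from the exchange condition. Throughout I would exploit the inverse symmetry $\asc_L(w)=\asc_R(w^{-1})$ and the fact that $w \in {}^IW$ iff $I \subseteq \asc_L(w)$, so that two-sided statements can be bootstrapped from the one-sided ones already recorded in the excerpt.

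For part (a), I would argue that a double coset $C = W_I w W_J$ has a \emph{unique} minimal-length element, and that it is minimal in Bruhat order. First I would show existence of a minimal-length element by taking any $x \in C$ of minimal length and showing $I \subseteq \asc_L(x)$ and $J \subseteq \asc_R(x)$: if some $s \in J$ were a right descent, then $xs \in C$ has smaller length, a contradiction (and symmetrically on the left). For uniqueness I would suppose $x,y \in C$ both have minimal length with $x \in {}^IW^J$ and $y \in {}^IW^J$. Writing $y = u x v$ with $u \in W_I$, $v \in W_J$, I would use the one-sided parabolic decomposition to control lengths: since $x \in W^J$, the factorization through $W_J$ is length-additive, forcing the extra factors to be trivial. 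The cleanest route is probably to note that $x$ and $y$ lie in the same one-sided coset modulo a parabolic subgroup and invoke the uniqueness of minimal coset representatives on each side in turn. Then, to upgrade minimal length to minimal in Bruhat order, I would use that Bruhat order is graded by $\ell$ (stated in the excerpt) together with the fact that everything in $C$ dominates the minimal-length element via the subword characterization of Bruhat order; a short argument shows any $z \in C$ satisfies $x \le z$.

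Part (b) is essentially a restatement of what is proved in (a) combined with the descent description of ${}^IW$ and $W^J$, so I would phrase it as: $w$ is minimal-length in its double coset iff $w$ has no left descent in $I$ and no right descent in $J$, i.e.\ $I \subseteq \asc_L(w)$ and $J \subseteq \asc_R(w)$, which is exactly membership in ${}^IW \cap W^J = {}^IW^J$. The identification of $W_I \backslash W / W_J$ with ${}^IW^J$ is then the map sending each coset to its unique minimal element, well-defined and bijective by part (a).

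For part (c), the key equivalence is that finiteness of every double coset is controlled by finiteness of $W_I$ and $W_J$. If both are finite, I would produce the maximal element explicitly: starting from the minimal-length representative $w \in {}^IW^J$, one multiplies by the longest elements of the relevant parabolic subgroups on each side, using that $wW_J$ has a unique maximal element (a Bruhat interval, as recorded in the one-sided discussion) and then acting by $W_I$ on the left. The main obstacle here, and the part I expect to require the most care, is showing that the resulting maximal-length element is genuinely the unique Bruhat-\emph{maximal} element of $C$ and that $C$ is a full Bruhat interval $[w_{\min}, w_{\max}]$: one must verify both that every element of $C$ is $\le w_{\max}$ and that the two-sided parabolic action does not overshoot or create incomparable maxima. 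For the converse direction of (c), I would observe that if, say, $W_J$ is infinite, then the double coset $W_I w W_J \supseteq w W_J$ is infinite, since a single one-sided coset is already infinite. I would handle the subtle interaction between the left and right multiplications by reducing to the one-sided maximal-element statement twice, once on each side, checking length-additivity at each step so that the constructed element really has maximal length in $C$.
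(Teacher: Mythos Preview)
The paper does not give its own proof of this proposition: immediately after the statement it writes ``For the proof of Proposition~\ref{P:standard1}, see \cite{Bourbaki} or \cite{kobayashi}.'' So there is nothing to compare against at the level of argument; the paper treats this as standard background.

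Your plan is broadly the textbook one and is sound in outline, but two places deserve more care if you actually write it out. In part~(a), the uniqueness step is where you are vaguest: from $y=uxv$ with $u\in W_I$, $v\in W_J$ and $x\in W^J$ you get $\ell(xv)=\ell(x)+\ell(v)$, but deducing $\ell(uxv)\ge \ell(xv)$ is not automatic without knowing something like $xv\in{}^IW$, which you have not established. The cleanest fix is to iterate the one-sided argument: the minimal element of $C$ in the left coset $W_I x$ is unique and equals $x$ (since $x\in{}^IW$), and then argue on the right. In part~(c), proving that $C=[w_{\min},w_{\max}]$ as a Bruhat interval (not just that it has a unique maximum) really does require a subword/lifting argument; the paper in fact proves exactly this downward-closure statement separately as Lemma~\ref{lem:b.order}, so you might want to isolate it rather than fold it into~(c).
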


For the proof of Proposition \ref{P:standard1}, see \cite{Bourbaki} or
\cite{kobayashi}. The following statement is a consequence of
Proposition~\ref{P:standard1}. It has appeared, for instance, in
\cite{garsia-stanton.1984,curtis.1985}.

\begin{cor} [Double Parabolic Decomposition]\label{C:parabolic}
    Fix $I,J \subseteq S$ and $w \in {}^I W^J$.  Set
    \[H := I \cap (w J w^{-1}).\]
    Then $u w \in W^J$ for $u \in W_I$ if and only if $u \in
    W_I^{H}$, the minimal-length coset representatives in $W_I/W_H$.
    Consequently, every element of $W_I w W_J$ can be written uniquely
    as $u w v$, where $u \in W_{I}^{H}$, $v \in W_J$, and $\ell(uwv) =
    \ell(u) + \ell(w) + \ell(v)$.
\end{cor}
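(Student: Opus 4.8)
The plan is to prove the characterization
\[
 uw \in W^J \iff u \in W_I^H \qquad (u \in W_I)
\]
first, and then to bootstrap the unique factorization and the additivity of lengths from it. Throughout I will lean on the two one-sided additivity facts recorded above: since $w \in {}^I W$, we have $\ell(uw) = \ell(u) + \ell(w)$ for every $u \in W_I$, and since elements of $W^J$ are minimal in their right cosets, $\ell(xv) = \ell(x) + \ell(v)$ for every $x \in W^J$ and $v \in W_J$. I will also use only the \emph{easy} inclusion $W_H \subseteq W_I \cap wW_Jw^{-1}$, which is immediate: $H \subseteq I$ and, by definition of $H$, $H \subseteq wJw^{-1}$, so $w^{-1} W_H w \subseteq W_J$.

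Granting the characterization, here is how I would finish. For existence, take $g = awb$ with $a \in W_I$, $b \in W_J$, and write the parabolic decomposition $a = uh$ of $a$ inside the Coxeter system $(W_I, I)$ relative to $W_H$, so $u \in W_I^H$ and $h \in W_H$. Since $h \in W_H$, the element $\tilde h := w^{-1} h w$ lies in $W_J$, whence $hw = w \tilde h$ and $g = uw(\tilde h b)$, with $u \in W_I^H$ and $v := \tilde h b \in W_J$. The characterization gives $uw \in W^J$, and the two additivity statements yield $\ell(g) = \ell(uw) + \ell(v) = \ell(u) + \ell(w) + \ell(v)$. For uniqueness, suppose $uwv = u'wv'$ with $u, u' \in W_I^H$ and $v, v' \in W_J$. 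Then $u'w = uw\,(v (v')^{-1}) \in uwW_J$, so $uw$ and $u'w$ lie in a common right $W_J$-coset; by the characterization both lie in $W^J$, and a right coset contains exactly one element of $W^J$, so $uw = u'w$, forcing $u = u'$ and then $v = v'$.

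It remains to prove the characterization, and this is where the one genuine obstacle sits. The direction $u \notin W_I^H \Rightarrow uw \notin W^J$ is easy: choosing $s \in H$ with $\ell(us) < \ell(u)$ and writing $s = ws'w^{-1}$ with $s' \in J$, one has $ws' = sw$, so $uws' = (us)w$ and $\ell(uws') = \ell(us) + \ell(w) < \ell(uw)$, exhibiting $s' \in \Des(uw) \cap J$. The reverse direction $u \in W_I^H \Rightarrow uw \in W^J$ is the hard one, and I would argue by contradiction. If some $s' \in J$ has $\ell(uws') < \ell(uw)$, then applying the exchange/deletion condition to the reduced word obtained by concatenating reduced words for $u$ and for $w$ shows the deleted letter cannot lie in the $w$-part (that would force $\ell(ws') < \ell(w)$, contradicting $s' \in \asc(w)$), hence it lies in the $u$-part, producing a reflection $\tau = ws'w^{-1} \in W_I \cap wW_Jw^{-1}$ with $\ell(u\tau) < \ell(u)$. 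The main obstacle is exactly to conclude that $\tau \in W_H$: this is the nontrivial inclusion $W_I \cap wW_Jw^{-1} \subseteq W_H$, i.e.\ the theorem (due to Kilmoyer) that the intersection of $W_I$ with a conjugate of $W_J$ is the standard parabolic subgroup $W_H$ associated to the minimal double-coset representative $w$. Once $\tau \in W_H$ is known, minimality of $u$ in $W_I^H$ gives $\ell(u\tau) = \ell(u) + \ell(\tau) > \ell(u)$, the desired contradiction. For this key inclusion I would either cite the intersection theorem (for instance \cite{Bourbaki}) or prove it directly by induction on $\ell(w)$, using the standard lifting property of minimal double-coset representatives that, for $s \in I$, either $sw$ is again a minimal double-coset representative or $sw = ws'$ for some $s' \in J$.
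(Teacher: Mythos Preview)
Your proof is correct. The paper does not actually give a proof of this corollary: it simply remarks that the statement is a consequence of Proposition~\ref{P:standard1} and cites \cite{garsia-stanton.1984,curtis.1985} for its appearance in the literature. So there is no in-paper argument to compare against.

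What you have written is essentially the standard proof one finds in those references. You correctly isolate the one nontrivial ingredient, namely Kilmoyer's theorem $W_I \cap wW_Jw^{-1} = W_{I \cap wJw^{-1}}$ for $w \in {}^IW^J$, and you correctly observe that everything else (the exchange-condition argument localizing the deleted letter to the $u$-part, and the existence/uniqueness of the factorization) is routine once this is in hand. One small sharpening: in the contradiction step, you can note that $\ell(u'w)=\ell(u')+\ell(w)$ (because $u'\in W_I$ and $w\in{}^IW$) together with $\ell(uws')=\ell(uw)-1$ forces $\ell(u\tau)=\ell(u')=\ell(u)-1$ exactly, not just an inequality; this makes the contradiction with $\ell(u\tau)=\ell(u)+\ell(\tau)$ cleaner.
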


If $W_I$ and $W_J$ are finite parabolic subgroups, then
Corollary~\ref{C:parabolic} gives a bijective proof of the double
coset formula $|W_I w W_J| = |W_I||W_J|/|W_H|$ \cite[Theorem 2.5.1, and Exercise 40 on page 49]{Herstein}.

\begin{lem}\label{lem:b.order}
  Let $C$ be a parabolic double coset in $W_I \backslash W / W_J$ with minimal element  $w \in {}^I W^J$.
  For $y \in C$, fix a reduced factorization $y = uwv$ where $u \in
  W_I$, $v \in W_J$.  For all $x \in W$,  the following are
  equivalent:
            \begin{enumerate}
                \item [(i)]$x \in C$ and $x \leq y$ in Bruhat order,
                \item [(ii)]$w \leq x \leq y$, \text{ and }
                \item [(iii)]$x = u' w v'$ for some $u' \leq u$ and $v' \leq v$.
            \end{enumerate}
\end{lem}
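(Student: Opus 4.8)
The plan is to prove the three conditions equivalent via the cyclic chain (i) $\Rightarrow$ (ii) $\Rightarrow$ (iii) $\Rightarrow$ (i). The implication (i) $\Rightarrow$ (ii) is immediate: if $x \in C$ then $w \le x$ because $w$ is the unique minimal element of $C$ by Proposition~\ref{P:standard1}(a), and $x \le y$ is part of the hypothesis. For (iii) $\Rightarrow$ (i) I would use the subword characterization of Bruhat order. Since $y = uwv$ is a reduced factorization, concatenating reduced words for $u$, $w$, and $v$ yields a reduced word for $y$; given $u' \le u$ and $v' \le v$, one selects a subword spelling $u'$ inside the $u$-block and a subword spelling $v'$ inside the $v$-block while retaining the entire $w$-block, producing a subword whose product is exactly $x = u'wv'$, so $x \le y$. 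Membership $x \in C$ then follows because parabolic subgroups are downward closed in Bruhat order, so $u' \le u \in W_I$ forces $u' \in W_I$ and likewise $v' \in W_J$, whence $x = u'wv' \in W_I w W_J = C$.

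The heart of the argument is (ii) $\Rightarrow$ (iii), which I would prove by induction on $\ell(u) + \ell(v) = \ell(y) - \ell(w)$, the main tool being the lifting property of Bruhat order \cite[Prop.~2.2.7]{b-b}. In the base case $u = v = e$ we have $y = w$, so $w \le x \le w$ forces $x = w = ewe$. For the inductive step, suppose $v \neq e$ and choose $s \in J$ with $vs < v$; then $ys = uw(vs)$ is again a reduced factorization with $vs \in W_J$ and $\ell(u) + \ell(vs) < \ell(u) + \ell(v)$, and $ys < y$. Crucially, since $w \in W^J$ and $s \in J$, the generator $s$ is a right ascent of $w$, i.e.\ $ws > w$. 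I then split on whether $s$ is a right descent of $x$. If $xs > x$, the lifting property gives $x \le ys$, so the inductive hypothesis applied to $w \le x \le ys$ yields $x = u'wv'$ with $u' \le u$ and $v' \le vs \le v$. If $xs < x$, the lifting property gives both $xs \le ys$ and, using $ws > w$ together with $w \le x$, the bound $w \le xs$; applying the inductive hypothesis to $w \le xs \le ys$ produces $xs = u'wv''$ with $u' \le u$ and $v'' \le vs$, and then $x = u'w(v''s)$ with $v''s \le v$, the last inequality being one more application of the lifting property to $v'' \le vs$ together with $s \in \Des_R(v)$. The remaining case $v = e$, $u \neq e$ is handled symmetrically, working on the left with a descent $s \in I$ of $u$ and using $w \in {}^I W$ to guarantee $sw > w$.

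I expect the main obstacle to be the bookkeeping in the inductive step: at each branch one must verify that the smaller element $w$ (and the pieces $u', v''$) remain inside the correct Bruhat intervals after multiplying by $s$, and every one of these verifications rests on the single structural fact that $w \in {}^I W^J$, so that each generator of $I$ (resp.\ $J$) is a left (resp.\ right) ascent of $w$. Once the lifting property is invoked in the correct orientation, the inequalities $w \le xs$ and $v''s \le v$ fall out, but selecting that orientation and confirming that $ys = uw(vs)$ is still reduced with the same minimal element $w$ are the points that demand care. No machinery beyond Proposition~\ref{P:standard1}, the reducedness of the given factorization, and the lifting property should be required.
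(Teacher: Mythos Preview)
Your proof is correct and takes a genuinely different route for (ii) $\Rightarrow$ (iii). The paper argues directly with subwords: fix a reduced expression $s_1\cdots s_k$ for $y$ split as $u=s_1\cdots s_a$, $w=s_{a+1}\cdots s_b$, $v=s_{b+1}\cdots s_k$; choose a reduced subword $s_{i_1}\cdots s_{i_l}$ for $x$ inside it, and then a reduced subword for $w$ inside that. Because $w \in {}^I W^J$, no reduced word for $w$ begins with a letter of $I$ or ends with a letter of $J$, so this innermost subword is confined to positions in $[a{+}1,b]$; having length exactly $b-a$, it must occupy all of those positions, whence the remaining letters of the $x$-subword split into a prefix giving $u' \le u$ and a suffix giving $v' \le v$. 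Your approach instead peels off one generator of $v$ (or, symmetrically, $u$) at a time and invokes the lifting property at each step; it is more mechanical but equally valid. One incidental byproduct of the paper's argument is that the resulting factorization $x = u'wv'$ is automatically reduced, being a contiguous splitting of a reduced word for $x$; your $v' = v''s$ in the descent case need not be length-additive with $u'w$. Since (iii) does not ask for reducedness this is harmless, but it is worth being aware of the difference.
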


\begin{proof} It is straightforward to check that (i) implies (ii) and
(iii) implies (i).  We need to show that (ii) implies (iii).

Take a
reduced expression $y = s_1 \cdots s_k$, where $u = s_1 \cdots s_a$,
$w = s_{a+1} \cdots s_b$, and $v = s_{b+1} \cdots s_k$.  This means that $s_i \in I$ for all $i \in [1,a]$, and $s_i \in J$ for all $j \in [b+1,k]$, while $s_{a+1} \not\in I$ and $s_b \not\in J$.

If
$x \leq y$ then $x$ has a reduced expression $s_{i_1} \cdots s_{i_l}$
for $1 \leq i_1 < i_2 < \ldots < i_l \leq k$.  If $w \leq x$
then $w$ has a reduced expression $w = s_{i_{j_1}} \cdots
s_{i_{j_m}}$, where $1 \leq j_1 < \ldots < j_m \leq l$.  Since $w \in
{}^I W^J$, we must have $s_{i_{j_1}} \not\in I$ and $s_{i_{j_m}} \not\in J$.
Thus $i_{j_1} \geq a+1$ and $i_{j_m} \leq b$. In other words,
\[
i_{j_1-1} < a+1 \leq i_{j_1} < \cdots < i_{j_m} \leq b < i_{j_m+1}.
\]
Since $m = \ell(w) = b -a$, and there are only $b-a$ letters from $a+1$ to $b$, we have
\[
\{i_{j_1} < \cdots < i_{j_m}\} = \{a+1 < \cdots < b\},
\]
and $u' := s_{i_1} \cdots s_{i_{j_1-1}} \in W_I$,
$v' := s_{i_{j_m+1}} \cdots s_{i_l} \in W_J$, as desired.
\end{proof}

In the remainder of the paper, we will assume that we know the unique
minimal-length element of a parabolic double coset.  The following
corollary shows that this is computationally easy to find from any
presentation of the coset.  The algorithm can also be used to test if
an arbitrary Bruhat interval is a parabolic double coset.

\begin{cor}\label{cor:min.max.elems}
Given any parabolic double coset $C=W_{I}wW_{J}$ with $w$ not
nec\-es\-sar\-i\-ly minimal, one can find the unique minimal element in $C$ by
applying a simple greedy algorithm to $w$. The algorithm proceeds by recursively multiplying $w$
by either $s_{i}$ on the left for any $s_i \in I\cap\Des_{L} (w)$, or $s_{j}$ on
the right for any $s_j \in J\cap \Des_{R} (w)$.  The algorithm terminates in at
most $\ell(w)$ steps with an element
$\mathrm{min}(C)=s_{i_{p}}\cdots s_{i_{1}} w s_{j_{1}} \cdots
s_{j_{q}}$ that has no left descents in $I$, nor right descents in $J$.

If $C$ is finite, the maximal element $\mathrm{max}(C)$ of $C$ can be
found in the analogous way by using ascent sets instead of descent sets
for $w$.
\end{cor}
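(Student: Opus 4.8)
The plan is to verify that the greedy descent procedure (i) never leaves $C$, (ii) strictly decreases length at each step, and (iii) halts exactly at the element characterized in Proposition~\ref{P:standard1}(b), which is then forced to be $\min(C)$ by the uniqueness in Proposition~\ref{P:standard1}(a). This cleanly separates a ``dynamics'' part (closure and termination) from a ``correctness'' part (identifying the halting state).

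First I would check closure. If $w'$ is the current element, then $w' \in C$, and since double cosets partition $W$ we have $W_I w' W_J = C$. Hence for any $s_i \in I$ we get $s_i w' \in W_I w' W_J = C$, and likewise $w' s_j \in C$ for $s_j \in J$, so every move keeps us inside $C$. Next, if $s_i \in I \cap \Des_L(w')$ then by definition $\ell(s_i w') = \ell(w') - 1$, and a right move with $s_j \in J \cap \Des_R(w')$ gives $\ell(w' s_j) = \ell(w') - 1$. Thus each step lowers the length by exactly one, so the process halts after at most $\ell(w)$ steps at some $x \in C$. The halting condition is precisely that $I \cap \Des_L(x) = \emptyset$ and $J \cap \Des_R(x) = \emptyset$, i.e. $I \subseteq \asc_L(x)$ and $J \subseteq \asc_R(x)$, which is exactly the condition $x \in {}^IW^J$ of Proposition~\ref{P:standard1}(b). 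That proposition says $x$ is the minimal element of its double coset, and since $x \in C$ this coset is $C$; by uniqueness (Proposition~\ref{P:standard1}(a)) we conclude $x = \min(C)$, independent of the choices made along the way.

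For the maximal element when $C$ is finite, the argument is formally dual: the same closure reasoning applies, each ascent move raises the length by one, and termination now follows from finiteness of $C$ (Proposition~\ref{P:standard1}(c)) rather than from a length floor. The procedure halts at some $x \in C$ with $I \subseteq \Des_L(x)$ and $J \subseteq \Des_R(x)$, and it remains to identify $x$ with $\max(C)$. One direction is immediate: if $\max(C)$ had a left ascent $s_i \in I$ or a right ascent $s_j \in J$, then $s_i \max(C)$ or $\max(C) s_j$ would be a strictly longer element of $C$, contradicting maximality, so $\max(C)$ is itself a halting state.

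The crux, and the step I expect to be the main obstacle, is the converse: that $\max(C)$ is the \emph{only} halting state, so that the procedure must reach it. I would establish this by writing $x = u w v$ in the double parabolic decomposition of Corollary~\ref{C:parabolic}, with $w = \min(C)$, $u \in W_I^H$, $v \in W_J$, and $H = I \cap wJw^{-1}$. Tracking right descents through this reduced factorization shows that $J \subseteq \Des_R(x)$ forces $v$ to be the longest element of the finite group $W_J$; an analogous analysis on the left, conveniently obtained by applying the right-hand case to $x^{-1} \in W_J w^{-1} W_I$, forces $u$ to be the longest element of $W_I^H$. These two conditions pin $x$ down uniquely, and since $\max(C)$ satisfies them, $x = \max(C)$. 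Alternatively, one may use Lemma~\ref{lem:b.order} to observe that $C = [\min(C), \max(C)]$ is a full Bruhat interval and argue the dual characterization as the order-theoretic mirror of Proposition~\ref{P:standard1}(b). Either way, the only genuine work is the bookkeeping of left descents through the $W_I^H$-factor, where the subgroup $H$ obstructs a naive symmetry with the right-hand side.
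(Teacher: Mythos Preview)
Your argument is correct. The paper gives only a one-sentence justification, pointing to Lemma~\ref{lem:b.order}, so your write-up is considerably more explicit than what appears there. For the minimal element, your use of Proposition~\ref{P:standard1}(b) is the natural route and is effectively what the paper intends: the halting condition is literally the characterization of ${}^I W^J$, so uniqueness of the minimum finishes it.

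For the maximal element, the paper simply says the analogous statement follows from Lemma~\ref{lem:b.order}; you instead invoke the double parabolic decomposition (Corollary~\ref{C:parabolic}) and the inverse trick to pin down the halting state. This is a genuinely more careful treatment: the dual of Proposition~\ref{P:standard1}(b) is not stated in the paper, so some work is needed, and your observation that the $W_I^H$ factor obstructs a naive left-right symmetry is exactly right. Your plan (show $v = w_0(J)$ directly, then pass to $x^{-1}$ to force the $W_I$-part to be $w_0(I)$, and finally rewrite via $w_0(I) = u_0 w_0(H)$ to conclude $u$ is the longest element of $W_I^H$) goes through cleanly. The alternative you mention via Lemma~\ref{lem:b.order} is closer to the paper's citation but would still require essentially this computation to be made rigorous, so your choice of Corollary~\ref{C:parabolic} is the more transparent one.
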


That $\mathrm{min}(C)$ (respectively, $\mathrm{max}(C)$) is the unique minimal (respectively, maximal) element of $C$ follows from Corollary~\ref{lem:b.order}.

\begin{cor}\label{cor:interval.test}
Let $[u,v]$ be a finite interval in Bruhat order in any Coxeter group
$W$.  Then $[u,v]$ is a parabolic double coset if and only if $u =
\mathrm{min}(C)$ where
\begin{align*}
C &= W_{I}vW_{J},\\
I &= \asc_{L}(u) \cap\des_{L}(v),\\
J &= \asc_{R}(u) \cap \des_{R}(v), \text{ and}
\end{align*}
$\mathrm{min}(C)$ is found via the greedy algorithm described in Corollary~\ref{cor:min.max.elems}, starting at $v$.
\end{cor}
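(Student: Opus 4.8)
The plan is to prove the two implications separately, using Proposition~\ref{P:standard1} together with the greedy descent/ascent algorithm of Corollary~\ref{cor:min.max.elems}. Throughout set $I = \asc_L(u) \cap \des_L(v)$, $J = \asc_R(u) \cap \des_R(v)$, and $C = W_I v W_J$, exactly as in the statement.

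For the direction that assumes $\mathrm{min}(C) = u$, I would first record that $C$ is finite: the conditions $I \subseteq \des_L(v)$ and $J \subseteq \des_R(v)$ force the parabolic subgroups $W_I$ and $W_J$ to be finite (a reduced expression for $v$ must begin, respectively end, with a reduced word for the longest element of $W_I$, respectively $W_J$, so these longest elements exist), and hence Proposition~\ref{P:standard1}(c) applies to $C$. Since $v \in C$ and $v$ has no left ascent in $I$ and no right ascent in $J$, the greedy ascent algorithm of Corollary~\ref{cor:min.max.elems} started from $v$ terminates immediately, giving $\mathrm{max}(C) = v$. By Proposition~\ref{P:standard1}(c), $C$ is then the Bruhat interval $[\mathrm{min}(C), \mathrm{max}(C)] = [u,v]$, so $[u,v]$ is a parabolic double coset.

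For the converse, suppose $[u,v]$ is a parabolic double coset, so $[u,v] = W_{I_0} u W_{J_0}$ for some $I_0, J_0 \subseteq S$ with minimal element $u$ and maximal element $v$. Since $u$ is minimal, Proposition~\ref{P:standard1}(b) gives $I_0 \subseteq \asc_L(u)$ and $J_0 \subseteq \asc_R(u)$; since $v$ is maximal, Corollary~\ref{cor:min.max.elems} gives $I_0 \subseteq \des_L(v)$ and $J_0 \subseteq \des_R(v)$. Hence $I_0 \subseteq I$ and $J_0 \subseteq J$, so $W_{I_0} \subseteq W_I$ and $W_{J_0} \subseteq W_J$, and therefore $[u,v] = W_{I_0} u W_{J_0} \subseteq W_I u W_J$. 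In particular $v \in W_I u W_J$, which places $u$ and $v$ in the same $(W_I, W_J)$-double coset, so $C = W_I v W_J = W_I u W_J$. Finally, $I \subseteq \asc_L(u)$ and $J \subseteq \asc_R(u)$ yield $u \in {}^I W^J$, whence $u = \mathrm{min}(C)$ by Proposition~\ref{P:standard1}(b).

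The heart of the matter, and the step I would be most careful about, is the enlargement in the converse direction: one must verify that passing from an arbitrary presentation $(I_0, J_0)$ of the interval to the canonical pair $(I,J)$ read off from the ascent and descent sets of the two endpoints does not change the coset. The containments $W_{I_0} \subseteq W_I$ and $W_{J_0} \subseteq W_J$ together with the membership $v \in W_I u W_J$ settle this, since they force $W_I v W_J$ and $W_I u W_J$ to coincide while $u$ stays minimal in the enlarged coset. The only other point needing attention is the finiteness of $W_I$ and $W_J$, which is exactly what licenses the appeal to Proposition~\ref{P:standard1}(c) identifying $C$ with a genuine Bruhat interval.
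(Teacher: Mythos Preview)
Your proof is correct and supplies precisely the details the paper omits (the corollary is stated there without proof). Both directions are handled the natural way: in the forward direction you identify $v$ as $\max(C)$ because the greedy ascent algorithm halts immediately, and in the converse you enlarge $(I_0,J_0)$ to $(I,J)$ and check that $u$ remains minimal.

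The only place to tighten is the justification that $W_I$ and $W_J$ are finite. As written, ``a reduced expression for $v$ must begin with a reduced word for the longest element of $W_I$, so this longest element exists'' is circular. The clean argument uses parabolic decomposition: write $v = v_I\, v^I$ with $v_I \in W_I$ and $v^I \in {}^I W$; for $s \in I$ one has $s v < v$ if and only if $s v_I < v_I$, so $I \subseteq \Des_L(v)$ forces $I \subseteq \Des_L(v_I)$. Thus $v_I \in W_I$ has every generator of $W_I$ as a left descent, which makes $v_I$ the longest element of $W_I$ and hence $W_I$ finite. The same works for $W_J$.
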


Another property of finite intervals $[u,v]$ that are parabolic double cosets is that they are rank-symmetric (see \cite{kobayashi}). It is natural to wonder if this rank-symmetry follows because the interval is self-dual, but this is not generally true. For example, the interval from the identity to the permutation $54312=s_3s_2s_1s_4s_3s_2s_4s_3s_4$ in the symmetric group $S_5$ can be written as
$$[e, 54312] = W_{\{s_2,s_3,s_4\}} e W_{\{s_1,s_2,s_3\}},$$
where $s_i$ denotes the $i$th adjacent transposition. Computer verification shows this interval is not self-dual.

\section{Parabolic double cosets in the symmetric group}\label{sec:symmetric group}

In this section we describe one of the main tools and results of this paper: the
marine model for $S_n$, and the accompanying formula for the number
$c_w$ of parabolic double cosets with a fixed minimal permutation $w
\in S_n$. At this stage, we focus on motivating the marine model, and
consequently some aspects are discussed only informally. All the facts
used in the enumeration will be discussed more formally in the next
section on general Coxeter groups.

\subsection{Ascents and descents in the symmetric group}

By Proposition~\ref{P:standard1}, we know that $w$ is the minimal element of a parabolic double coset $W_I w W_J$ if and only if $I \subseteq \asc_L(w)$ and $J \subseteq \asc_R(w)$. In the symmetric group, ascents have a well-known combinatorial description which we now describe.

First, recall from Example~\ref{ex:symmetric group background} that the symmetric group $S_n$ is a Coxeter group with generating set $S= \{ s_1,\ldots,s_{n-1}\}$, where $s_i$ denotes the
$i$th adjacent transposition. Elements of $S_n$ are encoded by
permutations of the set $\{1,2,\ldots,n\}$. We will usually write a
permutation $w \in S_n$ in one-line notation $w= w(1) \cdots
w(n)$. Thus right action permutes positions, while
left action permutes values.

\begin{example}
If $w = 7123546 \in S_7$, then $ws_6 = 7123564$ and $s_6w = 6123547$.
\end{example}

The length function for $S_n$ is the inversion statistic for permutations
\[
 \ell(w) = \inv(w):=\#\{ (i,j) : i < j \mbox{ and } w(i) > w(j) \}.
\]
Since $w$ and $ws_j$ differ only in that the letters $w(j)$ and $w(j+1)$ have
swapped positions, we have $\ell(w s_j) > \ell(w)$ if and only if
$w(j+1) > w(j)$. Thus, in a standard abuse of notation, we can write \[
 \asc_R(w) = \{ 1\leq j \leq n-1 : w(j) < w(j+1) \},
\]
and similarly
\[
 \Des_R(w)= \{1\leq j \leq n-1 : w(j) > w(j+1) \}.
\]
In the symmetric group, then, we can think of ascents and descents in terms of
\emph{positions} in permutations, in addition to the standard interpretation in terms
of simple generators. The study of these combinatorial notions of ascents and
descents goes (at least) as far back as the work of MacMahon in the early
twentieth century (see, for example, \cite{CarlitzRiordan,MacMahon}).

Left multiplication by $s_i$ swaps the positions of the letters $i$ and $i+1$, so
$s_i$ is a left ascent of $w$ if and only if the value $i$ appears to the left
of $i+1$ in the word $w(1) \cdots w(n)$, and we can think of
$\asc_L(w)=\asc_R(w^{-1})$ and $\Des_L(w) = \Des_R(w^{-1})$ as values of the
permutation.

To understand parabolic double cosets, we need to look at a particular kind of ascent which is, in a sense, ``small.''

\begin{defn}
A \emph{small right ascent} of $w$ is an index $j$ such that
$w(j+1)=w(j)+1$. If $j$ is a small right ascent, then $w s_j w^{-1} = s_i$,
where $i = w(j)$, and we say that $i$ is a \emph{small left ascent} of $w$.  Any
ascent that is not a small ascent is a \emph{large ascent}.
\end{defn}

\begin{example}\label{ex:7123546.pt1}
    For $w = 7123546$, we have $\asc_R(w) = \{ 2, 3, 4, 6\}$, $\Des_R(w) = \{
    1,5\}$, $\asc_L(w) = \{ 1,2,3,5\}$, and $\Des_L(w) = \{4, 6\}$. The small right
    ascents of $w$ are $\{2,3\}$, and the large right ascents are $\{4,6\}$. The small left
    ascents of $w$ are $\{1,2\}$, and the large left ascents are $\{3,5\}$.
\end{example}

\subsection{Balls in boxes}

Parabolic double cosets in the symmetric group can be represented by
\emph{balls-in-boxes pictures}, in which a number of balls are placed
in a two-dimensional grid of boxes separated by some solid vertical
and horizontal ``walls.'' This idea, attributed to Nantel Bergeron,
appears in work of Diaconis and Gangolli \cite[Proof of Theorem
  3.1]{Diaconis-Gangoli-1994} (see also
\cite{Pe15,PetersenBallsinBoxes}).

We construct the balls-in-boxes picture for a permutation $w$ by
placing balls as one would do in a permutation matrix.  To be precise, if
$w(a)=b$, we put a ball in column $a$ of row $b$, where columns are
labeled left-to-right and rows are labeled bottom-to-top in Cartesian
coordinates.  The symmetric group acts on the left by
permuting rows of such pictures, and on the right by permuting columns.

We can consider a parabolic double coset $C = W_I w W_J$ as a collection
of pictures.  While $C$ is not invariant under the full action of the
symmetric group, it is invariant under the left action of $W_I$ and
the right action of $W_J$. Thus walls are added to the picture to
indicate which simple transpositions are allowed to act on $C$.  For $i
\in I^c$, the complement of $I$, we put a horizontal wall between rows
$i$ and $i+1$. For $j \in J^c$, we put a vertical wall between columns
$j$ and $j+1$. We also draw walls around the boundary of the entire
permutation because no simple transpositions act in those positions. If
no sets $I$ and $J$ are specified, then we will assume that $I=\asc_L(w)$ and
$J=\asc_R(w)$; that is, we will draw walls in the left and right descent
positions.  Thus $C$ can be represented by the balls-in-boxes
pictures with walls, for $(I,w,J)$.

\begin{example}\label{ex:7123546.pt2}
Continuing Example~\ref{ex:7123546.pt1}, the permutation $w = 7123546$ is the minimal-length representative for any parabolic
double coset $W_IwW_J$ with $I \subseteq \{ 1,2,3,5\} = \asc_L(w)$ and $J \subseteq \{ 2,3,4,6\} = \asc_R(w)$. Figure~\ref{fig:double} depicts the balls-in-boxes model for $w=7123546$, with $I = \{ 1,2,3,5\}$ and $J = \{ 2,3,4,6\}$.
\end{example}

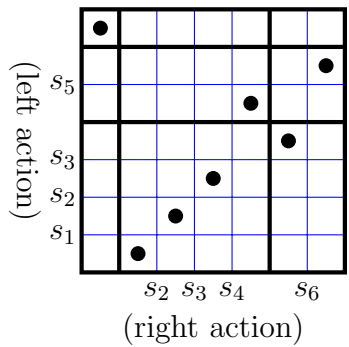
\begin{figure}[htbp]
\[
 \begin{tikzpicture}[scale=.25]
  \draw[blue] (0,0) grid[step=2] (14,14);
  \draw (1,13) node[circle,fill=black,inner sep =2] {};
  \draw (3,1) node[circle,fill=black,inner sep =2] {};
  \draw (5,3) node[circle,fill=black,inner sep =2] {};
  \draw (7,5) node[circle,fill=black,inner sep =2] {};
  \draw (9,9) node[circle,fill=black,inner sep =2] {};
  \draw (11,7) node[circle,fill=black,inner sep =2] {};
  \draw (13,11) node[circle,fill=black,inner sep =2] {};
  \draw[ultra thick] (2,0)--(2,14);
  \draw[ultra thick] (10,0)--(10,14);
  \draw[ultra thick] (0,8)--(14,8);
  \draw[ultra thick] (0,12)--(14,12);
  \draw[ultra thick] (0,0) rectangle (14,14);
  \draw (4,-1) node {$s_2$};
  \draw (6,-1) node {$s_3$};
  \draw (8,-1) node {$s_4$};
  \draw (12,-1) node {$s_6$};
  \draw (-1,2) node {$s_1$};
  \draw (-1,4) node {$s_2$};
  \draw (-1,6) node {$s_3$};
  \draw (-1,10) node {$s_5$};
  \draw (7,-3) node {(right action)};
  \draw (-3,7) node[rotate=-90] {(left action)};
 \end{tikzpicture}
\]
\caption{The balls-in-boxes model for a parabolic double coset. The light blue lines indicate which simple transpositions preserve the parabolic double coset.}\label{fig:double}
\end{figure}

Given a parabolic double coset $C = W_I w W_J$, we would like to find its
minimal and maximal elements as in Corollary \ref{cor:min.max.elems}.  The
balls-in-boxes picture can help with this task: The walls in the picture
partition the $n \times n$ grid into rectangular
enclosures.  By using the adjacent transpositions of rows and columns
that are not separated by walls, sort the balls between each parallel pair of adjacent
walls so that they create no inversions.  In this
sorting process, no ball leaves its enclosure; that is, the number of balls contained in any given enclosure does not change.  The
resulting picture will represent $(I,u,J)$ where $u$ is the minimal
length representative for the parabolic double coset $W_IwW_J$. Similarly, if we sort the balls so to maximize the number of inversions between each parallel pair of adjacent walls, we get the maximal-length representative $v$ for
$W_IwW_J$. The parabolic double coset is the Bruhat interval $[u,v]$ with $u$ and $v$ as just defined.

\begin{example}\label{ex:3512467}
Let $w=3512467$, with $I = \{1,3,4\}$ and $J=\{2,3,5,6\}$. Figure
\ref{fig:minmaxreps} shows the balls-in-boxes pictures for the triples
$(I,w,J)$, $(I,u,J)$, and $(I,v,J)$, where $u=3124567$ is the minimal element
of the parabolic double coset and $v=5421763$ is the maximal element.
\end{example}

\begin{figure}[htbp]
\[
\begin{tikzpicture}
\draw (-4,0) node (a) {
 \begin{tikzpicture}[scale=.25]
  \draw[blue] (0,0) grid[step=2] (14,14);
  \draw (1,5) node[circle,fill=black,inner sep =2] {};
  \draw (3,9) node[circle,fill=black,inner sep =2] {};
  \draw (5,1) node[circle,fill=black,inner sep =2] {};
  \draw (7,3) node[circle,fill=black,inner sep =2] {};
  \draw (9,7) node[circle,fill=black,inner sep =2] {};
  \draw (11,11) node[circle,fill=black,inner sep =2] {};
  \draw (13,13) node[circle,fill=black,inner sep =2] {};
  \draw[ultra thick] (2,0)--(2,14);
  \draw[ultra thick] (8,0)--(8,14);
  \draw[ultra thick] (0,4)--(14,4);
  \draw[ultra thick] (0,10)--(14,10);
  \draw[ultra thick] (0,12)--(14,12);
  \draw[ultra thick] (0,0) rectangle (14,14);
  \draw (4,-1) node {$s_2$};
  \draw (6,-1) node {$s_3$};
   \draw (10,-1) node {$s_5$};
  \draw (12,-1) node {$s_6$};
  \draw (-1,2) node {$s_1$};
  \draw (-1,6) node {$s_3$};
  \draw (-1,8) node {$s_4$};
  \end{tikzpicture}
 };
\draw (4,-3) node (b) {
\begin{tikzpicture}[scale=.25]
  \draw[blue] (0,0) grid[step=2] (14,14);
  \draw (1,5) node[circle,fill=black,inner sep =2] {};
  \draw (3,1) node[circle,fill=black,inner sep =2] {};
  \draw (5,3) node[circle,fill=black,inner sep =2] {};
  \draw (7,7) node[circle,fill=black,inner sep =2] {};
  \draw (9,9) node[circle,fill=black,inner sep =2] {};
  \draw (11,11) node[circle,fill=black,inner sep =2] {};
  \draw (13,13) node[circle,fill=black,inner sep =2] {};
  \draw[ultra thick] (2,0)--(2,14);
  \draw[ultra thick] (8,0)--(8,14);
  \draw[ultra thick] (0,4)--(14,4);
  \draw[ultra thick] (0,10)--(14,10);
  \draw[ultra thick] (0,12)--(14,12);
  \draw[ultra thick] (0,0) rectangle (14,14);
  \draw (4,-1) node {$s_2$};
  \draw (6,-1) node {$s_3$};
   \draw (10,-1) node {$s_5$};
  \draw (12,-1) node {$s_6$};
  \draw (-1,2) node {$s_1$};
  \draw (-1,6) node {$s_3$};
  \draw (-1,8) node {$s_4$};
 \end{tikzpicture}
 };
 \draw (4,3) node (c) {
\begin{tikzpicture}[scale=.25]
  \draw[blue] (0,0) grid[step=2] (14,14);
  \draw (1,9) node[circle,fill=black,inner sep =2] {};
  \draw (3,7) node[circle,fill=black,inner sep =2] {};
  \draw (5,3) node[circle,fill=black,inner sep =2] {};
  \draw (7,1) node[circle,fill=black,inner sep =2] {};
  \draw (9,13) node[circle,fill=black,inner sep =2] {};
  \draw (11,11) node[circle,fill=black,inner sep =2] {};
  \draw (13,5) node[circle,fill=black,inner sep =2] {};
  \draw[ultra thick] (2,0)--(2,14);
  \draw[ultra thick] (8,0)--(8,14);
  \draw[ultra thick] (0,4)--(14,4);
  \draw[ultra thick] (0,10)--(14,10);
  \draw[ultra thick] (0,12)--(14,12);
  \draw[ultra thick] (0,0) rectangle (14,14);
  \draw (4,-1) node {$s_2$};
  \draw (6,-1) node {$s_3$};
   \draw (10,-1) node {$s_5$};
  \draw (12,-1) node {$s_6$};
  \draw (-1,2) node {$s_1$};
  \draw (-1,6) node {$s_3$};
  \draw (-1,8) node {$s_4$};
 \end{tikzpicture}
 };
 \draw[->] (a)-- node[midway,below,fill=white,inner sep=2] {minimal} (b);
 \draw[->] (a)-- node[midway,above,fill=white,inner sep=2] {maximal} (c);
 \end{tikzpicture}
\]
\caption{A parabolic double coset, with its minimal and maximal representatives. Again, lines of reflection that represent allowable simple transpositions are drawn in blue.}\label{fig:minmaxreps}
\end{figure}
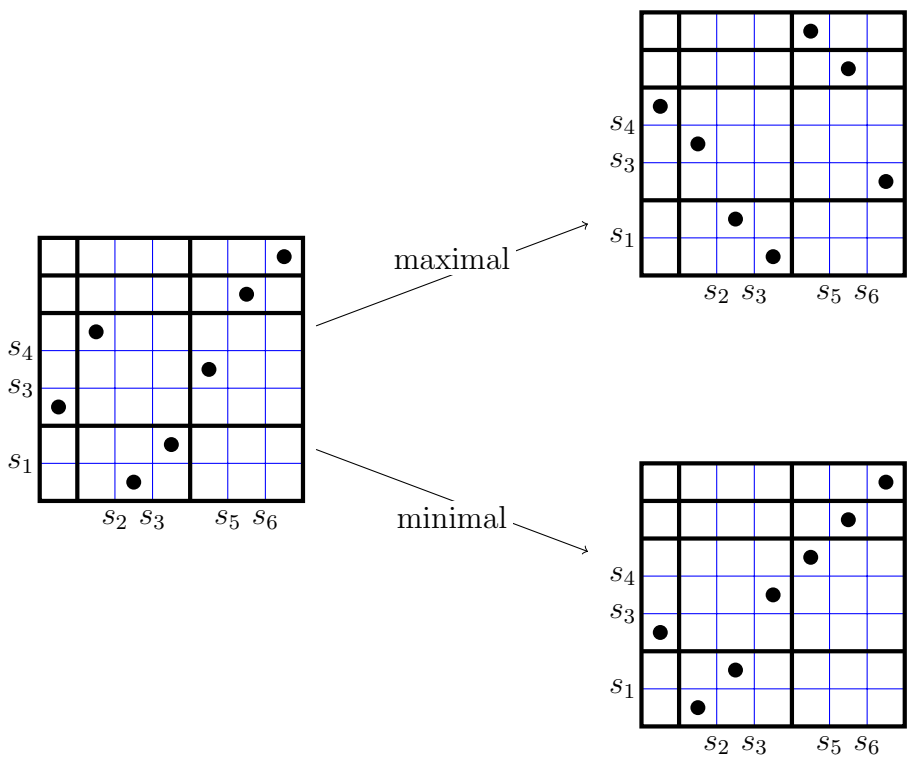

\subsection{Identifying canonical presentations in the symmetric group}\label{sec:lex-min in symmetric group}

Before we count parabolic double cosets having $w$ as a minimal element, we need to know
how to identify a canonical presentation. Parabolic double cosets are intervals in the Bruhat order, so they are uniquely identified by their maximal and minimal elements. In Theorem~\ref{T:minimalsym}, we will characterize the lex-minimal presentation of a parabolic double coset in the symmetric group. This foreshadows a more general result for all Coxeter groups, appearing in Theorem~\ref{T:minimal}, and it is this lex-minimal presentation that we will use in the enumeration in Theorem~\ref{main}. First, though, we will take a few moments to describe, quite simply, a canonical presentation for a parabolic double coset that is, in a sense, ``maximal.'' The generalized version of this will be discussed in Proposition~\ref{P:maxpres}.

\begin{defn}
For a parabolic double coset $C$ in $S_n$, let its minimal and maximal elements be $\Cmin$ and $\Cmax$, respectively, and set
\begin{align*}
M_L(C) & := \asc_L(\Cmin) \cap \des_L(\Cmax) \text{ and}\\
M_R(C) & := \asc_R(\Cmin) \cap \des_R(\Cmax).
\end{align*}
\end{defn}

\begin{prop}\label{prop:maxpressym}
Let $C$ be a parabolic double coset in $S_n$. There is a presentation
$$C = W_{M_L(C)}wW_{M_R(C)},$$
and this is the largest possible presentation for $C$, in the sense that if $C = W_I w' W_J$, then $I \subseteq M_L(C)$ and $J \subseteq M_R(C)$.
\end{prop}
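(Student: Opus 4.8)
The plan is to prove the two assertions separately: derive the upper bound (the maximality claim) from the greedy descent/ascent algorithm, and derive existence of the presentation from the interval characterization of parabolic double cosets. Throughout I write $w = \Cmin$.

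First I would establish the maximality claim. Let $C = W_I w' W_J$ be an arbitrary presentation. Running the greedy algorithm of Corollary~\ref{cor:min.max.elems} downward from $w'$ produces $\Cmin$, which by the stated terminal condition has no left descents in $I$ and no right descents in $J$; equivalently $I \subseteq \asc_L(\Cmin)$ and $J \subseteq \asc_R(\Cmin)$. Running the analogous upward (ascent) algorithm produces $\Cmax$, which has no left ascents in $I$ and no right ascents in $J$; equivalently $I \subseteq \des_L(\Cmax)$ and $J \subseteq \des_R(\Cmax)$. Intersecting these two containments gives $I \subseteq \asc_L(\Cmin) \cap \des_L(\Cmax) = M_L(C)$ and $J \subseteq M_R(C)$, which is exactly the assertion that $W_{M_L(C)}wW_{M_R(C)}$ is the largest presentation.

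Next I would prove that $(M_L(C), M_R(C))$ actually presents $C$, rather than a proper sub-coset. Since $C \subseteq S_n$ is finite, Proposition~\ref{P:standard1}(c) says $C$ is the Bruhat interval $[\Cmin, \Cmax]$. Applying Corollary~\ref{cor:interval.test} to this interval, the sets it produces are $\asc_L(\Cmin)\cap\des_L(\Cmax) = M_L(C)$ and $\asc_R(\Cmin)\cap\des_R(\Cmax)=M_R(C)$; and because $[\Cmin,\Cmax]$ is a parabolic double coset, its criterion holds, so $\Cmin$ is the minimal element of $C^\ast := W_{M_L(C)} \Cmax W_{M_R(C)}$. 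In particular $\Cmin \in C^\ast$, so $\Cmin$ and $\Cmax$ lie in the same $(M_L(C),M_R(C))$-double coset, whence $W_{M_L(C)} \Cmin W_{M_R(C)} = C^\ast$. Finally, since $M_L(C) \subseteq \des_L(\Cmax)$ and $M_R(C) \subseteq \des_R(\Cmax)$, the maximal-element characterization of Corollary~\ref{cor:min.max.elems} gives $\Cmax = \max(C^\ast)$; together with $\Cmin = \min(C^\ast)$ this yields $C^\ast = [\Cmin, \Cmax] = C$, establishing $C = W_{M_L(C)} \Cmin W_{M_R(C)}$.

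The routine part is the maximality claim, which is just a reading of the terminal conditions of the two greedy algorithms. The main obstacle is existence: one must rule out the a priori possibility that the small sets $M_L(C), M_R(C)$ present only a proper sub-coset, equivalently certify that $\Cmax$ is reachable from $\Cmin$ using only the simple reflections recorded in $M_L(C)$ and $M_R(C)$. This is precisely the content packaged by Corollary~\ref{cor:interval.test} (which itself rests on Lemma~\ref{lem:b.order}), so the real work lies in invoking that characterization correctly rather than in any fresh computation. A more hands-on justification is visible in the balls-in-boxes model: sorting the balls within each wall-bounded enclosure to maximize inversions produces $\Cmax$ without ever using a reflection outside the enclosures cut out by $M_L(C)$ and $M_R(C)$, which is the geometric shadow of the same fact.
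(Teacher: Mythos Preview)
Your proof is correct and follows essentially the same route as the paper. For existence you invoke Corollary~\ref{cor:interval.test} (exactly as the paper does, though you spell out the extra step showing $W_{M_L(C)}\Cmax W_{M_R(C)}=[\Cmin,\Cmax]$ that the paper leaves implicit), and for maximality you reach the containments $I\subseteq\asc_L(\Cmin)\cap\des_L(\Cmax)$ and $J\subseteq\asc_R(\Cmin)\cap\des_R(\Cmax)$ via the terminal conditions of the greedy algorithm, which is the same content as the paper's direct observation that $\Cmin$ and $\Cmax$ force those containments.
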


\begin{proof}
Since $S_n$ is finite, the coset $C$ is a finite Bruhat interval, and Corollary \ref{cor:interval.test} gives the desired presentation.

Now suppose that $C = W_IwW_J$ is another presentation of this parabolic double coset. Since $\Cmin$ is the minimal element of $C$, we must have $I \subseteq \asc_L(\Cmin)$ and $J \subseteq \asc_R(\Cmin)$. Similarly, since $\Cmax$ is the maximal element of $C$, we must have $I \subseteq \des_L(\Cmax)$ and $J \subseteq \des_R(\Cmax)$. (In other words, no element of $I$ or $J$ can take us up in Bruhat order.) Hence, $I \subseteq \asc_L(\Cmin) \cap \des_L(\Cmax)$ and $J \subseteq \asc_R(\Cmin) \cap \des_R(\Cmax)$, as desired.
\end{proof}

The maximal presentation
$$C = W_{M_L(C)}wW_{M_R(C)}$$
of a parabolic double coset $C$ is quite natural. In the balls-in-boxes context, it describes the minimum number of walls necessary to produce the desired coset.

\begin{example}
Continuing Example~\ref{ex:3512467}, the parabolic double coset $C$ has $\Cmin = 3124567$ and $\Cmax = 5421763$, and $M_L(C) = \{1,3,4,6\}$ and $M_R(C) = \{2,3,5,6\}$. The balls-in-boxes pictures for the triples
$$(M_L(C),w,M_R(C)), \ (M_L(C),\Cmin,M_R(C)), \text{ and } (M_L(C),\Cmax,M_R(C))$$
appear in Figure~\ref{fig:minmaxreps-maxpres}.
\end{example}

\begin{figure}[htbp]
\[
\begin{tikzpicture}
\draw (-4,0) node (a) {
 \begin{tikzpicture}[scale=.25]
  \draw[blue] (0,0) grid[step=2] (14,14);
  \draw (1,5) node[circle,fill=black,inner sep =2] {};
  \draw (3,9) node[circle,fill=black,inner sep =2] {};
  \draw (5,1) node[circle,fill=black,inner sep =2] {};
  \draw (7,3) node[circle,fill=black,inner sep =2] {};
  \draw (9,7) node[circle,fill=black,inner sep =2] {};
  \draw (11,11) node[circle,fill=black,inner sep =2] {};
  \draw (13,13) node[circle,fill=black,inner sep =2] {};
  \draw[ultra thick] (2,0)--(2,14);
  \draw[ultra thick] (8,0)--(8,14);
  \draw[ultra thick] (0,4)--(14,4);
  \draw[ultra thick] (0,10)--(14,10);
  \draw[ultra thick] (0,0) rectangle (14,14);
  \draw (4,-1) node {$s_2$};
  \draw (6,-1) node {$s_3$};
   \draw (10,-1) node {$s_5$};
  \draw (12,-1) node {$s_6$};
  \draw (-1,2) node {$s_1$};
  \draw (-1,6) node {$s_3$};
  \draw (-1,8) node {$s_4$};
  \draw (-1,12) node {$s_6$};
  \end{tikzpicture}
 };
\draw (4,-3) node (b) {
\begin{tikzpicture}[scale=.25]
  \draw[blue] (0,0) grid[step=2] (14,14);
  \draw (1,5) node[circle,fill=black,inner sep =2] {};
  \draw (3,1) node[circle,fill=black,inner sep =2] {};
  \draw (5,3) node[circle,fill=black,inner sep =2] {};
  \draw (7,7) node[circle,fill=black,inner sep =2] {};
  \draw (9,9) node[circle,fill=black,inner sep =2] {};
  \draw (11,11) node[circle,fill=black,inner sep =2] {};
  \draw (13,13) node[circle,fill=black,inner sep =2] {};
  \draw[ultra thick] (2,0)--(2,14);
  \draw[ultra thick] (8,0)--(8,14);
  \draw[ultra thick] (0,4)--(14,4);
  \draw[ultra thick] (0,10)--(14,10);
  \draw[ultra thick] (0,0) rectangle (14,14);
  \draw (4,-1) node {$s_2$};
  \draw (6,-1) node {$s_3$};
   \draw (10,-1) node {$s_5$};
  \draw (12,-1) node {$s_6$};
  \draw (-1,2) node {$s_1$};
  \draw (-1,6) node {$s_3$};
  \draw (-1,8) node {$s_4$};
  \draw (-1,12) node {$s_6$};
 \end{tikzpicture}
 };
 \draw (4,3) node (c) {
\begin{tikzpicture}[scale=.25]
  \draw[blue] (0,0) grid[step=2] (14,14);
  \draw (1,9) node[circle,fill=black,inner sep =2] {};
  \draw (3,7) node[circle,fill=black,inner sep =2] {};
  \draw (5,3) node[circle,fill=black,inner sep =2] {};
  \draw (7,1) node[circle,fill=black,inner sep =2] {};
  \draw (9,13) node[circle,fill=black,inner sep =2] {};
  \draw (11,11) node[circle,fill=black,inner sep =2] {};
  \draw (13,5) node[circle,fill=black,inner sep =2] {};
  \draw[ultra thick] (2,0)--(2,14);
  \draw[ultra thick] (8,0)--(8,14);
  \draw[ultra thick] (0,4)--(14,4);
  \draw[ultra thick] (0,10)--(14,10);
  \draw[ultra thick] (0,0) rectangle (14,14);
  \draw (4,-1) node {$s_2$};
  \draw (6,-1) node {$s_3$};
   \draw (10,-1) node {$s_5$};
  \draw (12,-1) node {$s_6$};
  \draw (-1,2) node {$s_1$};
  \draw (-1,6) node {$s_3$};
  \draw (-1,8) node {$s_4$};
  \draw (-1,12) node {$s_6$};
 \end{tikzpicture}
 };
 \draw[->] (a)-- node[midway,below,fill=white,inner sep=2] {minimal} (b);
 \draw[->] (a)-- node[midway,above,fill=white,inner sep=2] {maximal} (c);
 \end{tikzpicture}
\]
\caption{The maximal presentation of a parabolic double coset, with its minimal and maximal representatives. (Cf.~Figure~\ref{fig:minmaxreps}.)}\label{fig:minmaxreps-maxpres}
\end{figure}

The enumeration in Theorem~\ref{main} will be done in terms of another canonical presentation of a parabolic double coset; namely, the lex-minimal presentation (see Definition \ref{defn:lex-minimal}). We spend a few moments now exploring some features of that presentation. We postpone a completely rigorous analysis of lex-minimal presentations until Section~\ref{sec:general groups}.

Suppose we begin with the balls-in-boxes picture of a parabolic double coset $C=W_IwW_J$. Assume that $I\subseteq \asc_L(w)$
and $J \subseteq \asc_R(w)$, so that $w$ is minimal. We can then ask whether any other walls could be inserted without changing $C$.

Suppose that $w$ has consecutive small left ascents $\{a,a+1,\ldots,b\} \subseteq I$, while $\{a-1,b+1\} \cap I = \emptyset$. Thus there are horizontal walls in positions $a-1$ and
$b+1$ of the balls-in-boxes picture, and a southwest-to-northeast diagonal of $b-a+1$ balls appears between them, spanning exactly $b-a+1$ columns. Suppose, further, that no vertical walls appear between these balls. In other words, the small right ascents $\{w^{-1}(a),w^{-1}(a+1) = w^{-1}(a)+1,\ldots,w^{-1}(b)\}$ are in the set
$J$. This is illustrated in the figure below.
\[\begin{tikzpicture}[scale=.25]
  \draw[blue] (-1,-1) grid[step=2] (11,11);
  \draw (1,1) node[circle,fill=black,inner sep =2] {};
  \draw (3,3) node[circle,fill=black,inner sep =2] {};
  \draw (5,5) node[circle,fill=black,inner sep =2] {};
  \draw (7,7) node[circle,fill=black,inner sep =2] {};
  \draw (9,9) node[circle,fill=black,inner sep =2] {};
  \draw[ultra thick] (-1,0)--(11,0);
  \draw[ultra thick] (-1,10)--(11,10);
  \draw (-1,2) node[left] {$a$};
  \draw (-1,5.25) node[left] {$\vdots$};
  \draw (-1,8) node[left] {$b$};
  \draw (2,-3) node[left] {$w^{-1}(a)$};
  \draw[->] (1.5,-2.5) -- (2,-1.25);
  \draw (5,-3) node {$\,\cdots$};
  \draw[->] (8.5,-2.5) -- (8,-1.25);
  \draw (8,-3) node[right] {$w^{-1}(b)$};
 \end{tikzpicture}
\]
Now consider the same balls-in-boxes picture, but with $I'= I \setminus \{a,\ldots,b\}$. Locally, this appears as follows.
\[
\begin{tikzpicture}[scale=.25]
  \draw[blue] (-1,-1) grid[step=2] (11,11);
  \draw (1,1) node[circle,fill=black,inner sep =2] {};
  \draw (3,3) node[circle,fill=black,inner sep =2] {};
  \draw (5,5) node[circle,fill=black,inner sep =2] {};
  \draw (7,7) node[circle,fill=black,inner sep =2] {};
  \draw (9,9) node[circle,fill=black,inner sep =2] {};
  \draw[ultra thick] (-1,0)--(11,0);
  \draw[ultra thick] (-1,2)--(11,2);
  \draw[ultra thick] (-1,4)--(11,4);
  \draw[ultra thick] (-1,6)--(11,6);
  \draw[ultra thick] (-1,8)--(11,8);
  \draw[ultra thick] (-1,10)--(11,10);
 \end{tikzpicture}
\]
The new horizontal walls clearly do not impede the movement of balls outside this portion of the picture under either the left or right actions. Thus to determine if the two balls-in-boxes pictures represent the same parabolic double coset, it suffices to check that the balls in the picture can be sorted in the same way. In both pictures, right actions are enough to sort the balls into decreasing order. Hence both parabolic double cosets $W_IwW_J$ and $W_{I'}wW_J$ have the same maximal element, and so $W_IwW_J=W_{I'}wW_J$. Because $|I'| < |I|$, we conclude from this that the presentation $W_IwW_J$ is not lex-minimal.

Swapping the roles of $I$ and $J$ and of ``left'' and ``right'' yields an analogous conclusion about non lex-minimality when small right ascents are squeezed between consecutive vertical walls.

Another scenario we can easily analyze involves both removing and inserting walls. Suppose that $a,a+1,\ldots,b$ is a sequence of consecutive small right ascents of $w$, none of which are in $J$. Suppose, further, that neither $a-1$ nor $b+1$ are in $J$. In other words, there are vertical walls in each of the gaps from $a-1$ to $b+1$. Now suppose that the corresponding left ascents, $w(a),\ldots,w(b)$ are in $I$, but $w(a-1)$ and $w(b+1)$ are not in $I$. Consider removing all the vertical walls in $a,\ldots,b$ and inserting horizontal walls in $w(a),\ldots,w(b)$. This is illustrated below.
\[
\begin{tikzpicture}[scale=.25,baseline=1cm]
  \draw[blue] (-1,-1) grid[step=2] (11,11);
  \draw (1,1) node[circle,fill=black,inner sep =2] {};
  \draw (3,3) node[circle,fill=black,inner sep =2] {};
  \draw (5,5) node[circle,fill=black,inner sep =2] {};
  \draw (7,7) node[circle,fill=black,inner sep =2] {};
  \draw (9,9) node[circle,fill=black,inner sep =2] {};
  \draw[ultra thick] (-1,0)--(11,0);
  \draw[ultra thick] (2,-1)--(2,11);
  \draw[ultra thick] (4,-1)--(4,11);
  \draw[ultra thick] (6,-1)--(6,11);
  \draw[ultra thick] (8,-1)--(8,11);
  \draw[ultra thick] (-1,10)--(11,10);
  \draw[ultra thick] (0,-1)--(0,11);
  \draw[ultra thick] (10,-1)--(10,11);
  \draw (-1,2) node[left] {$w(a)$};
  \draw (-1,5.25) node[left] {$\vdots$};
  \draw (-1,8) node[left] {$w(b)$};
  \draw (2,-3) node[above] {$a$};
  \draw (5,-3) node[above] {$\,\cdots$};
  \draw (8,-3) node[above] {$b$};
 \end{tikzpicture}
 \longrightarrow
\begin{tikzpicture}[scale=.25,baseline=1cm]
  \draw[blue] (-1,-1) grid[step=2] (11,11);
  \draw (1,1) node[circle,fill=black,inner sep =2] {};
  \draw (3,3) node[circle,fill=black,inner sep =2] {};
  \draw (5,5) node[circle,fill=black,inner sep =2] {};
  \draw (7,7) node[circle,fill=black,inner sep =2] {};
  \draw (9,9) node[circle,fill=black,inner sep =2] {};
  \draw[ultra thick] (-1,0)--(11,0);
  \draw[ultra thick] (-1,2)--(11,2);
  \draw[ultra thick] (-1,4)--(11,4);
  \draw[ultra thick] (-1,6)--(11,6);
  \draw[ultra thick] (-1,8)--(11,8);
  \draw[ultra thick] (-1,10)--(11,10);
  \draw[ultra thick] (0,-1)--(0,11);
  \draw[ultra thick] (10,-1)--(10,11);
 \end{tikzpicture}
 \]
This change has no impact on the balls outside of this local area, and in both cases, we can sort the balls in decreasing order. Hence the two parabolic double cosets are the same. Let $I' = I\setminus \{w(a),\ldots, w(b)\}$ and let $J'=J \cup \{a,\ldots,b\}$. The picture on the left is $W_IwW_J$, while the picture on the right is $W_{I'}wW_{J'}$. Because $|I'|<|I|$, the presentation $W_IwW_J$ is not lex-minimal.

In fact, the preceding analysis characterizes lex-minimal presentations for $S_n$.

\begin{thm}\label{T:minimalsym}
    Let $w \in S_n$ and let $I$ and $J$ be subsets of the left and right ascent sets of $w$,  respectively.  Then $W_{I}w W_{J}$ is a lex-minimal presentation of a
    parabolic double coset of $S_n$ if and only if
    \begin{itemize}
          \item if $\{a-1,a,\ldots,b,b+1\} \cap I = \{a,\ldots,b\}$ and these are all small left ascents, then $\{w^{-1}(a), \ldots, w^{-1}(b)\} \not\subseteq J$ and $\{w^{-1}(a)-1,w^{-1}(a), \ldots, w^{-1}(b),w^{-1}(b)+1\} \cap J \neq \emptyset$, and
          \item if $\{a-1,a,\ldots,b,b+1\} \cap J = \{a,\ldots,b\}$ and these are all small right ascents, then $\{w(a),\ldots,w(b)\}\not\subseteq I$.
    \end{itemize}
\end{thm}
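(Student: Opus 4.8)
The plan is to prove Theorem~\ref{T:minimalsym} by showing that the two bulleted conditions are exactly the obstructions to lex-minimality, via the balls-in-boxes analysis already developed in the two displayed pictures preceding the statement. Concretely, the theorem is an ``if and only if,'' so I would split it into two directions: the contrapositive of the forward direction (if one of the conditions fails, then $W_IwW_J$ is \emph{not} lex-minimal), and the reverse direction (if neither condition fails, then $W_IwW_J$ \emph{is} lex-minimal). The forward direction is essentially done by the preceding discussion, and the reverse direction is the substantive part.

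First I would handle the forward (contrapositive) direction, which the preceding paragraphs have already sketched. If the first condition fails, there are two sub-cases. If $\{w^{-1}(a),\ldots,w^{-1}(b)\} \subseteq J$, then we are exactly in the first balls-in-boxes scenario: a maximal block of consecutive small left ascents $\{a,\ldots,b\} \subseteq I$ sits between horizontal walls, with no vertical wall interrupting the diagonal of balls; removing those horizontal walls (passing to $I' = I \setminus \{a,\ldots,b\}$) does not change the coset, and since $|I'| < |I|$ the presentation is not lex-minimal. If instead $\{w^{-1}(a)-1,\ldots,w^{-1}(b)+1\} \cap J = \emptyset$, we are in the second scenario (vertical walls bracketing the small right ascents $w^{-1}(a),\ldots,w^{-1}(b)$), where the simultaneous removal of vertical walls and insertion of horizontal walls again produces $I', J'$ with $|I'| < |I|$ and the same coset. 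The failure of the second bulleted condition is handled by the left-right symmetric argument: if $\{a,\ldots,b\} \subseteq J$ is a maximal block of small right ascents and $\{w(a),\ldots,w(b)\} \subseteq I$, then removing the vertical walls $\{a,\ldots,b\}$ from $J$ preserves the coset (both have the same maximal element, sortable by left actions alone), contradicting lex-minimality.

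The main obstacle is the reverse direction: assuming both conditions hold, I must show no presentation $W_{I'}w'W_{J'}$ of the same coset has $(|I'|,|J'|)$ lexicographically smaller. Here I would use Proposition~\ref{P:standard1} and Corollary~\ref{cor:interval.test} to reduce to comparing maximal elements: since $w$ is already minimal, any other presentation with minimal element $w$ is determined by its maximal element, and lex-minimality is about choosing walls that force that maximal element while keeping $I$ small. The key structural idea is to decompose the balls-in-boxes picture into \emph{rafts}---maximal diagonal runs of balls corresponding to consecutive small ascents---and to observe that the two conditions precisely prevent the two local wall-moves identified above. The hard step is proving these are the \emph{only} ways to shrink $|I|$: I would argue that any wall-removal or wall-swap preserving the coset must be decomposable into these local moves along rafts, because walls in large-ascent positions cannot be removed (they are forced by the requirement $I \subseteq \asc_L(\Cmin) \cap \des_L(\Cmax)$ from Proposition~\ref{prop:maxpressym}), and horizontal walls adjacent to a raft can only be removed when the entire diagonal is uninterrupted by vertical walls. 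I expect to defer the fully rigorous justification of ``every coset-preserving reduction is a composition of local raft moves'' to the general machinery of plank moves in Section~\ref{sec:general groups} (Theorem~\ref{thm:equivalence.moves}), and here give the self-contained sorting argument showing that, under both conditions, any proposed smaller $I'$ would force a strictly smaller maximal element, a contradiction.
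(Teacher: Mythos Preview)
Your proposal is correct and takes essentially the same approach as the paper: the forward (necessity) direction is exactly the balls-in-boxes analysis already given, and the reverse (sufficiency) direction is deferred to the general Coxeter-group machinery of Section~\ref{sec:general groups}. The only minor difference is that the paper invokes Theorem~\ref{T:minimal} directly (which rests on Proposition~\ref{P:minimal}), whereas you route through Theorem~\ref{thm:equivalence.moves} and the plank-move description; since Theorem~\ref{thm:equivalence.moves} is itself proved via Proposition~\ref{P:minimal}, these are the same argument packaged slightly differently, and your self-contained ``sorting/maximal element'' sketch is not needed once you invoke that machinery.
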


One direction of the proof of Theorem~\ref{T:minimalsym} was discussed above. The other direction is a specialization of the upcoming result, Theorem~\ref{T:minimal}, for general Coxeter groups.

\subsection{The marine model in the symmetric group}\label{sec:marinemodelforSn}

We are now ready to introduce the marine model. For $S_n$, we
illustrate this model using balls-in-boxes pictures. Later, in
Definition~\ref{defn:marine model pictures}, we will use a different
method of illustration that applies to any Coxeter group.

\begin{defn}\label{defn:marine model for S_n}
The following objects comprise the \emph{marine model} for a permutation $w \in S_n$. For each, we will give both a combinatorial and a pictorial description.
\begin{itemize}
\item A \emph{raft} of $w$ is an interval $[a,b]=\{a,\ldots,b\}$ such
  that $a,a+1,\ldots, b$ are all small right ascents of $w$, while
  $a-1$ and $b+1$ are not. In other words, a raft is a maximal
  increasing run of consecutive values: $w(a), w(a+1)=w(a)+1, \ldots,
  w(b)=w(a)+(b-a)$.  The \emph{size} of such a raft is
  $|[a,b]|=b-a+1$.

In a balls-in-boxes picture, a raft looks like a copy of an identity permutation, and we will connect the balls of a raft as follows.
\[
\begin{tikzpicture}[scale=.5]
 \foreach \x in {1,...,5} {
  \draw (\x,\x) node[circle,fill=black, inner sep=2] {};
 }
 \draw (1,1)--(5,5);
\end{tikzpicture}
\]
We write $\rafts(w)$ for the set of intervals of small (right) ascents that make up the rafts of $w$.
\end{itemize}

\noindent The ends of a raft may connect to other balls in various ways.

\begin{itemize}
\item A \emph{tether} of $w$ is an ascent that is adjacent to two rafts. Because rafts are maximal, tethers are necessarily large ascents. If desired, we can specify a ``left'' tether or a ``right'' tether, corresponding to whether the large ascent in question is a left or a right ascent. We let $\tethers_L(w)$ and $\tethers_R(w)$ denote the sets of positions of the left and right tethers of $w$, respectively.

In pictures, we draw tethers with squiggly lines. The left-hand picture below depicts a left tether, and the right-hand figure depicts a right tether.
\[
\begin{tikzpicture}[baseline=2cm,scale=.35]
 \foreach \x in {1,...,5} {
  \draw (\x,\x) node[circle,fill=black, inner sep=2] {};
 }
 \foreach \x in {1,...,3} {
  \draw (\x+8,\x+5) node[circle,fill=black, inner sep=2] {};
 }
 \draw (1,1)--(5,5);
 \draw (9,6)--(11,8);
 \draw[snake=coil] (5,5)--(9,6);
\end{tikzpicture}
\hspace{.25in}
\mbox{ or }
\hspace{.25in}
 \begin{tikzpicture}[baseline=2cm,scale=.35]
 \foreach \x in {1,...,5} {
  \draw (\x,\x) node[circle,fill=black, inner sep=2] {};
 }
 \foreach \x in {1,...,3} {
  \draw (\x+5,\x+8) node[circle,fill=black, inner sep=2] {};
 }
 \draw (1,1)--(5,5);
 \draw (6,9)--(8,11);
 \draw[snake=coil] (5,5)--(6,9);
\end{tikzpicture}
\]
Note that while two rafts can be connected by at most one tether, it is possible to have a raft with both types of tethers emanating from the same ball, as shown below.
\[
\begin{tikzpicture}[baseline=2.5cm,scale=.35]
 \foreach \x in {2,...,5} {
  \draw (\x,\x) node[circle,fill=black, inner sep=2] {};
 }
 \foreach \x in {1,...,3} {
  \draw (\x+8,\x+5) node[circle,fill=black, inner sep=2] {};
 }
 \foreach \x in {1,...,2} {
  \draw (\x+5,\x+8) node[circle,fill=black, inner sep=2] {};
 }
 \draw (2,2)--(5,5);
 \draw (9,6)--(11,8);
 \draw (6,9)--(7,10);
 \draw[snake=coil] (5,5)--(9,6);
 \draw[snake=coil] (5,5)--(6,9);
\end{tikzpicture}
\]

\item A \emph{rope} of $w$ is a large ascent that is adjacent to exactly one raft.
Again, we can specify that a given rope is a ``left'' rope or a ``right'' rope. We let $\ropes_L(w)$ and $\ropes_R(w)$ denote the sets of positions of the left and right ropes, respectively.

In pictures, we draw ropes with dashed lines. The two leftmost figures below depict left ropes, whereas the two rightmost depict right ropes. Because a rope is adjacent to exactly one raft, the ``$\times$'' symbols in the figures below indicate locations where balls may not appear.
\[
\begin{tikzpicture}[baseline=.75cm,scale=.35]
 \foreach \x in {1,...,4} {
  \draw (\x,\x) node[circle,fill=black, inner sep=2] {};
 }
 \draw (7,5) node[circle,fill=black, inner sep=2] {};
 \draw (1,1)--(4,4);
 \draw[loosely dashed] (4,4)--(7,5);
 \draw (8,6) node {$\times$};
\end{tikzpicture}
\mbox{ or }
\begin{tikzpicture}[baseline=.5cm,scale=.35]
 \foreach \x in {1,...,4} {
  \draw (\x,\x) node[circle,fill=black, inner sep=2] {};
 }
 \draw (-2,0) node[circle,fill=black, inner sep=2] {};
 \draw (1,1)--(4,4);
 \draw[loosely dashed] (1,1)--(-2,0);
 \draw (-3,-1) node {$\times$};
\end{tikzpicture}
\mbox{ \ or }
\begin{tikzpicture}[baseline=1.5cm,scale=.35]
 \foreach \x in {1,...,4} {
  \draw (\x,\x) node[circle,fill=black, inner sep=2] {};
 }
 \draw (5,7) node[circle,fill=black, inner sep=2] {};
 \draw (1,1)--(4,4);
 \draw[loosely dashed] (4,4)--(5,7);
  \draw (6,8) node {$\times$};
\end{tikzpicture}
\mbox{ or }
\begin{tikzpicture}[baseline=0cm,scale=.35]
 \foreach \x in {1,...,4} {
  \draw (\x,\x) node[circle,fill=black, inner sep=2] {};
 }
 \draw (0,-2) node[circle,fill=black, inner sep=2] {};
 \draw (1,1)--(4,4);
 \draw[loosely dashed] (1,1)--(0,-2);
 \draw (-1,-3) node {$\times$};
\end{tikzpicture}
 \]

\item A \emph{float} of $w$ is a large ascent that is not adjacent to any rafts. Once again, we can specify that a given float is a ``left'' float or a ``right'' float. The sets of positions of left and right floats are denoted by $\floats_L(w)$ and $\floats_R(w)$, respectively.

In pictures, a float connects two isolated balls, and we draw floats with a dotted line. As was the case for ropes, we mark locations that cannot have a ball by ``$\times$'' symbols. The following figures depict left and right floats, respectively.
\[
\begin{tikzpicture}[baseline=.5cm,scale=.5]
  \draw[thick,dotted] (7,.5) node[circle,fill=black, inner sep=2] {} -- (10,1.5) node[circle,fill=black, inner sep=2] {};
  \foreach \x in {(6,-.5),(11,2.5)} {\draw \x node {$\times$};}
 \end{tikzpicture}
\hspace{.25in}
\mbox{ or }
\hspace{.25in}
 \begin{tikzpicture}[baseline=.5cm,scale=.5]
  \draw[thick,dotted] (0,0) node[circle,fill=black, inner sep=2] {} -- (1,3) node[circle,fill=black, inner sep=2] {};
  \foreach \x in {(-1,-1),(2,4)} {\draw \x node {$\times$};}
\end{tikzpicture}
\]
\end{itemize}
\end{defn}

Before examining the utility of this model, we describe two permutations whose marine models are, in a sense, extreme.

\begin{example}
\ \begin{itemize}
\item If $w \in S_n$ is the identity permutation, in which all positions are small ascents, then $w$ has one raft, $[1,n-1]$, and no floats, ropes, or tethers.
\item If $w \in S_n$ is the longest permutation, in which no positions are ascents, then $w$ has no rafts, floats, ropes, or tethers.
\end{itemize}
\end{example}

To streamline notation, we will write left large ascents with tick marks $1',2',\ldots$ and set
\begin{align*}
\tethers(w)&:=\{ i' : i \in \tethers_L(w)\} \cup \tethers_R(w),\\
\ropes(w) &:= \{ i' : i \in \ropes_L(w)\} \cup \ropes_R(w), \text{ and}\\
\floats(w) &:= \{ i' : i \in \floats_L(w)\} \cup \floats_R(w).
\end{align*}

\begin{example}\label{ex:7123546.pt3}
Continuing Example~\ref{ex:7123546.pt2}, for the permutation $w = 7123546$, the marine model is overlaid on the balls-in-boxes picture of $w = 7123546$ in Figure~\ref{fig:marine model on grid 7123546}. We have $\rafts(w) = \{[2,3]\}$, $\tethers(w) = \emptyset$, $\ropes(w) = \{3',4\}$, and $\floats(w) = \{5',6\}$.
\begin{figure}[htbp]
$$\begin{tikzpicture}[scale=.25]
  \draw[blue] (0,0) grid[step=2] (14,14);
  \draw (1,13) node[circle,fill=black,inner sep =2] {};
  \draw (3,1) node[circle,fill=black,inner sep =2] {};
  \draw (5,3) node[circle,fill=black,inner sep =2] {};
  \draw (7,5) node[circle,fill=black,inner sep =2] {};
  \draw (9,9) node[circle,fill=black,inner sep =2] {};
  \draw (11,7) node[circle,fill=black,inner sep =2] {};
  \draw (13,11) node[circle,fill=black,inner sep =2] {};
  \draw[ultra thick] (2,0)--(2,14);
  \draw[ultra thick] (10,0)--(10,14);
  \draw[ultra thick] (0,8)--(14,8);
  \draw[ultra thick] (0,12)--(14,12);
  \draw[ultra thick] (0,0) rectangle (14,14);
  \draw (3,1) -- (7,5);
  \draw[loosely dashed] (7,5) -- (9,9);
  \draw[loosely dashed] (7,5) -- (11,7);
  \draw[thick,dotted] (11,7) -- (13,11);
  \draw[thick,dotted] (9,9) -- (13,11);
  \foreach \x in {1,...,6}{
   \draw (2*\x,-2) node[above] {$\x$};
   \draw (-1,2*\x) node {$\x'$};
  }
 \end{tikzpicture}$$
 \caption{The permutation $7123546$ with its lone raft, two ropes, and two floats marked.  There are no tethers. Walls are drawn in left and right descent positions.}\label{fig:marine model on grid 7123546}
\end{figure}
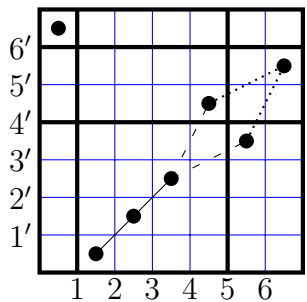
\end{example}

\subsection{Starting to count}\label{sec:startcounting}

We now illustrate how the marine model enables enumeration of parabolic double cosets with a given minimal element.

We start from the idea that rafts are, in a sense, well-behaved, since they look like copies of the identity permutation. Suppose we know the number of parabolic double cosets for an identity permutation (we will study this number in Section~\ref{sec:Snenumeration}). We would next want to identify how any relationships between rafts and isolated balls will affect the total number of parabolic double cosets whose minimal element is the permutation $w$.

Consider the balls-in-boxes picture for $w$, with $I=\asc_L(w)$ and $J=\asc_R(w)$ as large as possible. The parabolic double coset $W_IwW_J$ will contain all other cosets for which $w$ is the minimal element. We want to insert walls in this picture, yielding all the lex-minimal representations of these cosets.

We start to understand how to do this with a simple observation.

\begin{lem}\label{lem:descent}
If two balls are connected by a tether, rope, or float, then the balls occupy different boxes; that is, there is a wall between them. If the connector is of ``left" type, then the balls are separated by a vertical wall; if the connector is of ``right" type, then the balls are separated by a horizontal wall.
\end{lem}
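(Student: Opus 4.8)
The plan is to exploit the one feature shared by all three connectors: a tether, rope, or float is always a \emph{large} ascent. This is immediate from the definitions, since rafts are maximal runs of small ascents, so any ascent adjacent to a raft (or to none) that is not itself absorbed into a raft must be large. I will argue the right-hand case in detail; the left-hand case then follows by applying the identical argument to $w^{-1}$, using $\asc_L(w) = \asc_R(w^{-1})$ together with the fact that passing to $w^{-1}$ transposes the balls-in-boxes picture and thereby exchanges horizontal and vertical walls.

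So suppose the connector is a large right ascent at position $j$, joining the ball in column $j$, row $w(j)$, to the ball in column $j+1$, row $w(j+1)$. Because this is a large ascent rather than a small one, $w(j+1) \geq w(j) + 2$, so the two balls lie in rows differing by at least two. Under the default walls $I = \asc_L(w)$, a horizontal wall sits between rows $i$ and $i+1$ precisely when $i \in \Des_L(w)$, that is, when the value $i$ occurs to the right of the value $i+1$ in $w$. Hence it suffices to produce some index $i$ with $w(j) \leq i < w(j+1)$ satisfying $w^{-1}(i) > w^{-1}(i+1)$.

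The key step is a short monotonicity argument. Consider the consecutive values $w(j), w(j)+1, \ldots, w(j+1)$ and record their columns $c_0 = j, c_1, \ldots, c_m = j+1$, where $m = w(j+1) - w(j) \geq 2$. If no index in the required range were a left descent, then $c_0 < c_1 < \cdots < c_m$ would be strictly increasing; but a strictly increasing integer chain running from column $j$ to column $j+1$ through at least one intermediate term is impossible, since no integer lies strictly between $j$ and $j+1$. Therefore some consecutive pair satisfies $c_k > c_{k+1}$, yielding a left descent at $i = w(j) + k$. This places a horizontal wall between rows $i$ and $i+1$, and since $w(j) \leq i \leq w(j+1) - 1$, that wall lies strictly between the rows of the two connected balls, as claimed.

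The only point requiring genuine care — and the closest thing to an obstacle — is confirming that the wall extracted from the pigeonhole argument lands strictly between the two occupied rows rather than at their level; this is exactly the content of the inequalities $w(j) \leq i \leq w(j+1)-1$. Everything else is a routine translation through the dictionary between walls and descents and the column data of $w$, and the left-type statement is obtained verbatim after transposing.
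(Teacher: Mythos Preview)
Your proof is correct. The paper states this lemma as a ``simple observation'' and gives no proof at all, so there is nothing to compare against; your argument supplies the details the paper omits, and the pigeonhole step on the column sequence $c_0,\ldots,c_m$ is exactly the right way to extract the needed left descent strictly between rows $w(j)$ and $w(j+1)$.
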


Floats exhibit particularly interesting behavior, which we highlight here.

\begin{lem}\label{lem:floats}
If two nodes are connected by a float, then inserting a wall in this position will result in a different parabolic double coset, independent of all other choices for the walls.
\end{lem}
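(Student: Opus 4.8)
The plan is to fix a right float $p$ (the left case being symmetric after replacing $w$ by $w^{-1}$) and to show that for \emph{every} pair $I \subseteq \asc_L(w)$, $J \subseteq \asc_R(w)$ with $p \in J$, inserting a vertical wall at $p$ --- that is, passing from $J$ to $J' := J \setminus \{p\}$ --- strictly changes the coset. Since $W_{J'} \subseteq W_J$ we automatically have $W_I w W_{J'} \subseteq W_I w W_J$, so it suffices to produce one element of $W_I w W_J$ that does not lie in $W_I w W_{J'}$. The natural witness is $w s_p$: it lies in $W_I w W_J$ because $s_p \in W_J$, and I will argue that it cannot lie in $W_I w W_{J'}$.

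To do this I would invoke the Double Parabolic Decomposition (Corollary~\ref{C:parabolic}). Because $J' \subseteq \asc_R(w)$ and $I \subseteq \asc_L(w)$, Proposition~\ref{P:standard1} guarantees that $w$ is still the minimal element of $W_I w W_{J'}$, so every element of that coset has a unique reduced factorization $uwv$ with $u \in W_I$, $v \in W_{J'}$, and $\ell(uwv) = \ell(u) + \ell(w) + \ell(v)$. If $w s_p$ were in the coset, then since $p$ is an ascent we have $\ell(w s_p) = \ell(w) + 1$, forcing $\ell(u) + \ell(v) = 1$. This leaves exactly two cases: either $u = e$ and $v = s_j$ with $j \in J'$, giving $w s_p = w s_j$ and hence $p = j \in J'$, a contradiction; or $v = e$ and $u = s_i$ with $i \in I$, giving $w s_p = s_i w$, i.e. $w s_p w^{-1} = s_i$.

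The crux is to rule out this last possibility, and this is where the hypothesis that $p$ is a \emph{large} ascent enters. The element $w s_p w^{-1}$ is the transposition exchanging the values $w(p)$ and $w(p+1)$; it equals a simple reflection precisely when $w(p+1) = w(p)+1$, i.e. exactly when $p$ is a \emph{small} right ascent (compare the defining property $w s_j w^{-1} = s_{w(j)}$ of small ascents). Since a float is by definition a large ascent, we have $w(p+1) > w(p) + 1$, so $w s_p w^{-1}$ is not simple, contradicting $w s_p w^{-1} = s_i$. This eliminates both cases, whence $w s_p \notin W_I w W_{J'}$ and the two cosets differ.

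Finally, I would observe that the argument never used anything about $I$ or about the remaining elements of $J$ beyond $p \in J$; hence the conclusion holds independent of all other wall choices, as claimed. The left-float case follows by the symmetry $\asc_L(w) = \asc_R(w^{-1})$ together with the analogous fact that $w^{-1} s_p w$ is simple if and only if $p$ is a small left ascent. I expect the only delicate point to be the bookkeeping in the two length cases --- in particular confirming that $w$ remains minimal for $W_I w W_{J'}$ so that Corollary~\ref{C:parabolic} genuinely applies --- rather than any substantively hard step.
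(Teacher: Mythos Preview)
Your proof is correct, and it takes a genuinely different route from the paper's. The paper argues via the \emph{maximal} element of the coset: if $i \in J$ then $s_i$ must be a right descent of the maximal element $v$ of $W_I w W_J$, so $v(i) > v(i+1)$; whereas if $i \notin J$, the balls-in-boxes picture (together with the horizontal wall guaranteed by Lemma~\ref{lem:descent}) traps the two balls in separate boxes, forcing $v(i) < v(i+1)$. Since the two maximal elements are distinguished by this inequality, the cosets differ. Your argument instead exhibits the explicit witness $w s_p$ and uses the Double Parabolic Decomposition to show it cannot lie in $W_I w W_{J'}$, reducing the question to whether $w s_p w^{-1}$ is simple --- which is exactly the small/large ascent dichotomy.

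Your approach is more algebraic and arguably cleaner: it does not rely on the existence of a maximal element (so it does not implicitly use finiteness of the coset), and it avoids the somewhat informal balls-in-boxes reasoning about which box a ball ends up in after sorting. It also makes transparent that only the ``large ascent'' hypothesis is used, not the full ``float'' condition --- which is exactly right, and is why the same conclusion holds for tethers in Lemma~\ref{lem:tethers}. The paper's approach, on the other hand, stays within the visual framework being developed in that section and connects directly to the maximal-element description of cosets as Bruhat intervals.
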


\begin{proof}
Suppose, without loss of generality, that two balls are connected by a right float. Then Lemma~\ref{lem:descent} says there is a horizontal wall between them. Suppose the balls correspond to $w(i)$ and $w(i+1)$, with $w(i) < w(i+1)$. If there is no vertical wall between them, then the maximal element for this parabolic double coset, call it $v$, has $v(i) > v(i+1)$, obtained by acting on the right by $s_i$ at some point to swap the columns these balls occupy. If, on the other hand, there is a vertical wall in position $i$, then the maximal element must have $v(i) < v(i+1)$, since there are both vertical and horizontal bars between the two balls.
\end{proof}

Lemma~\ref{lem:floats} means that floats are independent actors in our counting. In other words, if $w$ has $m$ floats, then the formula for $c_w$ has the form
\[
 c_w = 2^m \cdot (\mbox{something}).
\]
If $w$ has no tethers, then each raft contributes independently to the formula, and that ``something" will be a product of terms related to each raft. If $w$ does have tethers, then these contributions will vary depending on whether we choose to place a wall in a tether position.

\begin{example}\label{ex:7123546.pt4}
Continuing Example~\ref{ex:7123546.pt3}, the permutation $w = 7123546$ has two floats, so $c_w$ is a multiple of four. If we ignore these floats, then we have only three potential horizontal walls (in positions $1', 2', 3'$) and three potential vertical walls (in positions $2, 3, 4$) whose insertion can possibly give rise to new cosets. This is equivalent to counting the parabolic double cosets for $x=12354$, pictured below.
$$\begin{tikzpicture}[scale=.25]
  \draw (3,1) node[circle,fill=black,inner sep =2] {};
  \draw (5,3) node[circle,fill=black,inner sep =2] {};
  \draw (7,5) node[circle,fill=black,inner sep =2] {};
  \draw (9,9) node[circle,fill=black,inner sep =2] {};
  \draw (11,7) node[circle,fill=black,inner sep =2] {};
  \draw[ultra thick] (2,0)--(2,10);
  \draw[ultra thick] (10,0)--(10,10);
  \draw[ultra thick] (2,8)--(12,8);
  \draw[ultra thick] (2,0) rectangle (12,10);
  \draw (3,1) -- (7,5);
  \draw[loosely dashed] (7,5) -- (9,9);
  \draw[loosely dashed] (7,5) -- (11,7);
 \end{tikzpicture}$$
Through brute force, we find that $c_x = 36$. Thus $w$ is the minimal element of $2^2\cdot 36 = 144$ parabolic double cosets in $S_7$.
\end{example}

We next consider interplay between rafts.

\begin{defn}
When multiple rafts are connected by tethers of the same orientation, the resulting structure is a (horizontal or vertical) \emph{flotilla}.
\end{defn}

\begin{example}\label{ex:biggun}
Let
$$w = 1\ 3\ 4\ 5\ 7\ 8\ 2\ 6\ 14\ 15\ 16\ 9\ 10\ 11\ 12\ 13 \in S_{16}.$$
We see the balls-in-boxes picture of the marine model for $w$ in Figure~\ref{fig:marine model overlaid on big example}. We have $\rafts(w) = \{[2,3],[5,5],[9,10],[12,15]\}$, $\tethers(w)=\{8',4\}$, $\ropes(w)=\{5',1,8\}$, and $\floats(w)=\{1',7\}$.

Label the rafts $A$, $B$, $C$, and $D$ from left to right in Figure~\ref{fig:marine model overlaid on big example}. There is a horizontal flotilla consisting of rafts $B$ and $D$, and a vertical flotilla consisting of rafts $A$ and $B$.
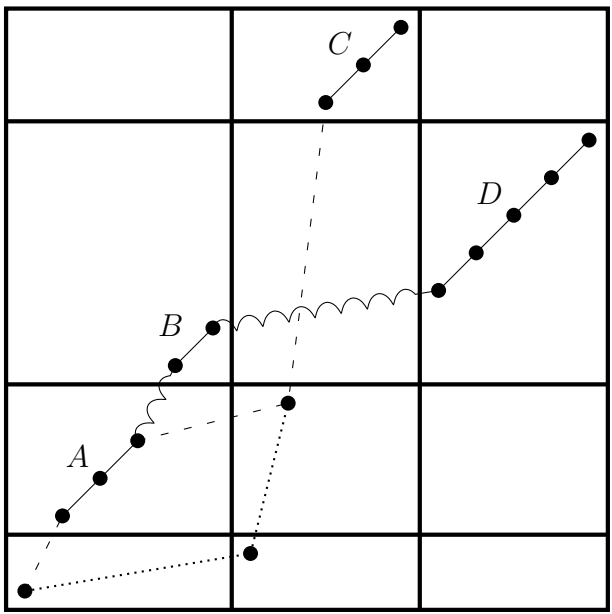
\begin{figure}[htbp]
\begin{tikzpicture}[scale=.25]
\draw[ultra thick] (0,0) rectangle (32,32);
\draw (1,1) node[circle,fill=black,inner sep =2] {};
\draw (3,5) node[circle,fill=black,inner sep =2] {};
\draw (5,7) node[circle,fill=black,inner sep =2] {};
\draw (7,9) node[circle,fill=black,inner sep =2] {};
\draw (9,13) node[circle,fill=black,inner sep =2] {};
\draw (11,15) node[circle,fill=black,inner sep =2] {};
\draw (13,3) node[circle,fill=black,inner sep =2] {};
\draw (15,11) node[circle,fill=black,inner sep =2] {};
\draw (17,27) node[circle,fill=black,inner sep =2] {};
\draw (19,29) node[circle,fill=black,inner sep =2] {};
\draw (21,31) node[circle,fill=black,inner sep =2] {};
\draw (23,17) node[circle,fill=black,inner sep =2] {};
\draw (25,19) node[circle,fill=black,inner sep =2] {};
\draw (27,21) node[circle,fill=black,inner sep =2] {};
\draw (29,23) node[circle,fill=black,inner sep =2] {};
\draw (31,25) node[circle,fill=black,inner sep =2] {};
\draw (3,5)-- node[midway,above left] {$A$} (7,9);
\draw (9,13)-- node[midway,above left] {$B$} (11,15);
\draw (17,27)-- node[midway,above left] {$C$} (21,31);
\draw (23,17)-- node[midway,above left] {$D$} (31,25);
\draw[snake=coil] (7,9)--(9,13);
\draw[snake=coil] (11,15)--(23,17);
\draw[loosely dashed] (1,1)--(3,5);
\draw[loosely dashed] (7,9)--(15,11)--(17,27);
\draw[thick,dotted] (1,1)--(13,3);
\draw[thick,dotted] (13,3)--(15,11);
\draw[ultra thick] (12,0)--(12,32);
\draw[ultra thick] (22,0)--(22,32);
\draw[ultra thick] (0,4)--(32,4);
\draw[ultra thick] (0,12)--(32,12);
\draw[ultra thick] (0,26)--(32,26);
\end{tikzpicture}
\caption{The permutation $1\ 3\ 4\ 5\ 7\ 8\ 2\ 6\ 14\ 15\ 16\ 9\ 10\ 11\ 12\ 13$ with its (four) rafts, (one right and one left) tethers, (two right and one left) ropes, and (one right and one left) floats marked.}\label{fig:marine model overlaid on big example}
\end{figure}
\end{example}

We now examine two adjacent rafts in a flotilla, and the tether connecting them. For example, consider the rafts $A$ and $B$ in Figure~\ref{fig:marine model overlaid on big example}. In the maximal representative of the corresponding parabolic double coset, the balls of raft $B$ will be sorted above and to the left of the balls of raft $A$. On the other hand, if a vertical wall is inserted between these two rafts, as depicted in Figure~\ref{fig:bigex-red}, then the balls of raft $B$ will appear above and to the right of the balls in raft $A$ in the maximal representative of the corresponding parabolic double coset. Thus the two parabolic double cosets are distinct.

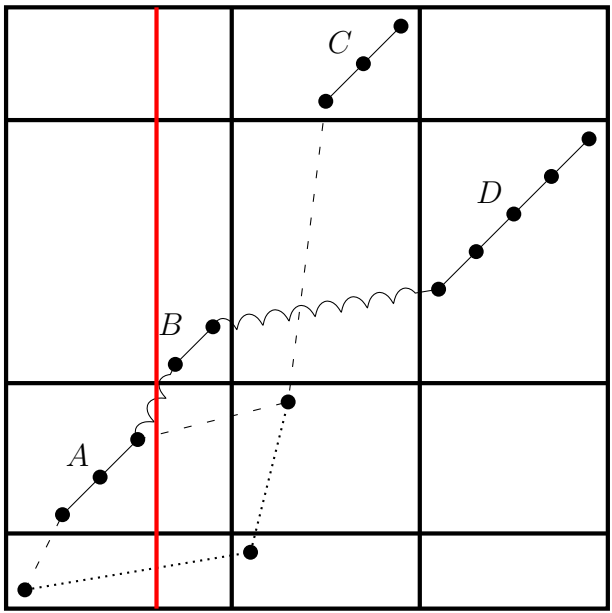
\begin{figure}[htbp]
\begin{tikzpicture}[scale=.25]
\draw[ultra thick] (0,0) rectangle (32,32);
\draw (1,1) node[circle,fill=black,inner sep =2] {};
\draw (3,5) node[circle,fill=black,inner sep =2] {};
\draw (5,7) node[circle,fill=black,inner sep =2] {};
\draw (7,9) node[circle,fill=black,inner sep =2] {};
\draw (9,13) node[circle,fill=black,inner sep =2] {};
\draw (11,15) node[circle,fill=black,inner sep =2] {};
\draw (13,3) node[circle,fill=black,inner sep =2] {};
\draw (15,11) node[circle,fill=black,inner sep =2] {};
\draw (17,27) node[circle,fill=black,inner sep =2] {};
\draw (19,29) node[circle,fill=black,inner sep =2] {};
\draw (21,31) node[circle,fill=black,inner sep =2] {};
\draw (23,17) node[circle,fill=black,inner sep =2] {};
\draw (25,19) node[circle,fill=black,inner sep =2] {};
\draw (27,21) node[circle,fill=black,inner sep =2] {};
\draw (29,23) node[circle,fill=black,inner sep =2] {};
\draw (31,25) node[circle,fill=black,inner sep =2] {};
\draw (3,5)-- node[midway,above left] {$A$} (7,9);
\draw (9,13)-- node[midway,above left] {$B$} (11,15);
\draw (17,27)-- node[midway,above left] {$C$} (21,31);
\draw (23,17)-- node[midway,above left] {$D$} (31,25);
\draw[snake=coil] (7,9)--(9,13);
\draw[snake=coil] (11,15)--(23,17);
\draw[loosely dashed] (1,1)--(3,5);
\draw[loosely dashed] (7,9)--(15,11)--(17,27);
\draw[thick,dotted] (1,1)--(13,3);
\draw[thick,dotted] (13,3)--(15,11);
\draw[ultra thick] (12,0)--(12,32);
\draw[ultra thick] (22,0)--(22,32);
\draw[ultra thick] (0,4)--(32,4);
\draw[ultra thick] (0,12)--(32,12);
\draw[ultra thick] (0,26)--(32,26);
\draw[ultra thick,red] (8,0)--(8,32);
\end{tikzpicture}
\caption{Inserting a vertical wall (marked in red) through the vertical tether between rafts $A$ and $B$ in Figure~\ref{fig:marine model overlaid on big example}.}\label{fig:bigex-red}
\end{figure}

In fact, the same argument can be used to show this phenomenon holds generally.

\begin{lem}\label{lem:tethers}
If two rafts are connected by a left (respectively, right) tether, then inserting a horizontal (respectively, vertical) wall in that position will result in a different parabolic double coset, independent of all other choices for the walls.
\end{lem}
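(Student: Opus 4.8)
The plan is to sidestep the maximal-element bookkeeping of the preceding paragraphs entirely and instead exhibit a single group element that lies in one of the two cosets but not the other. This automatically yields the ``independent of all other choices'' clause, because the witnessing element will depend only on the tether position, not on the remaining walls.

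First I would reduce to the right-tether case. Using the involution $W_I w W_J \mapsto (W_I w W_J)^{-1} = W_J w^{-1} W_I$ together with $\asc_L(w) = \asc_R(w^{-1})$, a left tether of $w$ becomes a right tether of $w^{-1}$ and a horizontal wall becomes a vertical wall, so it suffices to treat the right case. Let $j$ be a right tether, that is, a large right ascent of $w$ adjacent to two rafts. In the balls-in-boxes dictionary the two pictures to compare are ``no vertical wall at $j$'' and ``vertical wall at $j$,'' corresponding to the right-sets $J \ni j$ and $J' := J \setminus \{j\}$. Write $C = W_I w W_J$ and $C' = W_I w W_{J'}$; I must show $C \neq C'$. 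Since $I \subseteq \asc_L(w)$ and $J' \subseteq J \subseteq \asc_R(w)$, Proposition~\ref{P:standard1}(b) gives $w = \min C = \min C'$.

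The witness is $w s_j$. It lies in $C$ since $j \in J$, and $\ell(w s_j) = \ell(w)+1$ since $j$ is an ascent. To see that $w s_j \notin C'$, I would invoke the double parabolic decomposition (Corollary~\ref{C:parabolic}) for $C'$: every element of $C'$ is \emph{uniquely} $uwv$ with $u \in W_I^H$, $v \in W_{J'}$, and $\ell(uwv) = \ell(u) + \ell(w) + \ell(v)$. Were $w s_j = uwv$ such a factorization, length-additivity would force $\ell(u) + \ell(v) = 1$, so exactly one factor is a single generator. If $u = e$ and $v = s_{j'}$ with $j' \in J'$, then $w s_j = w s_{j'}$ forces $j = j' \in J'$, contradicting $j \notin J'$. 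If instead $v = e$ and $u = s_i$ with $i \in I$, then $w s_j = s_i w$, i.e.\ $w s_j w^{-1} = s_i \in S$.

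The crux is this last contradiction, and it is precisely where the hypothesis ``tether'' (rather than an arbitrary ascent in $J$) is used: $w s_j w^{-1}$ is the transposition of the values $w(j)$ and $w(j+1)$, which is a simple reflection exactly when $w(j+1) = w(j)+1$, that is, when $j$ is a \emph{small} ascent. A tether is a large ascent, so $w s_j w^{-1} \notin S$ and no such $i$ exists. Hence $w s_j \notin C'$ and $C \neq C'$. In the final writeup the only point needing care is verifying that the two length-one cases are genuinely exhaustive, which is immediate from the additivity in Corollary~\ref{C:parabolic}; and I would emphasize that the argument used nothing about the other walls beyond $I \subseteq \asc_L(w)$, $J \subseteq \asc_R(w)$, and the fact that $j \in J$ is large, which is exactly the claimed independence.
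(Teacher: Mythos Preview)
Your proof is correct and takes a genuinely different route from the paper. The paper argues via the \emph{maximal} element of the coset: with no wall at the right tether $j$, the balls from the two adjacent rafts can be sorted past one another (so in the maximal element one raft's balls end up above and to the left of the other's), whereas a vertical wall at $j$ forces those balls to remain on their original sides; the two maxima therefore differ, so the intervals differ. Your argument instead produces a \emph{minimal} witness: $ws_j$ lies in $C$ but, by the double parabolic decomposition (Corollary~\ref{C:parabolic}) and the fact that a tether is a large ascent (so $ws_jw^{-1}\notin S$), cannot be written as $uwv$ with $u\in W_I$, $v\in W_{J'}$, $\ell(u)+\ell(v)=1$.

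Each approach has its advantages. The paper's maximal-element picture is visually natural in the balls-in-boxes model and makes the ``independence of other walls'' intuitively clear, but it implicitly relies on the coset being a finite Bruhat interval with a unique top. Your argument is purely algebraic, uses only Corollary~\ref{C:parabolic}, and never needs the maximal element at all; in particular it would go through verbatim in any Coxeter group once ``large ascent'' is defined as in Section~\ref{sec:general groups}. The reduction to the right case via $C\mapsto C^{-1}$ and the length-one exhaustion are both clean; nothing is missing.
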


Despite the similarities between Lemmas~\ref{lem:floats} and~\ref{lem:tethers}, the contribution from the rafts can be different depending on the subset of tethers that are cut by walls. This is because each tether is incident to two rafts, whereas a float is incident to none. This gives the enumeration of $c_w$ the form:
\[
  c_w = 2^{|\floats(w)|}\sum_{ T \subseteq \tethers(w)} (\mbox{something depending on $T$}),
\]
where the ``something" will look like a product of contributions from the rafts.

The only piece of the marine model that we have yet to discuss is a rope. Ropes
occur at the end of a raft. These are better behaved, in the sense that
choosing whether or not to cut a rope only affects one raft's boundary.

\subsection{The Coxeter-theoretic picture}\label{sub:a.sequences}

We step briefly away from enumeration to expand our combinatorial model.

Foreshadowing our treatment of parabolic double cosets in other Coxeter groups, it will be useful to be able to represent the marine model in terms of the simple reflections that generate $S_n$. This more general visual representation will be based on the Coxeter graph $G$ of the group, which, in the case of the symmetric group $S_n$, is a path of vertices labeled $\{1,\ldots, n-1\}$. We now revisit the objects in Definition~\ref{defn:marine model for S_n}.

\begin{defn}\label{defn:marine model pictures}
To a permutation $w \in S_n$, we associate a diagram called
the \emph{$w$-ocean}, formed as follows.
\begin{itemize}
\item Draw two rows of $n-1$ vertices, represented as open dots. We will think of these as being labeled $1,2,\ldots, n-1$ from
  left to right, representing two copies of the set of adjacent transpositions that generates $S_n$.
\item Cross out each dot in the top (respectively, bottom) row that corresponds to a
  right (respectively, left) descent of $w$.  Thus the remaining dots correspond to
  left or right ascents of $w$.
\item Circle each dot in the top (respectively, bottom) row that corresponds to a large
  right (respectively, left) ascent in $w$.
\item For each $k$ which appears in a raft of $w$, draw a line from the $k$th dot in the top row to the $w(k)$th dot in the bottom row. Such lines are \emph{planks}.
\item Draw horizontal lines connecting consecutive small ascents.
\item If the $i$th dot in the top row is a right rope or tether, then draw an edge(s) horizontally from it to its adjacent small ascent(s). Do the same in the bottom row for the left ropes and tethers.
\end{itemize}
\end{defn}
In the $w$-ocean, we can represent a pair $(I,J)$, where $I \subset \asc_L(w)$
and $J \subset \asc_R(w)$, by filling in the corresponding open dots. More
precisely, elements of $I$ are filled in the bottom row, and elements of $J$
are filled in the top row. Such fillings will be the basis for our enumeration
of lex-minimal pairs for $w$.

As we mentioned at the end of Section \ref{sec:startcounting}, the enumeration
of lex-minimal fillings reduces to individual rafts. For each raft, the number
of lex-minimal fillings is controlled by the fillings of adjacent ropes and tethers.
The only nodes of a raft which can be adjacent to a rope or tether are those nodes
in the boundary of the raft. Hence we make the following (somewhat informal) definition:
\begin{defn}\label{defn:apparatus}
    A \emph{boundary apparatus} for a particular raft consists of the arrangement of
    ropes and tethers adjacent to the raft, along with a choice of fillings for these
    ropes and tethers.
\end{defn}

\begin{example}\label{ex:7123546.pt5}
Continuing Example~\ref{ex:7123546.pt4}, the $w$-ocean for $w=7123546$ is shown below. There is a raft of size
2, with one rope on each row on the right-hand side. There are also two floats.
\[
\begin{tikzpicture}[scale = 1]
 \draw  (2,1) -- (1,0)--(2,0)--(3,1) -- (2,1);
 \draw (6,1) circle (1ex);
 \draw (5,0) circle (1ex);
 \draw (3,0) circle (1ex);
 \draw (4,1) circle (1ex);
 \draw (4,1) -- (3,1); \draw (3,0) -- (2,0);
 \draw[->] (2.5,2) node[fill=white] {raft} -- (2.5,1.3);
 \draw[dotted] (1.7,.7) rectangle (3.3,1.3);
 \draw[->] (4,2) node[fill=white] {rope} -- (4,1.2);
 \draw[->] (6,2) node[fill=white] {float} -- (6,1.2);
 \draw[->] (3,-1) node[fill=white] {rope} -- (3,-.2);
 \draw[->] (5,-1) node[fill=white] {float} -- (5,-.2);
 \foreach \x in {1,...,6}
     {\draw[fill=white] (\x,1) circle (.5ex);\draw[fill=white] (\x,0) circle (.5ex);} 
 \draw (4,0) node[cross=.75ex]{};
 \draw (6,0) node[cross=.75ex]{};
 \draw (1,1) node[cross=.75ex]{};
 \draw (5,1) node[cross=.75ex]{};
\end{tikzpicture}
\]

\noindent
To represent the triple $(I,w,J)$ with $I=\{1,2\}$ and $J=\{3,4,6\}$,
we fill the appropriate dots in the $w$-ocean. In this case, the boundary apparatus
of the raft consists of the filled in rope attached to the upper right corner,
and the unfilled rope attached to the lower right corner.
\[
\begin{tikzpicture}[scale = 1]
 \draw  (2,1) -- (1,0)--(2,0)--(3,1) -- (2,1);
 \draw (6,1) circle (1ex);
 \draw (5,0) circle (1ex);
 \draw (3,0) circle (1ex);
 \draw (4,1) circle (1ex);
 \draw (4,1) -- (3,1); \draw (3,0) -- (2,0);
\draw[->] (2.5,2) node[fill=white] {raft} -- (2.5,1.3);
 \draw[dotted] (1.7,.7) rectangle (3.3,1.3);
 \draw[->] (4,2) node[fill=white] {rope} -- (4,1.2);
 \draw[->] (6,2) node[fill=white] {float} -- (6,1.2);
 \draw[->] (3,-1) node[fill=white] {rope} -- (3,-.2);
 \draw[->] (5,-1) node[fill=white] {float} -- (5,-.2);
 \foreach \x in {1,...,6}
          {\draw[fill=white] (\x,1) circle (.5ex);\draw[fill=white] (\x,0) circle (.5ex);} 
 \draw[fill] (1,0) circle (.5ex);
 \draw[fill] (2,0) circle (.5ex);
 \draw[fill] (3,1) circle (.5ex);
 \draw[fill] (4,1) circle (.5ex);
 \draw[fill] (6,1) circle (.5ex);
 \draw (4,0) node[cross=.75ex]{};
 \draw (6,0) node[cross=.75ex]{};
 \draw (1,1) node[cross=.75ex]{};
 \draw (5,1) node[cross=.75ex]{};
\end{tikzpicture}
\]
\end{example}

To fully appreciate the marine model as represented in the $w$-ocean, we consider a larger example.

\begin{example}
Continuing Example~\ref{ex:biggun}, Figure~\ref{fig:marine-model picture example} shows the $w$-ocean for the permutation $w = 1\ 3\ 4\ 5\ 7\ 8\ 2\ 6\ 14\ 15\ 16\ 9\ 10\ 11\ 12\ 13 \in S_{16}$. Compare with Figure~\ref{fig:marine model overlaid on big example}.

\begin{figure}[htbp]
\begin{center}
\begin{tikzpicture}[scale = 1]
 \node at (1,1.92) {rope};
 \draw[->] (1,1.72) -- (1,1.2);
 \node at (2.5,2) {raft};
 \draw[dotted] (1.7,.7) rectangle (3.3,1.3);
 \draw[->] (2.5,1.8) -- (2.5,1.3);
 \node at (4,2) {tether};
 \draw[->] (4,1.8) -- (4,1.2);
 \node at (5,2) {raft};
 \draw[->] (5,1.8) -- (5,1.2);
 \node at (7,2) {float};
 \draw[->] (7,1.8) -- (7,1.2);
 \node at (8,1.92) {rope};
 \draw[->] (8,1.72) -- (8,1.2);
 \node at (9.5,2) {raft};
 \draw[dotted] (8.7,.7) rectangle (10.3,1.3);
 \draw[->] (9.5,1.8) -- (9.5,1.3);
 \node at (13.5,2) {raft};
 \draw[dotted] (11.7,.7) rectangle (15.3,1.3);
 \draw[->] (13.5,1.8) -- (13.5,1.3);
 \draw[->] (1,-1) node[fill=white] {float} -- (1,-.3);
 \draw[->] (5,-1) node[fill=white] {rope} -- (5,-.2);
 \draw[->] (8,-1) node[fill=white] {tether} -- (8,-.2);
 \draw  (2,1) -- (3,0); \draw (3,1) -- (4,0); \draw (2,1) -- (3,1); \draw (3,0) -- (4,0);
 \draw (5,1) -- (7,0);
 \draw (9,1) -- (14,0); \draw (10,1) -- (15,0); \draw (9,1) -- (10,1); \draw (14,0) -- (15,0);
 \draw (12,1) -- (9,0); \draw (13,1) -- (10,0); \draw (14,1) -- (11,0); \draw (15,1) -- (12,0); \draw (12,1) -- (15,1); \draw (9,0) -- (12,0);
 \draw (3,1)--(5,1);
 \draw (7,0)--(9,0);
\draw (8,0) circle (1ex);
\draw (4,1) circle (1ex);
 \draw (7,1) circle (1ex);
 \draw (1,0) circle (1ex);
 \draw (1,1) -- (2,1); \draw (4,0) -- (5,0);
 \draw (1,1) circle (1ex);  \draw (8,1) circle (1ex);  \draw (5,0) circle (1ex);
 \draw (8,1) -- (9,1);
 \foreach \x in {1,...,15}
     {\draw[fill=white] (\x,1) circle (.5ex);\draw[fill=white] (\x,0) circle (.5ex);}
 \draw (2,0) node[cross=.75ex]{};
 \draw (6,0) node[cross=.75ex]{};
 \draw (13,0) node[cross=.75ex]{};
 \draw (6,1) node[cross=.75ex]{};
 \draw (11,1) node[cross=.75ex]{};
\end{tikzpicture}
\end{center}
\caption{The $w$-ocean of rafts, tethers, floats, and ropes for the permutation $w = 1\ 3\ 4\ 5\ 7\ 8\ 2\ 6\ 14\ 15\ 16\ 9\ 10\ 11\ 12\ 13 \in S_{16}$.} \label{fig:marine-model picture example}
\end{figure}
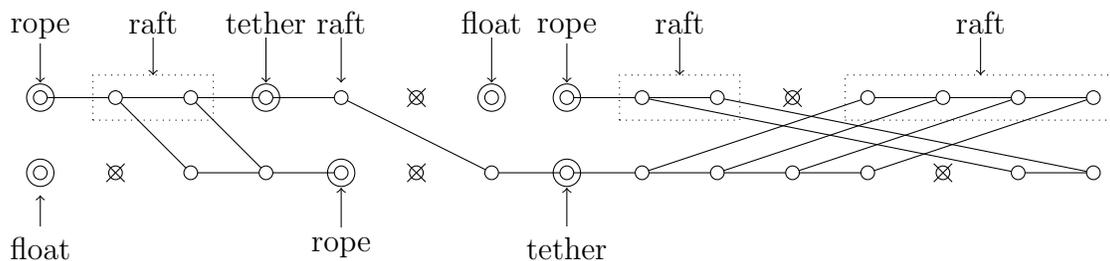
\end{example}

To enumerate the lex-minimal presentations of parabolic double cosets with a
fixed $w \in S_n$ as the minimal element, we want to count the ways of choosing
subsets $I$ and $J$ from the lower and upper row of the $w$-ocean,
respectively, such that the conditions in Theorem~\ref{T:minimalsym} are
satisfied.
\begin{defn}
    Let $\p R$ be a raft in a $w$-ocean. A \emph{lex-minimal filling} of
    $\p R$ is a lex-minimal filling of the $w$-ocean such that the only
    filled vertices belong either to $\p R$, or to the ropes and tethers
    adjacent to $\p R$.
\end{defn}
It should be clear that the number of lex-minimal fillings of a raft does not
depend on the full permutation $w$, only on the length of the raft and the
adjacent ropes and tethers.

For the next lemma, we need one more temporary definition. Let $(I,J)$ be a
filling of the $w$-ocean. The \emph{restriction} of $(I,J)$ to a raft $\p R$ is
a new filling in which all nodes outside of $\p R$ and its boundary apparatus
are left unfilled (and the nodes in $\p R$ and its boundary apparatus are left
unchanged).
\begin{lem}\label{lem:restriction}
    A filling of the $w$-ocean is lex-minimal if and only if the restriction to
    every raft $\p R$ is a lex-minimal filling of $\p R$.
\end{lem}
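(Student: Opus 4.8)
The plan is to reduce everything to the explicit local characterization of lex-minimal presentations in Theorem~\ref{T:minimalsym}, and to observe that each of the two conditions appearing there is triggered by, and depends only on, data confined to a single raft together with its boundary apparatus. Recall that Theorem~\ref{T:minimalsym} states that a filling $(I,J)$ with $I \subseteq \asc_L(w)$ and $J \subseteq \asc_R(w)$ is lex-minimal exactly when, for every maximal run $\{a,\ldots,b\}$ of filled small left ascents in the bottom row (that is, $\{a-1,\ldots,b+1\} \cap I = \{a,\ldots,b\}$ with $a,\ldots,b$ all small left ascents), a prescribed condition on $J$ at the positions $w^{-1}(a)-1,\ldots,w^{-1}(b)+1$ holds, and symmetrically for every maximal run of filled small right ascents in the top row.

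First I would show that any maximal run of filled small ascents lies inside a single raft. Small right ascents are grouped by definition into rafts, which are the maximal intervals of consecutive small right ascents; distinct rafts are separated by positions that are not small right ascents, so no run of small right ascents can cross a raft boundary. The conjugation $j \mapsto w(j)$ carries the small right ascents of a raft $\p R = [c,d]$ bijectively onto the consecutive small left ascents $\{w(c),\ldots,w(d)\}$, so the same statement holds for runs of filled small left ascents in the bottom row. Consequently each run that triggers a condition of Theorem~\ref{T:minimalsym} belongs to exactly one raft.

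Next I would check that all data referenced by the condition for such a run lies within $\p R$ and its boundary apparatus. For a run $\{a,\ldots,b\}$ of small left ascents inside $\p R = [c,d]$, the positions $w^{-1}(a),\ldots,w^{-1}(b)$ range over a subinterval of $\{c,\ldots,d\}$ and so are small right ascents of $\p R$ in the top row; the flanking positions $w^{-1}(a)-1$ and $w^{-1}(b)+1$ then lie in $\{c-1,\ldots,d+1\}$, i.e.\ either inside $\p R$ or at the rope/tether positions $c-1, d+1$ comprising its boundary apparatus. The symmetric statement handles the second condition. Hence the truth value of the condition attached to a run is determined entirely by the restriction of $(I,J)$ to $\p R$.

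With these observations the lemma follows by matching conditions. Restriction to $\p R$ leaves the fillings on $\p R$ and its apparatus unchanged and only empties nodes elsewhere; since the flanking nodes $a-1,b+1$ of any run in $\p R$ were already unfilled, and since restriction never fills a node, restriction preserves each such run together with its maximality and creates no new ones. Thus every condition triggered inside $\p R$ has the same truth value for $(I,J)$ as for its restriction. Because ropes and tethers are large ascents and trigger no conditions themselves, the restriction to $\p R$ is lex-minimal precisely when every condition coming from a run in $\p R$ holds; and as each triggering run belongs to exactly one raft, the full filling satisfies all conditions of Theorem~\ref{T:minimalsym} if and only if, for every raft $\p R$, the restriction to $\p R$ does. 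The main point requiring care is the bookkeeping of the third paragraph: verifying that the flanking positions $w^{-1}(a)-1,\ w^{-1}(b)+1$ are genuinely captured by $\p R$'s boundary apparatus (a shared tether being counted in both adjacent rafts), together with the verification that restriction neither creates nor destroys a maximal run inside any raft.
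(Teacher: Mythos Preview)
Your argument is correct. The paper states this lemma without proof, treating it as immediate from the local nature of the conditions in Theorem~\ref{T:minimalsym}; your proposal makes this locality explicit and correctly identifies the two points requiring care (that the flanking positions of any run lie in $\p R$ or its boundary apparatus, and that restriction neither creates nor destroys maximal runs inside a raft).
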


We can now prove the first version of our main enumeration formula, leaving the
number of lex-minimal fillings as a black box.

\begin{prop}\label{prop:raft.rope.tether.formula} For any $w\in S_n$,
\begin{equation}\label{eq:raft.formula}
  c_w = 2^{|\floats(w)|}\sum_{ T \subseteq \tethers(w)} \sum_{S
    \subseteq \ropes(w)} \prod_{\p R \in \rafts(w)} a(\p R,S,T)
\end{equation}
where $a(\p R,S,T)$ is the number of lex-minimal fillings of raft $\p R$ with
the boundary apparatus determined by $S$ and $T$.
\end{prop}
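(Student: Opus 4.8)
The plan is to count lex-minimal fillings of the $w$-ocean directly and to organize that count by how the floats, ropes, and tethers are filled. First I would invoke the (separately established) uniqueness of the lex-minimal presentation: since $W_I w W_J$ has minimal element $w$ exactly when $I \subseteq \asc_L(w)$ and $J \subseteq \asc_R(w)$ by Proposition~\ref{P:standard1}, the number $c_w$ of parabolic double cosets with minimal element $w$ equals the number of lex-minimal pairs $(I,J)$ for $w$, i.e. the number of lex-minimal fillings of the $w$-ocean. Every ascent node is exactly one of a raft node, a rope, a tether, or a float, while descent nodes are never filled; hence a filling is precisely the data of a choice of filled floats, a subset $S \subseteq \ropes(w)$, a subset $T \subseteq \tethers(w)$, together with a filling of the raft nodes. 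The task is to enumerate exactly those choices that are lex-minimal, and I would do so by peeling off the pieces one at a time.

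Next I would dispose of the floats. A float is a large ascent adjacent to no raft, so a float index never appears among the indices constrained by the two conditions of Theorem~\ref{T:minimalsym}: those indices are small ascents together with their immediate neighbors, and any neighbor of a small ascent that is itself a large ascent is (by adjacency to the raft) a rope or a tether, never a float. Consequently, toggling the filling of a float node preserves lex-minimality, so each of the $|\floats(w)|$ floats may be filled or not freely, contributing the factor $2^{|\floats(w)|}$; Lemma~\ref{lem:floats} confirms that distinct float choices are never conflated.

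With the floats removed, I would fix $S \subseteq \ropes(w)$ and $T \subseteq \tethers(w)$ and count the compatible fillings of the raft nodes. The engine here is Lemma~\ref{lem:restriction}: a filling is lex-minimal if and only if its restriction to each raft $\p R$ is lex-minimal. Once $S$ and $T$ are fixed, the boundary apparatus of every raft is determined, and because distinct rafts occupy disjoint sets of raft nodes, the per-raft lex-minimality conditions decouple; the number of admissible fillings of all raft nodes is then the product $\prod_{\p R \in \rafts(w)} a(\p R, S, T)$, where $a(\p R,S,T)$ counts the lex-minimal fillings of $\p R$ against the prescribed boundary. Summing over $S$ and $T$ and reinstating the float factor yields Equation~\eqref{eq:raft.formula}; checking that the assignment of each lex-minimal filling to its quadruple $(\text{floats},S,T,\text{raft nodes})$ is a bijection onto all such admissible data is the routine bookkeeping that closes the argument.

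The one genuinely delicate point, and the reason the sum over $T$ must sit \emph{outside} the product, is that a tether is adjacent to two rafts and so lies in the boundary apparatus of both. If one tried to enumerate raft by raft from the outset, each shared tether would be counted twice or its two incident constraints handled inconsistently. Fixing $T$ (and $S$) globally before forming the product is exactly what severs this coupling and makes the rafts independent, after which Lemma~\ref{lem:restriction} turns the decoupling into a rigorous factorization. I expect this handling of shared tethers to be the main conceptual obstacle; everything else reduces to the bijection with lex-minimal fillings and the locality supplied by Lemma~\ref{lem:restriction}.
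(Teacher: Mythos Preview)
Your proposal is correct and takes essentially the same approach as the paper's proof: both count lex-minimal fillings of the $w$-ocean via Theorem~\ref{T:minimalsym}, peel off the floats as independent binary choices, fix the selections $S$ and $T$ on ropes and tethers, and then invoke Lemma~\ref{lem:restriction} to factor over rafts. Your write-up is more detailed (in particular your explanation of why float toggling never touches the constraints of Theorem~\ref{T:minimalsym}, and your remark on why the sum over $T$ must sit outside the product), but the skeleton is identical; your appeal to Lemma~\ref{lem:floats} is not strictly needed once you have identified $c_w$ with the number of lex-minimal pairs, though it does no harm.
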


\begin{proof}
As described above, we want to choose appropriate subsets $I$ and $J$ from the lower and upper row of the $w$-ocean, as required by Theorem~\ref{T:minimalsym}. We can think of such pairs $(I,J)$ as arising from an arbitrary choice of any of the floats, ropes, and
tethers for $w$ (that is, any large ascents), followed by an appropriate filling
of the vertices in the rafts.  Once the floats, ropes, and tethers are
specified, the vertices on each raft are filled in such a way that they satisfy
the lex-minimal conditions of Theorem~\ref{T:minimalsym}. By Lemma
\ref{lem:restriction}, the choices made on each raft are independent of the
choice made on other rafts.

\end{proof}

We have reduced the problem of calculating $c_w$ to
finding a way to compute $a(\p R,S,T)$ for a single raft $\p R$ with fixed
apparatus on its boundary.

\begin{example}\label{ex:7123546.pt6}
Recall the $w$-ocean for $w=7123546$ shown in Example~\ref{ex:7123546.pt5}, which
has a rope on each row. If we choose the upper rope, but not the lower rope,
then there are nine ways to fill in vertices on the raft with lex-minimal
presentation.  These nine filling are in bijection with the following pictures.
$$
\begin{tikzpicture}
  \draw (1,1) -- (1,0);
  \draw (0,1) -- (0,0);
  \draw (0,0)--(2,0);
  \draw (0,1)--(2,1);
  \foreach \x in {0,...,2}
     {\draw[fill=white] (\x,1) circle (.5ex);\draw[fill=white] (\x,0) circle (.5ex);}
\draw[fill] (0,1) circle (.5ex);
\draw[fill] (1,1) circle (.5ex);
\draw[fill] (2,1) circle (.5ex);
\end{tikzpicture}\hspace{.2in}\begin{tikzpicture}
  \draw (0,0)--(2,0);
  \draw (0,1)--(2,1);
  \draw (1,1) -- (1,0);
  \draw (0,1) -- (0,0);
  \foreach \x in {0,...,2}
     {\draw[fill=white] (\x,1) circle (.5ex);\draw[fill=white] (\x,0) circle (.5ex);}
\draw[fill] (0,1) circle (.5ex);
\draw[fill] (2,1) circle (.5ex);
\end{tikzpicture}\hspace{.2in}\begin{tikzpicture}
  \draw (0,0)--(2,0);
  \draw (0,1)--(2,1);
  \draw (1,1) -- (1,0);
  \draw (0,1) -- (0,0);
  \foreach \x in {0,...,2}
     {\draw[fill=white] (\x,1) circle (.5ex);\draw[fill=white] (\x,0) circle (.5ex);}
\draw[fill] (0,0) circle (.5ex);
\draw[fill] (1,1) circle (.5ex);
\draw[fill] (2,1) circle (.5ex);
\end{tikzpicture}\hspace{.2in}\begin{tikzpicture}
  \draw (0,0)--(2,0);
  \draw (0,1)--(2,1);
  \draw (1,1) -- (1,0);
  \draw (0,1) -- (0,0);
  \foreach \x in {0,...,2}
     {\draw[fill=white] (\x,1) circle (.5ex);\draw[fill=white] (\x,0) circle (.5ex);}
\draw[fill] (0,0) circle (.5ex);
\draw[fill] (1,0) circle (.5ex);
\draw[fill] (1,1) circle (.5ex);
\draw[fill] (2,1) circle (.5ex);
\end{tikzpicture}\hspace{.2in}\begin{tikzpicture}
  \draw (0,0)--(2,0);
  \draw (0,1)--(2,1);
  \draw (1,1) -- (1,0);
  \draw (0,1) -- (0,0);
  \foreach \x in {0,...,2}
     {\draw[fill=white] (\x,1) circle (.5ex);\draw[fill=white] (\x,0) circle (.5ex);}
\draw[fill] (1,1) circle (.5ex);
\draw[fill] (2,1) circle (.5ex);
\end{tikzpicture}
$$
$$
\begin{tikzpicture}
  \draw (0,0)--(2,0);
  \draw (0,1)--(2,1);
  \draw (1,1) -- (1,0);
  \draw (0,1) -- (0,0);
  \foreach \x in {0,...,2}
     {\draw[fill=white] (\x,1) circle (.5ex);\draw[fill=white] (\x,0) circle (.5ex);}
\draw[fill] (0,0) circle (.5ex);
\draw[fill] (1,0) circle (.5ex);
\draw[fill] (2,1) circle (.5ex);
\end{tikzpicture}\hspace{.2in}\begin{tikzpicture}
  \draw (0,0)--(2,0);
  \draw (0,1)--(2,1);
  \draw (1,1) -- (1,0);
  \draw (0,1) -- (0,0);
  \foreach \x in {0,...,2}
     {\draw[fill=white] (\x,1) circle (.5ex);\draw[fill=white] (\x,0) circle (.5ex);}
\draw[fill] (1,0) circle (.5ex);
\draw[fill] (0,1) circle (.5ex);
\draw[fill] (2,1) circle (.5ex);
\end{tikzpicture}\hspace{.2in}\begin{tikzpicture}
  \draw (0,0)--(2,0);
  \draw (0,1)--(2,1);
  \draw (1,1) -- (1,0);
  \draw (0,1) -- (0,0);
  \foreach \x in {0,...,2}
     {\draw[fill=white] (\x,1) circle (.5ex);\draw[fill=white] (\x,0) circle (.5ex);}
\draw[fill] (1,0) circle (.5ex);
\draw[fill] (2,1) circle (.5ex);
\end{tikzpicture}\hspace{.2in}\begin{tikzpicture}
  \draw (0,0)--(2,0);
  \draw (0,1)--(2,1);
  \draw (1,1) -- (1,0);
  \draw (0,1) -- (0,0);
  \foreach \x in {0,...,2}
     {\draw[fill=white] (\x,1) circle (.5ex);\draw[fill=white] (\x,0) circle (.5ex);}
\draw[fill] (2,1) circle (.5ex);
\end{tikzpicture}
$$

This proves that $a(\p R, \{4\},\emptyset)=9$.  The reader is encouraged to
verify that $a(\p R, \{3'\},\emptyset)=9,$\ $ a(\p R, \{4,3'\},\emptyset) =
12,$ and $a(\p R, \emptyset,\emptyset) =6$ using the conditions in
Theorem~\ref{T:minimalsym}.  Thus,
\begin{align*}
c_w &= 2^2\big( a(\p R, \{4\},\emptyset) + a(\p R, \{3'\},\emptyset) + a(\p R,
\{4,3'\},\emptyset)+ a(\p R, \emptyset,\emptyset) \big)\\
&= 4 \cdot (9+9+12+6) \\
&=144.
\end{align*}
\end{example}

\subsection{Enumeration for rafts}\label{sec:Snenumeration}

Fix a raft $\p R$ and an accompanying boundary apparatus, composed of the selected ropes $S$ and tethers $T$.  To compute $a(\p R,S,T)$, we need to
consider all pairs of subsets $(I,J)$ of the lower and upper vertices of
the raft that satisfy the lex-minimal conditions in
Theorem~\ref{T:minimalsym}.  We will show that these lex-minimal
subsets can be recognized by a finite state automaton, pictured in
Figure~\ref{fig:aut}.  As a result, the transfer-matrix method can be
used to find a recurrence for the number of choices for $I$ and $J$,
depending on the size of the raft $\p R$ and the boundary apparatus.
Recall that the size of a raft is
equal to
the number of planks in the raft.

To make this idea precise, suppose that the tuple $(i_1,i_2,i_3,i_4)$
represents the apparatus attached to the lower-left, upper-left,
lower-right, and upper-right outer corners, respectively, of a raft as follows.
$$
\begin{tikzpicture}
\draw (0,0) node[fill=white, inner sep=2] {$i_1$}; \draw (6,0) node[fill=white, inner sep=7] {$i_3$};
\draw (0,1) node[fill=white, inner sep=2] {$i_2$}; \draw (6,1) node[fill=white, inner sep=2] {$i_4$};
\draw (.8,0)--(5.2,0);
\draw (.8,1)--(5.2,1);
 \foreach \x in {1,...,5}
     {\draw (\x,1) -- (\x,0);\draw[fill=white] (\x,1) circle (.5ex);\draw[fill=white] (\x,0) circle (.5ex);} 
\end{tikzpicture}
$$
The indicators are $1$ (if selected and filled) or $0$ (if not selected
and not filled), depending on whether or not a rope or tether attached
at that point appears in $S$ or $T$.  In terms of the
balls-in-boxes picture, $i_1=0$ indicates the presence of a horizontal
wall immediately below a raft, $i_2=0$ indicates the presence of a
vertical wall immediately to the left of the raft, $i_3=0$ indicates the presence of
a horizontal wall immediately above, and $i_4=0$ indicates the presence
of a vertical wall immediately to the right.

Up to symmetry, there are seven cases:
\begin{itemize}
\item $i_1+i_2+i_3+i_4 = 0$, i.e., four walls,
\[\begin{tikzpicture}[scale=.25]
  \draw[blue] (-1,-1) grid[step=2] (9,9);
  \draw (1,1) node[circle,fill=black,inner sep =2] {};
  \draw (3,3) node[circle,fill=black,inner sep =2] {};
  \draw (5,5) node[circle,fill=black,inner sep =2] {};
  \draw (7,7) node[circle,fill=black,inner sep =2] {};
  \draw (1,1) -- (7,7);
  \draw[ultra thick] (-1,0)--(9,0);
  \draw[ultra thick] (0,-1) -- (0,9);
  \draw[ultra thick] (-1,8)--(9,8);
  \draw[ultra thick] (8,-1) -- (8,9);
 \end{tikzpicture}
\]

\item $i_1+i_2+i_3+i_4 = 1$, i.e., three walls,
\[
\begin{tikzpicture}[scale=.25,baseline=.75cm]
  \draw[blue] (-1,-1) grid[step=2] (9,9);
  \draw (1,1) node[circle,fill=black,inner sep =2] {};
  \draw (3,3) node[circle,fill=black,inner sep =2] {};
  \draw (5,5) node[circle,fill=black,inner sep =2] {};
  \draw (7,7) node[circle,fill=black,inner sep =2] {};
  \draw (1,1) -- (7,7);
  \draw[ultra thick] (0,-1) -- (0,9);
  \draw[ultra thick] (-1,8)--(9,8);
  \draw[ultra thick] (8,-1) -- (8,9);
  \draw(4,-2) node[below] {$i_1=1$};
 \end{tikzpicture}
 \leftrightarrow
\begin{tikzpicture}[scale=.25,baseline=.75cm]
  \draw[blue] (-1,-1) grid[step=2] (9,9);
  \draw (1,1) node[circle,fill=black,inner sep =2] {};
  \draw (3,3) node[circle,fill=black,inner sep =2] {};
  \draw (5,5) node[circle,fill=black,inner sep =2] {};
  \draw (7,7) node[circle,fill=black,inner sep =2] {};
  \draw (1,1) -- (7,7);
  \draw[ultra thick] (-1,0)--(9,0);
  \draw[ultra thick] (-1,8)--(9,8);
  \draw[ultra thick] (8,-1) -- (8,9);
  \draw(4,-2) node[below] {$i_2=1$};
 \end{tikzpicture}
 \leftrightarrow
 \begin{tikzpicture}[scale=.25,baseline=.75cm]
  \draw[blue] (-1,-1) grid[step=2] (9,9);
  \draw (1,1) node[circle,fill=black,inner sep =2] {};
  \draw (3,3) node[circle,fill=black,inner sep =2] {};
  \draw (5,5) node[circle,fill=black,inner sep =2] {};
  \draw (7,7) node[circle,fill=black,inner sep =2] {};
  \draw (1,1) -- (7,7);
  \draw[ultra thick] (-1,0)--(9,0);
  \draw[ultra thick] (0,-1) -- (0,9);
  \draw[ultra thick] (8,-1) -- (8,9);
  \draw(4,-2) node[below] {$i_3=1$};
 \end{tikzpicture}
 \leftrightarrow
\begin{tikzpicture}[scale=.25,baseline=.75cm]
  \draw[blue] (-1,-1) grid[step=2] (9,9);
  \draw (1,1) node[circle,fill=black,inner sep =2] {};
  \draw (3,3) node[circle,fill=black,inner sep =2] {};
  \draw (5,5) node[circle,fill=black,inner sep =2] {};
  \draw (7,7) node[circle,fill=black,inner sep =2] {};
  \draw (1,1) -- (7,7);
  \draw[ultra thick] (-1,0)--(9,0);
  \draw[ultra thick] (0,-1) -- (0,9);
  \draw[ultra thick] (-1,8)--(9,8);
  \draw(4,-2) node[below] {$i_4=1$};
 \end{tikzpicture}
 \]
\item $i_1+i_2+i_3+i_4 = 3$, i.e., one wall,
\[
\begin{tikzpicture}[scale=.25,baseline=.75cm]
  \draw[blue] (-1,-1) grid[step=2] (9,9);
  \draw (1,1) node[circle,fill=black,inner sep =2] {};
  \draw (3,3) node[circle,fill=black,inner sep =2] {};
  \draw (5,5) node[circle,fill=black,inner sep =2] {};
  \draw (7,7) node[circle,fill=black,inner sep =2] {};
  \draw (1,1) -- (7,7);
  \draw[ultra thick] (-1,0)--(9,0);
  \draw(4,-2) node[below] {$i_1=0$};
 \end{tikzpicture}
 \leftrightarrow
\begin{tikzpicture}[scale=.25,baseline=.75cm]
  \draw[blue] (-1,-1) grid[step=2] (9,9);
  \draw (1,1) node[circle,fill=black,inner sep =2] {};
  \draw (3,3) node[circle,fill=black,inner sep =2] {};
  \draw (5,5) node[circle,fill=black,inner sep =2] {};
  \draw (7,7) node[circle,fill=black,inner sep =2] {};
  \draw (1,1) -- (7,7);
  \draw[ultra thick] (0,-1) -- (0,9);
  \draw(4,-2) node[below] {$i_2=0$};
 \end{tikzpicture}
 \leftrightarrow
 \begin{tikzpicture}[scale=.25,baseline=.75cm]
  \draw[blue] (-1,-1) grid[step=2] (9,9);
  \draw (1,1) node[circle,fill=black,inner sep =2] {};
  \draw (3,3) node[circle,fill=black,inner sep =2] {};
  \draw (5,5) node[circle,fill=black,inner sep =2] {};
  \draw (7,7) node[circle,fill=black,inner sep =2] {};
  \draw (1,1) -- (7,7);
  \draw[ultra thick] (-1,8)--(9,8);
  \draw(4,-2) node[below] {$i_3=0$};
 \end{tikzpicture}
 \leftrightarrow
\begin{tikzpicture}[scale=.25,baseline=.75cm]
  \draw[blue] (-1,-1) grid[step=2] (9,9);
  \draw (1,1) node[circle,fill=black,inner sep =2] {};
  \draw (3,3) node[circle,fill=black,inner sep =2] {};
  \draw (5,5) node[circle,fill=black,inner sep =2] {};
  \draw (7,7) node[circle,fill=black,inner sep =2] {};
  \draw (1,1) -- (7,7);
  \draw[ultra thick] (8,-1) -- (8,9);
  \draw(4,-2) node[below] {$i_4=0$};
 \end{tikzpicture}
 \]

\item $i_1+i_2+i_3+i_4 = 4$, i.e., zero walls,
\[\begin{tikzpicture}[scale=.25]
  \draw[blue] (-1,-1) grid[step=2] (9,9);
  \draw (1,1) node[circle,fill=black,inner sep =2] {};
  \draw (3,3) node[circle,fill=black,inner sep =2] {};
  \draw (5,5) node[circle,fill=black,inner sep =2] {};
  \draw (7,7) node[circle,fill=black,inner sep =2] {};
  \draw (1,1) -- (7,7);
 \end{tikzpicture}
\]

\item and $i_1+i_2+i_3+i_4 = 2$, i.e., two walls, in three distinct ways.
\[
\begin{tikzpicture}[scale=.25,baseline=.75cm]
  \draw[blue] (-1,-1) grid[step=2] (9,9);
  \draw (1,1) node[circle,fill=black,inner sep =2] {};
  \draw (3,3) node[circle,fill=black,inner sep =2] {};
  \draw (5,5) node[circle,fill=black,inner sep =2] {};
  \draw (7,7) node[circle,fill=black,inner sep =2] {};
  \draw (1,1) -- (7,7);
  \draw[ultra thick] (-1,0)--(9,0);
  \draw[ultra thick] (0,-1) -- (0,9);
  \draw(4,-2) node[below] {$i_1 = i_2 = 0$};
 \end{tikzpicture}
 \leftrightarrow
\begin{tikzpicture}[scale=.25,baseline=.75cm]
  \draw[blue] (-1,-1) grid[step=2] (9,9);
  \draw (1,1) node[circle,fill=black,inner sep =2] {};
  \draw (3,3) node[circle,fill=black,inner sep =2] {};
  \draw (5,5) node[circle,fill=black,inner sep =2] {};
  \draw (7,7) node[circle,fill=black,inner sep =2] {};
  \draw (1,1) -- (7,7);
  \draw[ultra thick] (-1,8)--(9,8);
  \draw[ultra thick] (8,-1) -- (8,9);
  \draw(4,-2) node[below] {$i_3 = i_4 =0$};
 \end{tikzpicture}
\]

\[
\begin{tikzpicture}[scale=.25,baseline=.75cm]
  \draw[blue] (-1,-1) grid[step=2] (9,9);
  \draw (1,1) node[circle,fill=black,inner sep =2] {};
  \draw (3,3) node[circle,fill=black,inner sep =2] {};
  \draw (5,5) node[circle,fill=black,inner sep =2] {};
  \draw (7,7) node[circle,fill=black,inner sep =2] {};
  \draw (1,1) -- (7,7);
  \draw[ultra thick] (-1,0)--(9,0);
  \draw[ultra thick] (-1,8)--(9,8);
  \draw(4,-2) node[below] {$i_1 = i_3 = 0$};
 \end{tikzpicture}
 \leftrightarrow
\begin{tikzpicture}[scale=.25,baseline=.75cm]
  \draw[blue] (-1,-1) grid[step=2] (9,9);
  \draw (1,1) node[circle,fill=black,inner sep =2] {};
  \draw (3,3) node[circle,fill=black,inner sep =2] {};
  \draw (5,5) node[circle,fill=black,inner sep =2] {};
  \draw (7,7) node[circle,fill=black,inner sep =2] {};
  \draw (1,1) -- (7,7);
  \draw[ultra thick] (0,-1) -- (0,9);
  \draw[ultra thick] (8,-1) -- (8,9);
  \draw(4,-2) node[below] {$i_2 = i_4 =0$};
 \end{tikzpicture}
\]

\[
\begin{tikzpicture}[scale=.25,baseline=.75cm]
  \draw[blue] (-1,-1) grid[step=2] (9,9);
  \draw (1,1) node[circle,fill=black,inner sep =2] {};
  \draw (3,3) node[circle,fill=black,inner sep =2] {};
  \draw (5,5) node[circle,fill=black,inner sep =2] {};
  \draw (7,7) node[circle,fill=black,inner sep =2] {};
  \draw (1,1) -- (7,7);
  \draw[ultra thick] (-1,0)--(9,0);
  \draw[ultra thick] (8,-1) -- (8,9);
  \draw(4,-2) node[below] {$i_1 = i_4 = 0$};
 \end{tikzpicture}
 \leftrightarrow
\begin{tikzpicture}[scale=.25,baseline=.75cm]
  \draw[blue] (-1,-1) grid[step=2] (9,9);
  \draw (1,1) node[circle,fill=black,inner sep =2] {};
  \draw (3,3) node[circle,fill=black,inner sep =2] {};
  \draw (5,5) node[circle,fill=black,inner sep =2] {};
  \draw (7,7) node[circle,fill=black,inner sep =2] {};
  \draw (1,1) -- (7,7);
  \draw[ultra thick] (0,-1) -- (0,9);
  \draw[ultra thick] (-1,8)--(9,8);
  \draw(4,-2) node[below] {$i_2 = i_3 =0$};
 \end{tikzpicture}
\]

\end{itemize}

These seven symmetry types can be encoded by the function
\begin{equation*}
    k(i_1,i_2,i_3,i_4)
        = \begin{cases}
         i_1 + i_2 + i_3 + i_4 & \mbox{if } i_1 + i_2 + i_3 + i_4 \neq 2, \\
                              2 & \mbox{if } i_1 + i_2 + i_3 + i_4 = 2 \mbox{ and } i_1 = i_2,\\
                                2' & \mbox{if }  i_1 + i_2 + i_3 + i_4 = 2 \mbox{ and } i_1 = i_3, \text{ and}\\
                                2'' & \mbox{if } i_1 + i_2 + i_3 + i_4 = 2 \mbox{ and } i_1 = i_4.
                               \end{cases}
\end{equation*}

In the following statement, the seven possibilities for
$k:=k(i_1,i_2,i_3,i_4)$ each determine an integer sequence denoted
$(a_m^k)_{m \in \N}$, which we call the \emph{$a$-sequences}.  These sequences are at the core of our enumerative results --- not just for symmetric groups, but for general Coxeter
groups as well.

\begin{thm}\label{thm:amk}
Consider a raft $\p R$ of size $m>0$ and its boundary apparatus,
composed of the selected ropes $S$ and tethers $T$. Let
$(i_1,i_2,i_3,i_4)$ represent this choice of apparatus attached to
the lower-left, upper-left, lower-right, and upper-right outer corners
of the raft, respectively, with $k:=k(i_1,i_2,i_3,i_4)$.  Then
$$a(\p R,S,T)=a_m^k,$$
where the family of sequences $(a_m^k)_{m
  \in \N}$, for $k \in \{0,1,2,2',2'',3,4\}$, are defined by the
recurrence
\begin{equation*}
    a_m  = 6 a_{m-1} - 13 a_{m-2} + 16 a_{m-3} - 11 a_{m-4} + 4 a_{m-5} \quad \mbox{for} \quad m \geq 5,
\end{equation*}
with initial conditions given in Table~\ref{table:a.table}.
\begin{table}[htbp]
  $$\begin{array}{r|ccccc}
   m=& 0 & 1 & 2 & 3 & 4\\
   \hline
   k=0\phantom{''} & 1 & 2 & 6 & 20 & 66 \\
   1\phantom{''} & 1 & 3 & 9 & 28 & 89 \\
   2\phantom{''} & 1 & 4 & 12 & 36 & 112 \\
   2' \phantom{'}& 1 & 3 & 11 & 37 & 119 \\
   2'' & 1 & 4 & 12 & 37 & 118 \\
   3\phantom{''} & 1 & 4 & 14 & 46 & 148 \\
   4\phantom{''} & 1 & 4 & 16 & 56 & 184
  \end{array}$$
\caption{Initial conditions for the $a$-sequences $a_m^k$.}\label{table:a.table}
\end{table}
\end{thm}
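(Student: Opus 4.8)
The plan is to prove Theorem~\ref{thm:amk} by setting up a finite-state automaton that recognizes exactly the lex-minimal fillings of a raft of size $m$ with a fixed boundary apparatus, and then invoking the transfer-matrix method to extract both the linear recurrence and the initial conditions. Concretely, I would read the raft from left to right, scanning the pairs of vertices (top and bottom) column by column. At each column I must decide whether the top vertex goes into $J$ and whether the bottom vertex goes into $I$, subject to the lex-minimal conditions of Theorem~\ref{T:minimalsym}. The key observation is that those conditions are \emph{local}: the prohibited patterns (a maximal run of consecutive filled small left ascents whose corresponding right-side neighbors are all filled, and the symmetric condition for right ascents) can be detected by tracking only a bounded amount of information as we move across the raft. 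Thus I would define a finite set of states encoding the relevant recent history --- essentially, how long a run of filled top or filled bottom vertices we are currently in, and whether that run can still be ``closed off'' legally given the upcoming boundary.

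\textbf{Building the automaton.} First I would formalize the state space. A natural choice tracks, at each column boundary, whether the current column's top and bottom are filled, together with enough memory to know when a maximal run of small ascents is about to terminate illegally. Because Theorem~\ref{T:minimalsym} forbids precisely the configuration where an entire maximal run $\{a,\dots,b\}$ on one side sits ``between walls'' while the opposite side is fully filled, the forbidden transitions are those that would complete such a run. The boundary apparatus $(i_1,i_2,i_3,i_4)$ enters only through the initial and terminal states: the left corners $(i_1,i_2)$ fix the starting state (whether the raft abuts a wall or an available rope/tether on each side), and the right corners $(i_3,i_4)$ determine which states count as accepting. This is exactly why the count depends only on $m$ and on the symmetry type $k$ defined by $k(i_1,i_2,i_3,i_4)$: the seven values of $k$ correspond to the distinct (up to the evident top-bottom and left-right symmetries) choices of start/accept data on the same underlying automaton.

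\textbf{Extracting the recurrence.} Once the automaton is in hand, the transfer-matrix method (\cite[Section 4.7]{ec1}) gives $a_m^k = \langle \mathbf{u}_k, M^{m}\, \mathbf{v}_k\rangle$ for a fixed transition matrix $M$ and boundary vectors $\mathbf{u}_k,\mathbf{v}_k$ depending on $k$. Since $M$ is the same for every $k$, all seven sequences satisfy the common linear recurrence whose characteristic polynomial is (a factor of) the characteristic polynomial of $M$. I would compute that the minimal polynomial governing these sequences is
\[
  x^5 - 6x^4 + 13 x^3 - 16 x^2 + 11 x - 4,
\]
yielding the five-term recurrence $a_m = 6a_{m-1} - 13 a_{m-2} + 16 a_{m-3} - 11 a_{m-4} + 4 a_{m-5}$ for $m \geq 5$. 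The initial conditions $a_m^k$ for $0 \le m \le 4$ in Table~\ref{table:a.table} I would verify directly by enumerating lex-minimal fillings of small rafts under each boundary type --- the case $a(\p R,\{4\},\emptyset)=9$ worked out in Example~\ref{ex:7123546.pt6} is a representative instance, and the identity-permutation raft with no apparatus gives the $k=0$ row.

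\textbf{The main obstacle} will be establishing that the local forbidden-pattern description genuinely captures lex-minimality \emph{globally}, i.e., that a filling is lex-minimal if and only if it avoids the finitely many bad column-transitions, with no long-range dependence slipping through. Lemma~\ref{lem:restriction} already reduces the question to a single raft, which is essential; the remaining delicate point is that the conditions in Theorem~\ref{T:minimalsym} refer to \emph{maximal} runs $\{a,\dots,b\}$ bounded by empty neighbors, so the automaton must correctly handle the moment a run is sealed off by an unfilled vertex or by the raft's boundary. Verifying that the proposed state transitions neither over-count (admitting a non-minimal filling) nor under-count (rejecting a legitimate one), and checking the boundary bookkeeping for all seven $k$-types and for the three genuinely distinct two-wall cases $2,2',2''$, is where the real care is required. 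The rest is the routine linear algebra of the transfer matrix.
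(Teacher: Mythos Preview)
Your proposal is correct and follows essentially the same approach as the paper: build a finite automaton over the four-letter tile alphabet that reads the raft column by column, identify the forbidden local patterns coming from Theorem~\ref{T:minimalsym}, encode the boundary apparatus as start/accept data, and apply the transfer-matrix method to obtain a common recurrence with $k$-dependent initial conditions. The paper carries this out explicitly with an eight-state automaton and its adjacency matrix, but the structure of the argument is exactly what you outline.
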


Before proving Theorem~\ref{thm:amk}, we pause for commentary and an example.

\begin{remark}\label{rem:factor.rec}
     The characteristic polynomial corresponding to the recurrence is
     $$1 - 6 t + 13t^2 - 16 t^3 + 11 t^4 - 4t^5 = (1 - t + t^2) (1 - 5 t + 7 t^2 - 4 t^3).$$
    In fact, the sequences $(a_m^k)_{m \in \N}$ for $k = 0,1,2,3,4$ (but not for $k = 2', 2''$)
    actually satisfy a recurrence of order $3$: $$a_m = 5 a_{m-1} - 7 a_{m-2}
    + 4 a_{m-3} \quad \mbox{for} \quad m \geq 3.$$
    For the sake of brevity, however, we opt for stating the result in terms of a single (higher order) recurrence.
    \end{remark}

To prove Theorem~\ref{thm:amk}, we will use a finite automaton to
recognize pairs $(I,J)$ that give \emph{lex-minimal fillings} for rafts with
a given boundary apparatus.  Our automaton will read the raft from left
to right, one vertical pair of nodes at a time, so the alphabet will
be the set of tiles

\begin{equation}\label{eqn:the alphabet A}
\mathcal{A} = \Bigg\{
\begin{tikzpicture}[scale=.5,baseline=.1cm]
\draw (-.5,-.5) rectangle (.5,1.5);
\draw (0,0) node [circle,fill=black, inner sep=2.5] {};
\draw (0,1) node [circle,fill=black, inner sep=2.5] {};
\end{tikzpicture}\, ,
\begin{tikzpicture}[scale=.5,baseline=.1cm]
\draw (-.5,-.5) rectangle (.5,1.5);
\draw (0,0) node [circle,draw=black,fill=white, inner sep=2.5] {};
\draw (0,1) node [circle,fill=black, inner sep=2.5] {};
\end{tikzpicture}\, ,
\begin{tikzpicture}[scale=.5,baseline=.1cm]
\draw (-.5,-.5) rectangle (.5,1.5);
\draw (0,0) node [circle,fill=black, inner sep=2.5] {};
\draw (0,1) node [circle,draw=black,fill=white, inner sep=2.5] {};
\end{tikzpicture}\, ,
\begin{tikzpicture}[scale=.5,baseline=.1cm]
\draw (-.5,-.5) rectangle (.5,1.5);
\draw (0,0) node [circle,draw=black,fill=white, inner sep=2.5] {};
\draw (0,1) node [circle,draw=black,fill=white, inner sep=2.5] {};
\end{tikzpicture}
\Bigg\}.
\end{equation}
The first tile represents the boundary apparatus on the left of the raft, and the
last tile represents the apparatus on the right.

\begin{example}
Consider a raft of length $m=7$ with no apparatus on
either side, and with $I=\{1,3,4,5\}$ and $J=\{2,3,4,5,6\}$. This is drawn
as:
\[
\begin{tikzpicture}[scale=.5]
\draw (1,0)--(7,0);
\draw (1,1)--(7,1);
\foreach \x in {1,...,7} {
\draw (\x,0) node [circle,draw=black,fill=white,inner sep=2] {} -- (\x,1) node [circle,draw=black,fill=white,inner sep=2] {};
}
\draw (1,0) node [circle,fill=black, inner sep=2] {};
\draw (3,0) node [circle,fill=black, inner sep=2] {};
\draw (4,0) node [circle,fill=black, inner sep=2] {};
\draw (5,0) node [circle,fill=black, inner sep=2] {};
\draw (2,1) node [circle,fill=black, inner sep=2] {};
\draw (3,1) node [circle,fill=black, inner sep=2] {};
\draw (4,1) node [circle,fill=black, inner sep=2] {};
\draw (5,1) node [circle,fill=black, inner sep=2] {};
\draw (6,1) node [circle,fill=black, inner sep=2] {};
\end{tikzpicture}
\]

or
\[
\begin{tikzpicture}[scale=.5,baseline=.1cm]
\draw (-.5,-.5) rectangle (.5,1.5);
\draw (0,0) node [circle,draw=black,fill=white, inner sep=2.5] {};
\draw (0,1) node [circle,draw=black,fill=white, inner sep=2.5] {};
\end{tikzpicture}
\begin{tikzpicture}[scale=.5,baseline=.1cm]
\draw (-.5,-.5) rectangle (.5,1.5);
\draw (0,0) node [circle,fill=black, inner sep=2.5] {};
\draw (0,1) node [circle,draw=black,fill=white, inner sep=2.5] {};
\end{tikzpicture}
\begin{tikzpicture}[scale=.5,baseline=.1cm]
\draw (-.5,-.5) rectangle (.5,1.5);
\draw (0,0) node [circle,draw=black,fill=white, inner sep=2.5] {};
\draw (0,1) node [circle,fill=black, inner sep=2.5] {};
\end{tikzpicture}
\begin{tikzpicture}[scale=.5,baseline=.1cm]
\draw (-.5,-.5) rectangle (.5,1.5);
\draw (0,0) node [circle,fill=black, inner sep=2.5] {};
\draw (0,1) node [circle,fill=black, inner sep=2.5] {};
\end{tikzpicture}
\begin{tikzpicture}[scale=.5,baseline=.1cm]
\draw (-.5,-.5) rectangle (.5,1.5);
\draw (0,0) node [circle,fill=black, inner sep=2.5] {};
\draw (0,1) node [circle,fill=black, inner sep=2.5] {};
\end{tikzpicture}
\begin{tikzpicture}[scale=.5,baseline=.1cm]
\draw (-.5,-.5) rectangle (.5,1.5);
\draw (0,0) node [circle,fill=black, inner sep=2.5] {};
\draw (0,1) node [circle,fill=black, inner sep=2.5] {};
\end{tikzpicture}
\begin{tikzpicture}[scale=.5,baseline=.1cm]
\draw (-.5,-.5) rectangle (.5,1.5);
\draw (0,0) node [circle,draw=black,fill=white, inner sep=2.5] {};
\draw (0,1) node [circle,fill=black, inner sep=2.5] {};
\end{tikzpicture}
\begin{tikzpicture}[scale=.5,baseline=.1cm]
\draw (-.5,-.5) rectangle (.5,1.5);
\draw (0,0) node [circle,draw=black,fill=white, inner sep=2.5] {};
\draw (0,1) node [circle,draw=black,fill=white, inner sep=2.5] {};
\end{tikzpicture}
\begin{tikzpicture}[scale=.5,baseline=.1cm]
\draw (-.5,-.5) rectangle (.5,1.5);
\draw (0,0) node [circle,draw=black,fill=white, inner sep=2.5] {};
\draw (0,1) node [circle,draw=black,fill=white, inner sep=2.5] {};
\end{tikzpicture}
\]
where prefix and suffix tiles have been appended to indicate the (lack of an) apparatus.
Notice that this presentation is not lex-minimal, because the interval $[3,5] \subset I$ is a proper subset of $J$.
\end{example}

A priori, from Theorem~\ref{T:minimalsym}, we can realize any lex-minimal pair $(I,J)$ for a raft of size $m$ by a walk on the graph in Figure~\ref{fig:all.words}.    However, not every walk will correspond to a lex-minimal pair.

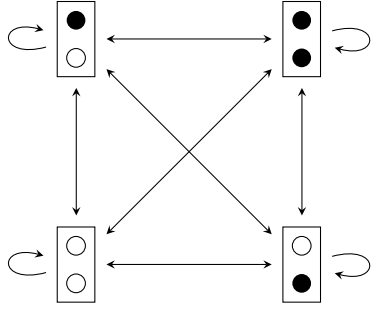
\begin{figure}[htbp]
\[
\begin{tikzpicture}[>=stealth]
 \draw (0,0) node (a) {
 \begin{tikzpicture}[scale=.5,baseline=.1cm]
\draw (-.5,-.5) rectangle (.5,1.5);
\draw (0,0) node [circle,draw=black,fill=white, inner sep=2.5] {};
\draw (0,1) node [circle,draw=black,fill=white, inner sep=2.5] {};
\end{tikzpicture}};
\draw (3,0) node (b) {
 \begin{tikzpicture}[scale=.5,baseline=.1cm]
\draw (-.5,-.5) rectangle (.5,1.5);
\draw (0,0) node [circle,fill=black, inner sep=2.5] {};
\draw (0,1) node [circle,draw=black,fill=white, inner sep=2.5] {};
\end{tikzpicture}};
\draw (0,3) node (c) {
 \begin{tikzpicture}[scale=.5,baseline=.1cm]
\draw (-.5,-.5) rectangle (.5,1.5);
\draw (0,0) node [circle,draw=black,fill=white, inner sep=2.5] {};
\draw (0,1) node [circle,fill=black, inner sep=2.5] {};
\end{tikzpicture}};
\draw (3,3) node (d) {
 \begin{tikzpicture}[scale=.5,baseline=.1cm]
\draw (-.5,-.5) rectangle (.5,1.5);
\draw (0,0) node [circle,fill=black, inner sep=2.5] {};
\draw (0,1) node [circle,fill=black, inner sep=2.5] {};
\end{tikzpicture}};
\draw[<->] (a) -- (b);
\draw[<->] (a) -- (c);
\draw[<->] (a) -- (d);
\draw[<->] (b) -- (c);
\draw[<->] (b)--(d);
\draw[<->] (c)--(d);
\path[->] (a) edge [loop left] (a);
\path[->] (b) edge [loop right] (b);
\path[->] (c) edge [loop left] (c);
\path[->] (d) edge [loop right] (d);
 \end{tikzpicture}
\]
\caption{The finite state automaton for all words on the alphabet $\mathcal{A}$ given in Equation~\eqref{eqn:the alphabet A}.}\label{fig:all.words}
\end{figure}

We are now ready to prove the recurrence relation for $a(\p R,S,T)$.

\begin{proof}[Proof of Theorem~\ref{thm:amk}]

Recall from Proposition~\ref{prop:raft.rope.tether.formula} that
$a(\p R,S,T)$ is the number of ways to fill the vertices in the upper and lower rows
of raft $\p R$ with the boundary apparatus determined by $S$ and $T$ in a
lex-minimal presentation.  Since $a(\p R,S,T)$ only depends on the
boundary apparatus in $S\cup T$ and the size of $\p R$, we will simplify
the notation by setting $a_m(s,t) := a(\p R,S,T)$
where $m$ is the size of $\p R$ and $s,t \in
\mathcal{A}$ are chosen accordingly.  Set $a_0(s,t) =1$ for all $s,t \in \mathcal{A}$.
This defines $4\times 4$ auxiliary sequences $a_m(s,t)$ for $m\geq0$.

Theorem~\ref{T:minimalsym} gives local conditions whose avoidance
characterizes lex-minimal pairs. There are three types of
local configurations to avoid, shown in Figure~\ref{fig:avoid}. In each type, the ellipsis represents arbitrarily long repetition of the adjacent tile. Types
(i) and (ii) correspond to part (a) of Theorem~\ref{T:minimalsym},
where $\{a,\ldots,b\}$ is an interval of small left ascents in $I$,
with neither $a-1$ nor $b+1$ in $I$. Type (iii) corresponds to part
(b), in which $\{a,\ldots,b\}$ is an interval of small right ascents
in $J$, with neither $a-1$ nor $b+1$ in $J$. In the pictures for Types (i) and (iii)
of Figure~\ref{fig:avoid}, the pattern is forbidden whether or not the nodes marked ``$\star$" are filled.

\begin{figure}[htbp]
\[
\begin{array}{|c| c| c|}
\hline
\begin{tikzpicture}[scale=.5]
\draw (0.5,0) node {\begin{tikzpicture}[scale=.5,baseline=.1cm]
\draw (-.5,-.5) rectangle (.5,1.5);
\draw (0,0) node [circle,draw=black,fill=white, inner sep=2.5] {};
\draw (0,1) node {$\star$};
\end{tikzpicture}
};
\draw (1.5,0) node {\begin{tikzpicture}[scale=.5,baseline=.1cm]
\draw (-.5,-.5) rectangle (.5,1.5);
\draw (0,0) node [circle,fill=black, inner sep=2.5] {};
\draw (0,1) node [circle,fill=black, inner sep=2.5] {};
\end{tikzpicture}
};
\draw (4.5,0) node {
\begin{tikzpicture}[scale=.5,baseline=.1cm]
\draw (-.5,-.5) rectangle (.5,1.5);
\draw (0,0) node [circle,fill=black, inner sep=2.5] {};
\draw (0,1) node [circle,fill=black, inner sep=2.5] {};
\end{tikzpicture}
};
\draw (5.5,0) node {\begin{tikzpicture}[scale=.5,baseline=.1cm]
\draw (-.5,-.5) rectangle (.5,1.5);
\draw (0,0) node [circle,draw=black,fill=white, inner sep=2.5] {};
\draw (0,1) node {$\star$};
\end{tikzpicture}
};
\draw (3,0) node {$\cdots$};
\end{tikzpicture}
&
\begin{tikzpicture}[scale=.5]
\draw (0.5,0) node {\begin{tikzpicture}[scale=.5,baseline=.1cm]
\draw (-.5,-.5) rectangle (.5,1.5);
\draw (0,0) node [circle,draw=black,fill=white, inner sep=2.5] {};
\draw (0,1) node [circle,draw=black,fill=white, inner sep=2.5] {};
\end{tikzpicture}
};
\draw (1.5,0) node {\begin{tikzpicture}[scale=.5,baseline=.1cm]
\draw (-.5,-.5) rectangle (.5,1.5);
\draw (0,0) node [circle,fill=black, inner sep=2.5] {};
\draw (0,1) node [circle,draw=black,fill=white, inner sep=2.5] {};
\end{tikzpicture}
};
\draw (4.5,0) node {
\begin{tikzpicture}[scale=.5,baseline=.1cm]
\draw (-.5,-.5) rectangle (.5,1.5);
\draw (0,0) node [circle,fill=black, inner sep=2.5] {};
\draw (0,1) node [circle,draw=black,fill=white, inner sep=2.5] {};
\end{tikzpicture}
};
\draw (5.5,0) node {\begin{tikzpicture}[scale=.5,baseline=.1cm]
\draw (-.5,-.5) rectangle (.5,1.5);
\draw (0,0) node [circle,draw=black,fill=white, inner sep=2.5] {};
\draw (0,1) node [circle,draw=black,fill=white, inner sep=2.5] {};
\end{tikzpicture}
};
\draw (3,0) node {$\cdots$};
\end{tikzpicture}
&
\begin{tikzpicture}[scale=.5]
\draw (0.5,0) node {\begin{tikzpicture}[scale=.5,baseline=.1cm]
\draw (-.5,-.5) rectangle (.5,1.5);
\draw (0,1) node [circle,draw=black,fill=white, inner sep=2.5] {};
\draw (0,0) node {$\star$};
\end{tikzpicture}
};
\draw (1.5,0) node {\begin{tikzpicture}[scale=.5,baseline=.1cm]
\draw (-.5,-.5) rectangle (.5,1.5);
\draw (0,0) node [circle,fill=black, inner sep=2.5] {};
\draw (0,1) node [circle,fill=black, inner sep=2.5] {};
\end{tikzpicture}
};
\draw (4.5,0) node {
\begin{tikzpicture}[scale=.5,baseline=.1cm]
\draw (-.5,-.5) rectangle (.5,1.5);
\draw (0,0) node [circle,fill=black, inner sep=2.5] {};
\draw (0,1) node [circle,fill=black, inner sep=2.5] {};
\end{tikzpicture}
};
\draw (5.5,0) node {\begin{tikzpicture}[scale=.5,baseline=.1cm]
\draw (-.5,-.5) rectangle (.5,1.5);
\draw (0,1) node [circle,draw=black,fill=white, inner sep=2.5] {};
\draw (0,0) node {$\star$};
\end{tikzpicture}
};
\draw (3,0) node {$\cdots$};
\end{tikzpicture} \\
(\mbox{i}) & (\mbox{ii}) & (\mbox{iii}) \\
\hline
\end{array}
\]
\caption{Patterns that are forbidden for lex-minimal presentations.}\label{fig:avoid}
\end{figure}
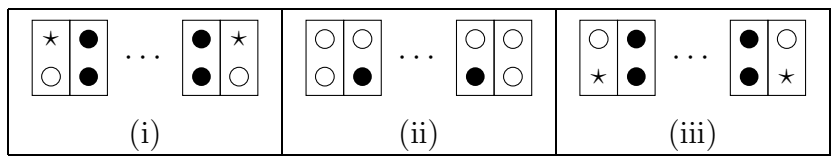

In order to construct an automaton for the allowed configurations, we
need a more refined graph (as opposed to the graph in Figure~\ref{fig:all.words}) labeled by the tiles of $\mathcal{A}$.  The first and last tiles can be any one of the four tiles in $\mathcal{A}$
corresponding to the boundary apparatus.  The tiles in between these two must not introduce any forbidden configurations. This involves some tedious case analysis, but the result is given in
Figure~\ref{fig:aut}. While there are still only four tile types, there are
eight states, reflecting the need for allowed walks to avoid
introducing any of the patterns shown in Figure
\ref{fig:avoid}.

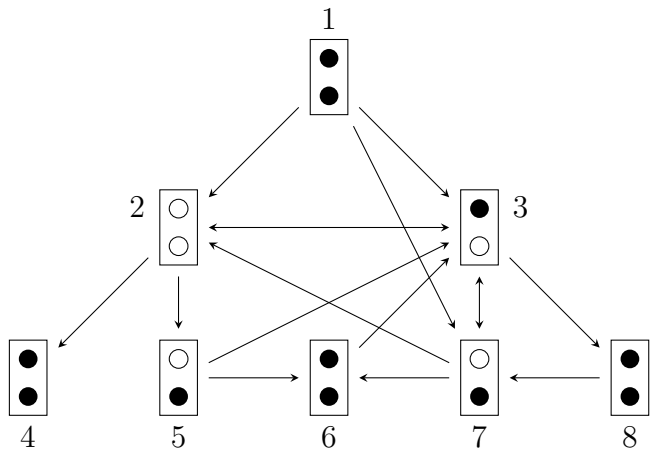
\begin{figure}[htbp]
\begin{tikzpicture}[>=stealth]
\draw (0,0) node[fill=white] (a1) {
\begin{tikzpicture}[scale=.5]
\draw (-.5,-.5) rectangle (.5,1.5);
\draw (0,0) node [circle,fill=black, inner sep=2.5] {};
\draw (0,1) node [circle,fill=black, inner sep=2.5] {};
\end{tikzpicture}
};
\draw (2,0) node[fill=white] (a2) {
\begin{tikzpicture}[scale=.5]
\draw (-.5,-.5) rectangle (.5,1.5);
\draw (0,0) node [circle,fill=black, inner sep=2.5] {};
\draw (0,1) node [circle,draw=black,fill=white, inner sep=2.5] {};
\end{tikzpicture}
};
\draw (4,0) node[fill=white] (a3) {
\begin{tikzpicture}[scale=.5]
\draw (-.5,-.5) rectangle (.5,1.5);
\draw (0,0) node [circle,fill=black, inner sep=2.5] {};
\draw (0,1) node [circle,fill=black, inner sep=2.5] {};
\end{tikzpicture}
};
\draw (6,0) node[fill=white] (a4) {
\begin{tikzpicture}[scale=.5]
\draw (-.5,-.5) rectangle (.5,1.5);
\draw (0,0) node [circle,fill=black, inner sep=2.5] {};
\draw (0,1) node [circle,draw=black,fill=white, inner sep=2.5] {};
\end{tikzpicture}
};
\draw (8,0) node[fill=white] (a5) {
\begin{tikzpicture}[scale=.5]
\draw (-.5,-.5) rectangle (.5,1.5);
\draw (0,0) node [circle,fill=black, inner sep=2.5] {};
\draw (0,1) node [circle,fill=black, inner sep=2.5] {};
\end{tikzpicture}
};
\draw (2,2) node[fill=white] (b1) {
\begin{tikzpicture}[scale=.5]
\draw (-.5,-.5) rectangle (.5,1.5);
\draw (0,0) node [circle,draw=black,fill=white, inner sep=2.5] {};
\draw (0,1) node [circle,draw=black,fill=white, inner sep=2.5] {};
\end{tikzpicture}
};
\draw (6,2) node[fill=white] (b2) {
\begin{tikzpicture}[scale=.5]
\draw (-.5,-.5) rectangle (.5,1.5);
\draw (0,0) node [circle,draw=black,fill=white, inner sep=2.5] {};
\draw (0,1) node [circle,fill=black, inner sep=2.5] {};
\end{tikzpicture}
};
\draw (4,4) node[fill=white] (c) {
\begin{tikzpicture}[scale=.5]
\draw (-.5,-.5) rectangle (.5,1.5);
\draw (0,0) node [circle,fill=black, inner sep=2.5] {};
\draw (0,1) node [circle,fill=black, inner sep=2.5] {};
\end{tikzpicture}
};
\draw[->] (c) -- (b1);
\draw[->] (c) -- (b2);
\draw[->] (c) -- (a4);
\draw[<->] (b1) -- (b2);
\draw[->] (b1) -- (a1);
\draw[->] (b1) -- (a2);
\draw[->] (b2) -- (a5);
\draw[<->] (b2) -- (a4);
\draw[->] (a2) -- (b2);
\draw[->] (a2) -- (a3);
\draw[->] (a3) -- (b2);
\draw[->] (a4) -- (b1);
\draw[->] (a4) -- (a3);
\draw[->] (a5) -- (a4);
\draw (c) node[above,yshift=5mm] {$1$};
\draw (b1) node[above left,xshift=-3mm] {$2$};
\draw (b2) node[above right,xshift=3mm] {$3$};
\draw (a1) node[below,yshift=-5mm] {$4$};
\draw (a2) node[below,yshift=-5mm] {$5$};
\draw (a3) node[below,yshift=-5mm] {$6$};
\draw (a4) node[below,yshift=-5mm] {$7$};
\draw (a5) node[below,yshift=-5mm] {$8$};
\end{tikzpicture}
\caption{The finite automaton for lex-minimal presentations for a raft. Loops are allowed at each node, and have only been omitted for the sake of readability.}\label{fig:aut}
\end{figure}

Having built the automaton, enumerating walks in this graph is now a straightforward application of the transfer matrix method. See \cite[Section 4.7]{ec1}. The matrix of adjacencies for the automaton is
\[
 A = \left[
 \begin{array}{c ccccccc}
 1 & 1 & 1 & 0 & 0 & 0 & 1 & 0 \\
 0 & 1 & 1 & 1 & 1 & 0 & 0 & 0 \\
 0 & 1 & 1 & 0 & 0 & 0 & 1 & 1 \\
 0 & 0 & 0 & 1 & 0 & 0 & 0 & 0 \\
 0 & 0 & 1 & 0 & 1 & 1 & 0 & 0 \\
 0 & 0 & 1 & 0 & 0 & 1 & 0 & 0 \\
 0 & 1 & 1 & 0 & 0 & 1 & 1 & 0 \\
 0 & 0 & 0 & 0 & 0 & 0 & 1 & 1
 \end{array}
 \right].
\]
Denote the generating function for walks that begin at node $i$ and
end at node $j$ in the automaton by $R_{i,j}$. That is,
\[
 R_{i,j}(t) = \sum_{m\geq 0} r(i,j;m) t^m,
\]
where $r(i,j;m)$ denotes the number of walks of length $m$ that begin at node $i$ and end at node $j$. It is a well-known result that
\begin{equation}\label{eq:Rij}
 R_{i,j}(t) = (-1)^{i+j}\frac{ \det(\mathcal{I}-tA:j,i)
 }{\det(\mathcal{I}-tA) },
\end{equation}
where $\mathcal{I}$ denotes the identity matrix and $\det(B:r,s)$ denotes the determinant of the matrix $B$ after deleting row $r$ and column $s$.

Consider walks of length $m+1$ on the graph given in Figure
\ref{fig:aut}, including loops which are not drawn, starting in states
$1$, $2$, $3$, or $7$, and ending at any tile.  We claim that such walks are in bijective
correspondence with lex-minimal fillings of rafts of length $m$ with fixed boundary apparatus.
The starting state, $1$, $2$, $3$, or $7$, is uniquely determined by the starting tile $s \in \mathcal{A}$.  The final state can be any one of the eight states, but it
must correspond with the raft's boundary apparatus, meaning
$t \in \mathcal{A}$.  The bijection sends such a walk $v_0v_1\ldots
v_{m+1}$ with $v_0=s$ and $v_{m+1}=t$ to $(I,J)$, with $k \in I$ if and
only if the node in the bottom row of $v_k$ is filled, and $k \in J$ if and only if
the node in the top row of $v_k$ is filled, for all $1\leq k \leq m$.  The claim
now follows from the fact that the forbidden configurations will never
occur in such a walk, and conversely a sequence of tiles avoiding the
forbidden configurations can be realized in a unique way by such a walk.

Thus, Table~\ref{table:4x4.table} expresses the generating functions for the
$4 \times 4$ sequences $a_m(s,t)$ for $s,t \in \mathcal{A}$, in terms
of the finite automaton, where we abbreviate
$$R_{i,j_1 \ldots j_k} : = R_{i,j_1}(t) + \cdots + R_{i,j_k}(t)$$
for legibility. Each sequence is shifted by
a factor of $t$ because we are relating walks of length $m+1$ to
fillings of rafts of size $m$.  Along the diagonal, we first subtract
the constant term because we are only considering walks of length at
least 1.  Note that the forbidden configurations in
Figure~\ref{fig:avoid} are symmetric under reversal, so $a_m(s,t)
= a_m(t,s)$.  Therefore, we only have to describe ten sequences.

\begin{table}[htbp]
$$\begin{array}{c|cccc}
  & \oo & \xo & \ox &  \xx  \\ \hline
\oo & \frac{R_{2,2}-1}t & \frac{R_{2,3}}t & \frac{R_{2,57}}t & \frac{R_{2,468}}t   \Tstrut\\[0.3cm]
\xo & & \frac{R_{3,3}-1}t & \frac{R_{3,57}}t & \frac{R_{3,468}}t \\[0.3cm]
\ox & & & \frac{R_{7,57} - 1}{t} & \frac{R_{7,468}}{t} \\[0.3cm]
\xx & & & & \frac{R_{1,1468}-1}{t}
\end{array}$$
\caption{Generating functions for the sequences $a_m(s,t)$.}\label{table:4x4.table}
\end{table}

By applying Equation~\eqref{eq:Rij}, we can compute the explicit form for each
rational generating function $R_{i,j}$ and use that to compute each
$a_m(s,t)$ in Table~\ref{table:4x4.table}.  We leave the computation to the
reader, and instead we give the first terms and recurrences which is
an equivalent formulation. All $a_m(s,t)$ sequences satisfy one of the
following recurrences:
\begin{description}
 \item[R1] $a_m = 5a_{m-1} - 7a_{m-2} + 4a_{m-3}$ for $m \geq 3$
 \item[R2] $a_m = 6a_{m-1} - 13a_{m-2} + 16a_{m-3} - 11a_{m-4} + 4a_{m-5}$ for $m \geq 5$
\end{description}
The recurrences and initial conditions are shown in Table~\ref{table:4x4.table solved}.
\begin{table}[htbp]
$$\begin{array}{c|cccc}
  & \oo & \ox & \xo & \xx  \\ \hline
\oo & \ri 1{1,2,6}  & \ri 1{1,3,9} & \ri 1{1,3,9}  & \ri 1{1,4,12}  \Tstrut\\[0.3cm]
\ox & & \ri 2{1,3,11,37,119} & \ri 2{1,4,12,37,118}  & \ri 1{1,4,14}  \\[0.3cm]
\xo & & & \ri 2{1,3,11,37,119}  & \ri 1{1,4,14}  \\[0.3cm]
\xx & & & & \ri 1{1,4,16}
\end{array}$$
\caption{Recurrence relations and initial conditions for the sequences $a_m(s,t)$.}\label{table:4x4.table solved}
\end{table}

One can observe from this table that
\begin{align*}
a_m\!\raisebox{.5ex}{\Big(}\oo\, ,\ox\raisebox{.5ex}{\Big)} &= a_m\!\raisebox{.5ex}{\Big(}\oo\, ,\xo\raisebox{.5ex}{\Big)},\\
a_m\!\raisebox{.5ex}{\Big(}\ox\, ,\xx\raisebox{.5ex}{\Big)} &= a_m\!\raisebox{.5ex}{\Big(}\xo\, ,\xx\raisebox{.5ex}{\Big)}, \text{ and}\\
a_m\!\raisebox{.5ex}{\Big(}\ox\, ,\ox\raisebox{.5ex}{\Big)} &= a_m\!\raisebox{.5ex}{\Big(}\xo\, ,\xo\raisebox{.5ex}{\Big)},
\end{align*}
so, in fact, the original sixteen sequences $a_m(s,t)$ fall into seven distinct families.

Comparing Table~\ref{table:4x4.table solved} to the statement in Theorem~\ref{thm:amk}
and Remark~\ref{rem:factor.rec} finishes the proof.
\end{proof}

\subsection{Finishing the enumeration}\label{sub:b-sequences}

We now have a complete answer for how many lex-minimal presentations a
raft can have, given a fixed choice of selected nodes on its
boundary. Whether these choices are available depends on the immediate
neighborhood of the raft in the $w$-ocean. We can lump together some of
these boundary cases, which differ only by the selection of ropes.

Suppose that $\p R$ is a raft of size $m$ in the $w$-ocean for some
permutation $w$, and $(i_1,i_2,i_3,i_4)$ are the indicators on the
boundary of the raft.  For a fixed set of selected tethers $T \subseteq
\tethers(w)$ on the boundary of $\p R$, consider
\[
 b_m (\p R, T) = \sum_S a(\p R,S,T) = \sum a_m^{k(i_1,i_2,i_3,i_4)},
\]
where the first sum is over all possible selections from the ropes $S$
adjacent to $\p R$ in the $w$-ocean, and the second sum is over all encodings for the
corresponding triples $(\p R,S,T)$. Note there are at most sixteen terms in the sum. We encode the terms in $b_m (\p R, T)$
by defining indicator sets $K(\p R, T)=(I_1,I_2,I_3,I_4)$ where
$$
I_r = \begin{cases}
  \{0,1\} &  i_r \in \ropes(w),\\
  \{1\}  &   i_r \in T, \text{ and}\\
  \{0\}  &   i_r \not \in T \cup \ropes(w).\\
\end{cases}
$$
Thus,
\[
b_m (\p R, T)  = \sum_{\substack{(i_1,i_2,i_3,i_4) \\ \in I_1\times I_2 \times I_3 \times I_4}} a_m^{k(i_1,i_2,i_3,i_4)}.
\]

Since $b_m (\p R, T)$ only depends on $m$, and $K(\p R, T)=(I_1,I_2,I_3,I_4)$, we
define $81=3^4$ sequences known as the \emph{$b$-sequences}:
\begin{equation}\label{eq:b.seq}
b_m^{(I_1,I_2,I_3,I_4)}  = \sum_{\substack{(i_1,i_2,i_3,i_4) \\ \in I_1\times I_2 \times I_3 \times I_4}} a_m^{k(i_1,i_2,i_3,i_4)},
\end{equation}
where each $I_r \in \{ \{0\},\{1\}, \{0,1\}\}$.  Similar to the
$a$-sequences, we can use open or filled dots to denote the possible
apparatus on either end of a raft.  Let the symbols $\nodot$\, , $\yesdot$\, , and
$\bothdot$ represent the three options $\{0\}$, $\{1\}$, and $\{0,1\}$, respectively.
Then the apparatus at each end of a raft with a selection of specified
tethers can be represented by one of the nine tiles in the alphabet

$$
\mathcal{B} =\raisebox{.5ex}{\Big\{}\oo\, ,\ \ox\, ,\ \xo\, ,\ \xx\, ,\ \bo\, ,\ \ob\, ,\ \bx\, ,\ \xb\, ,\ \bb\raisebox{.5ex}{\Big\}}.
$$

\noindent
Thus, each $b_m^{(I_1,I_2,I_3,I_4)}$ can be denoted by $b_m(s,t)$ for
$s,t \in \mathcal{B}$.  Up to symmetry among the $a$-sequences, there
are only 27 different $b$-sequences.  Initial terms in each case appear in the Appendix to this article.

\begin{example}
Suppose $\p R$ is the following raft of length $m=6$.
\[
 \begin{tikzpicture}
  \draw[dotted] (7.5,1)--(8,1);
  \draw[->] (0,1.5) node[above] {rope} -- (0,1.2);
  \draw[->] (7,1.5) node[above] {tether at $t$} -- (7,1.2);
  \draw[->] (7,-1) node[fill=white] {rope} -- (7,-.2);
  \draw (1,0)-- (7,0) node[draw=black,fill=white,circle,inner sep=2] {};
  \draw (0,1) node[draw=black,fill=white,circle,inner sep=2] {} -- (7,1) node[draw=black,fill=white,circle,inner sep=2] {} --(7.5,1);
  \draw (0,0) node[draw=black,fill=white,circle,inner sep=2] {};
  \foreach \x in {1,...,6}{
    \draw (\x,0) node [circle,draw=black,fill=white,inner sep=2] {} -- (\x,1) node [circle,draw=black,fill=white,inner sep=2] {};     \draw (0,0) node[cross=.75ex]{};
  }
 \end{tikzpicture}
\]
If the tether at $i_4$ in this raft is not selected, then, by considering whether or not the ropes are selected, we conclude that
\[
b_6({\p R}, \emptyset) = b_6^{(0,01,01,0)} = b_6\raisebox{.25ex}{\Big(}\bo\, , \ob\raisebox{.25ex}{\Big)} = a_6\raisebox{.25ex}{\Big(}\oo\, , \oo\raisebox{.25ex}{\Big)} + a_6\raisebox{.25ex}{\Big(}\oo\, , \ox\raisebox{.25ex}{\Big)} + a_6\raisebox{.25ex}{\Big(}\xo\, , \oo\raisebox{.25ex}{\Big)}  + a_6\raisebox{.25ex}{\Big(}\xo\, , \ox\raisebox{.25ex}{\Big)} =3732
\]
lex-minimal presentations for this raft, where we have abbreviated $\{0\}$ by ``$0$'' and $\{0,1\}$ by ``$01$'' for the sake of legibility. On the other hand, selecting the tether at $i_4$ yields
\[
 b_6 (\p R, \{t\}) = b_6^{(0,01,01,1)} = b_6\raisebox{.25ex}{\Big(}\bo\, , \xb\raisebox{.25ex}{\Big)} =a_6\raisebox{.25ex}{\Big(}\oo\, , \xo\raisebox{.25ex}{\Big)} + a_6\raisebox{.25ex}{\Big(}\oo\, , \xx\raisebox{.25ex}{\Big)} + a_6\raisebox{.25ex}{\Big(}\xo\, , \xo\raisebox{.25ex}{\Big)}  + a_6\raisebox{.25ex}{\Big(}\xo\, , \xx\raisebox{.25ex}{\Big)} = 4788
\]
lex-minimal presentations for $\p R$, with analogous abbreviations.
\end{example}

\begin{cor}\label{cor:b-rec}
  The sequences $b_m^{(I_1,I_2,I_3,I_4)}$  satisfy the
  linear recurrence
  \begin{equation*}
    b_m  = 6 b_{m-1} - 13 b_{m-2} + 16 b_{m-3} - 11 b_{m-4} + 4 b_{m-5} \quad \mbox{for} \quad m \geq 5,
  \end{equation*}
with initial conditions depending on $(I_1,I_2,I_3,I_4)$, which can be
deduced from the initial conditions for the $a$-sequences given in
Table~\ref{table:a.table}.
\end{cor}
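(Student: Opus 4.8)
The plan is to derive this as an immediate consequence of Theorem~\ref{thm:amk} together with the elementary fact that solution spaces of linear recurrences are closed under linear combination. By the defining Equation~\eqref{eq:b.seq}, each sequence $b_m^{(I_1,I_2,I_3,I_4)}$ is a finite sum
$$b_m^{(I_1,I_2,I_3,I_4)} = \sum_{(i_1,i_2,i_3,i_4) \in I_1 \times I_2 \times I_3 \times I_4} a_m^{k(i_1,i_2,i_3,i_4)}$$
of the $a$-sequences, with at most sixteen summands (the maximum occurring when all four $I_r$ equal $\{0,1\}$). In particular, it is a nonnegative integer combination of the sequences $a_m^k$ for $k \in \{0,1,2,2',2'',3,4\}$.

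The key step is to introduce the linear operator $L$ on the space of real sequences defined by $(La)_m = a_m - 6a_{m-1} + 13a_{m-2} - 16a_{m-3} + 11a_{m-4} - 4a_{m-5}$ for $m \geq 5$, whose kernel $\ker L$ is precisely the set of sequences satisfying the stated recurrence. Since $\ker L$ is a subspace, it is closed under finite linear combinations. By Theorem~\ref{thm:amk} each $a_m^k$ lies in $\ker L$, so every $b$-sequence, being such a combination, lies in $\ker L$ as well. Equivalently, applying $L$ term-by-term and using linearity gives, for all $m \geq 5$,
$$b_m - 6b_{m-1} + 13b_{m-2} - 16b_{m-3} + 11b_{m-4} - 4b_{m-5} = \sum_{(i_1,i_2,i_3,i_4)} \bigl( a_m^{k} - 6a_{m-1}^{k} + 13a_{m-2}^{k} - 16a_{m-3}^{k} + 11a_{m-4}^{k} - 4a_{m-5}^{k} \bigr) = 0,$$
where the sum ranges over $I_1 \times I_2 \times I_3 \times I_4$ and $k = k(i_1,i_2,i_3,i_4)$. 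This establishes the recurrence. For the initial conditions, I would note that $b_0^{(I_1,I_2,I_3,I_4)}, \ldots, b_4^{(I_1,I_2,I_3,I_4)}$ are obtained directly from Equation~\eqref{eq:b.seq} by summing the relevant entries of Table~\ref{table:a.table}, translating each tuple $(i_1,i_2,i_3,i_4)$ to its symmetry class $k(i_1,i_2,i_3,i_4)$.

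There is no substantial obstacle here; the corollary is a formal consequence of Theorem~\ref{thm:amk}. The one point requiring care is the \emph{uniformity} of the recurrence order. By Remark~\ref{rem:factor.rec}, the sequences for $k = 2'$ and $k = 2''$ genuinely require the order-$5$ recurrence, whereas those for $k \in \{0,1,2,3,4\}$ already satisfy the shorter order-$3$ recurrence $a_m = 5a_{m-1} - 7a_{m-2} + 4a_{m-3}$. Since a general $b$-sequence may combine summands of both types, the order-$3$ recurrence need not be inherited, but the order-$5$ recurrence always is, since the order-$3$ characteristic polynomial divides the order-$5$ one (as noted in Remark~\ref{rem:factor.rec}); hence the order-$5$ recurrence is the correct common relation to state, and every summand satisfies it.
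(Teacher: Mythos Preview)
Your proof is correct and follows essentially the same approach as the paper. The paper's proof is a single sentence noting that the $b$-sequences are fixed finite sums of the $a$-sequences, which all satisfy the same recurrence by Theorem~\ref{thm:amk}; your version expands this with the linear-operator formalism and the remark about why order~$5$ (rather than order~$3$) is the right common recurrence, but the underlying idea is identical.
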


\begin{proof}
The recurrence relation follows from the fact that the $b$-sequences
are each defined as a fixed finite sum of the $a$-sequences, and
the $a$-sequences all satisfy the same recurrence, as shown in
Theorem~\ref{thm:amk}.
\end{proof}

Now that we have developed all of the notation, we can prove our
main enumeration theorem for $S_n$, originally given in
Theorem~\ref{main}.  We restate the theorem here for the reader's
convenience before giving the proof.

\begin{reppthm}{main}
The number of parabolic double cosets with minimal element $w \in S_n$ is
\[
 c_w = 2^{|\floats(w)|} \sum_{T \subseteq \tethers(w) } \prod_{\p R \in \rafts(w)} b_{|\p R|}^{K(\p R,T)}.
\]
\end{reppthm}

\begin{proof}
By Proposition~\ref{prop:raft.rope.tether.formula}, we reduce the
computation of $c_w$ to finding $a(\p R,S,T)$, the number of lex-minimal
fillings of the raft $\p R$ with boundary apparatus
apparatus determined by $S$ and $T$.  By
definition of the $b$-sequences, we collect the terms in each sum corresponding to $\ropes(w)$ to get the stated formula.
\end{proof}

We finish our discussion of the symmetric group case by calculating $c_w$ for a large, somewhat generic example.

\begin{example}
Continuing Example~\ref{ex:biggun}, let
$$w = 1\ 3\ 4\ 5\ 7\ 8\ 2\ 6\ 14\ 15\ 16\ 9\ 10\ 11\ 12\ 13 \in S_{16},$$
whose balls-in-boxes picture is shown in Figure~\ref{fig:marine model overlaid on big example}, and whose $w$-ocean is shown in Figure~\ref{fig:marine-model picture example}. Recall that $\floats(w) = \{ 1', 7\}$, $\tethers = \{8',4\}$, and there are four rafts:
\[
 A = [2,3], \ B=[5,5], \ C = [9,10], \ \text{and} \ D=[12,15].
\]
Raft $A$ has ropes in positions $i_2$ and $i_3$, raft $B$ has no ropes, raft $C$ has a rope in position $i_2$, and raft $D$ has no ropes. Thus
\begin{align*}
c_w &= 2^{|\floats(w)|} \sum_{T \subseteq \tethers(w) } \prod_{\p R \in \rafts(w)} b_{\p R}( T),\\
 &= 4\cdot \sum_{T \subseteq \{8',4\} } b_A(T)b_B(T)b_C(T)b_D(T).
\end{align*}

We now consider each subset of $T$. If $T=\emptyset$, then
\begin{align*}
 b_A(\emptyset) &= b_2^{(0,01,01,0)} = a_2^0 + 2a_2^1 + a_2^{2''} = 6+2(9)+12=36,\\
 b_B(\emptyset) &= b_1^{(0,0,0,0)} = a_1^0 = 2,\\
 b_C(\emptyset) &= b_2^{(0,01,0,0)} = a_2^0 + a_2^1 = 6+9=15,\text{ and}\\
 b_D(\emptyset) &= b_4^{(0,0,0,0)} = a_4^0 = 66,
\end{align*}
and so
\[
\prod_{\p R \in \rafts(w)} b_{\p R}(\emptyset) =71280.
\]

\noindent
For $T=\{8'\}$, we find
\begin{align*}
 b_A(\{8'\}) &= b_2^{(0,01,01,0)} = a_2^0 + 2a_2^1 + a_2^{2''} = 6+2(9)+12=36,\\
 b_B(\{8'\}) &= b_1^{(0,0,1,0)} = a_1^1 = 3,\\
 b_C(\{8'\}) &= b_2^{(0,01,0,0)} = a_2^0 + a_2^1 = 6+9=15,\text{ and}\\
 b_D(\{8'\}) &= b_4^{(1,0,0,0)} = a_4^1 = 89,
\end{align*}
and so
\[
\prod_{\p R \in \rafts(w)} b_{\p R}(\{8'\}) =144180.
\]

\noindent
When $T=\{4\}$, we have
\begin{align*}
 b_A(\{4\}) &= b_2^{(0,01,01,1)} = a_2^1 + a_2^2 + a_2^{2'} +a_2^3= 9 + 12 + 11 + 14 =46,\\
 b_B(\{4\}) &= b_1^{(0,1,0,0)} = a_1^1 = 3,\\
 b_C(\{4\}) &= b_2^{(0,01,0,0)} = a_2^0 + a_2^1 = 6+9=15,\text{\ and}\\
 b_D(\{4\}) &= b_4^{(0,0,0,0)} = a_4^0 = 66,
\end{align*}
and so
\[
\prod_{\p R \in \rafts(w)} b_{\p R}(\{4\}) =136620.
\]

\noindent
Finally, $T = \{8',4\}$ yields
\begin{align*}
 b_A(\{8',4\}) &= b_2^{(0,01,01,1)} = a_2^1 + a_2^2 + a_2^{2'} +a_2^3= 9 + 12 + 11 + 14 =46,\\
  b_B(\{8',4\}) &= b_1^{(0,1,1,0)} = a_1^{2''} = 4,\\
 b_C(\{8',4\}) &= b_2^{(0,01,0,0)} = a_2^0 + a_2^1 = 6+9=15,\text{ and}\\
 b_D(\{8',4\}) &= b_4^{(1,0,0,0)} = a_4^1 = 89,
\end{align*}
and so
\[
\prod_{\p R \in \rafts(w)} b_{\p R}(\{8',4\}) =245640.
\]

\noindent
Therefore, the number of parabolic double cosets for which this $w$ is the minimal element is
\begin{align*}
c_w &= 4(71280+144180+136620+245640)\\
 &= 2,\!390,\!880.
\end{align*}
\end{example}

\subsection{Expected number of tethers}

Given a particular $w$, Theorem~\ref{main} can, in general, be computed
quickly using the linear recurrence relation for the $b$-sequences
given in Corollary~\ref{cor:b-rec}. However, one might be concerned
about computing the sum over all subsets of $\tethers(w)$.  Recall the claim, made in the
introduction, that tethers are rare (and hence the sum
has few terms). In fact, as stated in Proposition~\ref{prop:expected},
an exact formula for the expected number of tethers for $w\in S_n$
for all $n>2$ is
\[
 \frac{1}{n!}\sum_{w \in S_n} |\tethers(w)| = \frac{(n-3)(n-4)}{n(n-1)(n-2)}.
 \]

\begin{proof}[Proof of Proposition~\ref{prop:expected}]
For $w$ to have a right tether at position $k$ means that
$$(w(k-1),w(k),w(k+1),w(k+2)) = (i,i+1,j,j+1),$$
where $i+1 < j - 1$.
Let $RT_{k}(w) \in \{0,1\}$ be the number of right tethers of $w$ in position $k$.  The expected value of $RT_{k}$ as a
random variable can be computed by counting the number of pairs $i,j$
for which $1 \leq i < j-2 \leq n-3$, multiplied by the number of permutations
containing $(i,i+1,j,j+1)$ as consecutive values with $i+1$ in position
$k$.  Namely,
\[
\EE(RT_{k}) = \frac{\binom{n-3}2 (n-4)!}{n!} = \frac{(n-4)}{2n(n-1)(n-2)}.
\]
for all $2 \leq k \leq n-2$.  Expectation of random variables is
linear, so the expected number of right tethers is
\[
\EE(RT_{2}) + \cdots + \EE(RT_{n-2}) = \frac{(n-3)(n-4)}{2n(n-1)(n-2)}.
\]
An analogous argument proves that the expected number of left tethers
has the same value, so summing the two contributions proves the formula.
\end{proof}

While the set $\tethers(w)$ is typically small (on the order of $1/n$), there are some permutations for which $|\tethers(w)|$ can be quite large, as seen in the following example.

\begin{example}\label{ex:manytethers}
The permutation
$$w = 1 \ 2 \ 17 \ 18 \ 3 \ 4 \  19 \ 20 \ \ldots \ 15 \ 16 \ 31 \ 32$$ has fourteen left tethers and eight right tethers (see Figure~\ref{fig2}), leading to a sum with $2^{22}$ terms. In this case, the value $c_w = 632,\!371,\!867,\!544,\!102$ can be determined on a computer within a few minutes.
\end{example}

\begin{figure}[htbp]
\begin{center}
\begin{tikzpicture}[scale=.5]
 \draw (1,0) -- (15,0); \draw (17,0) -- (31,0);
 \foreach \x in {1,...,8}
     {\draw  (-1+4*\x,3) -- (-3+4*\x,3) -- (-1+2*\x,0); \draw (-1+4*\x,3) -- (15+2*\x,0);}
 \foreach \x in {1,...,8}
     {
     \draw (-2 + 4*\x,3) circle (1.5ex);
     }
 \foreach \x in {1,...,7}
     { {\draw (2*\x,0) circle (1.5ex); \draw (16 + 2*\x,0) circle (1.5ex);
}
 \foreach \x in {1,...,31}
     {\draw[draw=black,fill=white] (\x,3) circle (1ex);
     \draw[draw=black,fill=white] (\x,0) circle (1ex);}
 \foreach \x in {1,...,7}
     \draw (4*\x,3) node[cross=.75ex]{};
          }
     \draw (16,0) node[cross=.75ex]{};
\end{tikzpicture}
\end{center}
\caption{The $w$-ocean for the permutation of Example~\ref{ex:manytethers}.
 We see that $w$ has $22$ large ascents denoted by two concentric
 circles, each of which is a tether} \label{fig2}
\end{figure}
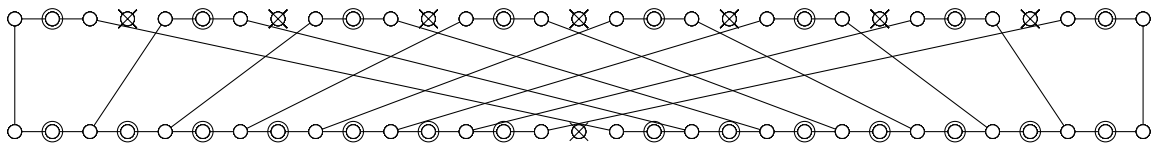

\section{Parabolic double cosets for Coxeter groups}\label{sec:general groups}

We now turn to the general setting of Coxeter groups, where our first task is to extend the
characterization of lex-minimal presentations in Theorem
\ref{T:minimalsym} to this context.

Fix a Coxeter system $(W,S)$ and recall the notation and terminology of Section~\ref{sec:background}. We first observe that if a coset has two different presentations, we get a third presentation by taking the union of the generators acting on the left and the right.

\begin{lem}\label{lem:newpres}
    Suppose that a parabolic double coset $C$ has two different presentations $W_IwW_J =
    C = W_{I'}wW_{J'}$. Then $W_{I\cup I'}wW_{J\cup J'}$ is also a presentation for
    $C$.
\end{lem}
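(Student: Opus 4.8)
The plan is to prove the two inclusions that together establish $C = W_{I\cup I'}wW_{J\cup J'}$. The nontrivial direction is showing $W_{I\cup I'}wW_{J\cup J'} \subseteq C$, since the reverse inclusion $C = W_IwW_J \subseteq W_{I\cup I'}wW_{J\cup J'}$ is immediate from $I \subseteq I\cup I'$ and $J \subseteq J\cup J'$.

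First I would reduce to understanding the \emph{minimal element} of $C$. By Proposition~\ref{P:standard1}(a), $C$ has a unique minimal element; since $w$ appears in both presentations as a (not necessarily minimal) representative, I would let $w_0 = \mathrm{min}(C)$ be that minimal element and note that both presentations can equally be written $C = W_I w_0 W_J = W_{I'} w_0 W_{J'}$ once we pass to the minimal representative. (In fact it is cleaner to assume $w = \mathrm{min}(C)$ from the start, as the lemma is clearly insensitive to the choice of representative: both $W_{I\cup I'}wW_{J\cup J'}$ and $C$ are unchanged under replacing $w$ by any other element of $C$.) With $w$ minimal, Proposition~\ref{P:standard1}(b) gives $I \subseteq \asc_L(w)$, $J \subseteq \asc_R(w)$ and likewise $I' \subseteq \asc_L(w)$, $J' \subseteq \asc_R(w)$, so $I \cup I' \subseteq \asc_L(w)$ and $J \cup J' \subseteq \asc_R(w)$, meaning $w$ is still minimal in the candidate double coset $W_{I\cup I'}wW_{J\cup J'}$.

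The heart of the argument is a closure property: since $W_IwW_J = W_{I'}wW_{J'} = C$, for any $s \in I'$ we have $sw \in C$ (as $sw \in W_{I'}wW_{J'}$), hence $sC = C$; similarly $Cs = C$ for $s \in J'$. The key step is then to show that left multiplication by the \emph{generators} of $W_{I\cup I'}$ preserves $C$. Concretely, $W_{I\cup I'}$ is generated by $I \cup I'$; left multiplication by any $s\in I$ preserves $C$ because $C = W_IwW_J$, and left multiplication by any $s \in I'$ preserves $C$ because $C = W_{I'}wW_{J'}$. Since these generators generate the whole group $W_{I\cup I'}$, and preservation under left multiplication by a generating set implies $uC = C$ for every $u \in W_{I\cup I'}$ (the set of $u$ with $uC=C$ is a subgroup containing the generators), we get $W_{I\cup I'} C = C$. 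The symmetric argument on the right gives $C W_{J\cup J'} = C$. Therefore $W_{I\cup I'}\, w\, W_{J\cup J'} \subseteq W_{I\cup I'}\, C\, W_{J\cup J'} = C$, and combined with the trivial reverse inclusion we conclude $W_{I\cup I'}wW_{J\cup J'} = C$.

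I expect the only real subtlety to be the bookkeeping in the closure step: one must be careful that ``$sC = C$ for $s$ in a generating set'' upgrades to ``$uC = C$ for all $u$ in the generated subgroup,'' which is clean because $\{u \in W : uC = C\}$ is a subgroup of $W$ (closed under products and inverses, containing the identity) and hence contains $\langle I \cup I'\rangle = W_{I\cup I'}$ once it contains $I \cup I'$. No length or Bruhat-order machinery is strictly needed beyond invoking Proposition~\ref{P:standard1} to fix the minimal representative; the argument is essentially formal group theory about double cosets, so the main care is simply in justifying that both presentations share the common representative $w$ and that the stabilizer-style set is genuinely a subgroup.
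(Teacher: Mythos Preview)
Your proof is correct and follows essentially the same approach as the paper: both arguments are purely group-theoretic, showing that $C$ is invariant under left multiplication by $W_{I\cup I'}$ and right multiplication by $W_{J\cup J'}$. The paper phrases the closure step via iterated products $(H_1H_2)^k g (K_2K_1)^k$ and a union, whereas you use the cleaner observation that $\{u : uC = C\}$ is a subgroup containing the generators; your reduction to the minimal representative $w$ is unnecessary (as you yourself note), since the argument works for any common representative and needs no Bruhat-order input.
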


\begin{proof}
    The lemma is an instance of a fact about general groups: if $H_i,K_i$, $i=1,2$ are subgroups of a group $G$ such
    that $C := H_1 g K_1 = H_2 g K_2$ for some $g \in G$, then $C = H g K$, where $H = \langle H_1,H_2 \rangle$
    is the subgroup generated by $H_1$ and $H_2$, and $K = \langle K_1, K_2 \rangle$ is the subgroup generated by
    $K_1$ and $K_2$.

    To prove this fact, observe that $H_i C K_i = H_i g K_i = C$. But this means that
    $C = H_1 H_2 g K_2 K_1$. Repeating this idea again, we get that $C = H_2 C K_2 = H_2 H_1 H_2 g K_2 K_1 K_2$.
    By induction, we conclude that $C = (H_1 H_2)^k g (K_2 K_1)^k$ for all $k \geq 1$. Taking the union across
    $k$, we get that $C = H g K$.

    To finish the lemma, note that if $H_1 = W_I$ and $H_2 = W_{I'}$, then
    $\langle H_1, H_2 \rangle = W_{I \cup I'}$.

\end{proof}

\begin{defn}
    Given a parabolic double coset $C$, set
    \begin{equation*}
        M_L(C) := \bigcup_{\substack{(I,w,J) \ : \\ C = W_I w W_J}} I \quad\text{ and }\quad M_R(C) := \bigcup_{\substack{(I,w,J) \ : \\ C = W_I w W_J}} J.
    \end{equation*}
\end{defn}

\begin{prop}\label{P:maxpres}
    Let $C$ be a parabolic double coset.
    \begin{enumerate}[(a)]
        \item $C$ has a presentation $C = W_{M_L(C)} w W_{M_R(C)}$, and this is
            the largest possible presentation for $C$, in the sense that if $C = W_I w'
            W_J$ then $I \subseteq M_L(C)$ and $J \subseteq M_R(C)$.
        \item The sets $M_L(C)$ and $M_R(C)$ can be determined by
            \begin{align*}
                M_L(C) & = \{s \in S : s x \in C \text{ for all } x \in C\} \text{ and } \\
                M_R(C) & = \{s \in S : x s \in C \text{ for all } x \in C\}.
            \end{align*}
    \end{enumerate}
\end{prop}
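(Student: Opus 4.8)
The plan is to establish part (a) first, using Lemma~\ref{lem:newpres} as the engine, and then read part (b) off the explicit presentation produced in part (a). The one conceptual hurdle is that Lemma~\ref{lem:newpres} only combines presentations that share a \emph{common} middle term $w$, whereas the unions defining $M_L(C)$ and $M_R(C)$ range over triples $(I,w,J)$ whose middle terms may differ. So the first step is to normalize every presentation to the unique minimal element $w := \min(C)$, which exists by Proposition~\ref{P:standard1}(a). Given any triple $(I,w',J)$ with $C = W_I w' W_J$, Corollary~\ref{cor:min.max.elems} lets us run the greedy descent from $w'$ down to $w$ by multiplying on the left by left descents lying in $I$ and on the right by right descents lying in $J$; each such multiplication fixes the double coset $W_I(\cdot)W_J$, so we obtain $C = W_I w W_J$ with the \emph{same} $I$ and $J$. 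Hence the sets $I$ (resp.\ $J$) appearing in the union are exactly those appearing in presentations with middle term $w$, and we may restrict the unions to such presentations.

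Since $S$ is finite there are only finitely many presentations $(I_1,w,J_1),\dots,(I_k,w,J_k)$ with middle term $w$. Applying Lemma~\ref{lem:newpres} to the first two gives $C = W_{I_1\cup I_2}\, w\, W_{J_1\cup J_2}$; feeding this presentation back into Lemma~\ref{lem:newpres} with $(I_3,w,J_3)$ and iterating, we reach $C = W_{I_1\cup\cdots\cup I_k}\, w\, W_{J_1\cup\cdots\cup J_k} = W_{M_L(C)}\, w\, W_{M_R(C)}$, which is the presentation asserted in part~(a). The ``largest presentation'' claim is then immediate from the definition: any $I$ (resp.\ $J$) occurring in a presentation of $C$ is one of the sets in the union defining $M_L(C)$ (resp.\ $M_R(C)$), so $I \subseteq M_L(C)$ and $J \subseteq M_R(C)$.

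For part (b), write $N_R(C) := \{s \in S : xs \in C \text{ for all } x \in C\}$; I will prove $M_R(C) = N_R(C)$, the statement for $M_L$ being symmetric (multiply on the left). For $M_R(C) \subseteq N_R(C)$, take $s \in M_R(C)$ and any $x \in C$. Using the presentation of part~(a), write $x = u w v$ with $u \in W_{M_L(C)}$ and $v \in W_{M_R(C)}$; then $vs \in W_{M_R(C)}$, so $xs = u w (vs) \in W_{M_L(C)} w W_{M_R(C)} = C$, whence $s \in N_R(C)$. For the reverse inclusion, suppose $s \in N_R(C)$, so $Cs \subseteq C$; since $s^2 = e$ we have $C = (Cs)s \subseteq Cs$, giving $Cs = C$. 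Set $J' := M_R(C)\cup\{s\}$. From the presentation we have $C W_{M_R(C)} = C$, and we have just shown $Cs = C$, so $C$ is closed on the right under both generating sets of $W_{J'} = \langle W_{M_R(C)}, s\rangle$; applying these closures repeatedly to any word in the generators yields $C W_{J'} = C$. As $w \in C$ this gives $w W_{J'} \subseteq C W_{J'} = C$, and since $W_{M_L(C)} C = C$ we conclude $W_{M_L(C)} w W_{J'} \subseteq W_{M_L(C)} C = C$; the reverse containment holds because $W_{M_R(C)} \subseteq W_{J'}$. Thus $C = W_{M_L(C)} w W_{J'}$ is a presentation with $s \in J'$, so $s \in M_R(C)$.

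The two inclusions in part~(b) and the final union argument in part~(a) are routine once the closure bookkeeping (that $C$ is fixed under left multiplication by $W_{M_L(C)}$ and right multiplication by $W_{M_R(C)}$) is in place. I expect the genuine subtlety to be the normalization to the minimal element at the start of part~(a): one must verify via Corollary~\ref{cor:min.max.elems} that the greedy descent preserves \emph{both} $I$ and $J$, since this is exactly what licenses the repeated application of Lemma~\ref{lem:newpres} to a common middle term and makes the iterated-union argument valid.
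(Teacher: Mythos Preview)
Your proof is correct and follows essentially the same approach as the paper's: Lemma~\ref{lem:newpres} for part~(a), and closure under one-sided multiplication for part~(b). You are more explicit than the paper about normalizing all presentations to the common minimal element $w$ before iterating Lemma~\ref{lem:newpres} (the paper's proof of~(a) is just ``follows immediately from the definition and Lemma~\ref{lem:newpres}''); for~(b) the paper works with the full set $I' = \{s : sx \in C \text{ for all } x\}$ at once and shows $W_{I'} w W_{J'} = C$ directly, whereas you add one generator $s$ at a time---but this is a presentational difference, not a different argument.
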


\begin{proof}
Part (a) follows immediately from the definition and Lemma \ref{lem:newpres}.

For part (b), let $I' = \{s \in S : s x \in C \text{ for all } x \in C\}$ and $J' = \{s \in S : x s \in C \text{ for all } x \in C\}.$ Then $M_L(C) \subseteq I'$ and $M_R(C) \subseteq J'$. Hence $C \subseteq W_{I'}wW_{J'}$. At the same time, $C$ is closed under left multiplication by members of $I'$ and right multiplication by members of $J'$. Hence $W_{I'}wW_{J'} \subseteq C$. Thus $W_{I'} w W_{J'}$ is a presentation for $C$, which means $I' \subseteq M_L(C)$ and $J'\subseteq M_R(C)$. Putting this all together means $I' = M_L(C)$ and $J' = M_R(C)$, as desired.
\end{proof}

In light of Proposition~\ref{P:maxpres}, we introduce the following terminology.

\begin{defn}
The presentation $W_{M_L(C)} w W_{M_R(C)}$ appearing in Proposition~\ref{P:maxpres} is the \emph{maximal presentation} for $C$.
\end{defn}

We will also want to identify presentations that are as small as possible.

\begin{defn}\label{D:minimal}
    A presentation $C = W_I w W_J$ is \emph{minimal} if
    \begin{enumerate}[(a)]
        \item $w \in {}^I W^J$,
        \item no connected component of $I$ is contained in $(w J w^{-1}) \cap S$, and
        \item no connected component of $J$ is contained in $(w^{-1} I w) \cap S$.
    \end{enumerate}
\end{defn}

Note that this is not the same as lex-minimality, which was introduced in Definition~\ref{defn:lex-minimal}.

If a connected component $I_0$ of $I$ is contained in $(w J w^{-1}) \cap S$ for
$w \in {}^I W^J$, then $W_{I} w W_J = W_{I \setminus I_0} w W_J$.  A similar argument applies to subsets of $J$, so every
presentation can be reduced to a minimal presentation. In Proposition
\ref{P:minimal}, we will show that our nomenclature is appropriate; that is,
minimal presentations have minimum size.

\begin{lem}\label{L:descentsets}
    Let $C = W_I w W_J$ be a minimal presentation of $C$. Then
    \begin{align*}
        M_L(C) & = I \cup \{s \in (w J w^{-1}) \cap S : s \text{ is not adjacent to } I\} \text{ and }\\
        M_R(C) & = J \cup \{s \in (w^{-1} I w) \cap S : s \text{ is not adjacent to } J\}.
    \end{align*}
\end{lem}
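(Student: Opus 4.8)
The plan is to compute $M_L(C)$ and $M_R(C)$ from the description in Proposition~\ref{P:maxpres}(b), namely $M_L(C)=\{s\in S: sx\in C\text{ for all }x\in C\}$, and to establish the two set inclusions separately. The formula for $M_R(C)$ will then follow by the symmetry $x\mapsto x^{-1}$: this sends $C=W_IwW_J$ to $C^{-1}=W_Jw^{-1}W_I$, which is again a minimal presentation (conditions (a)--(c) of Definition~\ref{D:minimal} are preserved with the roles of $I,J$ and of $w,w^{-1}$ interchanged, using $\asc_L(w)=\asc_R(w^{-1})$), and it sends $M_R(C)$ to $M_L(C^{-1})$. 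So it suffices to prove the $M_L$ formula, and applying it to $C^{-1}$ yields the $M_R$ formula.

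First I would prove the inclusion $\supseteq$. The containment $I\subseteq M_L(C)$ is immediate. For $s\in(wJw^{-1})\cap S$ with $s\notin I$ and $s$ not adjacent to $I$, write $s=wtw^{-1}$ with $t\in J$, so that $sw=wt$. Since $s$ is neither equal nor adjacent to any generator of $I$, it commutes with every element of $W_I$. Hence for an arbitrary element $uwv\in C$ with $u\in W_I$, $v\in W_J$ we get $s(uwv)=u(sw)v=u(wt)v=uw(tv)\in W_IwW_J=C$, so $s\in M_L(C)$.

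Next I would prove $\subseteq$. Take $s\in M_L(C)\setminus I$. Since $w=\min C$ by condition (a) and $C=W_{M_L(C)}wW_{M_R(C)}$ by Proposition~\ref{P:maxpres}(a), we have $w\in{}^{M_L(C)}W^{M_R(C)}$, so $s\in\asc_L(w)$ and $\ell(sw)=\ell(w)+1$; also $sw\in C$. Applying the double parabolic decomposition (Corollary~\ref{C:parabolic}) to $sw$ gives $sw=uwv$ with $u\in W_I^H$, $v\in W_J$, and $\ell(u)+\ell(v)=1$. If $\ell(u)=1$ then $s=u\in I$, a contradiction, so $u=e$ and $sw=wt$ with $t=v\in J$; that is, $s\in(wJw^{-1})\cap S$. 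It remains to show that such an $s$ is not adjacent to $I$.

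This last step is the hard part. Suppose for contradiction that $s$ is adjacent to some $r\in I$, and let $I_0$ be the connected component of $I$ containing $r$. Because $I\cup\{s\}\subseteq\asc_L(w)$ we have $w\in{}^{I\cup\{s\}}W$, so $\ell(zw)=\ell(z)+\ell(w)$ for all $z\in W_{I\cup\{s\}}$; in particular $\ell(srw)=\ell(sr)+\ell(w)=\ell(w)+2$, and $srw=s(rw)\in C$ since $rw\in C$ and $s\in M_L(C)$. Decomposing $srw=u'wv'$ via Corollary~\ref{C:parabolic} forces $\ell(u')+\ell(v')=2$, and the case $\ell(u')=2$ is impossible, since it would give $sr=u'\in W_I$ while $s\notin I$ means $\{r,s\}\not\subseteq I$. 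A useful intermediate observation is that $\ell(r''sr)=3$ for every $r''\in I$ (a short argument from $w\in{}^{I\cup\{s\}}W$, using that $sr$ has a unique reduced word and $s\notin I$), which shows $I\subseteq\asc_L(srw)$; the analogous statement $J\subseteq\asc_R(srw)$ would give $srw\in{}^IW^J$, and then $srw\in C$ of length greater than $\ell(w)$ would contradict uniqueness of the minimal element $w$ from Proposition~\ref{P:standard1}(a). I expect the main obstacle to be exactly the right-hand condition $J\subseteq\asc_R(srw)$: the interaction of a right descent $t''\in J$ with the relation $sw=wt$ is what is not controlled by the local edge $\{r,s\}$ alone. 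Resolving it should require propagating the argument along the edges of the connected component $I_0$ to conclude that every generator of $I_0$ is conjugate by $w$ into $J$, i.e. $I_0\subseteq(wJw^{-1})\cap S$, which contradicts the minimality condition (b) of Definition~\ref{D:minimal}; tracking the dihedral subgroups $W_{\{r,s\}}$ and the parabolic decomposition simultaneously across $I_0$ is where the real work lies.
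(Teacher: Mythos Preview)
Your outline matches the paper's proof through the point $s\in M_L(C)\setminus I \Rightarrow sw=wt$ with $t\in J$; the symmetry reduction to $M_L$ and the easy inclusion are fine. The gap is exactly where you locate it: you do not actually carry out the ``not adjacent to $I$'' step, and your proposed route via showing $srw\in{}^IW^J$ does not work as written, because the right-ascent condition $J\subseteq\asc_R(srw)$ need not hold.

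The paper avoids this obstruction entirely by never trying to show that $srw$ (or any longer product) lies in ${}^IW^J$. Instead, for an arbitrary $r'\in I_0$ choose a simple path $s=s_0,s_1,\dots,s_k=r'$ in the Coxeter graph with $s_1,\dots,s_k\in I_0$. Then $s_0\cdots s_k$ has $s_0$ as its \emph{only} left descent, and since $I\cup\{s_0\}\subseteq\asc_L(w)$ the same holds for $s_0\cdots s_k w$ inside $I\cup\{s_0\}$. In the double parabolic decomposition $s_0\cdots s_k w=uwv$ (Corollary~\ref{C:parabolic}), any left descent of $u$ would be a left descent of $s_0\cdots s_k w$ lying in $I$; hence $u=e$ outright, with no analysis of right descents needed. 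This gives $w^{-1}s_0\cdots s_k w\in W_J$; the same argument with the shorter path gives $w^{-1}s_0\cdots s_{k-1}w\in W_J$, so $w^{-1}r'w\in W_J$, and since $w\in{}^IW^J$ this forces $w^{-1}r'w\in J$. Thus $I_0\subseteq wJw^{-1}$, contradicting minimality. The point you were missing is that working with the full path and using only the \emph{left}-descent information sidesteps the dihedral bookkeeping you anticipated.
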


\begin{proof}
    That $M_L(C)$ includes its proposed reformulation is clear. For the
    other direction, suppose that $s \in M_L(C)$ and $s \not\in I$. By
    Proposition~\ref{P:maxpres}(b), $w < s w \in C$, and by Corollary
   ~\ref{C:parabolic} it follows that $s w = w t$ for some $t \in J$.

    We can extend this argument to show that $s$ is not adjacent to any element
    of $I$.  Indeed, suppose that $s$ is adjacent to some element of $r$ of
    $I$, and let $I_0$ be the connected component of $r$ in $I$. We
    argue that $I_0 \subseteq w J w^{-1}$, in contradiction of our hypothesis.
    Any element $r'$ of $I_0$ is connected to $s$ by a simple path $s = s_0,
    s_1,\ldots,s_k=r'$, $k\geq 1$, in the Coxeter graph of $W$, where
    $s_1,\ldots,s_k$ is entirely contained in $I_0$. Using
    Proposition~\ref{P:maxpres}(b) and Corollary~\ref{C:parabolic}, we
    can write $s_0 \cdots s_k w = u w v$, where $u \in W_I$ and $v \in
    {}^{(w^{-1} I w) \cap S} W_J$. Now, $s_0$ is the only left descent of $s_0
    \cdots s_k$, and because $I \cup \{s_0\} \subset \asc_L(w)$, we conclude
    that $s_0$ is the only left descent of $s_0 \cdots s_k w$ in $I \cup \{s_0\}$.
    But any left descent of $u$ will be a left descent of $s_0 \cdots s_k w$ in
    $I$, so we conclude that $u$ has no left descents, or in other words, $u = e$. As a result, $w^{-1} s_0 \cdots s_k w = v \in
    W_J$. The same argument shows that $w^{-1} s_0 \cdots s_{k-1} w \in
    W_J$. Hence we have $w^{-1} s_k w \in W_J$. Because $s_k w = w (w^{-1}
    s_k w)$, and $w \in {}^I W^J$, we conclude that $w^{-1} s_k w = w^{-1} r' w$ belongs to
    $J$. Thus $I_0 \subseteq w J w^{-1}$, yielding the desired contradiction.

    The equality for $M_R(C)$ is analogous.
\end{proof}

\begin{cor}\label{C:descentsets}
    Suppose that $C = W_I w W_J$ is a minimal presentation of $C$.  If $T$ is
    any connected subset of $M_L(C)$, then either $T \subseteq I$, or $T$ is
    disjoint and non-adjacent to $I$ and $T \subseteq (w J w^{-1}) \cap S$.
\end{cor}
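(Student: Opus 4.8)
The plan is to read off the structure of $M_L(C)$ from Lemma~\ref{L:descentsets} and then exploit the connectivity of $T$. By Lemma~\ref{L:descentsets}, a minimal presentation gives
\[
M_L(C) = I \cup N, \qquad N := \{s \in (w J w^{-1}) \cap S : s \text{ is not adjacent to } I\}.
\]
The first step is to verify that this union is \emph{disjoint}. If some $s$ lay in both $I$ and $N$, then $s$ would be an element of $I$ that is not adjacent to any element of $I$; hence the singleton $\{s\}$ would be a connected component of $I$ contained in $(w J w^{-1}) \cap S$, contradicting condition (b) of minimality (Definition~\ref{D:minimal}). Thus we may write $M_L(C) = I \sqcup N$ with $I \cap N = \emptyset$, where every element of $N$ is non-adjacent to $I$ and lies in $(w J w^{-1}) \cap S$.

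Next I would use the hypothesis that $T$ is connected, that is, that the subgraph of the Coxeter graph induced on $T$ is connected. Write the partition $T = (T \cap I) \sqcup (T \cap N)$. The key observation is that the Coxeter graph contains no edge joining a vertex of $N$ to a vertex of $I$, since by construction every element of $N$ is non-adjacent to $I$. Consequently, if both $T \cap I$ and $T \cap N$ were nonempty, then the induced subgraph on $T$ would split into two nonempty parts with no edge between them, so $T$ would be disconnected — a contradiction. Hence exactly one of $T \cap I$, $T \cap N$ is empty.

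It remains to match the two cases to the two alternatives in the conclusion. In the first case $T \subseteq I$, which is the first alternative. In the second case $T \subseteq N$, and then $T$ is disjoint from $I$ (since $I \cap N = \emptyset$), non-adjacent to $I$ (since each element of $N$ is non-adjacent to $I$), and contained in $(w J w^{-1}) \cap S$ (since $N \subseteq (w J w^{-1}) \cap S$); this is precisely the second alternative. The main point to get right — and essentially the only place where minimality is genuinely invoked beyond the formula of Lemma~\ref{L:descentsets} — is the disjointness of $I$ and $N$ in the first step; once that is established, the conclusion reduces to the routine graph-theoretic fact that a connected subset cannot straddle two parts with no edges between them.
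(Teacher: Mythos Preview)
Your proof is correct and follows essentially the same approach as the paper's: both hinge on Lemma~\ref{L:descentsets} to see that every element of $M_L(C)\setminus I$ is non-adjacent to $I$, and then use connectedness of $T$ to prevent $T$ from straddling $I$ and its complement. Your presentation is slightly more structural (partition $M_L(C)=I\sqcup N$ with no edges between the parts), whereas the paper phrases it iteratively (propagate from one $s\in T\setminus I$ to its neighbors), and your explicit check that $I\cap N=\emptyset$ via condition~(b) of Definition~\ref{D:minimal} is a detail the paper leaves implicit.
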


\begin{proof}
    Suppose $s \in T$ is not contained in $I$. By Lemma~\ref{L:descentsets},
    this $s$ must be non-adjacent to $I$ and contained in $w J w^{-1}$. Because
    $T$ is connected, iterating this argument for the neighbors
    of $s$ yields the desired conclusion.
\end{proof}

Given subsets $X,Y,Z \subseteq S$, write
$$X = Y \naunion Z$$
to mean that $X$ is
the disjoint union of $Y$ and $Z$, and $Y$ and $Z$ are non-adjacent. (In other
words, the subgraph of the Coxeter graph induced by the vertex set $X$ is
isomorphic to the disjoint union of the vertex-induced subgraphs of $Y$ and
$Z$.) The following proposition is, roughly speaking, obtained by repeated
application of Corollary~\ref{C:descentsets}.

\begin{prop}\label{P:minimal}
    Fix $w \in {}^I W^J$. A presentation $C = W_I w W_J$ is minimal if and
    only if $|I| + |J| \leq |I'| + |J'|$ for all other presentations $C = W_{I'}
    w W_{J'}$ of $C$. Furthermore, if $C = W_I w W_J$ and $C = W_{I'} w W_{J'}$ are both minimal
    presentations, then there are sequences of connected components
    $I_1,\ldots,I_m$ and $J_1,\ldots,J_n$ of the subgraphs induced by $I$ and
    $J$, respectively, such that for each $i$ and $j$, $wJ_jw^{-1}$ and $w^{-1}I_iw$ are subsets of $S$, and
    \begin{align*}
        I' & = \left(I \naunion w J_1 w^{-1} \naunion \cdots \naunion w J_n w^{-1}\right) \setminus \bigcup_{i=1}^m I_i \text{ and } \\
        J' & = \left(J \naunion w^{-1} I_1 w \naunion \cdots \naunion w^{-1} I_m w\right) \setminus \bigcup_{j=1}^n J_j .
    \end{align*}
\end{prop}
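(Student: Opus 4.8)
The plan is to deduce the first assertion (``minimal $\Leftrightarrow$ minimum size'') from the structural second assertion, and to prove the structural statement by playing the two presentations off against the common data $M_L(C)$ and $M_R(C)$. For the easy half of the first assertion, note that since we assume $w \in {}^{I}W^{J}$, condition (a) of Definition~\ref{D:minimal} holds automatically; so if $C = W_I w W_J$ fails to be minimal, then some connected component $I_0$ of $I$ lies in $(wJw^{-1})\cap S$, or symmetrically for $J$, and as observed just before the proposition $W_I w W_J = W_{I\setminus I_0} w W_J$, which is strictly smaller. Hence every minimum-size presentation is minimal, and minimal presentations exist. Granting the structural statement, any two minimal presentations satisfy $|I'|+|J'| = |I|+|J|$, because conjugation preserves cardinality and the two formulas give $|I'| = |I| + \sum_j |J_j| - \sum_i|I_i|$ and $|J'| = |J| + \sum_i|I_i| - \sum_j|J_j|$. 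Comparing an arbitrary minimal presentation with a minimum-size one then forces all minimal presentations to attain the minimum, which finishes the first assertion.

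For the structural statement, fix two minimal presentations $C = W_I w W_J = W_{I'}wW_{J'}$. By Proposition~\ref{P:maxpres} all four sets lie inside $M_L(C)$ or $M_R(C)$ as appropriate. First I would record the elementary fact that conjugation by $w$ preserves Coxeter-graph adjacency between simple reflections that stay simple: if $s,t,wsw^{-1},wtw^{-1}\in S$, then $m(s,t)$, being the order of $st$, equals the order of $w(st)w^{-1}$, which is $m(wsw^{-1},wtw^{-1})$; in particular $wXw^{-1}$ is connected whenever $X$ and $wXw^{-1}$ are subsets of $S$ and $X$ is connected. Next, applying Corollary~\ref{C:descentsets} to each presentation shows that every connected component of $M_L(C)$ is either contained in $I$, or disjoint from and non-adjacent to $I$ (and then contained in $wJw^{-1}$), and likewise for $I'$; so both $I$ and $I'$ are unions of connected components of $M_L(C)$, and symmetrically $J,J'$ are unions of connected components of $M_R(C)$. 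By Lemma~\ref{L:descentsets} the components of $M_L(C)$ contained in $I$ are precisely the connected components of $I$, and similarly for the other three sets.

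The remaining task is to match the components along which the presentations differ. I would set $I_1,\dots,I_m$ to be the connected components of $I$ lying in $I\setminus I'$ and $J_1,\dots,J_n$ the connected components of $J$ lying in $J\setminus J'$. The crux is to show $I'\setminus I = \bigcup_j wJ_jw^{-1}$ (with the symmetric claim for $J'$). Given a component $Y$ of $M_L(C)$ with $Y\subseteq I'\setminus I$, the component $Y$ is disjoint and non-adjacent from $I$, so $Y\subseteq wJw^{-1}$ and $Y':=w^{-1}Yw\subseteq J$ is connected; if $Y'$ lay in $J'$, then $Y$ would be a component of $I'$ contained in $wJ'w^{-1}$, contradicting minimality condition (b) for $(I',J')$, so the $M_R(C)$-component $J_j$ containing $Y'$ satisfies $J_j\subseteq J\setminus J'$. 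Since $wJ_jw^{-1}$ is connected (by the adjacency remark), contained in $M_L(C)$, and meets the component $Y$, it lies in $Y$; combined with $Y\subseteq wJ_jw^{-1}$ this gives $Y=wJ_jw^{-1}$. Conversely, for a component $J_0$ of $J$ in $J\setminus J'$, the same component analysis gives $wJ_0w^{-1}\subseteq I'$, and if it met $I$ then (since components of $M_L(C)$ are either inside $I$ or disjoint from it, and $wJ_0w^{-1}$ is connected) all of $wJ_0w^{-1}$ would lie in $I$, forcing $J_0\subseteq w^{-1}Iw$ against condition (c) for $(I,J)$. Thus the map $Y\mapsto w^{-1}Yw$ is a bijection between the $M_L(C)$-components in $I'\setminus I$ and the components $J_j$ of $J$ in $J\setminus J'$, with each $wJ_jw^{-1}$ disjoint and non-adjacent from $I$ and from the others; this yields $I' = \big(I \naunion wJ_1w^{-1}\naunion\cdots\naunion wJ_nw^{-1}\big)\setminus\bigcup_i I_i$, and the mirror-image argument gives the formula for $J'$ with the \emph{same} sequences $I_i,J_j$.

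The step I expect to be the main obstacle is exactly this matching: ruling out that a component of $M_L(C)$ appearing in $I'\setminus I$ conjugates back to a \emph{proper} connected subset of a component of $J$ (the ``raft pinned by a large-ascent rope or tether'' phenomenon), and dually ruling out partial overlap $wJ_0w^{-1}\cap I\neq\emptyset$ for a fully moved component. Both are resolved only by using the minimality conditions of \emph{both} presentations simultaneously --- condition (b) to locate where $Y'$ can sit and condition (c) to force disjointness from $I$ --- together with the adjacency-preservation remark guaranteeing that $wJ_jw^{-1}$ is genuinely connected; it is the interaction of these three ingredients (rather than any single computation) that produces the clean $\naunion$ structure. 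Once the bijection between moved components is established, verifying that the two displayed formulas use consistent index sets, and the cardinality count feeding back into the first assertion, is routine.
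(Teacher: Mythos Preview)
Your proposal is correct and follows essentially the same approach as the paper's proof: both use Corollary~\ref{C:descentsets} repeatedly to show that each connected component either survives intact in the other presentation or is conjugated wholesale to the opposite side, and both deduce the size characterization from the structural statement via the obvious cardinality count. Your version routes the component-tracking through the components of $M_L(C)$ and $M_R(C)$ explicitly, whereas the paper works directly with components of $I$ and applies Corollary~\ref{C:descentsets} twice in succession (once to land in $J'$, once more to return to $I$ and force equality of components); this is a cosmetic reorganization rather than a different idea.
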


Note that the order of operations in the identities of
Proposition~\ref{P:minimal} is significant, in that $w J_j w^{-1}$ is required
to be disjoint and non-adjacent to $I_i$ for all $i,j$.

\begin{proof}[Proof of Proposition~\ref{P:minimal}]
    We start by proving the second part of the proposition. Suppose that $C =
    W_I w W_J$ and $C = W_{I'} w W_{J'}$ are both minimal presentations, and
    let $I_0$ be a connected component of $I$. Then $I_0 \subseteq M_L(C)$, so
    by Corollary~\ref{C:descentsets} either $I_0 \subseteq I'$, or $I_0$ is
    disjoint and non-adjacent to $I'$. In the former case, $I_0$ must be
    contained in a connected component $I'_0$ of $I'$. Applying Corollary
   ~\ref{C:descentsets} again, we must have $I'_0 \subseteq I$, and hence $I_0 =
    I'_0$. If $I_0$ is disjoint and non-adjacent to $I'$, then Corollary~\ref{C:descentsets}
    also tells us that $I_0 \subseteq (w J'_0 w^{-1}) \cap S$,
    where $J'_0$ is a connected component of $J'$. Since $C = W_I w W_J$
    is minimal, it is impossible for $J'_0$ to be contained in $J$, because then we would
    have $I_0 \subseteq w J w^{-1}$. Applying Corollary~\ref{C:descentsets}
    one last time, we see that $J'_0$ is disjoint and non-adjacent to $J$,
    and $J'_0 \subseteq w^{-1} I_1 w$ for some connected component $I_1$ of
    $I$. But then $I_0 \subseteq I_1$, which means that $I_0 = I_1$ and
    $J'_0 = w^{-1} I_0 w$. Combining the two cases shows that
    either $I_0$ is a connected component of $I'$, or $w^{-1} I_0 w$
    is a connected component of $J'$, which is disjoint and non-adjacent
    to $J$. This proves the second part of the proposition.

    For the first part of the proposition, let $N$ be the minimum of
    $|\tilde{I}| + |\tilde{J}|$ across all presentations $C = W_{\tilde{I}} w
    W_{\tilde{J}}$ of the parabolic double coset $C$. Any presentation $C = W_{I'} w W_{J'}$
    with $|I'| + |J'| = N$ is clearly minimal. If $C = W_I w W_J$ is another
    minimal presentation, then $I,J$ and $I',J'$ are related as in the second
    part of the proposition, and consequently $|I| + |J| = |I'| + |J'| = N$.
\end{proof}

We now get the desired characterization of lex-minimal presentations as an
immediate corollary of Proposition~\ref{P:minimal}. This will also complete the proof of Theorem~\ref{T:minimalsym}.

\begin{thm}\label{T:minimal}
    Fix $w \in {}^I W^J$. Then $C = W_I w W_J$ is lex-minimal if and only if
    \begin{enumerate}[(a)]
        \item no connected component of $J$ is contained in $(w^{-1} I w) \cap S$, and
        \item if a connected component $I_0$ of $I$ is contained in $w S w^{-1}$,
            then $w^{-1} I_0 w$ is not contained in $J$, and there is an element of
            $J$ adjacent to or contained in $w^{-1} I_0 w$.
    \end{enumerate}
    Furthermore, every parabolic double coset has a unique lex-minimal presentation.
\end{thm}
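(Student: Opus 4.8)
The plan is to deduce everything from Proposition~\ref{P:minimal}, which already pins down the relationship between any two minimal presentations. The first observation is that conditions (a) and (b) of the theorem refine minimality (Definition~\ref{D:minimal}): condition (a) is exactly condition (c) of minimality, while the clause ``$w^{-1}I_0w\not\subseteq J$'' inside (b) is condition (b) of minimality (noting that $I_0\subseteq wJw^{-1}$ is equivalent to $w^{-1}I_0w\subseteq J$, which already forces $w^{-1}I_0w\subseteq S$). Thus any presentation satisfying (a) and (b) is automatically minimal, and the only genuinely new ingredient in (b) is the adjacency clause: for every connected component $I_0$ of $I$ with $w^{-1}I_0w\subseteq S$, some element of $J$ is adjacent to or lies in $w^{-1}I_0w$.

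First I would prove that a lex-minimal presentation $(I,w,J)$ satisfies (a) and (b). Since $w=\min(C)$ we have $w\in{}^IW^J$. If some component $I_0$ of $I$ satisfied $I_0\subseteq wJw^{-1}$, then $W_{I\setminus I_0}wW_J=W_IwW_J$ (the reduction noted before Proposition~\ref{P:minimal}) would have strictly smaller $|I|$ and the same $w$, contradicting lex-minimality; this gives minimality~(b). Symmetrically, a component $J_0\subseteq w^{-1}Iw$ could be dropped to lower $|J|$ without changing $|I|$, giving (a). For the adjacency clause, suppose a component $I_0$ has $w^{-1}I_0w\subseteq S$ but $J_0:=w^{-1}I_0w$ is disjoint from and non-adjacent to $J$. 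The identity $w^{-1}W_{I_0}w=W_{J_0}$ gives $W_{I_0}w=wW_{J_0}$, and since $I_0$ is a union of components of $I$ and $J_0$ is non-adjacent to $J$, the direct-product splittings $W_I=W_{I\setminus I_0}W_{I_0}$ and $W_{J\cup J_0}=W_{J_0}W_J$ yield
\[
W_IwW_J=W_{I\setminus I_0}\,W_{I_0}w\,W_J=W_{I\setminus I_0}\,w\,W_{J_0}W_J=W_{I\setminus I_0}\,w\,W_{J\cup J_0}.
\]
This presentation has smaller $|I|$, again contradicting lex-minimality, so (b) holds.

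The core of the argument is the converse together with uniqueness, and here Proposition~\ref{P:minimal} does the heavy lifting. Suppose $(I,w,J)$ satisfies (a) and (b); it is then minimal. Let $(I',w,J')$ be any minimal presentation of $C$. By the second part of Proposition~\ref{P:minimal} there are components $I_1,\dots,I_m$ of $I$ and $J_1,\dots,J_n$ of $J$ with
\[
I'=\bigl(I\naunion wJ_1w^{-1}\naunion\cdots\naunion wJ_nw^{-1}\bigr)\setminus\bigcup_{i=1}^m I_i,\qquad J'=\bigl(J\naunion w^{-1}I_1w\naunion\cdots\naunion w^{-1}I_mw\bigr)\setminus\bigcup_{j=1}^n J_j.
\]
Each $w^{-1}I_iw$ lies in $J'\subseteq S$, so $w^{-1}I_iw\subseteq S$, and the $\naunion$ in the formula for $J'$ forces $w^{-1}I_iw$ to be disjoint from and non-adjacent to $J$. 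But condition (b), applied to the component $I_i$, asserts the opposite. Therefore $m=0$, whence $|I'|=|I|+\sum_j|J_j|\geq|I|$, with equality only when $n=0$, i.e. when $(I',J')=(I,J)$. Thus among all minimal presentations, $(I,w,J)$ is the unique one of smallest $|I|$.

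To conclude, a lex-minimal presentation exists because $S$ is finite, and by the forward direction it satisfies (a) and (b); applying the previous paragraph to it shows it is the unique minimal presentation minimizing $|I|$, which forces uniqueness of the lex-minimal presentation and shows any presentation satisfying (a),(b) coincides with it (both minimize $|I|$ among minimal presentations, and equality in $|I|$ forces equality of the pairs). The main obstacle is the bookkeeping in the core paragraph: one must verify that the adjacency clause of (b) is precisely the negation of the non-adjacency encoded in Proposition~\ref{P:minimal}'s $\naunion$ decomposition, so that (b) forbids moving any component of $I$ to the right and hence locks $|I|$ at its minimum. The supporting coset identity for these single-component moves rests only on the commutation $w^{-1}W_{I_0}w=W_{w^{-1}I_0w}$ together with the direct-product splitting coming from non-adjacency.
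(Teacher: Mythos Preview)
Your proof is correct and follows exactly the approach the paper intends: the paper simply states that Theorem~\ref{T:minimal} is ``an immediate corollary of Proposition~\ref{P:minimal}'' without spelling out the details, and you have supplied precisely those details. The key step---using the $\naunion$ structure in Proposition~\ref{P:minimal} to see that moving any component $I_i$ to the right forces $w^{-1}I_iw$ to be disjoint from and non-adjacent to $J$, which is exactly what the adjacency clause in (b) forbids---is the intended mechanism, and your bookkeeping around it (that $m=0$ forces $|I'|\ge |I|$ with equality only when $n=0$) is clean.
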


When $w$ is the identity, Theorem~\ref{T:minimal} implies that
lex-minimal presentations are two-level staircase diagrams in the
sense of recent work by Richmond and Slofstra \cite{RS15}. Note that
while \cite{RS15} addresses the enumeration of staircase diagrams,
two-level staircase diagrams were not considered.

\subsection{The marine model for Coxeter groups}

The enumeration formula in Theorem~\ref{main} extends to general
Coxeter groups. To describe the formula in the general setting, we
need to extend the marine model.  In
Theorem~\ref{thm:equivalence.moves}, we present a visual test for
detecting when two parabolic double cosets with the same minimal
element are equal.

\begin{defn}
For $w \in W$, an ascent $s \in \asc_{R}(w)$ is a \emph{small ascent}
if $w s w^{-1} \in S$.  Otherwise, this $s$ is a \emph{large ascent}.
\end{defn}

Note that $s$ is a small ascent of $w$ if and only if $w s w^{-1}$ is
a small ascent of $w^{-1}$.  Also, if $s$ and $t$ are both small
ascents for $w$, then $m_{st} = m_{s't'}$, where $s' = w s w^{-1}$ and
$t' = w t w^{-1}$.

Definition~\ref{defn:marine model pictures} introduced the $w$-ocean of a permutation, built out of two copies of the Coxeter graph of $S_n$. For general Coxeter groups, we build an analogous graph.

\begin{defn}\label{defn:general marine model pictures}
    For $w \in W$, the \emph{$w$-ocean}
    is the graph $G(W,S,w)$ whose vertices are
    \begin{equation*}
        \big\{ (s,1) : s \text{ a right ascent of } w \big\} \cup \big\{ (s,0) : s \text{ a left ascent of } w \big\}.
    \end{equation*}
    There is an edge between vertices $(s,1)$ and $(t,1)$ (respectively, $(s,0)$ and $(t,0)$), if $s$ and $t$ are adjacent in the Coxeter graph of $W$, and
    at least one of them is a small ascent of $w$ (respectively, $w^{-1}$).  There is
    an edge between $(s,1)$ and $(t,0)$ if $s$ is a small ascent of $w$ and $t
    = w s w^{-1}$.
\end{defn}

In terms of Definition~\ref{defn:marine model pictures}, the vertices $(s,1)$ are
the top row of the $w$-ocean, and the vertices $(s,0)$ are the bottom
row.

\begin{defn}
Given a $w$-ocean $G = G(W,S,w)$, we identify certain vertices
and induced subgraphs. Note how these generalize the classifications
in Definition~\ref{defn:marine model for S_n}, and note also the new
type ``wharf.''
\begin{itemize}
\item A \emph{small ascent} of $G$ is a vertex of the form $(s,1)$ or
$(s,0)$ such that $s$ is a small ascent of $w$ or $w^{-1}$
respectively.

\item A \emph{large ascent} of $G$ is a vertex of the form $(s,1)$ or
$(s,0)$ such that $s$ is a large ascent of $w$ or $w^{-1}$
respectively.

\item A \emph{float} is an isolated vertex in $G$.  They are all large
ascents of $G$ not adjacent to any small ascents.

\item A \emph{rope} is a large ascent of $G$ which is adjacent to
exactly one small ascent of $G$.

 \item A \emph{tether} is a large ascent of $G$ which is adjacent to
at least two small ascents of $G$.

\item A \emph{plank} in $G$ is an induced subgraph consisting of
exactly two small ascents of the form $\{(s,1),(t,0) \}$ such that $t
= w s w^{-1}$.

 \item A \emph{wharf} is a plank in $G$ such that at least one of its
two vertices is adjacent to at least three other vertices of $G$, all
on the same row, and at least two of the adjacent vertices are small
ascents of $G$. In addition, any plank can be designated as a wharf if at least
one of its vertices is adjacent to two other small ascents of $G$ (on the same
row). If $G$ contains cycles, then we will always choose a number of additional
wharfs so that the graph with tethers and wharfs deleted is acyclic.

 \item A \emph{raft} is a connected component of the subgraph of $G$ induced by
the planks which are not wharfs.  The \emph{size} of a raft is the number of
planks it contains, or equivalently half the number of vertices.  If a raft $R$
has size $m$, then it is isomorphic to the $w$-ocean for $w=e \in S_{m+1}$.
\end{itemize}
\end{defn}

Compared to the definition of the marine model in Section
\ref{sec:marinemodelforSn}, floats, ropes and rafts are essentially
the same as before. Tethers are also essentially the same, although
they can be adjacent to more than two small ascents when the degree of
the corresponding vertex of the Coxeter graph is at least three. It is
also now possible to have small ascents that are connected to more
than two small ascents, or to two small ascents and a large ascent,
which is where wharfs come in.  Examples of all of these objects are
given in the next sections.

\begin{remark}\label{rem:oceans and edge labels}
  Observe that the construction of the $w$-ocean does not depend on
  edge labels $m(s,t) \ge 3$ in the Coxeter graph of $(W,S)$.
\end{remark}

The $w$-ocean is a useful tool for studying the presentations
$(I,w,J)$ for all $(I,J) \in \asc(w^{-1}) \times \asc(w)$.  Each such
pair $(I,J)$ can be represented by the subgraph of the $w$-ocean
containing the vertices $\{(s_i,0): s_i \in I\} \cup \{(s_j,1): s_j \in J\}$.
We denote the selected vertices by filled dots and the unselected
vertices by open dots in the $w$-ocean as in the type $A$ case.  By a
slight abuse of notation, we will equate the subsets of
$\{(s_i,0): s_i \in I\}$ in the $w$-ocean with subsets of $I$ by the
natural bijection.  In particular, the connected components of $I$ as
an induced subgraph of the Coxeter graph are in natural bijection with
the components of $\{(s_i,0): s_i \in I\}$ as an induced subgraph of the
$w$-ocean.  Similarly, there is a natural bijection between subsets of
$J$ and $\{(s_j,1): s_j \in J\}$.

\begin{defn}\label{defn:plank.moves}
  Assume $w\in W$ and $(I,J) \in \asc(w^{-1}) \times \asc(w)$.  Let
  $I_0$ be a connected component of $I$ consisting entirely of small
  left ascents.  Then, each vertex in $I_0$ is the endpoint of a plank
  in the $w$-ocean.  Let $J_0 =w^{-1}I_0 w$ be the corresponding
  connected set of endpoints on the top row of the $w$-ocean so $J_0$
  consists entirely of small right ascents.  We define the following
  three types of \emph{plank moves} when applicable along with their
  analogs obtained from switching the roles of $I$ and $J$.

\begin{itemize}
\item Contraction move: $(I,J) \rightarrow (I\setminus I_0,J)$ provided
  $J_0 \subset J$.

\item Expansion move: $(I\setminus I_0,J) \rightarrow (I,J)$ provided
  $J_0 \subset J$.

\item Slide move: $(I,J) \rightarrow (I\setminus I_0,J \cup J_0)$
  provided $J \cap J_0 = \emptyset$ and $J_0$ is a connected component
 of $J\cup J_0$ on the top row of the $w$-ocean.
\end{itemize}
\end{defn}

\begin{thm}\label{thm:equivalence.moves}
  Let $(I,J),(I',J') \in \asc(w^{-1}) \times \asc(w)$.  Then,
  $W_I w W_J = W_{I'} w W_{J'}$ if and only if $(I,J)$ can be obtained
  from $(I',J')$ by plank moves.
\end{thm}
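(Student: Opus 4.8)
The plan is to prove the theorem by showing that plank moves generate exactly the equivalence relation ``$W_IwW_J = W_{I'}wW_{J'}$'' on pairs $(I,J) \in \asc(w^{-1}) \times \asc(w)$. The natural strategy is to reduce everything to the characterization of minimal presentations already established in Proposition~\ref{P:minimal}. First I would verify the easy direction: each of the three plank moves preserves the double coset. For the contraction and expansion moves this is the observation already used before Definition~\ref{D:minimal}, namely that if a connected component $I_0$ of $I$ satisfies $w^{-1}I_0 w = J_0 \subseteq J$, then $I_0 \subseteq (wJw^{-1})\cap S$, so $W_I w W_J = W_{I\setminus I_0}wW_J$. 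For the slide move, I would observe that the intermediate pair $(I, J\cup J_0)$ is an expansion of both $(I\setminus I_0, J\cup J_0)$ and $(I,J)$ (using $J_0 = w^{-1}I_0 w$ in the two roles), so the slide is a composition of an expansion followed by a contraction on the other side; hence it too preserves $C$. This shows that if $(I,J)$ and $(I',J')$ are related by plank moves, the cosets agree.

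For the converse --- the substantive direction --- I would first reduce an arbitrary pair to a minimal presentation using contraction moves. Given any $(I,J)$ with $W_IwW_J = C$, repeatedly contracting connected components of $I$ contained in $(wJw^{-1})\cap S$ and of $J$ contained in $(w^{-1}Iw)\cap S$ terminates (lengths strictly decrease) at a minimal presentation in the sense of Definition~\ref{D:minimal}. So it suffices to show that any two \emph{minimal} presentations of the same $C$ are connected by plank moves. This is exactly where Proposition~\ref{P:minimal} does the heavy lifting: it tells us that if $C = W_IwW_J$ and $C = W_{I'}wW_{J'}$ are both minimal, then
\[
 I' = \left(I \naunion wJ_1w^{-1}\naunion\cdots\naunion wJ_nw^{-1}\right)\setminus\bigcup_i I_i, \qquad J' = \left(J \naunion w^{-1}I_1w\naunion\cdots\naunion w^{-1}I_mw\right)\setminus\bigcup_j J_j,
\]
where the $I_i$ and $J_j$ are connected components of $I$ and $J$. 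I would then read this decomposition as a prescription for a sequence of slide moves: each component $I_i$ that is removed from $I$ and reinserted as $w^{-1}I_iw$ in $J'$ is precisely a slide move $(I,J)\to(I\setminus I_i, J\cup w^{-1}I_iw)$, and symmetrically each $J_j$ corresponds to a slide in the other direction. The key bookkeeping point is that the non-adjacency conditions recorded by $\naunion$ in Proposition~\ref{P:minimal} are exactly the hypotheses required to legally apply a slide move (that $J_0$ be a connected component of $J\cup J_0$, i.e.\ disjoint and non-adjacent from the rest).

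The main obstacle I anticipate is ordering the slide moves correctly and checking that each is applicable at the moment it is performed. Proposition~\ref{P:minimal} asserts the existence of the components but, as the remark after it stresses, the order of operations matters: $wJ_jw^{-1}$ must be non-adjacent to the $I_i$, and one must guarantee that when a slide is applied the relevant image is a full connected component (not merely a connected subset that happens to abut other filled vertices). I would handle this by doing the slides one component at a time and arguing inductively that after each slide the remaining target decomposition is still valid for the new pair --- essentially re-invoking Proposition~\ref{P:minimal} on the updated minimal presentation, or tracking that the components being moved are ``maximal'' in the sense that neither their source nor their destination picks up new adjacencies during the process. A clean way to organize this is to first move all components of $I$ that must migrate to $J'$ (in any order, since they are mutually non-adjacent and each is non-adjacent to its destination), then all components of $J$ that migrate to $I'$, and finally to check that no component needs to move in both directions (which would contradict minimality). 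Once this ordering is justified, the chain of slide moves carries $(I,J)$ to $(I',J')$, and combined with the contraction reductions at both ends this completes the converse and hence the theorem.
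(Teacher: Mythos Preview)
Your proposal is correct and follows essentially the same approach as the paper: verify that each plank move preserves the coset, reduce both pairs to minimal presentations via contractions, and then invoke Proposition~\ref{P:minimal} to conclude that the two minimal presentations differ by slides. Your factoring of a slide as an expansion followed by a contraction on the other side is a slight variant of the paper's direct computation $W_IwW_J = W_{I\setminus I_0}wW_{J_0}W_J$, and your careful discussion of the ordering of slides (using the non-adjacency encoded in $\naunion$) fills in a point the paper dispatches in a single sentence.
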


\begin{proof}
  Let $(I,J) \in \asc(w^{-1}) \times \asc(w)$, and let $I_0 \subset I$
  be a connected component of $I$ as an induced subgraph of the
  Coxeter graph.  Then, the elements of $W_{I\setminus I_0}$ commute
  with $W_{I_0}$ so $W_Iw = W_{I\setminus I_0} W_{I_0} w$.  If in
  addition each vertex in $I_0$ is a small ascent or equivalently
  adjacent to a plank, then $W_Iw=W_{I\setminus I_0} w W_{J_0}$ where
  $J_0=w^{-1}I_0 w$ is the set of vertices on the other side of the
  planks attached to $I_0$.  Thus,
  $W_I w W_J = W_{I\setminus I_0} w W_{J_0} W_J$.  The product
  $W_{J_0} W_J$ equals the parabolic subgroup $W_{J}$ if
  $J_0 \subset J$ (expansion/contraction move) or $W_{J_{0} \cup J}$
  provided $W_{J_0}$ and $W_J$ are commuting subgroups (slide move).
  Recall, the ladder condition is equivalent to saying $J_0$ is a
  connected component of the induced subgraph of the $w$-ocean on
  vertices $J\cup J_0$ and $J \cap J_0 = \emptyset$.  Thus, both
  slides and contractions preserve the corresponding double cosets so
  if $(I',J')$ is connected to $(I,J)$ by plank moves, then
  $W_I w W_J = W_{I'} w W_{J'}$.

  Conversely, assume $W_I w W_J = W_{I'} w W_{J'}$.  By applying all
  possible contraction moves on $(I,J)$ and $(I',J')$, we arrive at two
  minimal presentations for the same double coset.  Then by
  Proposition~\ref{P:minimal}, these two minimal presentations are
  connected by slide moves.
\end{proof}

\begin{cor}\label{cor:lex-min-fillings}
  For $w \in W$ and $(I,J) \in \asc(w^{-1}) \times \asc(w)$, the
  lex-minimal presentation of the parabolic double coset $W_I w W_J$
  is obtained from $(I,J)$ by applying all possible contraction moves
  and then applying all possible slide moves on the remaining
  components in the bottom row.
\end{cor}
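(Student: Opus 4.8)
The plan is to exploit the uniqueness clause of Theorem~\ref{T:minimal}: since every parabolic double coset has a \emph{unique} lex-minimal presentation, it suffices to check that the presentation produced by the stated procedure (i) presents the same coset $W_I w W_J$, and (ii) satisfies conditions (a) and (b) of Theorem~\ref{T:minimal}. Uniqueness then forces the output to be \emph{the} lex-minimal presentation, and in particular guarantees that the output does not depend on the order in which the moves are applied. Claim (i) is immediate from Theorem~\ref{thm:equivalence.moves}, since contractions and slides are plank moves. For (ii), recall the remark following Definition~\ref{D:minimal}: exhaustively applying contraction moves terminates at a minimal presentation, and by Definition~\ref{D:minimal} a minimal presentation already satisfies condition (a) of Theorem~\ref{T:minimal} together with the first clause of condition (b) (non-contractibility of the small components of $I$). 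So after the contraction phase these two conditions hold, and the only thing that can fail is the second clause of (b), namely that some small connected component $I_0$ of $I$ can still be slid to the top row.

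First I would observe that, for a fixed small component $I_0$ with image $J_0 = w^{-1} I_0 w$, a contraction move (which requires $J_0 \subseteq J$) and a slide move (which requires $J \cap J_0 = \emptyset$ with $J_0$ a separate component) are mutually exclusive, so the instruction ``contract, then slide'' is coherent: once no contraction is possible, slides neither create nor are blocked by pending contractions. Each bottom-to-top slide strictly decreases $|I|$, so the slide phase terminates. The key lemma to establish is that a bottom-to-top slide takes a minimal presentation to a minimal presentation and preserves condition (a). Minimality is preserved because a slide fixes the coset (Theorem~\ref{thm:equivalence.moves}) and preserves $|I| + |J|$ (as $|I_0| = |J_0|$), so by Proposition~\ref{P:minimal} the image still has minimum total size and is therefore minimal. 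For condition (a), after the slide we have $I' = I \setminus I_0$ and $J' = J \sqcup J_0$, where $J_0$ is a new connected component disjoint and non-adjacent to $J$; here one uses that conjugation by $w$ is a graph isomorphism between the small ascents of $w$ and those of $w^{-1}$ (as $m_{st} = m_{s't'}$ for small ascents $s,t$, with $s' = wsw^{-1}$ and $t' = wtw^{-1}$), so that distinct bottom components have non-adjacent images on the top row. Since $w^{-1} I' w \subseteq w^{-1} I w$ is only smaller, no old component of $J$ can become contained in it, and $J_0$ is the injective image of $I_0$ and hence disjoint from $w^{-1} I' w$; thus no component of $J'$ lies in $w^{-1} I' w$, which is exactly condition (a).

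Finally, at the end of the slide phase no bottom-to-top slide is available, which by Definition~\ref{defn:plank.moves} means that for every small connected component $I_0$ of the final $I$ the set $J_0 = w^{-1} I_0 w$ is \emph{not} disjoint and non-adjacent to $J$; that is, some element of $J$ is adjacent to or contained in $w^{-1} I_0 w$. Together with the surviving minimality (which gives condition (a) and $w^{-1} I_0 w \not\subseteq J$), this is precisely the second clause of condition (b) of Theorem~\ref{T:minimal}. Hence the output satisfies all the conditions of Theorem~\ref{T:minimal} and, by uniqueness, equals the lex-minimal presentation. I expect the main obstacle to be the bookkeeping in the slide lemma: verifying simultaneously that minimality and condition (a) are preserved, and matching the precise ``disjoint, non-adjacent, separate component'' hypothesis of a slide move against the ``adjacent to or contained in'' phrasing of Theorem~\ref{T:minimal}(b). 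The graph-isomorphism property of conjugation by $w$ on small ascents is what makes this matching clean.
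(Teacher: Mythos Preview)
Your proposal is correct and follows the same approach as the paper, which simply says the statement ``follows directly from Theorem~\ref{T:minimal} and Theorem~\ref{thm:equivalence.moves}.'' You have supplied the details the paper omits---in particular, the clean observation that slides preserve minimality via Proposition~\ref{P:minimal} (since $|I|+|J|$ is unchanged), which immediately gives preservation of condition~(a) and makes your separate verification of it redundant but harmless.
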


\begin{proof}
  The statement follows directly from Theorem~\ref{T:minimal} and
  Theorem~\ref{thm:equivalence.moves}.
\end{proof}

\begin{example}\label{ex:plank.moves}
  Consider the permutation
\[w = 2 \ 3 \ 4 \ \cdots \ 15 \ 16 \ 1 \ \in S_{16}\]
and the
  parabolic double coset $W_I w W_J$ with
\[I=\{s_i : i \in \{2,3,5,6,7,8,9,11,12,14,15\}\} \text{\ and \ }
  J=\{s_j : j \in \{1,2,3,4,5,7,9,10\}\}.\]
 Figure~\ref{fig:slide.1} shows the
  connected components of $I$ and $J$ as induced subgraphs of the
  $w$-ocean on the bottom and top rows respectively.  The arrows
  indicate possible plank moves; from left to right we see two
  contractions and a slide.  Applying these plank moves to $(I,J)$
  results in the pair $(I',J')$ for
\[I'=\{s_i : i \in \{5,6,7,8,9,11,12\}\} \text{\ and \ }
  J'=\{s_j : j \in \{1,2,3,4,5,9,10,13,14\}\}\]
whose connected components are shown
  in Figure~\ref{fig:slide.2}.  Thus, by
  Theorem~\ref{thm:equivalence.moves}, $W_I w W_J = W_{I'} w W_{J'}$.
  One can also verify that $(I',J')$ is a lex-minimal pair for $w$ by
  Corollary~\ref{cor:lex-min-fillings} or directly from
  Theorem~\ref{T:minimal}.
\end{example}

\begin{figure}
\begin{tikzpicture}[scale = .7]
    \draw[fill,color=myblue,rounded corners] (.75,.8) rectangle (5.25,1.2);
    \draw[fill,color=myblue,rounded corners] (6.75,.8) rectangle (7.25,1.2);
    \draw[fill,color=myblue,rounded corners] (8.75,.8) rectangle (10.25,1.2);

    \draw[fill,color=myyellow,rounded corners] (1.75,-.2) rectangle (3.25,.2);
    \draw[fill,color=myyellow,rounded corners] (4.75,-.2) rectangle (9.25,.2);
    \draw[fill,color=myyellow,rounded corners] (10.75,-.2) rectangle (12.25,.2);
    \draw[fill,color=myyellow,rounded corners] (13.75,-.2) rectangle (15.25,.2);

    \draw (1,1) -- (14,1);
    \draw (2,0) -- (15,0);

    \foreach \x in {1,...,14}
        \draw (\x,1) -- (\x+1,0);

    \foreach \x in {1,...,15} {
        \draw[fill=white] (\x,1) circle (.3ex);
        \draw[fill=white] (\x,0) circle (.3ex);
    }

    \foreach \x in {1,2,3,4,5,7,9,10}
        \draw[fill=black] (\x,1) circle (.3ex);

    \foreach \x in {2,3,5,6,7,8,9,11,12,14,15}
        \draw[fill=black] (\x,0) circle (.3ex);

    \draw (15,1) node[cross=.5ex]{};
    \draw (1,0) node[cross=.5ex]{};

    \draw[line width=1mm,color=black,->] (2.25,0.25) -- (1.75,.75);
    \draw[line width=1mm,color=black,->] (7.1,0.75) -- (7.6,.25);
    \draw[line width=1mm,color=black,->] (14.25,0.25) -- (13.75,.75);
\end{tikzpicture}
\caption{ For Example~\ref{ex:plank.moves},
  the connected components of $(I,J)$ along with three possible plank
  moves are shown on the $w$-ocean.}
\label{fig:slide.1}
\end{figure}
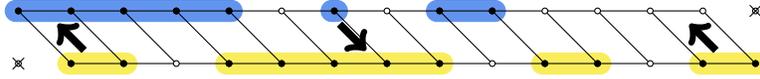

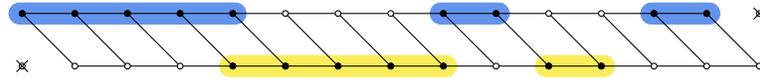
\begin{figure}
\begin{tikzpicture}[scale = .7]
    \draw[fill,color=myblue,rounded corners] (.75,.8) rectangle (5.25,1.2);
    \draw[fill,color=myblue,rounded corners] (8.75,.8) rectangle (10.25,1.2);
    \draw[fill,color=myblue,rounded corners] (12.75,.8) rectangle (14.25,1.2);

    \draw[fill,color=myyellow,rounded corners] (4.75,-.2) rectangle (9.25,.2);
    \draw[fill,color=myyellow,rounded corners] (10.75,-.2) rectangle (12.25,.2);

    \draw (1,1) -- (14,1);
    \draw (2,0) -- (15,0);

    \foreach \x in {1,...,14}
        \draw (\x,1) -- (\x+1,0);

    \foreach \x in {1,...,15} {
        \draw[fill=white] (\x,1) circle (.3ex);
        \draw[fill=white] (\x,0) circle (.3ex);
    }

    \foreach \x in {1,2,3,4,5,9,10,13,14}
        \draw[fill=black] (\x,1) circle (.3ex);

    \foreach \x in {5,6,7,8,9,11,12}
        \draw[fill=black] (\x,0) circle (.3ex);

    \draw (15,1) node[cross=.5ex]{};
    \draw (1,0) node[cross=.5ex]{};
\end{tikzpicture}
\caption{ For Example~\ref{ex:plank.moves}, the connected components
  of $(I',J')$ are shown on the $w$-ocean.  This pair is lex-minimal.}
\label{fig:slide.2}
\end{figure}

\subsection{Lex-minimal presentations near wharfs in star Coxeter groups}\label{sub:wharfs}

As a warm-up, let $(W,S)$ be a Coxeter system whose Coxeter graph is a star,
specifically $s_1$ is the central vertex and $s_2,\ldots, s_n$ for
$n\geq 4$ are adjacent to $s_1$ and nothing else.  Consider the
identity element $e \in W$.  Every $1\leq i \leq n$ indexes a small
ascent for $e$, and thus $\{(s_{1},0), (s_{1},1) \}$ is a wharf in
the $e$-ocean.  In this subsection, we study the enumeration of
lex-minimal presentations $(I,e,J)$ for such Coxeter groups in order
to determine the valid neighborhoods for wharfs in lex-minimal
presentations for any $w$ and any Coxeter group.

Partition the set of lex-minimal presentations $(I,e,J)$ according
to which nodes are selected at the wharf. We have four choices: we either
include or exclude the node on the upper level, and we either include or
exclude the node on the lower level.  Denote these four options by \omo, \omx,
\xmo, \xmx\ where again a filled dot means that it is selected to be in $I$ or
$J$ depending on if it is on the bottom or on the top, respectively.

By Theorem~\ref{T:minimal}, the allowed pairs for lex-minimal
pairs $(I, J)\subset S \times S$ for $e$ in the neighborhood of a wharf
are characterized as follows.
\begin{enumerate}[(a)]
\item If $s_1\in I$, then $I$ and $J$ must be incomparable in subset
  order.
\item If $s_1\not \in I$, then $J$ can be any subset, and either $I$ is
  empty or $I$ is not comparable to $J$.
\end{enumerate}

\begin{example} Consider the case of a star Coxeter graph with four
vertices. If the edge labels are all 3, this Coxeter group is type
$D_4$, and $c_{e} =72$.  Up to symmetry of the three leaves around
the central vertex, there are twenty-four distinct types of allowable
lex-minimal presentations $(I,e,J)$ as shown in Figure~\ref{fig:d4}.
\end{example}

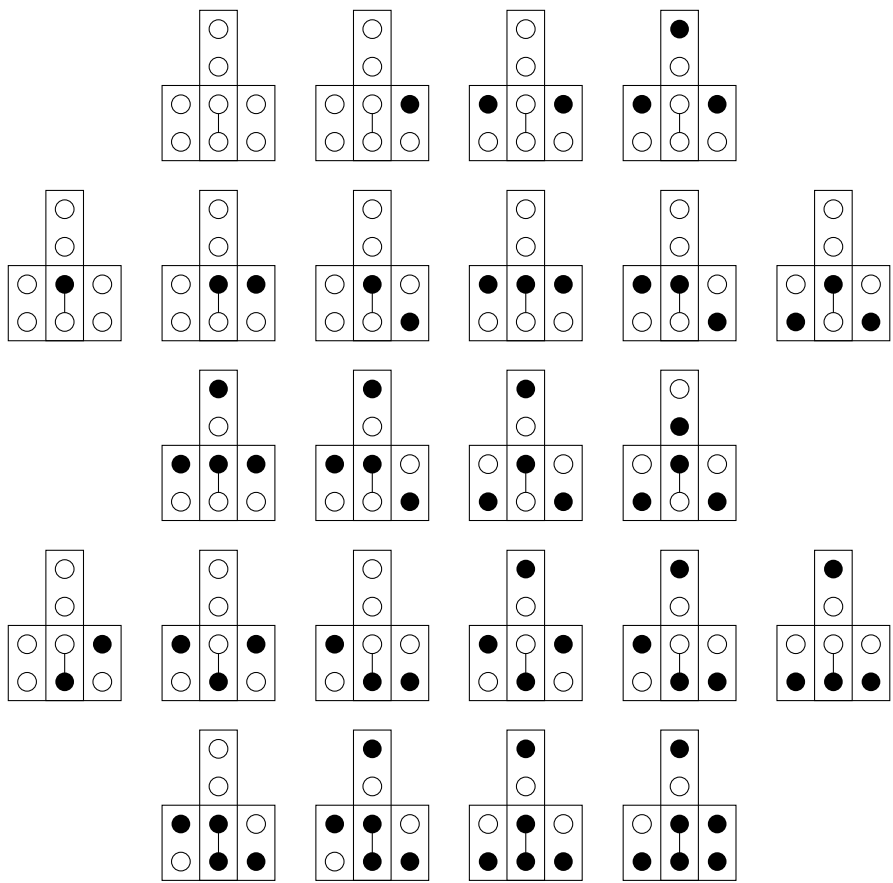
\begin{figure}[htbp]
 \begin{tikzpicture}[scale=.5,baseline=.1cm]
    \draw (-.5,-.5) rectangle (2.5,1.5);
    \draw (.5,-.5) rectangle (1.5,3.5);
     \draw (1,0.15) -- (1,0.85);
      \draw (1,0) node [circle, draw=black,fill=white, inner sep=2.5] {};
    \draw (1,1) node [circle, draw=black,fill=white, inner sep=2.5] {};
  \draw (1,2) node [circle, draw=black,fill=white, inner sep=2.5] {};
  \draw (1,3) node [circle, draw=black,fill=white, inner sep=2.5] {};
  \draw (0,0) node [circle, draw=black,fill=white, inner sep=2.5] {};
  \draw (0,1) node [circle, draw=black,fill=white, inner sep=2.5] {};
  \draw (2,0) node [circle, draw=black,fill=white, inner sep=2.5] {};
  \draw (2,1) node [circle, draw=black,fill=white, inner sep=2.5] {};
\end{tikzpicture}
\hspace{.1in}
 \begin{tikzpicture}[scale=.5,baseline=.1cm]
    \draw (-.5,-.5) rectangle (2.5,1.5);
    \draw (.5,-.5) rectangle (1.5,3.5);
     \draw (1,0.15) -- (1,0.85);
      \draw (1,0) node [circle, draw=black,fill=white, inner sep=2.5] {};
    \draw (1,1) node [circle, draw=black,fill=white, inner sep=2.5] {};
  \draw (1,2) node [circle, draw=black,fill=white, inner sep=2.5] {};
  \draw (1,3) node [circle, draw=black,fill=white, inner sep=2.5] {};
  \draw (0,0) node [circle, draw=black,fill=white, inner sep=2.5] {};
  \draw (0,1) node [circle, draw=black,fill=white, inner sep=2.5] {};
  \draw (2,0) node [circle, draw=black,fill=white, inner sep=2.5] {};
  \draw (2,1) node [circle,fill=black, inner sep=2.5] {};
\end{tikzpicture}
\hspace{.1in}
 \begin{tikzpicture}[scale=.5,baseline=.1cm]
    \draw (-.5,-.5) rectangle (2.5,1.5);
    \draw (.5,-.5) rectangle (1.5,3.5);
     \draw (1,0.15) -- (1,0.85);
      \draw (1,0) node [circle, draw=black,fill=white, inner sep=2.5] {};
    \draw (1,1) node [circle, draw=black,fill=white, inner sep=2.5] {};
  \draw (1,2) node [circle, draw=black,fill=white, inner sep=2.5] {};
  \draw (1,3) node [circle, draw=black,fill=white, inner sep=2.5] {};
  \draw (0,0) node [circle, draw=black,fill=white, inner sep=2.5] {};
  \draw (0,1) node [circle,fill=black, inner sep=2.5] {};
  \draw (2,0) node [circle, draw=black,fill=white, inner sep=2.5] {};
  \draw (2,1) node [circle,fill=black, inner sep=2.5] {};
\end{tikzpicture}
\hspace{.1in}
 \begin{tikzpicture}[scale=.5,baseline=.1cm]
    \draw (-.5,-.5) rectangle (2.5,1.5);
    \draw (.5,-.5) rectangle (1.5,3.5);
     \draw (1,0.15) -- (1,0.85);
      \draw (1,0) node [circle, draw=black,fill=white, inner sep=2.5] {};
    \draw (1,1) node [circle, draw=black,fill=white, inner sep=2.5] {};
  \draw (1,2) node [circle, draw=black,fill=white, inner sep=2.5] {};
  \draw (1,3) node [circle,fill=black, inner sep=2.5] {};
  \draw (0,0) node [circle, draw=black,fill=white, inner sep=2.5] {};
  \draw (0,1) node [circle,fill=black, inner sep=2.5] {};
  \draw (2,0) node [circle, draw=black,fill=white, inner sep=2.5] {};
  \draw (2,1) node [circle,fill=black, inner sep=2.5] {};
\end{tikzpicture}

\bigskip
 \begin{tikzpicture}[scale=.5,baseline=.1cm]
    \draw (-.5,-.5) rectangle (2.5,1.5);
    \draw (.5,-.5) rectangle (1.5,3.5);
     \draw (1,0.15) -- (1,0.85);
      \draw (1,0) node [circle, draw=black,fill=white, inner sep=2.5] {};
    \draw (1,1) node [circle,fill=black, inner sep=2.5] {};
  \draw (1,2) node [circle, draw=black,fill=white, inner sep=2.5] {};
  \draw (1,3) node [circle, draw=black,fill=white, inner sep=2.5] {};
  \draw (0,0) node [circle, draw=black,fill=white, inner sep=2.5] {};
  \draw (0,1) node [circle, draw=black,fill=white, inner sep=2.5] {};
  \draw (2,0) node [circle, draw=black,fill=white, inner sep=2.5] {};
  \draw (2,1) node [circle, draw=black,fill=white, inner sep=2.5] {};
\end{tikzpicture}
\hspace{.1in}
 \begin{tikzpicture}[scale=.5,baseline=.1cm]
    \draw (-.5,-.5) rectangle (2.5,1.5);
    \draw (.5,-.5) rectangle (1.5,3.5);
     \draw (1,0.15) -- (1,0.85);
      \draw (1,0) node [circle, draw=black,fill=white, inner sep=2.5] {};
    \draw (1,1) node [circle,fill=black, inner sep=2.5] {};
  \draw (1,2) node [circle, draw=black,fill=white, inner sep=2.5] {};
  \draw (1,3) node [circle, draw=black,fill=white, inner sep=2.5] {};
  \draw (0,0) node [circle, draw=black,fill=white, inner sep=2.5] {};
  \draw (0,1) node [circle, draw=black,fill=white, inner sep=2.5] {};
  \draw (2,0) node [circle, draw=black,fill=white, inner sep=2.5] {};
  \draw (2,1) node [circle,fill=black, inner sep=2.5] {};
\end{tikzpicture}
\hspace{.1in}
 \begin{tikzpicture}[scale=.5,baseline=.1cm]
    \draw (-.5,-.5) rectangle (2.5,1.5);
    \draw (.5,-.5) rectangle (1.5,3.5);
     \draw (1,0.15) -- (1,0.85);
      \draw (1,0) node [circle, draw=black,fill=white, inner sep=2.5] {};
    \draw (1,1) node [circle,fill=black, inner sep=2.5] {};
  \draw (1,2) node [circle, draw=black,fill=white, inner sep=2.5] {};
  \draw (1,3) node [circle, draw=black,fill=white, inner sep=2.5] {};
  \draw (0,0) node [circle, draw=black,fill=white, inner sep=2.5] {};
  \draw (0,1) node [circle, draw=black,fill=white, inner sep=2.5] {};
  \draw (2,0) node [circle,fill=black, inner sep=2.5] {};
  \draw (2,1) node [circle, draw=black,fill=white, inner sep=2.5] {};
\end{tikzpicture}
\hspace{.1in}
 \begin{tikzpicture}[scale=.5,baseline=.1cm]
    \draw (-.5,-.5) rectangle (2.5,1.5);
    \draw (.5,-.5) rectangle (1.5,3.5);
     \draw (1,0.15) -- (1,0.85);
      \draw (1,0) node [circle, draw=black,fill=white, inner sep=2.5] {};
    \draw (1,1) node [circle,fill=black, inner sep=2.5] {};
  \draw (1,2) node [circle, draw=black,fill=white, inner sep=2.5] {};
  \draw (1,3) node [circle, draw=black,fill=white, inner sep=2.5] {};
  \draw (0,0) node [circle, draw=black,fill=white, inner sep=2.5] {};
  \draw (0,1) node [circle,fill=black, inner sep=2.5] {};
  \draw (2,0) node [circle, draw=black,fill=white, inner sep=2.5] {};
  \draw (2,1) node [circle,fill=black, inner sep=2.5] {};
 \end{tikzpicture}
 \hspace{.1in}
 \begin{tikzpicture}[scale=.5,baseline=.1cm]
    \draw (-.5,-.5) rectangle (2.5,1.5);
    \draw (.5,-.5) rectangle (1.5,3.5);
     \draw (1,0.15) -- (1,0.85);
      \draw (1,0) node [circle, draw=black,fill=white, inner sep=2.5] {};
    \draw (1,1) node [circle,fill=black, inner sep=2.5] {};
  \draw (1,2) node [circle, draw=black,fill=white, inner sep=2.5] {};
  \draw (1,3) node [circle, draw=black,fill=white, inner sep=2.5] {};
  \draw (0,0) node [circle, draw=black,fill=white, inner sep=2.5] {};
  \draw (0,1) node [circle,fill=black, inner sep=2.5] {};
  \draw (2,0) node [circle,fill=black, inner sep=2.5] {};
  \draw (2,1) node [circle, draw=black,fill=white, inner sep=2.5] {};
\end{tikzpicture}
\hspace{.1in}
 \begin{tikzpicture}[scale=.5,baseline=.1cm]
    \draw (-.5,-.5) rectangle (2.5,1.5);
    \draw (.5,-.5) rectangle (1.5,3.5);
     \draw (1,0.15) -- (1,0.85);
      \draw (1,0) node [circle, draw=black,fill=white, inner sep=2.5] {};
    \draw (1,1) node [circle,fill=black, inner sep=2.5] {};
  \draw (1,2) node [circle, draw=black,fill=white, inner sep=2.5] {};
  \draw (1,3) node [circle, draw=black,fill=white, inner sep=2.5] {};
  \draw (0,0) node [circle,fill=black, inner sep=2.5] {};
  \draw (0,1) node [circle, draw=black,fill=white, inner sep=2.5] {};
  \draw (2,0) node [circle,fill=black, inner sep=2.5] {};
  \draw (2,1) node [circle, draw=black,fill=white, inner sep=2.5] {};
\end{tikzpicture}

\bigskip
 \begin{tikzpicture}[scale=.5,baseline=.1cm]
    \draw (-.5,-.5) rectangle (2.5,1.5);
    \draw (.5,-.5) rectangle (1.5,3.5);
     \draw (1,0.15) -- (1,0.85);
      \draw (1,0) node [circle, draw=black,fill=white, inner sep=2.5] {};
    \draw (1,1) node [circle,fill=black, inner sep=2.5] {};
  \draw (1,2) node [circle, draw=black,fill=white, inner sep=2.5] {};
  \draw (1,3) node [circle,fill=black, inner sep=2.5] {};
  \draw (0,0) node [circle, draw=black,fill=white, inner sep=2.5] {};
  \draw (0,1) node [circle,fill=black, inner sep=2.5] {};
  \draw (2,0) node [circle, draw=black,fill=white, inner sep=2.5] {};
  \draw (2,1) node [circle,fill=black, inner sep=2.5] {};
\end{tikzpicture}
\hspace{.1in}
 \begin{tikzpicture}[scale=.5,baseline=.1cm]
    \draw (-.5,-.5) rectangle (2.5,1.5);
    \draw (.5,-.5) rectangle (1.5,3.5);
     \draw (1,0.15) -- (1,0.85);
      \draw (1,0) node [circle, draw=black,fill=white, inner sep=2.5] {};
    \draw (1,1) node [circle,fill=black, inner sep=2.5] {};
  \draw (1,2) node [circle, draw=black,fill=white, inner sep=2.5] {};
  \draw (1,3) node [circle,fill=black, inner sep=2.5] {};
  \draw (0,0) node [circle, draw=black,fill=white, inner sep=2.5] {};
  \draw (0,1) node [circle,fill=black, inner sep=2.5] {};
  \draw (2,0) node [circle,fill=black, inner sep=2.5] {};
  \draw (2,1) node [circle, draw=black,fill=white, inner sep=2.5] {};
\end{tikzpicture}
\hspace{.1in}
 \begin{tikzpicture}[scale=.5,baseline=.1cm]
    \draw (-.5,-.5) rectangle (2.5,1.5);
    \draw (.5,-.5) rectangle (1.5,3.5);
     \draw (1,0.15) -- (1,0.85);
      \draw (1,0) node [circle, draw=black,fill=white, inner sep=2.5] {};
    \draw (1,1) node [circle,fill=black, inner sep=2.5] {};
  \draw (1,2) node [circle, draw=black,fill=white, inner sep=2.5] {};
  \draw (1,3) node [circle,fill=black, inner sep=2.5] {};
  \draw (0,0) node [circle,fill=black, inner sep=2.5] {};
  \draw (0,1) node [circle, draw=black,fill=white, inner sep=2.5] {};
  \draw (2,0) node [circle,fill=black, inner sep=2.5] {};
  \draw (2,1) node [circle, draw=black,fill=white, inner sep=2.5] {};
\end{tikzpicture}
\hspace{.1in}
 \begin{tikzpicture}[scale=.5,baseline=.1cm]
    \draw (-.5,-.5) rectangle (2.5,1.5);
    \draw (.5,-.5) rectangle (1.5,3.5);
     \draw (1,0.15) -- (1,0.85);
      \draw (1,0) node [circle, draw=black,fill=white, inner sep=2.5] {};
    \draw (1,1) node [circle,fill=black, inner sep=2.5] {};
  \draw (1,2) node [circle,fill=black, inner sep=2.5] {};
  \draw (1,3) node [circle, draw=black,fill=white, inner sep=2.5] {};
  \draw (0,0) node [circle,fill=black, inner sep=2.5] {};
  \draw (0,1) node [circle, draw=black,fill=white, inner sep=2.5] {};
  \draw (2,0) node [circle,fill=black, inner sep=2.5] {};
  \draw (2,1) node [circle, draw=black,fill=white, inner sep=2.5] {};
\end{tikzpicture}

\bigskip
 \begin{tikzpicture}[scale=.5,baseline=.1cm]
    \draw (-.5,-.5) rectangle (2.5,1.5);
    \draw (.5,-.5) rectangle (1.5,3.5);
     \draw (1,0.15) -- (1,0.85);
      \draw (1,0) node [circle,fill=black, inner sep=2.5] {};
    \draw (1,1) node [circle, draw=black,fill=white, inner sep=2.5] {};
  \draw (1,2) node [circle, draw=black,fill=white, inner sep=2.5] {};
  \draw (1,3) node [circle, draw=black,fill=white, inner sep=2.5] {};
  \draw (0,0) node [circle, draw=black,fill=white, inner sep=2.5] {};
  \draw (0,1) node [circle, draw=black,fill=white, inner sep=2.5] {};
  \draw (2,0) node [circle, draw=black,fill=white, inner sep=2.5] {};
  \draw (2,1) node [circle,fill=black, inner sep=2.5] {};
\end{tikzpicture}
\hspace{.1in}
 \begin{tikzpicture}[scale=.5,baseline=.1cm]
    \draw (-.5,-.5) rectangle (2.5,1.5);
    \draw (.5,-.5) rectangle (1.5,3.5);
     \draw (1,0.15) -- (1,0.85);
      \draw (1,0) node [circle,fill=black, inner sep=2.5] {};
    \draw (1,1) node [circle, draw=black,fill=white, inner sep=2.5] {};
  \draw (1,2) node [circle, draw=black,fill=white, inner sep=2.5] {};
  \draw (1,3) node [circle, draw=black,fill=white, inner sep=2.5] {};
  \draw (0,0) node [circle, draw=black,fill=white, inner sep=2.5] {};
  \draw (0,1) node [circle,fill=black, inner sep=2.5] {};
  \draw (2,0) node [circle, draw=black,fill=white, inner sep=2.5] {};
  \draw (2,1) node [circle,fill=black, inner sep=2.5] {};
\end{tikzpicture}
\hspace{.1in}
 \begin{tikzpicture}[scale=.5,baseline=.1cm]
    \draw (-.5,-.5) rectangle (2.5,1.5);
    \draw (.5,-.5) rectangle (1.5,3.5);
     \draw (1,0.15) -- (1,0.85);
      \draw (1,0) node [circle,fill=black, inner sep=2.5] {};
    \draw (1,1) node [circle, draw=black,fill=white, inner sep=2.5] {};
  \draw (1,2) node [circle, draw=black,fill=white, inner sep=2.5] {};
  \draw (1,3) node [circle, draw=black,fill=white, inner sep=2.5] {};
  \draw (0,0) node [circle, draw=black,fill=white, inner sep=2.5] {};
  \draw (0,1) node [circle,fill=black, inner sep=2.5] {};
  \draw (2,0) node [circle,fill=black, inner sep=2.5] {};
  \draw (2,1) node [circle, draw=black,fill=white, inner sep=2.5] {};
\end{tikzpicture}
\hspace{.1in}
 \begin{tikzpicture}[scale=.5,baseline=.1cm]
    \draw (-.5,-.5) rectangle (2.5,1.5);
    \draw (.5,-.5) rectangle (1.5,3.5);
     \draw (1,0.15) -- (1,0.85);
      \draw (1,0) node [circle,fill=black, inner sep=2.5] {};
    \draw (1,1) node [circle, draw=black,fill=white, inner sep=2.5] {};
  \draw (1,2) node [circle, draw=black,fill=white, inner sep=2.5] {};
  \draw (1,3) node [circle,fill=black, inner sep=2.5] {};
  \draw (0,0) node [circle, draw=black,fill=white, inner sep=2.5] {};
  \draw (0,1) node [circle,fill=black, inner sep=2.5] {};
  \draw (2,0) node [circle, draw=black,fill=white, inner sep=2.5] {};
  \draw (2,1) node [circle,fill=black, inner sep=2.5] {};
\end{tikzpicture}
\hspace{.1in}
 \begin{tikzpicture}[scale=.5,baseline=.1cm]
    \draw (-.5,-.5) rectangle (2.5,1.5);
    \draw (.5,-.5) rectangle (1.5,3.5);
     \draw (1,0.15) -- (1,0.85);
      \draw (1,0) node [circle,fill=black, inner sep=2.5] {};
    \draw (1,1) node [circle, draw=black,fill=white, inner sep=2.5] {};
  \draw (1,2) node [circle, draw=black,fill=white, inner sep=2.5] {};
  \draw (1,3) node [circle,fill=black, inner sep=2.5] {};
  \draw (0,0) node [circle, draw=black,fill=white, inner sep=2.5] {};
  \draw (0,1) node [circle,fill=black, inner sep=2.5] {};
  \draw (2,0) node [circle,fill=black, inner sep=2.5] {};
  \draw (2,1) node [circle, draw=black,fill=white, inner sep=2.5] {};
\end{tikzpicture}
\hspace{.1in}
 \begin{tikzpicture}[scale=.5,baseline=.1cm]
    \draw (-.5,-.5) rectangle (2.5,1.5);
    \draw (.5,-.5) rectangle (1.5,3.5);
     \draw (1,0.15) -- (1,0.85);
      \draw (1,0) node [circle,fill=black, inner sep=2.5] {};
    \draw (1,1) node [circle, draw=black,fill=white, inner sep=2.5] {};
  \draw (1,2) node [circle, draw=black,fill=white, inner sep=2.5] {};
  \draw (1,3) node [circle,fill=black, inner sep=2.5] {};
  \draw (0,0) node [circle,fill=black, inner sep=2.5] {};
  \draw (0,1) node [circle, draw=black,fill=white, inner sep=2.5] {};
  \draw (2,0) node [circle,fill=black, inner sep=2.5] {};
  \draw (2,1) node [circle, draw=black,fill=white, inner sep=2.5] {};
\end{tikzpicture}

\bigskip

 \begin{tikzpicture}[scale=.5,baseline=.1cm]
    \draw (-.5,-.5) rectangle (2.5,1.5);
    \draw (.5,-.5) rectangle (1.5,3.5);
     \draw (1,0.15) -- (1,0.85);
      \draw (1,0) node [circle,fill=black, inner sep=2.5] {};
    \draw (1,1) node [circle,fill=black, inner sep=2.5] {};
  \draw (1,2) node [circle, draw=black,fill=white, inner sep=2.5] {};
  \draw (1,3) node [circle, draw=black,fill=white, inner sep=2.5] {};
  \draw (0,0) node [circle, draw=black,fill=white, inner sep=2.5] {};
  \draw (0,1) node [circle,fill=black, inner sep=2.5] {};
  \draw (2,0) node [circle,fill=black, inner sep=2.5] {};
  \draw (2,1) node [circle, draw=black,fill=white, inner sep=2.5] {};
\end{tikzpicture}
\hspace{.1in}
 \begin{tikzpicture}[scale=.5,baseline=.1cm]
    \draw (-.5,-.5) rectangle (2.5,1.5);
    \draw (.5,-.5) rectangle (1.5,3.5);
     \draw (1,0.15) -- (1,0.85);
      \draw (1,0) node [circle,fill=black, inner sep=2.5] {};
    \draw (1,1) node [circle,fill=black, inner sep=2.5] {};
  \draw (1,2) node [circle, draw=black,fill=white, inner sep=2.5] {};
  \draw (1,3) node [circle,fill=black, inner sep=2.5] {};
  \draw (0,0) node [circle, draw=black,fill=white, inner sep=2.5] {};
  \draw (0,1) node [circle,fill=black, inner sep=2.5] {};
  \draw (2,0) node [circle,fill=black, inner sep=2.5] {};
  \draw (2,1) node [circle, draw=black,fill=white, inner sep=2.5] {};
\end{tikzpicture}
\hspace{.1in}
 \begin{tikzpicture}[scale=.5,baseline=.1cm]
    \draw (-.5,-.5) rectangle (2.5,1.5);
    \draw (.5,-.5) rectangle (1.5,3.5);
     \draw (1,0.15) -- (1,0.85);
      \draw (1,0) node [circle,fill=black, inner sep=2.5] {};
    \draw (1,1) node [circle,fill=black, inner sep=2.5] {};
  \draw (1,2) node [circle, draw=black,fill=white, inner sep=2.5] {};
  \draw (1,3) node [circle,fill=black, inner sep=2.5] {};
  \draw (0,0) node [circle,fill=black, inner sep=2.5] {};
  \draw (0,1) node [circle, draw=black,fill=white, inner sep=2.5] {};
  \draw (2,0) node [circle,fill=black, inner sep=2.5] {};
  \draw (2,1) node [circle, draw=black,fill=white, inner sep=2.5] {};
\end{tikzpicture}
\hspace{.1in}
 \begin{tikzpicture}[scale=.5,baseline=.1cm]
    \draw (-.5,-.5) rectangle (2.5,1.5);
    \draw (.5,-.5) rectangle (1.5,3.5);
     \draw (1,0.15) -- (1,0.85);
      \draw (1,0) node [circle,fill=black, inner sep=2.5] {};
    \draw (1,1) node [circle,fill=black, inner sep=2.5] {};
  \draw (1,2) node [circle, draw=black,fill=white, inner sep=2.5] {};
  \draw (1,3) node [circle,fill=black, inner sep=2.5] {};
  \draw (0,0) node [circle,fill=black, inner sep=2.5] {};
  \draw (0,1) node [circle, draw=black,fill=white, inner sep=2.5] {};
  \draw (2,0) node [circle,fill=black, inner sep=2.5] {};
  \draw (2,1) node [circle,fill=black, inner sep=2.5] {};
\end{tikzpicture}
\caption{Lex-minimal pairs for $D_4$, up to symmetry around the central
tile.}\label{fig:d4}
\end{figure}

\subsection{Enumeration for Coxeter groups}

Given an element $w$ in a general Coxeter group $W$, the $w$-ocean
formed on its ascents consists of rafts, tethers, ropes, floats, and
wharfs.  To enumerate the lex-minimal pairs
$(I,J) \in \asc(w^{-1}) \times \asc(w)$, we partition the set of
lex-minimal pairs according to which tethers, ropes, and wharfs are
included in $I$ and $J$. Every possible selection of tethers, ropes,
and wharfs leads to a nonempty set of lex-minimal pairs. After fixing
such a selection, we further partition the set of lex-minimal pairs by
adding local conditions around the wharfs which allow us to once again
reduce the enumeration to the fillings of rafts avoiding the forbidden
patterns in Figure~\ref{fig:avoid} and the automaton in
Figure~\ref{fig:aut}.

The purpose of this section is to describe this process in detail, starting
with the selection of tethers, ropes and wharfs. For each tether and rope, we
have two choices: we either include it or not, as in the case of $S_n$.
For each wharf, we have four choices: we either include or exclude the node in
the upper row, and we either include or exclude the node in the lower row. As
with $S_n$, we indicate our choices by filling in the vertices on the
$w$-ocean.  For the sake of discussion, we will denote our selection by $C$.

After choosing $C$, we need to make additional choices on the way that the
rafts are filled near wharfs. These conditions are recorded on a new graph
constructed from the $w$-ocean, which we call a harbor. Before we define a
harbor, consider the rafts in the $w$-ocean. It is convenient to think of rafts
as a path of planks, each of which has a node at the top and at the bottom. If
a raft has size greater than one, then it has two distinct endplanks, and each
of these endplanks can be adjacent to a wharf, or to some collection of ropes
and tethers on the top and/or bottom. Note that, in this case, if one of these
endplanks is adjacent to a wharf then it cannot be adjacent to a rope or a
tether, since that would make it a wharf.  For rafts consisting of a single
plank, then that plank can either be adjacent to two wharfs, or to at most one
wharf and some (possibly empty) collection of ropes and/or tethers. To treat
rafts in a uniform fashion, we will think of rafts of size one as having two
logical endplanks. In this way, we can divide adjacent wharfs, ropes, and
tethers among the two logical endplanks so that no endplank is adjacent to more
than one wharf, and no endplank is adjacent to both a wharf and a rope or
tether.

Roughly speaking, a harbor graph is defined by thinking of the rafts
as edges in a new graph. We now make this precise:
\begin{defn}\label{defn:harbor}
    For a choice $C$ of fillings of the vertices in the $w$-ocean corresponding
    to tethers, ropes, and wharfs, we define a simple graph $H_{C}$, called
    a \emph{harbor}, as follows: First, the harbor has a vertex for each
    wharf of the $w$-ocean, and one vertex for each endplank of a raft which is
    not adjacent to a wharf. As mentioned above, we think of rafts of size one as
    having two logical endplanks; if such a raft is adjacent to only one wharf, then
    we add one endplank vertex, and if the raft is not adjacent to any wharf,
    then we add two endplank vertices. The harbor also has an edge for every
    raft in the $w$-ocean.  This edge is incident to a vertex of the harbor if
    and only if the vertex corresponds to a wharf which is adjacent to the
    endplank of the raft in question, or the vertex corresponds to the endplank
    of the raft.  In addition, the harbor has a vertex and edge for each pair
    $(w,r)$, where $w$ is a wharf and $r$ is a selected rope or tether adjacent
    to $w$.  The edge connects this additional vertex to the vertex
    corresponding to $w$. Finally, we connect two wharfs by an edge if they
    are adjacent in the $w$-ocean.
\end{defn}

Next we describe the edge, vertex and half edge decorations on $H_C$.
The edge and vertex decorations are completely determined by the
$w$-ocean and the choice $C$.  In contrast, there are different
possible ways to decorate the half edges, and we will need to consider
all of them for the main enumeration formula.  This step will
necessarily be more complicated.  The reader may wish to look ahead to
Theorem~\ref{main.coxeter} and the example in Figure~\ref{branch}.

Observe that every vertex in the graph $H_{C}$ is connected to at
least one edge by construction.  Each edge represents a raft of some
size, possibly of size 0.  Rafts of size 0 come from edges connecting
two wharfs or a $(w,r)$ pair.  Every edge of the harbor $H_C$ is
decorated with the integer corresponding to the size of the
corresponding raft.

The vertices of the harbor $H_C$ are decorated with tiles from
    \begin{equation*}
        \Big\{ \omo\,, \omx\,, \xmo\,, \xmx\,, \oo\,, \ox\,, \xo\,, \xx \Big\}.
    \end{equation*}
    Wharfs are decorated with tiles \omo\,, \omx\,, \xmo\,, and \xmx\,
    according to which nodes of the wharf are selected.  Vertices
    corresponding to $(w,r)$ are decorated with \ox\, or \xo\,
    depending on whether the selected rope or tether $r$ is on the top
    or the bottom of the $w$-ocean.  Endplanks of rafts are decorated
    with tiles \oo\,, \ox\,, \xo\,, and \xx\,, where the top
    (resp. bottom) node of the tile is filled if the endplank is
    adjacent to any selected rope or tether on the top (resp. bottom).
    In this way, a selected tether is split into many ropes, and then
    selected ropes adjacent to the same endplank are amalgamated.

    For rafts of size one, we must again take special care for
    arbitrary Coxeter groups---if the raft is not adjacent to any
    wharf, then we can split adjacent ropes and tethers arbitrarily
    among the two endplanks before applying the above recipe without
    changing the lex-minimal conditions by Theorem~\ref{T:minimal}.
    For instance, we can assign all ropes and tethers to one endplank,
    meaning that we label that endplank as above, and then label the
    other endplank by {\oo}.

    Each half-edge of the harbor $H_C$ is decorated by one of the
    labels from the set $\mathcal{L} \cup \mathcal{L}'$ where
$$ \mathcal{L} = \raisebox{.5ex}{\Big\{}{\oo}\, , \ {\xo}\, , \ {\ox}\, , \ {\xx}\, , \ {\oX}\, , \ {\Ox}\, ,
\ {\Xx}\, , \ {\xX}\, , \ {\XX}\raisebox{.5ex}{\Big\}} \text{    and
}
\mathcal{L}' = \raisebox{.5ex}{\Big\{}{\aox}\, , \ {\axx}\raisebox{.5ex}{\Big\}}
$$
Not all possible labellings of the half-edges are allowed for $H_C$.
We refer to the labellings which are allowed as \emph{legal
  labellings}.  Legal labellings encode the local conditions on
lex-minimal fillings in the neighborhoods of wharfs which are similar
to those found in Section~\ref{sub:wharfs}. 
The idea is that the half-edge labels are used to specify the boundary
apparatus at each end of the corresponding raft in the $w$-ocean, and
then the enumeration of lex-minimal fillings reduces to finding the
number of lex-minimal fillings for each raft with the specified
boundary apparatus. This in turn translates to a walk on the automata
of Section~\ref{sec:Snenumeration}.  Furthermore, the half-edge labels
indicate specific selections of nodes on the initial or final segment
of rafts in certain lex-minimal pairs $(I,J)$ consistent with the
selection $C$. Thus, they also allow us to refer back to the ``top''
and ``bottom'' rows of the $w$-ocean, even though the harbor doesn't
have ``top'' or ``bottom'' vertices.

We now explain what makes a labeling legal; this also gives us an
opportunity to explain what each label means. Afterwards we give a
more concise formal definition.  Throughout this discussion we will
assume $(I,J)$ is a lex-minimal pair for $w$ consistent with $C$ and
the given half edge labeling.  Recall no contraction or upward slide
moves apply to $(I,J)$ by Corollary~\ref{cor:lex-min-fillings}.  The
pair $(I,J)$ determines an induced subgraph $G_C(I,J)$ of the
$w$-ocean with vertices $I$ on the bottom and $J$ on the top, and we
will frequently refer to the connected components of this subgraph.
Also, it will be useful to recall the meaning of the tiles on planks
in rafts from the type $A$ case.  For example, if we have a tile of
the form $\ox$ on the first plank of a raft, that means that the lower
endpoint of the plank is in $I$ and its upper endpoint is not in $J$.

\begin{enumerate}
\item Vertices of type \omo\ indicate a wharf with neither top nor
bottom node chosen in $C$. Thus they behave just like the boundary
apparatus \oo\ when filling the adjacent rafts. 
We label all of the half-edges emanating from a \omo\ vertex by $\oo$.  Recall
that $\oo$ is vertex 2 in the automaton from Section~\ref{sec:Snenumeration}.
Any walk in the automaton starting or ending at vertex 2 corresponds to an
initial or final segment of a lex-minimal filling of an adjacent raft.

\item Vertices of type \xmo\ indicate a wharf with only the top
node selected.
The selected node may join connected components of $G_C(I,J)$ on
the top in adjacent rafts. Because the bottom node of
the wharf is not selected, the connected component containing the
wharf will not be contained in any connected component on the bottom
(so the conditions of Theorem \ref{T:minimal} are satisfied).  Thus
wharfs of this type behave just like the boundary apparatus {\xo} when
filling the adjacent rafts. Thus we label all of the half-edges
emanating from a {\xmo} vertex by {\xo}.  Recall that {\xo} is vertex
3 in the automaton.  Any walk in the automaton starting or ending at
vertex 3 corresponds to an initial or final segment of a lex-minimal
filling of an adjacent raft.

\item Vertices of type {\omx} indicate a wharf with only the bottom
  node selected. As in the previous case, this selected node can join
  connected components of $G_C(I,J)$ on the bottom in adjacent
  rafts. Since the top node of the wharf is not selected, the
  connected component of $G_C(I,J)$ containing this wharf is not
  contained in a connected component on the top. To
prevent there  being an available upward slide move, 
we must also show either that the planks with bottom node in the
connected component contain or are adjacent to a
selected node on top, or that the connected component contains a large
ascent.  In either case, the selected top node or large ascent might
be connected to the wharf in question only after passing through
another wharf or sequence of wharfs. To keep track of the different
possibilities, we label the adjacent half-edges by {\oX}, {\Ox}, or
{\aox}. The meaning of these labels is as follows:

  \begin{enumerate} \item A half-edge label {\oX} means that on that
side of the corresponding raft, we have any nonnegative number of
$\ox$ tiles emanating from the wharf, after which we have a tile {\oo} or {\omo}. For
example, we may choose three intermediate tiles {\omx \ox \ox \ox \oo}, or
even zero intermediate tiles {\omx \oo}, along the raft corresponding to a
half-edge labeled {\oX}\,.  After the {\oo} tile, the allowed tiles follow the
automaton again, starting with vertex 2.  If the raft is oriented so that the
half-edge is at the terminal end of a raft, then the label implies that the
corresponding walks in the automaton terminate at vertex 5.

\item A half-edge label {\Ox} means that on that end of the
  corresponding raft, we have any nonnegative number of $\ox$ tiles
  moving away from the wharf, and then either a {\xx} tile, a {\xmx}
  tile, or a {\xo} tile. For example, we may choose three intermediate
  tiles $\omx \ox \ox \ox \xx$ or $\omx \ox \ox \ox \xo$\,, or even
  zero intermediate tiles {\omx \xx}, along the raft corresponding to
  a half-edge labeled {\oX}.  Note that in the automaton, after
  visiting vertex 5 \raisebox{.25ex}{\big(}\ox\raisebox{.25ex}{\big)},
  a walk must either visit vertex 3
  \raisebox{.25ex}{\big(}\xo\raisebox{.25ex}{\big)} or vertex 6
  \raisebox{.25ex}{\big(}\xx\raisebox{.25ex}{\big)}, and then proceed
  to visit vertex 3 before going on to other vertices.  Thus, this
  case is encoded in the automaton by starting with vertex 5 or ending
  at vertex 7.

  \item The half-edge label {\aox} is special, and means that every
    plank of the raft corresponding to that edge is tiled by {\ox}, and the other vertex of the edge is labeled by {\ox} or
    {\omx}. This can happen in two cases. Either:
    \begin{enumerate}
        \item the edge connects two wharfs, both labeled by {\omx}, and
            both half-edges of the edge are labeled by {\aox}; or
        \item the edge connects a wharf labeled by {\omx} to a vertex
            labeled by {\ox}. The half-edge incident to {\omx} is labeled
            by {\aox}, and the half-edge incident to {\ox} is labeled by
            {\ox}.
    \end{enumerate}
    We call a path in the harbor $H_{C}$ a {\aox}-\emph{path} if at least one
    half-edge of every edge of the path is labeled by {\aox}.
\end{enumerate}
To be a legal labeling, every vertex labeled by {\omx} must have an incident
half-edge labeled by {\Ox}, be connected by a {\aox}-path to a vertex with an
incident half-edge labeled by {\Ox}, or be connected by a {\aox}-path to a
vertex labeled by {\ox}. In the two former cases, the planks in the connected
component contain or are adjacent to a selected top node, while in
the latter case, the connected component will contain a large ascent.

\item Vertices of type \xmx\ indicate a wharf with both nodes selected. In this
    case, this wharf will join connected components in adjacent rafts on both
    the top and the bottom. For the conditions of Theorem \ref{T:minimal} to
    be satisfied, the connected component of vertices on the top must contain
    either a plank with only the top component selected, or a large ascent.
    The connected component of vertices on the bottom must satisfy the same
    condition. The wharf itself provides a plank whose bottom node is in the
    connected component on the bottom, and whose top node is selected, so this
    part of Theorem \ref{T:minimal} is automatically satisfied.  Once
    again, we have to consider several cases.  The different cases are
    distinguished by the tiles {\Xx}, {\xX}, {\XX} and {\axx}.
\begin{enumerate}
\item A label {\Xx} means that on that side, we choose some number of
doubly filled tiles, followed by {\xo} or {\xmo}. Thus, this case is encoded in the
automaton by starting with vertex 6 or ending at vertex 8.

\item A label {\xX} means that on that side, we choose some number of
doubly filled tiles, followed by {\ox} or {\omx}.  Thus, this case is encoded in the
automaton by starting with vertex 8 or ending at vertex 6.

\item A label {\XX} means that on that side, we choose any nonnegative
number of doubly filled tiles, followed by \oo\,.  Thus, if this label
appears at the initial end of the edge, the corresponding lex-minimal
conditions are encoded in the automaton by starting with some
nonnegative number of {\xx} tiles, followed by any walk starting at vertex 2.
If this label appears at the terminal end of the edge, then the walks must
all terminate at vertex 4.

\item The half-edge label {\axx} is special.  It means that every plank
  along the corresponding edge is tiled by {\xx}\, and the other
  vertex of the edge is labeled by {\xmx} or {\xx}. Therefore, it
  occurs in exactly two cases. Either:

\begin{enumerate}
\item it must connect a wharf of type {\xmx} to another wharf of the
same type, and both halves of the edge must have the {\axx} label; or
\item it must connect a wharf of type {\xmx} to a vertex of type
{\xx}, with the half-edge incident to {\xmx} labeled by {\axx}, and
the half-edge incident to {\xx} labeled by {\xx}.
\end{enumerate}
We call a path in the harbor $H_{C}$ a {\axx}-\emph{path} if every edge in the
path contains at least one half-edge label {\axx}\,.
\end{enumerate}

To be a legal labeling, every vertex labeled by {\xmx} must have an incident
half-edge labeled by {\Xx}, or be connected by a {\aox}-path to a vertex labeled
by {\xmx} with an incident half-edge labeled by {\Xx}, or be connected by an
{\aox}-path to a vertex labeled by {\xx}. In the two former cases, the top component
either contains a wharf with only the top node selected, or a large ascent (which one
depends on where the {\xo} or {\xmo} tile appears). In the latter case, the top component
contains a large ascent.
Similarly, every vertex labeled by {\xmx} must be incident to a half-edge
labeled by {\xX}, or be connected by a {\aox}-path to a vertex labeled by
{\xmx} with an incident half-edge labeled by {\xX}, or be connected by an
{\aox}-path to a vertex labeled by {\xx}.

\item The vertices of types \oo\,, \ox\,, \xo\,, and \xx \ all correspond to type
$A$ boundary apparatuses on rafts, and are adjacent to exactly one edge.
These impose the same initial/final conditions for lex-minimal fillings as
they did in Section~\ref{sec:Snenumeration}, so their half-edges are labeled by
the same symbol.
\end{enumerate}

We summarize this in a formal definition:

\begin{defn}\label{defn:legallabelling}
    Let $C$ be a selection of nodes for the wharfs, tethers, and ropes of a
    $w$-ocean. A labeling $L$ of the half-edges of the harbor $H_C$ is a
    \emph{legal labeling} if the following conditions are satisfied.
    \begin{enumerate}
        \item The half-edge labels are compatible with the vertex labels:
            \begin{enumerate}
             \item a half-edge coming from a vertex of type {\oo} or {\omo} is labeled by {\oo},
             \item a half-edge coming from a vertex of type {\xo} or {\xmo} is labeled by {\xo}, 
             \item a half-edge coming from a vertex of type {\ox} is labeled by {\ox}, 
             \item a half-edge coming from a vertex of type {\xx} is labeled by {\xx},
             \item a half-edge coming from a vertex of type {\omx} is labeled by
             {\oX}, {\Ox}, or {\aox}, 
             \item a half-edge coming from a vertex of type {\xmx} is labeled by
             {\Xx}, {\xX}, {\XX}, or {\axx}. 
             \end{enumerate}
        \item The special labels {\aox} and {\axx} are used correctly:
            \begin{enumerate}
                \item if a half-edge is labeled by {\aox} then the other
                    vertex is labeled by {\omx} or {\ox}, and the other half-edge
                    is labeled by {\aox} or {\ox} respectively; and
                \item if a half-edge is labeled by {\axx} then the other
                    vertex is labeled by {\xmx} or {\xx}, and the other half-edge
                    is labeled by {\axx} or {\xx} respectively.
            \end{enumerate}
        \item Several conditions to ensure lex-minimality are satisfied:
            \begin{enumerate}
                \item for every vertex labeled by {\omx}, there must be a {\aox}-path
                    (possibly of length zero) either to a vertex labeled by
                    {\omx} with an outgoing half-edge labeled by {\Ox}, or to
                    a vertex labeled by {\ox},
                \item for every vertex labeled by {\xmx}, there must be a {\axx}-path
                    (possibly of length zero) either to a vertex labeled by
                    {\xmx} with an outgoing half-edge labeled by {\Xx}, or to a vertex
                    labeled by {\xx}, and
                \item for every vertex labeled by {\xmx}, there must be a {\axx}-path
                    (possibly of length zero) either to a vertex labeled by
                    {\xmx} with an outgoing half-edge labeled by {\xX}, or to a vertex
                    labeled by {\xx}.
            \end{enumerate}
            As above, a {\aox}-path (resp. {\axx}-path) is a path in the
            harbor in which every edge has at least one half-edge labeled by
            {\aox} (resp. {\axx}).
    \end{enumerate}
\end{defn}

We also define what it means for a lex-minimal filling of the $w$-ocean to be
\emph{consistent} with a legal labeling. In the informal description of half-edge
labels above, we used tiles to refer to the selections of planks, and we now
formalize this with the notion of a \emph{tile sequence} associated to an edge.
\begin{defn}\label{defn:consistentwlabelling}
    Let $L$ be a legal labeling as outlined above, and let $F$ be a
    (not necessarily lex-minimal) filling of the $w$-ocean consistent with $C$. To every
    oriented edge $e$ (meaning an edge $e$ with choice of orientation) in the harbor,
    we associate a sequence of tiles which begins with the label of the initial
    half-edge, and
    ends with the label of the final half-edge. The tiles in between are drawn
    from  \{{\oo},{\xo},{\ox},{\xx}\}, and indicate the filling in $F$ of the
    corresponding raft (so the overall sequence has $n+2$ tiles, where $n$ is
    the label of the edge $\varepsilon$ in the harbor). We refer to this sequence as the
    \emph{tile sequence associated to $\varepsilon$}. Using this terminology, we say
    that $F$ is \emph{consistent with $L$} if, for every oriented edge, the
    associated tile sequence satisfies the following conditions:
    \begin{enumerate}
        \item If the sequence begins with a {\oX}, then the remaining sequence
            starts with a non-negative number of {\ox}'s, followed by a {\oo}.
        \item If the sequence begins with a {\Ox}, then the remaining sequence
            starts with a non-negative number of {\ox}'s, followed by a {\xo}, {\xx}, or {\xX}.
        \item If the sequence begins with a {\aox}, then every tile in the
            sequence, aside from the first and last, is a {\ox}.
        \item If the sequence begins with a {\Xx}, then the remaining sequence
            starts with a non-negative number of {\xx}'s, followed by a {\xo}.
        \item If the sequence begins with a {\xX}, then the remaining sequence
            starts with a non-negative number of {\xx}'s, followed by a {\ox}
            or {\Ox}.
        \item If the sequence begins with a {\xmx}, then the remaining sequence
            starts with a non-negative number of {\xx}'s, followed by a {\oo}.
        \item If the sequence begins with a {\axx}, then every tile in the
            sequence, aside from the first and last, is a {\xx}.
    \end{enumerate}
\end{defn}
Note that $F$ does not have to be lex-minimal in the above definition, and
consistency with $L$ alone is not enough to guarantee lex-minimality, since the
conditions of Theorem \ref{T:minimal} might not be satisfied in the middle of a
raft.
\begin{lem}\label{L:legallabelling}
    Let $L$ be a legal labeling, and suppose that $F$ is a filling of the $w$-ocean
    consistent with $L$. Then $F$ is lex-minimal if and only if for every oriented edge
    $\varepsilon$, the associated tile sequence $t_0 \cdots t_{n+1}$ satisfies two conditions:
    \begin{enumerate}[(a)]
        \item If $t_i t_{i+1} t_{i+2} \cdots t_{j}$ is a subsequence with all
            top nodes selected, such that $1 \leq i,j \leq n$ and $t_{i-1}$ and $t_{j+1}$
            do not have top nodes selected, then there is some $i \leq k \leq j$ such
            that the bottom node of $t_k$ is not selected.
        \item If $t_i t_{i+1} t_{i+2} \cdots t_{j}$ is a subsequence with all
            bottom nodes selected, such that $1 \leq i,j \leq n$ and $t_{i-1}$ and $t_{j+1}$
            do not have bottom nodes selected, then there is some $i \leq k \leq j$ such
            that the top node of $t_k$ is not selected, and some $i-1 \leq k' \leq j+1$
            such that the top node of $t_{k'}$ is selected.
    \end{enumerate}
\end{lem}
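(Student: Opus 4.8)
The plan is to reduce the global lex-minimality condition of Theorem~\ref{T:minimal} to local conditions on each raft's tile sequence, using the fact that $F$ is already consistent with the legal labeling $L$. The key observation is that Theorem~\ref{T:minimal} characterizes lex-minimality in terms of connected components of $I$ and $J$ in the $w$-ocean: condition (a) says no connected component of $J$ sits inside $(w^{-1}Iw)\cap S$, and condition (b) controls connected components $I_0$ of $I$ that lie inside $wSw^{-1}$. Since $F$ is consistent with $L$, the behavior of $F$ along the wharfs, tethers, ropes, and the special {\aox}/{\axx}-paths is already pinned down by the legal-labeling conditions in Definition~\ref{defn:legallabelling}(3). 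So the only freedom left---and hence the only place where lex-minimality can fail---is in the interior planks of each raft, i.e.\ the tiles $t_1,\dots,t_n$ strictly between the two half-edge labels. This is exactly what conditions (a) and (b) of the lemma address.

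First I would set up the correspondence between connected components of $G_C(I,J)$ and their intersections with individual rafts. The crucial structural point is that a connected component of selected top nodes (respectively bottom nodes) can only ``exit'' a raft through an endplank adjacent to a wharf, and the legal-labeling conditions already govern what happens at those exits. Concretely: a maximal run of selected top nodes interior to a raft that does \emph{not} touch either endplank corresponds to a connected component of $J$ entirely contained in that raft, so Theorem~\ref{T:minimal}(a) applies to it directly and translates into condition~(a) of the lemma (some bottom node in the run must be unselected). For runs that reach an endplank, consistency with $L$ plus the legal-labeling conditions (the existence of {\Ox}/{\Xx}/{\xX} half-edges or appropriate special paths) already guarantees that the corresponding global component satisfies the required condition from Theorem~\ref{T:minimal}; this is precisely why we can drop those boundary runs from the interior analysis. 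The symmetric analysis for bottom runs yields condition~(b), where the extra clause (some $t_{k'}$ with $i-1 \le k' \le j+1$ having a selected top node) encodes the part of Theorem~\ref{T:minimal}(b) requiring an element of $J$ adjacent to or contained in $w^{-1}I_0w$.

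Next I would argue both directions. For the forward direction, assuming $F$ is lex-minimal, I would take any interior run of all-top-selected (or all-bottom-selected) tiles and observe it forms a connected component of $J$ (resp.\ of $I$ lying in $wSw^{-1}$) that does not merge with anything outside the raft, invoke Theorem~\ref{T:minimal}, and read off conditions (a) and (b). For the converse, assuming the two tile-sequence conditions hold for every edge, I would verify Theorem~\ref{T:minimal}(a) and (b) for every connected component of $G_C(I,J)$ by splitting into cases according to whether the component is confined to a single raft interior (handled by (a)/(b) of the lemma) or reaches a wharf (handled by the legal-labeling conditions of Definition~\ref{defn:legallabelling}(3) together with consistency). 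The careful bookkeeping here is matching the labels: a component exiting through a {\Ox}-labeled half-edge contains a selected top node, one exiting via an {\aox}-path to a vertex labeled {\ox} contains a large ascent, and so on, exactly as spelled out in the informal discussion preceding Definition~\ref{defn:legallabelling}.

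\textbf{The main obstacle} I anticipate is handling components that span multiple rafts through wharfs connected by {\aox}- or {\axx}-paths. Here lex-minimality is a genuinely global condition---a single connected component of $J$ may thread through several rafts and wharfs---yet the lemma asserts it can be checked edge-by-edge. The resolution is that the legal-labeling conditions of Definition~\ref{defn:legallabelling}(3) were \emph{defined} precisely to certify, for every multi-raft component reaching a wharf, that Theorem~\ref{T:minimal}'s requirement is met somewhere along the path. Thus the real work is showing that consistency with $L$ forces such a component to be ``anchored'' (by a selected opposite-row node or a large ascent) in a way that the half-edge labels faithfully record, so that no additional interior constraint beyond (a) and (b) is needed. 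I would make this precise by a case analysis over the four wharf types \omo, \omx, \xmo, \xmx, showing in each case that the only potential failure of Theorem~\ref{T:minimal} occurs strictly inside some raft and is therefore caught by condition (a) or (b). The remaining verifications are routine once this anchoring principle is established.
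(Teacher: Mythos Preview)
Your proposal is correct and follows essentially the same approach as the paper: reduce to Theorem~\ref{T:minimal} and use that the legal-labeling conditions of Definition~\ref{defn:legallabelling} (together with consistency as in Definition~\ref{defn:consistentwlabelling}) handle all components that reach a wharf or boundary, leaving only the interior runs to be checked by conditions (a) and (b). The paper's own proof is a two-line pointer to exactly this reasoning (``Follows from Theorem~\ref{T:minimal}. The proof is given in the informal description of Definitions~\ref{defn:legallabelling} and~\ref{defn:consistentwlabelling}''), so your write-up is a faithful expansion of what the authors left implicit.
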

Note that this criterion only looks at subsequences not containing the endplanks.
In other words, if $F$ is consistent with a legal labeling, then
lex-minimality reduces to checking the conditions of Theorem \ref{T:minimalsym}
on the interior of tile sequences.
\begin{proof}
    Follows from Theorem \ref{T:minimal}.  The proof is given in the informal
    description of Definitions \ref{defn:legallabelling} and
    \ref{defn:consistentwlabelling}.
\end{proof}
Finally, we define a generalization of the $a$-sequences from
Section~\ref{sub:a.sequences}.
\begin{defn}\label{defn:a.sequence}
    Let $u,v \in \mc{L} \cup \mc{L}'$, the set of half-edge labellings. We say that
    $(u,v)$ is a \emph{legal pair} if neither $u$ or $v$ belongs to $\mc{L}' =
    \{{\aox},{\axx}\}$, or one of $u$ or $v$ is {\aox} (resp. {\axx}) and the
    other is {\ox} or {\aox} (resp. {\xx} or {\axx}).

    If $u,v$ is a legal pair, and $m \geq 0$, then we define $a(u,v;m)$ to be
    the number of tile sequences $t_0 \cdots t_{m+1}$ such that:
    \begin{enumerate}[(i)]
        \item $t_0 = u$ and $t_{m+1} = v$,
        \item both $t_0 \cdots t_{m+1}$ and $t_{m+1} t_m \cdots t_0$ satisfy conditions (1)-(7)
            of Definition \ref{defn:consistentwlabelling}, and
        \item $t_0 \cdots t_{m+1}$ satisfies conditions (a) and (b) of Lemma
            \ref{L:legallabelling}.
    \end{enumerate}
\end{defn}

\begin{lem}\label{L:a.sequence}
    The sequences $a(u,v;m)$ satisfy the following properties:
    \begin{itemize}
        \item $a(u,v;m) = a(v,u;m)$.
        \item If one of $u$ or $v$ is {\aox} or {\axx} then $a(u,v;m) = 1$ for
            all $m \geq 0$.
        \item If neither $u$ or $v$ is {\aox} or {\axx}, then the family of
            sequences $a(u,v;m)$ for $u,v \in \mathcal{L}$ are determined by
            the entries of Table~\ref{table:generating functions in terms of finite automaton},
            given in terms of the generating functions for walks on the
            automaton.
    \end{itemize}
\end{lem}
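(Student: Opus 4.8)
The proof breaks into the three bulleted claims. The plan is to dispatch the symmetry and the special-label cases by hand and then reduce the remaining case to the automaton of Figure~\ref{fig:aut}. For the identity $a(u,v;m)=a(v,u;m)$, I would show that the reversal map $t_0 t_1 \cdots t_{m+1} \mapsto t_{m+1} t_m \cdots t_0$ is a bijection between the tile sequences counted by $a(u,v;m)$ and those counted by $a(v,u;m)$. Condition (i) of Definition~\ref{defn:a.sequence} merely interchanges $u$ and $v$, and condition (ii) is already symmetric, since it demands that both a sequence and its reverse satisfy conditions (1)--(7) of Definition~\ref{defn:consistentwlabelling}. The only thing to check is that conditions (a) and (b) of Lemma~\ref{L:legallabelling} are invariant under reversal; this is immediate, because each condition concerns the \emph{maximal interior runs} of tiles whose top (resp.\ bottom) nodes are all selected, together with the selection status of the two bordering tiles, and both the notion of a maximal run and its bordering constraints are unchanged when the sequence is read backwards.

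For the second claim, suppose $u={\aox}$ (the case $u={\axx}$ is identical, with the roles of top and bottom exchanged). The legal-pair condition forces $v\in\{{\ox},{\aox}\}$, and condition (3) of Definition~\ref{defn:consistentwlabelling} forces $t_1=\cdots=t_m={\ox}$, so the sequence ${\aox}\,{\ox}\cdots{\ox}\,v$ is completely determined. It remains to confirm that this single sequence meets conditions (a) and (b) of Lemma~\ref{L:legallabelling}: condition (a) holds vacuously, as no interior tile has its top node selected, and condition (b) also holds vacuously, because every interior tile \emph{and} the tile $t_0={\aox}$ have their bottom nodes selected, so no interior run of bottom-selected tiles is bordered on the left by a tile with unselected bottom node. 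Hence exactly one sequence is admissible and $a(u,v;m)=1$.

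The third claim is the substantive one, and the plan is to extend the proof of Theorem~\ref{thm:amk}. When $u,v\in\mathcal{L}$, the interior tiles $t_1\cdots t_m$ lie in $\{{\oo},{\xo},{\ox},{\xx}\}$, and conditions (a), (b) of Lemma~\ref{L:legallabelling} are exactly the interior versions of the forbidden patterns of Figure~\ref{fig:avoid}; as in the proof of Theorem~\ref{thm:amk}, these are precisely the admissible interior transitions of the automaton of Figure~\ref{fig:aut}. Thus $a(u,v;m)$ counts walks whose boundary behavior is dictated by $u$ and $v$. For the four basic labels this boundary data is exactly what is recorded in Table~\ref{table:4x4.table} (start states $2,3,7,1$ for ${\oo},{\xo},{\ox},{\xx}$, and end state-sets $\{2\},\{3\},\{5,7\},\{4,6,8\}$), and each compound label is translated into automaton data through conditions (1)--(7) of Definition~\ref{defn:consistentwlabelling}: a label ${\oX}$ at the terminal end forces the walk to end at state $5$, ${\Ox}$ at state $7$, ${\Xx}$ at state $8$, ${\xX}$ at state $6$, and ${\XX}$ at state $4$, with the dual start-states obtained from the same analysis. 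The symmetry $a(u,v;m)=a(v,u;m)$ from the first claim lets me always place a label needing the ``terminal'' interpretation at the terminal end, and for pairs in which both labels are compound the two boundary gadgets compose into a product of the walk generating functions. In every case the transfer-matrix formula~\eqref{eq:Rij} then yields $a(u,v;m)$ as the claimed rational expression in the $R_{i,j}$, matching the table.

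The step I expect to be the main obstacle is precisely this final matching of the compound labels ${\oX},{\Ox},{\Xx},{\xX},{\XX}$ to automaton states. The states $5,6,7,8$ were designed to encode the local \emph{context} of an ${\ox}$ or ${\xx}$ tile --- namely, which forbidden pattern it is still capable of completing --- so that, for instance, ``ending at state $5$'' captures exactly the constraint that an ${\ox}$-run be preceded by an ${\oo}$, which is the content of the ${\oX}$ gadget. Verifying that each compound label lines up with the correct state at each orientation, that the forced initial or final runs of ${\ox}$ or ${\xx}$ tiles are accounted for without over- or under-counting, and that doubly-compound pairs decompose into the correct product of generating functions, is a careful but elementary bookkeeping exercise over the finitely many legal pairs, reduced roughly in half by the reversal symmetry already established.
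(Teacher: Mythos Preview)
Your approach is correct and essentially the same as the paper's: the first two bullets are dispatched by construction (you supply more detail than the paper, which simply says they follow immediately), and the third is handled by matching each half-edge label in $\mathcal{L}$ to start/end data in the automaton of Figure~\ref{fig:aut} and reading off the generating functions via~\eqref{eq:Rij}. One refinement: the compound--compound entries are not literally products of walk generating functions but single walk counts $R_{i,j}$ adjusted by a factor of $1/(1-t)$ when a gadget absorbs a variable-length initial run of identical tiles (the $\oX$ and $\XX$ rows), and in the $(\Xx,\xX)$ case by an additional subtraction of $1/(1-t)$ to exclude the all-$\xx$ sequence already accounted for under the $\axx$ label --- this is precisely the bookkeeping you flag, and the paper likewise leaves it as a straightforward but tedious verification.
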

\begin{proof}
    The first two properties follow immediately by construction. For the third
    property, while we have added additional boundary conditions, the enumeration
    is otherwise the same as in the symmetric group case. In particular, each
    sequence is determined by a function of the $R_{i,j}$ generating functions
    given in Equation~\eqref{eq:Rij}, as shown in Table~\ref{table:generating functions in terms of finite automaton}.
  Note that along each row of the table, the initial index $i$ in
$R_{i,j}$ is the same.  These values come from our description of the
half-edge labels on harbors and/or the original assumption from type
$A$ that walks must begin in vertices 1, 2, 3, or 7, according to the
label. The set of indices $j$ that appear in each $R_{i,j}$ in any one
column can be determined similarly, either by referring back to the type
$A$ case, or by using the descriptions of the half-edge labels.

The factor $1/(1-t)$ occurs when we can start to fill a raft
moving away from a wharf with any number of tiles of a given type, and
then we follow an allowed walk starting at a given vertex.  We
subtract 1 in cases where there is a constant term arising from  $i=j$, because walks with 0 edges cannot occur in the enumeration
of lex-minimal fillings of rafts since we always include two tiles
representing the apparatus on either end of the raft.  Finally, in the
case of $a\raisebox{.25ex}{\big(}{\Xx}, {\xX}; m \raisebox{.25ex}{\big)}$, the generating function
$R_{6,6}-1/(1-t)$ counts walks starting at vertex 6 and
ending at vertex 6, except for the walk that never leaves vertex 6.  That
exception is an $\axx$\,-path, and it is counted as a different labeling
of the harbor.

The verification of the entries in Table~\ref{table:generating
  functions in terms of finite automaton} is now straightforward, although a bit tedious.
\end{proof}

\begin{table}[htbp]
$$\begin{array}{c||c|c|c|c|c|c|c|c|c}
 \raisebox{0cm}[.5cm][.4cm]{} & \oo & \xo & \ox & \oX & \Ox & \xx & \Xx & \xX & \XX  \\ \hline
  \hline
\oo \raisebox{0cm}[.8cm][.4cm]{}& \frac{R_{2,2}-1}t & \frac{R_{2,3}}t & \frac{R_{2,57}}t & \frac{R_{2,5}}t & \frac{R_{2,7}}t & \frac{R_{2,468}}t & \frac{R_{2,8}}t & \frac{R_{2,6}}t & \frac{R_{2,4}}t \Tstrut\\
  \hline
\xo \raisebox{0cm}[.8cm][.4cm]{}& & \frac{R_{3,3} - 1}{t} & \frac{R_{3,57}}t & \frac{R_{3,5}}{t} & \frac{R_{3,7}}{t} & \frac{R_{3,468}}{t} & \frac{R_{3,8}}t & \frac{R_{3,6}}t & \frac{R_{3,4}}t \\
  \hline
\ox \raisebox{0cm}[.8cm][.4cm]{}& & & \frac{R_{7,57}-1}t & \frac{R_{7,5}}t & \frac{R_{7,7}-1}t & \frac{R_{7,468}}t & \frac{R_{7,8}}t & \frac{R_{7,6}}t & \frac{R_{7,4}}t \\
  \hline
\oX \raisebox{0cm}[.8cm][.4cm]{}& & & & \frac{R_{2,5}}{1-t} & \frac{R_{2,7}}{1-t} & \frac{R_{2,468}}{1-t} & \frac{R_{2,8}}{1-t} & \frac{R_{2,6}}{1-t} & \frac{R_{2,4}}{1-t}\\
  \hline
\Ox \raisebox{0cm}[1.1cm][.6cm]{}& & & & & \frac{R_{5,7}}t & \frac{R_{5,468}}t & \frac{R_{5,8}}t & \frac{R_{5,6}}t &  \frac{R_{5,4}}t \\
  \hline
\xx \raisebox{0cm}[.8cm][.4cm]{}& & & & & & \frac{R_{1,1468}-1}{t} & \frac{R_{1,8}}{t} & \frac{R_{1,6}}{t} & \frac{R_{1,4}}{t}\\
  \hline
\Xx \raisebox{0cm}[.8cm][.4cm]{}& & & & & & & \frac{R_{6,8}}t & \frac{R_{6,6} - \frac 1{1-t}}{t} & \frac{R_{6,4}}{t}\\
  \hline
\xX \raisebox{0cm}[.8cm][.4cm]{}& & & & & & & & \frac{R_{8,6}}t & \frac{R_{8,4}}{t}\\
  \hline
\XX \raisebox{0cm}[.8cm][.4cm]{}& & & & & & & & & \frac{R_{2,4}}{1 - t}
\end{array}$$
\caption{Generating functions of the sequences $a(u,v;n)$ in terms of the finite automaton.}
\label{table:generating functions in terms of finite automaton}
\end{table}

\begin{cor}\label{cor:recurrences}
Each of the generalized $a$-sequences from Table~\ref{table:generating
functions in terms of finite automaton} satisfies one of the following
recurrences:
\begin{description}
 \item[R1] $a_n = 5a_{n-1} - 7a_{n-2} + 4a_{n-3}$ for $n \geq 3$,
 \item[R2] $a_n = 6a_{n-1} - 12a_{n-2} + 11a_{n-3} - 4a_{n-4}$ for $n \geq 4$,
 \item[R3] $a_n = 6a_{n-1} - 13a_{n-2} + 16a_{n-3} - 11a_{n-4} + 4a_{n-5}$ for $n \geq 5$, or
 \item[R4] $a_n = 7a_{n-1} - 19a_{n-2} + 29a_{n-3} - 27a_{n-4} + 15a_{n-5} -4a_{n-6}$ for $n \geq 6$.
\end{description}
The recurrences and initial conditions are shown in Table~\ref{table:recurrences and initial conditions}.
\end{cor}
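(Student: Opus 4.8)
The plan is to read each recurrence off the denominator of the corresponding generating function in Table~\ref{table:generating functions in terms of finite automaton}, using the transfer-matrix method together with the standard correspondence between rational generating functions and linear recurrences \cite[Section 4.7]{ec1}. Recall the relevant fact: if $\sum_{m \geq 0} a_m t^m = N(t)/D(t)$ in lowest terms, with $D(0) = 1$, $\deg N < \deg D$, and $D(t) = 1 + d_1 t + \cdots + d_k t^k$, then $a_m = -d_1 a_{m-1} - \cdots - d_k a_{m-k}$ for all $m \geq k$. Thus it suffices to show that the reduced denominator of each entry of Table~\ref{table:generating functions in terms of finite automaton} is one of
\[
 D_1 = P, \quad D_2 = (1-t)P, \quad D_3 = (1-t+t^2)P, \quad D_4 = (1-t)(1-t+t^2)P,
\]
where $P(t) = 1 - 5t + 7t^2 - 4t^3$, and that the reduced numerator has strictly smaller degree in each case. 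A direct expansion identifies $D_1,D_2,D_3,D_4$ as exactly the characteristic polynomials of R1, R2, R3, R4; for instance $(1-t)P = 1 - 6t + 12t^2 - 11t^3 + 4t^4$ gives R2, and $(1-t)(1-t+t^2)P = 1 - 7t + 19t^2 - 29t^3 + 27t^4 - 15t^5 + 4t^6$ gives R4. Note that the four targets form a divisibility chain built from $P$ and the two small factors $(1-t)$ and $(1-t+t^2)$.

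By Equation~\eqref{eq:Rij}, every $R_{i,j}(t)$ is a rational function with common denominator $\Delta(t) := \det(\mathcal{I} - tA)$, where $A$ is the $8 \times 8$ adjacency matrix from the proof of Theorem~\ref{thm:amk}. Each table entry is a small integer combination of the $R_{i,j}$ divided by $t$ or by $1-t$, so every entry is rational with denominator dividing $t(1-t)\Delta(t)$. The division by $t$ is always legitimate: the combinations in question count walks of length at least one — the subtracted constants, and the subtracted $\tfrac{1}{1-t}$ in the {\Xx}-row, remove the length-zero walks — so their numerators vanish at $t=0$ and the factor of $t$ cancels, leaving a genuine power series. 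Hence each entry is a power series whose reduced denominator divides $(1-t)\Delta(t)$. The first concrete step is therefore to compute and factor $\Delta(t)$; I expect to find that both $P(t)$ (the order-three factor already isolated in Remark~\ref{rem:factor.rec}) and $(1-t+t^2)$ divide $\Delta(t)$, and that every remaining factor of $\Delta(t)$ cancels against the numerators of the combinations occurring in the table.

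The verification then splits naturally into two parts. The entries whose row and column labels both lie in $\mathcal{A} = \{{\oo},{\xo},{\ox},{\xx}\}$ are literally the combinations of $R_{i,j}$ already computed in Theorem~\ref{thm:amk} and recorded in Tables~\ref{table:4x4.table} and~\ref{table:4x4.table solved}; these reduce to denominator $D_1$ or $D_3$, i.e.\ R1 or R3, with initial conditions in hand. The genuinely new entries involve at least one of the labels {\oX}, {\Ox}, {\Xx}, {\xX}, {\XX}. For these I would compute each reduced generating function symbolically and match its denominator to one of $D_1, \ldots, D_4$. The extra factor $(1-t)$ distinguishing $D_2$ and $D_4$ from $D_1$ and $D_3$ is supplied precisely by the explicit $1/(1-t)$ prefactors carried by the {\oX}-row and the {\XX}-diagonal of the table: these are exactly the positions where a lex-minimal filling may begin with an arbitrarily long run of identical tiles as one moves away from a wharf. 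Once each reduced denominator is known, expanding the generating function up to order $\deg D_k$ yields the initial conditions for Table~\ref{table:recurrences and initial conditions}; the symmetry $a(u,v;m) = a(v,u;m)$ from Lemma~\ref{L:a.sequence} is what lets us restrict to the upper triangle, and the sequences with a label in $\mathcal{L}'$ do not appear in the table (they are constantly $1$ by Lemma~\ref{L:a.sequence}) and so need no treatment here.

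The main obstacle is not conceptual but the volume of bookkeeping: there are forty-five entries in the upper triangle of the $9 \times 9$ table, each requiring a minor $\det(\mathcal{I} - tA : j, i)$, the assembly of a short sum, and a reduction to lowest terms. I would carry out the polynomial arithmetic in a computer algebra system and present only the factorization of $\Delta(t)$ together with the resulting list of reduced denominators. The one genuinely delicate point is to confirm that no unexpected cancellation collapses $D_2$ to $D_1$ or $D_4$ to $D_3$ — in particular that the $1/(1-t)$ prefactors, and the subtracted $\tfrac{1}{1-t}$ term in the {\Xx}-row, do not cancel — which can be certified for each affected sequence by exhibiting a single window of consecutive values that violates the shorter recurrence. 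Combining the matched denominators with these initial conditions gives the stated recurrences R1--R4 and completes the proof.
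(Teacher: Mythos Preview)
Your approach is correct and matches the paper's own treatment: the paper states this as an immediate corollary of the generating functions in Table~\ref{table:generating functions in terms of finite automaton} (themselves obtained via the transfer-matrix identity~\eqref{eq:Rij}), leaving the explicit reduction of each rational function and the extraction of initial conditions to the reader, with the remark that the initial values can alternatively be checked directly in the three families of Coxeter groups listed after the corollary. One small wording quibble: the four denominators $D_1,\ldots,D_4$ do not form a chain under divisibility (neither $D_2$ nor $D_3$ divides the other), but rather a diamond lattice with minimum $D_1=P$ and maximum $D_4$; this does not affect your argument.
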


The initial conditions in Table~\ref{table:recurrences and initial
  conditions} can all be verified from the generating functions or by
considering the lex-minimal presentations in 3 families of cases.

\begin{enumerate}

\item No wharfs: The $w$-ocean for $w=s_1s_n=[2,1,3,4,\ldots, n-1,n]$ in
type $A_n$ for $n>4$ has 4 ropes at $(2,0), (2,1), (n-1,0), (n-1,1)$
attached to the 4 corners of one raft of size $n-4$. Each rope can be
selected independently.

\item One wharf: In type $D_n$, say the unique leaf not connected to the
branch node of the Coxeter graph is labeled $n$, and the branch node
is labeled $s_1$.  Then if $w=s_n$, the $w$-ocean has a wharf on
vertices $\{(1,0),(1,1)\}$, ropes at $(n-1,0)$ and $(n-1,1)$ and each
$\{(i,0),(i,1)\}$ for $1< i <n-1$ is a plank in one of the 3 rafts.

\item Two wharfs: In type $\widetilde{D}_n$, the identity element has two
wharfs corresponding with the two branch nodes and every generator is
a small ascent on the left and the right.
\end{enumerate}

\begin{table}[htbp]
$$\begin{array}{c||c|c|c|c|c|c|c|c|c}
\raisebox{0cm}[.8cm][.4cm]{}  & \oo & \xo & \ox & \oX & \Ox & \xx & \Xx & \xX & \XX  \\ \hline
  \hline
\oo \raisebox{0cm}[.8cm][.4cm]{}& \ri 1{1,2,6}  & \ri 1{1,3,9} & \ri 1{1,3,9} & \ri 1{1,2,4} & \ri 1{0,1,5} & \ri 1{1,4,12} & \ri 1{0,1,4} & \ri 1{0,1,4} & \ri 1{1,2,4} \Tstrut\\
\hline
\xo \raisebox{0cm}[.8cm][.4cm]{}& & \ri 3{1,3,11,37,119} & \ri 3{1,4,12,37,118} & \ri 1{0,1,4} & \ri 3{1,3,8,24,77} & \ri 1{1,4,14} & \ri 3{1,2,5,16,53} & \ri 3{0,1,5,17,54} & \ri 1{0,1,4} \\
\hline
\ox \raisebox{0cm}[.8cm][.4cm]{}& & & \ri 3{1,3,11,37,119} & \ri 1{0,1,4} & \ri 3{1,2,7,24,78} & \ri 1{1,4,14} & \ri 3{0,1,5,17,54} & \ri 3{1,2,5,16,53} & \ri 1{0,1,4} \\
\hline
\oX \raisebox{0cm}[.8cm][.4cm]{}& & & & \ri 2{0,1,3,7} & \ri 2{0,0,1,6} & \ri 2{0,1,5,17} & \ri 1{0,0,1} & \ri 1{0,0,1} & \ri 2{0,1,3,7}\\
\hline
\Ox \raisebox{0cm}[.8cm][.4cm]{}& & & & & \ri 3{0,1,5,17,53} & \ri 2{1,3,9,29} & \ri 3{0,1,4,12,36} & \ri 3{1,2,4,11,35} & \ri 2{0,0,1,6}\\
\hline
\xx \raisebox{0cm}[.8cm][.4cm]{}& & & & & & \ri 1{1,4,16} & \ri 2{0,1,5,19} & \ri 2{0,1,5,19} & \ri 2{0,1,5,17}\\ 
\hline
\Xx \raisebox{0cm}[.8cm][.4cm]{}& & & & & & & \ri 3{0,1,3,8,24} & \ri 4{0,0,1,6,23,77} & \ri 1{0,0,1}\\
\hline
\xX \raisebox{0cm}[.8cm][.4cm]{}& & & & & & & & \ri 3{0,1,3,8,24} & \ri 1{0,0,1}\\
\hline
\XX \raisebox{0cm}[.8cm][.4cm]{}& & & & & & & & & \ri 2{0,1,3,7}
\end{array}$$
\caption{Recurrence relations and initial conditions arising from the finite automaton.}
\label{table:recurrences and initial conditions}
\end{table}

We can now state the main theorem in full generality.

\begin{thm}\label{main.coxeter}
Fix an arbitrary Coxeter group $W$ and an element $w \in W$. The number of parabolic double cosets with minimal element $w$ is
\[
 c_w = 2^{|\floats(w)|} \sum_{\substack{\text{choices $C$ of}\\ \text{tethers, ropes, and} \\ \text{wharfs for $w$}}} \ \sum_{\substack{\text{legal labelings $L$}\\ \text{of the harbor $H_C$}}} \ \prod_{\p R \in \rafts(w)} a(R,C,L),
\]
where $a(R,C,L)$ is determined by the
rational generating functions (or, equivalently, the linear recurrence
relations) given in Table~\ref{table:generating functions in terms of
  finite automaton} (Table~\ref{table:recurrences and initial
  conditions}).
\end{thm}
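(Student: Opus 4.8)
The plan is to count the lex-minimal presentations directly. By Theorem~\ref{T:minimal}, every parabolic double coset with minimal element $w$ has a unique lex-minimal presentation $W_I w W_J$, and the pair $(I,J)$ ranges over precisely the lex-minimal pairs in $\asc_L(w) \times \asc_R(w) = \asc(w^{-1}) \times \asc(w)$. Hence $c_w$ equals the number of such lex-minimal pairs, and the whole task is to enumerate the allowed fillings of the $w$-ocean. First I would dispatch the floats: a float is an isolated vertex of the $w$-ocean, so by Theorem~\ref{T:minimal} its node may be selected or not with no effect on any lex-minimality condition (the argument is identical to Lemma~\ref{lem:floats} in the symmetric group case, since the relevant connected component is a single large ascent). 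This contributes the factor $2^{|\floats(w)|}$ and reduces the problem to counting lex-minimal fillings supported on the rafts, ropes, tethers, and wharfs.

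Next I would stratify the remaining fillings by the induced selection $C$ of nodes on the ropes, tethers, and wharfs. Every selection $C$ occurs (each yields at least one lex-minimal pair, by the analysis preceding the theorem), so $c_w/2^{|\floats(w)|}$ is the sum over all $C$ of the number of lex-minimal fillings of the rafts extending $C$. For a fixed $C$, form the harbor $H_C$ of Definition~\ref{defn:harbor}, whose edges are the rafts and whose vertices record the wharfs and raft endplanks together with their adjacent selected ropes and tethers. A lex-minimal filling extending $C$ restricts, on each raft (that is, each edge of $H_C$), to a tile sequence $t_0 \cdots t_{m+1}$ whose endpoints $t_0,t_{m+1}$ are the boundary apparatus dictated by $C$ and whose interior tiles are drawn from $\{\oo,\xo,\ox,\xx\}$.

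The key structural step is to show that the remaining count factors as a sum over legal labelings $L$ of a product over rafts. I would argue that each lex-minimal filling extending $C$ determines a unique legal labeling $L$ of the half-edges of $H_C$, recording the initial and terminal behavior of each raft's tile sequence near the wharfs, and conversely that the fillings consistent with a given legal $L$ (Definition~\ref{defn:consistentwlabelling}) are exactly enumerated raft by raft. This is precisely the content of Lemma~\ref{L:legallabelling}: once $F$ is consistent with a legal labeling, the global lex-minimality conditions of Theorem~\ref{T:minimal} collapse to the purely interior conditions (a) and (b) on each tile sequence, which are independent across rafts. Therefore, for fixed $C$ and legal $L$, the number of consistent lex-minimal fillings is $\prod_{\p R \in \rafts(w)} a(R,C,L)$, where $a(R,C,L) = a(u,v;m)$ is the generalized $a$-sequence of Definition~\ref{defn:a.sequence} for the half-edge labels $u,v$ assigned to $\p R$ and its size $m$. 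Summing over all $C$ and all legal $L$ yields the stated formula, with the values of $a(R,C,L)$ supplied by Lemma~\ref{L:a.sequence} and Corollary~\ref{cor:recurrences}.

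The main obstacle---and the reason the wharf machinery is needed at all---is the nonlocal nature of the lex-minimality conditions introduced by wharfs. In the symmetric group every connected component of the filling $G_C(I,J)$ lies in a single raft, so the conditions of Theorem~\ref{T:minimalsym} are genuinely local; but a wharf can splice together the filled components of several rafts, and Theorem~\ref{T:minimal} then asks whether such a \emph{global} component contains a large ascent or an appropriately selected neighboring node---a witness that may sit in a different raft reached through a chain of wharfs. The delicate point is to verify that the $\aox$-paths and $\axx$-paths in Definition~\ref{defn:legallabelling} assign to each merged component exactly one such witness, so that the map $F \mapsto (C,L)$ is well defined, surjective onto the legal labelings with nonempty consistent fiber, and has fibers that genuinely decouple into the raft-indexed product. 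Establishing this correspondence---that legal labelings are in bijection with the global coordination data, with no double counting across the two witness conventions (a selected top or bottom node versus a large ascent)---is where the care lies; once it is in place, the product structure and the automaton computation of each $a(u,v;m)$ follow from the already-established Lemmas~\ref{L:legallabelling} and~\ref{L:a.sequence}.
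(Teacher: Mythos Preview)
Your proposal is correct and follows essentially the same approach as the paper: the paper's proof is a single sentence citing Definitions~\ref{defn:legallabelling}, \ref{defn:consistentwlabelling}, \ref{defn:a.sequence} and Lemmas~\ref{L:legallabelling}, \ref{L:a.sequence}, and your write-up simply unpacks how those ingredients assemble into the formula. You also correctly identify the one nontrivial point left implicit in the paper---that the assignment $F \mapsto (C,L)$ is a bijection between lex-minimal fillings and pairs (legal labeling, consistent interior filling)---which is exactly the content buried in the informal discussion preceding Definition~\ref{defn:legallabelling}.
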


\begin{proof}
    The proof follows from Definitions \ref{defn:legallabelling}, \ref{defn:consistentwlabelling}
    and \ref{defn:a.sequence}, and Lemmas \ref{L:legallabelling} and \ref{L:a.sequence}.
\end{proof}

We demonstrate this result for examples for Weyl groups and affine Weyl groups in the
next subsections.

\begin{remark}
As in Section~\ref{sub:b-sequences}, we can sum together collections
of $a$-sequences containing the option to select a given rope(s) or
not.  Thus, there exist $b$-sequences for all Coxeter groups as well.
The proof of Theorem~\ref{main.general.case} now follows as a
corollary to Theorem~\ref{main.coxeter} provided we define
$\wharfs(w)$ to be the set of all possible choices of dots on all of
the wharfs of $w$ along with legal labels of all the half-edges
emanating from the wharfs.
\end{remark}

\subsection{Example: a wharf with three branches}

We now study the case of a wharf with three branches. These will have sizes $i$, $j$, and $k$, and each will end in a doubly unfilled tile. This structure arises as the $e$-ocean in Coxeter groups $D_n$, $E_n$, $\widetilde B_n$, and $\widetilde E_n$, where $e$ is the identity. We denote the total number of parabolic double cosets in this case by
$$\branch(i,j,k).$$
There are four options for the wharf. Throughout the following computation, we will refer to Figure~\ref{branch}.

The blank wharf $\omo$ gives only one possible labeling (see Figure~\ref{branch}, drawing 1), and contributes
\begin{equation} \label{branch1}
 a\raisebox{.25ex}{\big(}\oo\,,\oo\,;i\raisebox{.25ex}{\big)}\,a\raisebox{.25ex}{\big(}\oo\,,\oo\,;j\raisebox{.25ex}{\big)}\,a\raisebox{.25ex}{\big(}\oo\,,\oo\,;k\raisebox{.25ex}{\big)}.
\end{equation}
The wharf $\xmo$ also gives only one possible labeling (see Figure~\ref{branch}, drawing 2), and contributes
\begin{equation} \label{branch2}
 a\raisebox{.25ex}{\big(}\oo\,,\xo\,;i\raisebox{.25ex}{\big)}\,a\raisebox{.25ex}{\big(}\oo\,,\xo\,;j\raisebox{.25ex}{\big)}\,a\raisebox{.25ex}{\big(}\oo\,,\xo\,;k\raisebox{.25ex}{\big)}.
\end{equation}
The wharf $\omx$ gives seven possible labelings (see Figure~\ref{branch}, drawings 3--9): every half-edge can be either $\Ox$ or $\oX$\,, but at least one of them has to be labeled $\Ox$\,. The total contribution is therefore
\begin{align} \label{branch3}
 a\raisebox{.25ex}{\big(}\oo\,,\Ox\,;i\raisebox{.25ex}{\big)}\,&a\raisebox{.25ex}{\big(}\oo\,,\Ox\,;j\raisebox{.25ex}{\big)}\,a\raisebox{.25ex}{\big(}\oo\,,\Ox\,;k\raisebox{.25ex}{\big)}
 + a\raisebox{.25ex}{\big(}\oo\,,\oX\,;i\raisebox{.25ex}{\big)}\,a\raisebox{.25ex}{\big(}\oo\,,\Ox\,;j\raisebox{.25ex}{\big)}\,a\raisebox{.25ex}{\big(}\oo\,,\Ox\,;k\raisebox{.25ex}{\big)}  \\
\nonumber
& + a\raisebox{.25ex}{\big(}\oo\,,\Ox\,;i\raisebox{.25ex}{\big)}\,a\raisebox{.25ex}{\big(}\oo\,,\oX\,;j\raisebox{.25ex}{\big)}\,a\raisebox{.25ex}{\big(}\oo\,,\Ox\,;k\raisebox{.25ex}{\big)}
+ a\raisebox{.25ex}{\big(}\oo\,,\Ox\,;i\raisebox{.25ex}{\big)}\,a\raisebox{.25ex}{\big(}\oo\,,\Ox\,;j\raisebox{.25ex}{\big)}\,a\raisebox{.25ex}{\big(}\oo\,,\oX\,;k\raisebox{.25ex}{\big)} \\
\nonumber
& + a\raisebox{.25ex}{\big(}\oo\,,\oX\,;i\raisebox{.25ex}{\big)}\,a\raisebox{.25ex}{\big(}\oo\,,\oX\,;j\raisebox{.25ex}{\big)}\,a\raisebox{.25ex}{\big(}\oo\,,\Ox\,;k\raisebox{.25ex}{\big)}
+ a\raisebox{.25ex}{\big(}\oo\,,\oX\,;i\raisebox{.25ex}{\big)}\,a\raisebox{.25ex}{\big(}\oo\,,\Ox\,\,;j\raisebox{.25ex}{\big)}\,a\raisebox{.25ex}{\big(}\oo\,,\oX\,;k\raisebox{.25ex}{\big)} \\
\nonumber
&+ a\raisebox{.25ex}{\big(}\oo\,,\Ox\,;i\raisebox{.25ex}{\big)}\,a\raisebox{.25ex}{\big(}\oo\,,\oX\,;j\raisebox{.25ex}{\big)}\,a\raisebox{.25ex}{\big(}\oo\,,\oX\,;k\raisebox{.25ex}{\big)}.
\end{align}
Finally, the wharf $\xmx$ gives twelve possible labelings (see Figure~\ref{branch}, drawings 10--21): every half-edge can be either $\xX$\,, $\Xx$\,, or $\XX$\,, at least one of them must be $\xX$\,, and at least one of them must be $\Xx$\,. The total contribution is
\begin{align} \label{branch4}
 a\raisebox{.25ex}{\big(}\oo\,,\xX\,;i\raisebox{.25ex}{\big)}\,&a\raisebox{.25ex}{\big(}\oo\,,\Xx\,;j\raisebox{.25ex}{\big)}\,a\raisebox{.25ex}{\big(}\oo\,,\XX\,;k\raisebox{.25ex}{\big)}
 + a\raisebox{.25ex}{\big(}\oo\,,\xX\,;i\raisebox{.25ex}{\big)}\,a\raisebox{.25ex}{\big(}\oo\,,\XX\,;j\raisebox{.25ex}{\big)}\,a\raisebox{.25ex}{\big(}\oo\,,\Xx\,;k\raisebox{.25ex}{\big)}\\
 \nonumber
& + a\raisebox{.25ex}{\big(}\oo\,,\Xx\,;i\raisebox{.25ex}{\big)}\,a\raisebox{.25ex}{\big(}\oo\,,\xX\,;j\raisebox{.25ex}{\big)}\,a\raisebox{.25ex}{\big(}\oo\,,\XX\,;k\raisebox{.25ex}{\big)}
+ a\raisebox{.25ex}{\big(}\oo\,,\Xx\,;i\raisebox{.25ex}{\big)}\,a\raisebox{.25ex}{\big(}\oo\,,\XX\,;j\raisebox{.25ex}{\big)}\,a\raisebox{.25ex}{\big(}\oo\,,\xX\,;k\raisebox{.25ex}{\big)} \\
\nonumber
& + a\raisebox{.25ex}{\big(}\oo\,,\XX\,;i\raisebox{.25ex}{\big)}\,a\raisebox{.25ex}{\big(}\oo\,,\xX\,;j\raisebox{.25ex}{\big)}\,a\raisebox{.25ex}{\big(}\oo\,,\Xx\,;k\raisebox{.25ex}{\big)}
+ a\raisebox{.25ex}{\big(}\oo\,,\XX\,;i\raisebox{.25ex}{\big)}\,a\raisebox{.25ex}{\big(}\oo\,,\Xx\,;j\raisebox{.25ex}{\big)}\,a\raisebox{.25ex}{\big(}\oo\,,\xX\,;k\raisebox{.25ex}{\big)} \\
\nonumber
&+ a\raisebox{.25ex}{\big(}\oo\,,\xX\,;i\raisebox{.25ex}{\big)}\,a\raisebox{.25ex}{\big(}\oo\,,\xX\,;j\raisebox{.25ex}{\big)}\,a\raisebox{.25ex}{\big(}\oo\,,\Xx\,;k\raisebox{.25ex}{\big)}
+ a\raisebox{.25ex}{\big(}\oo\,,\xX\,;i\raisebox{.25ex}{\big)}\,a\raisebox{.25ex}{\big(}\oo\,,\Xx\,;j\raisebox{.25ex}{\big)}\,a\raisebox{.25ex}{\big(}\oo\,,\xX\,;k\raisebox{.25ex}{\big)} \\
\nonumber
& + a\raisebox{.25ex}{\big(}\oo\,,\Xx\,;i\raisebox{.25ex}{\big)}\,a\raisebox{.25ex}{\big(}\oo\,,\xX\,;j\raisebox{.25ex}{\big)}\,a\raisebox{.25ex}{\big(}\oo\,,\xX\,;k\raisebox{.25ex}{\big)}
+ a\raisebox{.25ex}{\big(}\oo\,,\xX\,;i\raisebox{.25ex}{\big)}\,a\raisebox{.25ex}{\big(}\oo\,,\Xx\,;j\raisebox{.25ex}{\big)}\,a\raisebox{.25ex}{\big(}\oo\,,\Xx\,;k\raisebox{.25ex}{\big)} \\
\nonumber
& + a\raisebox{.25ex}{\big(}\oo\,,\Xx,;i\raisebox{.25ex}{\big)}\,a\raisebox{.25ex}{\big(}\oo\,,\xX\,;j\raisebox{.25ex}{\big)}\,a\raisebox{.25ex}{\big(}\oo\,,\Xx\,;k\raisebox{.25ex}{\big)}
+ a\raisebox{.25ex}{\big(}\oo\,,\Xx\,;i\raisebox{.25ex}{\big)}\,a\raisebox{.25ex}{\big(}\oo\,,\Xx\,;j\raisebox{.25ex}{\big)}\,a\raisebox{.25ex}{\big(}\oo\,,\xX\,;k\raisebox{.25ex}{\big)}.
\end{align}
Therefore, $\branch(i,j,k)$ is the sum of expressions \eqref{branch1}--\eqref{branch4}.

\begin{figure}[htbp]
\begin{center}
 \input{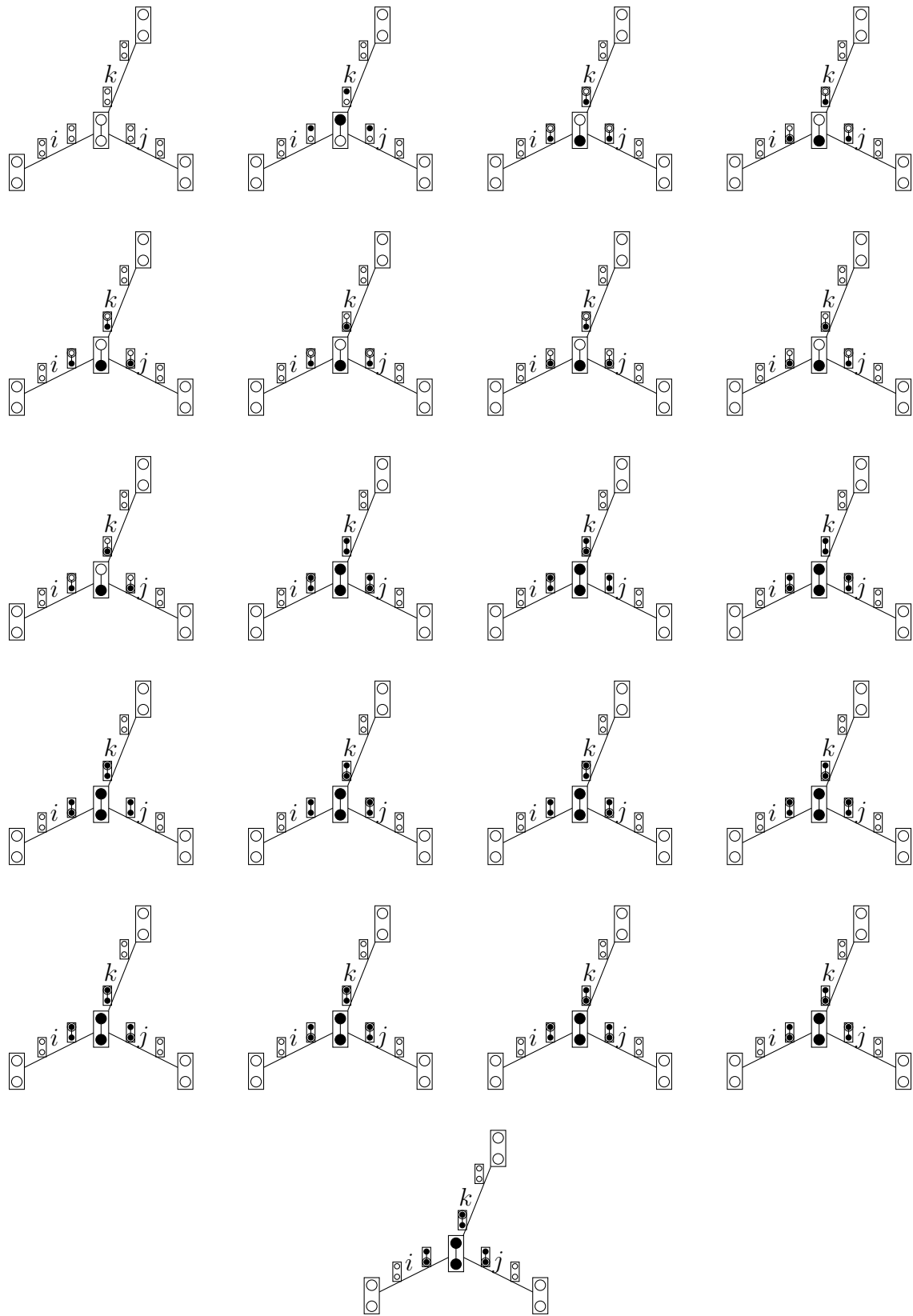} \caption{All 21 legally labeled, decorated,
harbor graphs for the ocean with one wharf and three rafts of sizes
$i$, $j$ and $k$.}\label{branch}
\end{center}
\end{figure}

\begin{example}
For $n \ge 3$, consider the identity elements in $D_n$ and $\widetilde B_{n-1}$.  The number of parabolic double cosets whose minimal representative is one of these elements is $\branch(1,1,n-3)$. For $n = 3,\ldots,10$, this gives the sequence
$$20, 72, 234, 746, 2380, 7614, 24394, 78192,$$
and has generating function
$$\frac{t^3 \left(20-28t+14 t^2\right)}{1-5t+7t^2-4t^3}.$$
\end{example}

\begin{example}
Consider the identity element in $E_n$, for $n \geq 4$. For this, we compute $\branch(2,1,n-4)$. For $n = 4,\ldots,10$, this gives the sequence
$$66, 234, 750, 2376, 7566, 24198, 77532,$$
and has generating function
$$\frac{t^4 \left(66-96t+42 t^2\right)}{1-5t+7t^2-4t^3}.$$
\end{example}

\begin{example}
The number of parabolic double cosets whose minimal representative is the identity element in the affine Coxeter group $\widetilde E_n$, for $n = 6,7,8$, is
\begin{align*}
\branch(2,2,2) &= 2378,\\
\branch(3,1,3) &= 7514, \text{ and}\\
\branch(2,1,5) &= 24198.
\end{align*}
\end{example}

\subsection{Example: a circular raft}

We can enumerate parabolic double cosets in the affine group
$\widetilde A_n$, for $n \geq 0$. The Coxeter graph in this case is a
cycle with $n+1$ elements.  To study the $e$-ocean, we introduce a wharf on any one of the planks. Again,
we have four options for filling the wharf. The unfilled wharf $\omo$ gives
only one possible labeling (see Figure~\ref{circular}, drawing 1), the
wharf $\xmo$ also gives only one possible labeling (see
Figure~\ref{circular}, drawing 2), the wharf $\omx$ has three possible
labelings (see Figure~\ref{circular}, drawings 3--5), and the wharf
$\xmx$ has two possible labelings (see Figure~\ref{circular}, drawings
6--7). Thus the number of parabolic double cosets is
\begin{equation}
 a\raisebox{.25ex}{\big(}\oo\,,\oo\,;n\raisebox{.25ex}{\big)} +a\raisebox{.25ex}{\big(}\xo\,,\xo\,;n\raisebox{.25ex}{\big)} +a\raisebox{.25ex}{\big(}\Ox\,,\Ox\,;n\raisebox{.25ex}{\big)} +a\raisebox{.25ex}{\big(}\Ox\,,\oX\,;n\raisebox{.25ex}{\big)}+a\raisebox{.25ex}{\big(}\oX\,,\Ox\,;n\raisebox{.25ex}{\big)} +a\raisebox{.25ex}{\big(}\xX\,,\Xx\,;n\raisebox{.25ex}{\big)} +a\raisebox{.25ex}{\big(}\Xx\,,\xX\,;n\raisebox{.25ex}{\big)}.
\end{equation}
For $n = 0,\ldots,10$, this gives $2, 6, 26, 98, 332, 1080, 3474, 11146, 35738, 114566, 367248$. The generating function is
$$\frac{2-8t+22t^2-28t^3+20t^4-4 t^5}{(1-t) \left(1-t+t^2\right) \left(1-5t+7t^2-4t^3\right)}.$$

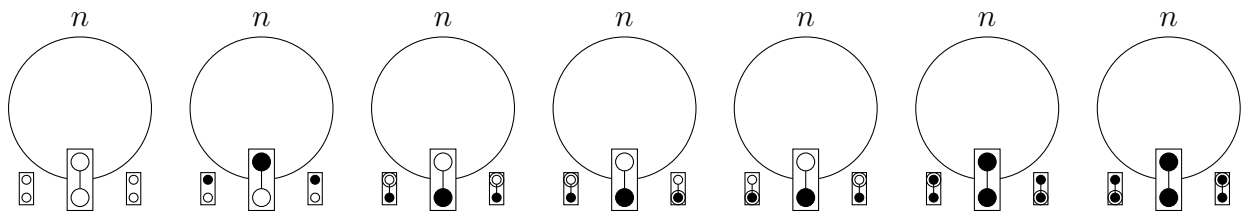
\begin{figure}[htbp]
\begin{center}
 \begin{tikzpicture}[scale=.95]
\draw (0,0) circle (1cm);
\fill[white] (-.18,-.57) rectangle (.18,-1.43);
\draw (-.18,-.57) rectangle (.18,-1.43);
\draw (0,-.75) circle (.125cm);
\draw (0,-1.25) circle (.125cm);
\draw (0,-.875) -- (0,-1.125);
\foreach \x in {(-.75,-1.25), (-.75,-1), (.75,-1.25), (.75,-1)} {\draw \x circle (.0625cm);}
\draw (-.85,-1.35) rectangle (-.65,-.9); \draw (.65,-1.35) rectangle (.85,-.9); 
\draw (0,1) node[above] {$n$};
\end{tikzpicture}
\hspace{.1in}
\begin{tikzpicture}[scale=.95]
\draw (0,0) circle (1cm);
\fill[white] (-.18,-.57) rectangle (.18,-1.43);
\draw (-.18,-.57) rectangle (.18,-1.43);
\fill[black] (0,-.75) circle (.125cm); \draw (0,-.75) circle (.125cm);
\draw (0,-1.25) circle (.125cm);
\draw (0,-.875) -- (0,-1.125);
\foreach \x in {(-.75,-1.25), (.75,-1.25)} {\draw \x circle (.0625cm);}
\foreach \x in {(-.75,-1), (.75,-1)} {\fill[black] \x circle (.0625cm); \draw \x circle (.0625cm);}
\draw (-.85,-1.35) rectangle (-.65,-.9); \draw (.65,-1.35) rectangle (.85,-.9); 
\draw (0,1) node[above] {$n$};
\end{tikzpicture}
\hspace{.1in}
\begin{tikzpicture}[scale=.95]
\draw (0,0) circle (1cm);
\fill[white] (-.18,-.57) rectangle (.18,-1.43);
\draw (-.18,-.57) rectangle (.18,-1.43);
\draw (0,-.75) circle (.125cm);
\fill[black] (0,-1.25) circle (.125cm); \draw (0,-1.25) circle (.125cm);
\draw (0,-.875) -- (0,-1.125);
\foreach \x in {(-.75,-1.25), (.75,-1.25)} {\fill[black] \x circle (.0625cm); \draw \x circle (.0625cm); \draw \x -- ++(0,.1875);}
\foreach \x in {(-.75,-1), (.75,-1)} {\draw \x circle (.0625cm); \draw \x circle (.1cm);}
\draw (-.85,-1.35) rectangle (-.65,-.9); \draw (.65,-1.35) rectangle (.85,-.9); 
\draw (0,1) node[above] {$n$};
\end{tikzpicture}
\hspace{.1in}
\begin{tikzpicture}[scale=.95]
\draw (0,0) circle (1cm);
\fill[white] (-.18,-.57) rectangle (.18,-1.43);
\draw (-.18,-.57) rectangle (.18,-1.43);
\draw (0,-.75) circle (.125cm);
\fill[black] (0,-1.25) circle (.125cm); \draw (0,-1.25) circle (.125cm);
\draw (0,-.875) -- (0,-1.125);
\foreach \x in {(-.75,-1.25), (.75,-1.25)} {\fill[black] \x circle (.0625cm); \draw \x circle (.0625cm); \draw \x -- ++(0,.1875);}
\foreach \x in {(-.75,-1), (.75,-1)} {\draw \x circle (.0625cm);}
\foreach \x in {(-.75,-1),(.75,-1.25)} {\draw \x circle (.1cm);}
\draw (-.85,-1.35) rectangle (-.65,-.9); \draw (.65,-1.35) rectangle (.85,-.9); 
\draw (0,1) node[above] {$n$};
\end{tikzpicture}
\hspace{.1in}
\begin{tikzpicture}[scale=.95]
\draw (0,0) circle (1cm);
\fill[white] (-.18,-.57) rectangle (.18,-1.43);
\draw (-.18,-.57) rectangle (.18,-1.43);
\draw (0,-.75) circle (.125cm);
\fill[black] (0,-1.25) circle (.125cm); \draw (0,-1.25) circle (.125cm);
\draw (0,-.875) -- (0,-1.125);
\foreach \x in {(-.75,-1.25), (.75,-1.25)} {\fill[black] \x circle (.0625cm); \draw \x circle (.0625cm); \draw \x -- ++(0,.1875);}
\foreach \x in {(-.75,-1), (.75,-1)} {\draw \x circle (.0625cm);}
\foreach \x in {(.75,-1),(-.75,-1.25)} {\draw \x circle (.1cm);}
\draw (-.85,-1.35) rectangle (-.65,-.9); \draw (.65,-1.35) rectangle (.85,-.9); 
\draw (0,1) node[above] {$n$};
\end{tikzpicture}
\hspace{.1in}
\begin{tikzpicture}[scale=.95]
\draw (0,0) circle (1cm);
\fill[white] (-.18,-.57) rectangle (.18,-1.43);
\draw (-.18,-.57) rectangle (.18,-1.43);
\fill[black] (0,-.75) circle (.125cm);\draw (0,-.75) circle (.125cm);
\fill[black] (0,-1.25) circle (.125cm); \draw (0,-1.25) circle (.125cm);
\draw (0,-.875) -- (0,-1.125);
\foreach \x in {(-.75,-1.25), (.75,-1.25)} {\fill[black] \x circle (.0625cm); \draw \x circle (.0625cm); \draw \x -- ++(0,.1875);}
\foreach \x in {(-.75,-1), (.75,-1)} {\fill[black] \x circle (.0625cm);\draw \x circle (.0625cm);}
\foreach \x in {(-.75,-1),(.75,-1.25)} {\draw \x circle (.1cm);}
\draw (-.85,-1.35) rectangle (-.65,-.9); \draw (.65,-1.35) rectangle (.85,-.9); 
\draw (0,1) node[above] {$n$};
\end{tikzpicture}
\hspace{.1in}
\begin{tikzpicture}[scale=.95]
\draw (0,0) circle (1cm);
\fill[white] (-.18,-.57) rectangle (.18,-1.43);
\draw (-.18,-.57) rectangle (.18,-1.43);
\fill[black] (0,-.75) circle (.125cm);\draw (0,-.75) circle (.125cm);
\fill[black] (0,-1.25) circle (.125cm); \draw (0,-1.25) circle (.125cm);
\draw (0,-.875) -- (0,-1.125);
\foreach \x in {(-.75,-1.25), (.75,-1.25)} {\fill[black] \x circle (.0625cm); \draw \x circle (.0625cm); \draw \x -- ++(0,.1875);}
\foreach \x in {(-.75,-1), (.75,-1)} {\fill[black] \x circle (.0625cm);\draw \x circle (.0625cm);}
\foreach \x in {(.75,-1),(-.75,-1.25)} {\draw \x circle (.1cm);}
\draw (-.85,-1.35) rectangle (-.65,-.9); \draw (.65,-1.35) rectangle (.85,-.9); 
\draw (0,1) node[above] {$n$};
\end{tikzpicture}
 \caption{A circular raft, and all seven legal labelings.}\label{circular}
\end{center}
\end{figure}

\subsection{Example: two wharfs with three branches}

Our final example is the case of the identity for a Coxeter graph with two branch points connected by a path (of size $k$), and with two more branches (of sizes $i_1$, $j_1$, $i_2$, and $j_2$) coming out of each branch point. An example is the affine group $\widetilde D_n$, for $n \geq 4$ (with $k = n-4$ and $i_1=j_1=i_2=j_2=1$).

There are now too many labelings to state in a concise manner. For each wharf, there are four choices, so we have 16 choices total. If both wharfs are either $\omo$ or $\xmo$\,, then there is only one labeling. For example, for left wharf $\omo$ and right wharf $\xmo$\,, the contribution is
$$a\raisebox{.25ex}{\big(}\oo\,,,\oo\,;i_1\raisebox{.25ex}{\big)}\,a\raisebox{.25ex}{\big(}\oo\,,\oo\,;j_1\raisebox{.25ex}{\big)}\,a\raisebox{.25ex}{\big(}\oo\,,\xo\,;k\raisebox{.25ex}{\big)}\,a\raisebox{.25ex}{\big(}\xo\,,\oo\,;i_2\raisebox{.25ex}{\big)}\,a\raisebox{.25ex}{\big(}\xo\,,\oo\,;j_2\raisebox{.25ex}{\big)}$$
(see Figure~\ref{twobranch}, drawing 1). But, for example, for left wharf $\omx$ and right wharf $\omx$\,, there are $49 + 15 = 64$ possible labelings (we can either label all three half-edges coming out of each wharf in one of $7$ possible ways -- the choices are $\oX$ or $\Ox$ for each, and we cannot select $\oX$ for all three -- or we can label the edge between the wharfs by {\ox}\,, and then we can label the remaining four half-edges either by $\oX$ or $\Ox$\,, but we cannot label them all $\oX$\,). In Figure~\ref{twobranch}, drawings 2,
we see a labeling that contribute $a\raisebox{.25ex}{\big(}\oo\,,\Ox\,;i_1\raisebox{.25ex}{\big)}\,a\raisebox{.25ex}{\big(}\oo\,,\oX\,;j_1\raisebox{.25ex}{\big)}\,a\raisebox{.25ex}{\big(}\Ox\,,\Ox\,;k\raisebox{.25ex}{\big)}\,a\raisebox{.25ex}{\big(}\oX\,,\oo\,;i_2\raisebox{.25ex}{\big)}\,a\raisebox{.25ex}{\big(}\oX\,,\oo\,;j_2\raisebox{.25ex}{\big)}$ while drawing 3 in Figure~\ref{twobranch} contributes $a\raisebox{.25ex}{\big(}\oo\,,\Ox\,;i_1\raisebox{.25ex}{\big)}\,a\raisebox{.25ex}{\big(}\oo\,,\oX\,;j_1\raisebox{.25ex}{\big)}\,a\raisebox{.25ex}{\big(}\oX\,,\oo\,;i_2\raisebox{.25ex}{\big)}\,a\raisebox{.25ex}{\big(}\oX\,,\oo\,;j_2\raisebox{.25ex}{\big)}$. In total, there are
$$(1+1+7+12)+(1+1+7+12)+(7+7+64+84)+(12+12+84+194)=506$$
possible labelings, each contributing a product of four or five terms to the sum.

\begin{figure}[htbp]
\begin{center}
 \begin{tikzpicture}[scale=.75]
\draw (-1,0) coordinate (a);
\draw (1,0) coordinate (b);
\draw (2.75,1.75) coordinate (c);
\draw (2.75,-1.75) coordinate (d);
\draw (-2.75,1.75) coordinate (e);
\draw (-2.75,-1.75) coordinate (f);
\draw (a) -- (b);
\draw (c) -- (b) -- (d);
\draw ($(b)!0.5!(c)$)++(-.15,.2) node {$i_2$}; \draw ($(b)!0.5!(d)$)++(.2,.2) node {$j_2$};
\draw ($(a)!0.5!(e)$)++(.15,.2) node {$i_1$}; \draw ($(a)!0.5!(f)$)++(-.2,.2) node {$j_1$};
\draw (e) -- (a) -- (f);
\draw ($(a)!0.5!(b)$) node[above] {$k$};
\foreach \x in {(a),(b),(c),(d),(e),(f)} {
\fill[white] \x++(-.18,-.43) rectangle ++(.36,.86);
\draw \x ++(0,.25) circle (.125);\draw \x ++(0,-.25) circle (.125);
\draw \x++(-.18,-.43) rectangle ++(.36,.86);}
\draw (a)++(0,-.125) -- ++(0,.25);
\draw (b)++(0,-.125) -- ++(0,.25);
\draw ($(a)!0.33!(b)$) coordinate (aab);
\draw ($(b)!0.33!(a)$) coordinate (abb);
\draw ($(b)!0.25!(c)$)++(0,.1) coordinate (bbc);
\draw ($(b)!0.65!(c)$)++(-.1,0) coordinate (bcc);
\draw ($(b)!0.35!(d)$)++(.1,0) coordinate (bbd);
\draw ($(b)!0.75!(d)$)++(.1,0) coordinate (bdd);
\draw ($(a)!0.25!(e)$)++(0,.1) coordinate (aae);
\draw ($(a)!0.65!(e)$)++(.1,0) coordinate (aee);
\draw ($(a)!0.35!(f)$)++(-.1,0) coordinate (aaf);
\draw ($(a)!0.75!(f)$)++(-.1,0) coordinate (aff);
\foreach \x in {(aab),(abb),(bbc),(bcc),(bbd),(bdd),(aae),(aee),(aaf),(aff)} {\draw \x ++(0,.15) circle (.0625); \draw \x ++(0,.4) circle (.0625);
\draw \x++(-.1,.04) rectangle ++(.2,.46);}
\foreach \x in {(abb),(bbc),(bbd)}{\fill[black] \x++(0,.4) circle (.0625);} 
\foreach \x in {(b)} {\fill[black] \x ++(0,.25) circle (.125);} 
\end{tikzpicture}
\hspace{.2in}
\begin{tikzpicture}[scale=.75]
\draw (-1,0) coordinate (a);
\draw (1,0) coordinate (b);
\draw (2.75,1.75) coordinate (c);
\draw (2.75,-1.75) coordinate (d);
\draw (-2.75,1.75) coordinate (e);
\draw (-2.75,-1.75) coordinate (f);
\draw (a) -- (b);
\draw (c) -- (b) -- (d);
\draw ($(b)!0.5!(c)$)++(-.15,.2) node {$i_2$}; \draw ($(b)!0.5!(d)$)++(.2,.2) node {$j_2$};
\draw ($(a)!0.5!(e)$)++(.15,.2) node {$i_1$}; \draw ($(a)!0.5!(f)$)++(-.2,.2) node {$j_1$};
\draw (e) -- (a) -- (f);
\draw ($(a)!0.5!(b)$) node[above] {$k$};
\foreach \x in {(a),(b),(c),(d),(e),(f)} {
\fill[white] \x++(-.18,-.43) rectangle ++(.36,.86);
\draw \x ++(0,.25) circle (.125);\draw \x ++(0,-.25) circle (.125);
\draw \x++(-.18,-.43) rectangle ++(.36,.86);}
\draw (a)++(0,-.125) -- ++(0,.25);
\draw (b)++(0,-.125) -- ++(0,.25);
\draw ($(a)!0.33!(b)$) coordinate (aab);
\draw ($(b)!0.33!(a)$) coordinate (abb);
\draw ($(b)!0.25!(c)$)++(0,.1) coordinate (bbc);
\draw ($(b)!0.65!(c)$)++(-.1,0) coordinate (bcc);
\draw ($(b)!0.35!(d)$)++(.1,0) coordinate (bbd);
\draw ($(b)!0.75!(d)$)++(.1,0) coordinate (bdd);
\draw ($(a)!0.25!(e)$)++(0,.1) coordinate (aae);
\draw ($(a)!0.65!(e)$)++(.1,0) coordinate (aee);
\draw ($(a)!0.35!(f)$)++(-.1,0) coordinate (aaf);
\draw ($(a)!0.75!(f)$)++(-.1,0) coordinate (aff);
\foreach \x in {(aab),(abb),(bbc),(bcc),(bbd),(bdd),(aae),(aee),(aaf),(aff)} {\draw \x ++(0,.15) circle (.0625); \draw \x ++(0,.4) circle (.0625);
\draw \x++(-.1,.04) rectangle ++(.2,.46);}
\foreach \x in {(aab),(abb),(bbc),(bbd),(aae),(aaf)}{\fill[black] \x++(0,.15) circle (.0625);} 
\foreach \x in {(a),(b)} {\fill[black] \x ++(0,-.25) circle (.125);} 
\foreach \x in {(aab),(abb),(bbc),(bbd),(aae),(aaf)}{\draw \x++(0,.2125) -- ++(0,.125);} 
\foreach \x in {(aab),(abb),(aae)}{\draw \x++(0,.4) circle (.1);} 
\foreach \x in {(aaf),(bbc),(bbd)}{\draw \x++(0,.15) circle (.1);} 
\end{tikzpicture}
\hspace{.2in}
\begin{tikzpicture}[scale=.75]
\draw (-1,0) coordinate (a);
\draw (1,0) coordinate (b);
\draw (2.75,1.75) coordinate (c);
\draw (2.75,-1.75) coordinate (d);
\draw (-2.75,1.75) coordinate (e);
\draw (-2.75,-1.75) coordinate (f);
\draw (a) -- (b);
\draw (c) -- (b) -- (d);
\draw ($(b)!0.5!(c)$)++(-.15,.2) node {$i_2$}; \draw ($(b)!0.5!(d)$)++(.2,.2) node {$j_2$};
\draw ($(a)!0.5!(e)$)++(.15,.2) node {$i_1$}; \draw ($(a)!0.5!(f)$)++(-.2,.2) node {$j_1$};
\draw (e) -- (a) -- (f);
\draw ($(a)!0.5!(b)$) node[above] {$k$};
\foreach \x in {(a),(b),(c),(d),(e),(f)} {
\fill[white] \x++(-.18,-.43) rectangle ++(.36,.86);
\draw \x ++(0,.25) circle (.125);\draw \x ++(0,-.25) circle (.125);
\draw \x++(-.18,-.43) rectangle ++(.36,.86);}
\draw (a)++(0,-.125) -- ++(0,.25);
\draw (b)++(0,-.125) -- ++(0,.25);
\draw ($(a)!0.33!(b)$) coordinate (aab);
\draw ($(b)!0.33!(a)$) coordinate (abb);
\draw ($(b)!0.25!(c)$)++(0,.1) coordinate (bbc);
\draw ($(b)!0.65!(c)$)++(-.1,0) coordinate (bcc);
\draw ($(b)!0.35!(d)$)++(.1,0) coordinate (bbd);
\draw ($(b)!0.75!(d)$)++(.1,0) coordinate (bdd);
\draw ($(a)!0.25!(e)$)++(0,.1) coordinate (aae);
\draw ($(a)!0.65!(e)$)++(.1,0) coordinate (aee);
\draw ($(a)!0.35!(f)$)++(-.1,0) coordinate (aaf);
\draw ($(a)!0.75!(f)$)++(-.1,0) coordinate (aff);
\foreach \x in {(bbc),(bcc),(bbd),(bdd),(aae),(aee),(aaf),(aff)} {\draw \x ++(0,.15) circle (.0625); \draw \x ++(0,.4) circle (.0625);
\draw \x++(-.1,.04) rectangle ++(.2,.46);}
\foreach \x in {(bbc),(bbd),(aae),(aaf)}{\fill[black] \x++(0,.15) circle (.0625);} 
\foreach \x in {(a),(b)} {\fill[black] \x ++(0,-.25) circle (.125);} 
\foreach \x in {(bbc),(bbd),(aae),(aaf)}{\draw \x++(0,.2125) -- ++(0,.125);} 
\foreach \x in {(aae)}{\draw \x++(0,.4) circle (.1);} 
\foreach \x in {(aaf),(bbc),(bbd)}{\draw \x++(0,.15) circle (.1);} 
\draw ($(a)!0.5!(b)$)++(0,-.15) circle (.0625);
\draw ($(a)!0.5!(b)$)++(0,-.4) circle (.0625);
\draw ($(a)!0.5!(b)$)++(-.1,-.04) rectangle ++(.2,-.46);
\fill[black] ($(a)!0.5!(b)$)++(0,-.4) circle (.0625);
\end{tikzpicture}
 \caption{A harbor graph with two wharfs and some of its legal labelings.}\label{twobranch}
\end{center}
\end{figure}
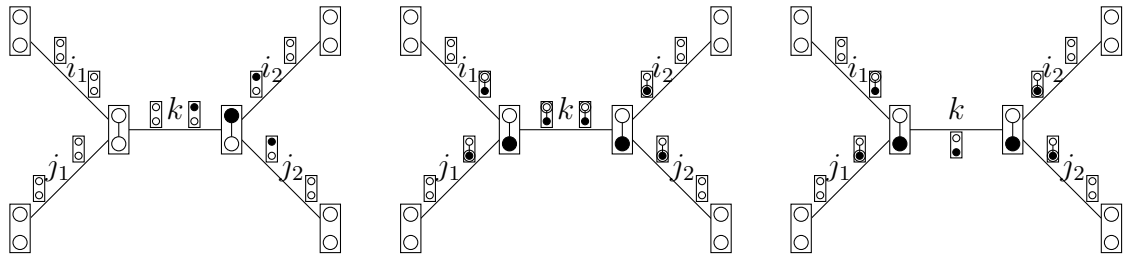

\begin{example}
The number of parabolic double cosets whose minimal representative is the identity in $\widetilde D_n$, with $n = 5,\ldots,14$,
$$814, 2558, 8176, 26230, 84150, 269844, 865090, 2773142, 8889456, 28495646, 91344606,$$
and the generating function is (the surprisingly simple)
$$\frac{t^4 (814 - 1512 t + 1084 t^2)}{1 - 5 t + 7 t^2 - 4 t^3}.$$
\end{example}

\section{Parabolic double cosets with restricted simple reflections}\label{sec:restricted}

We finish with some remarks about our enumerative formulas. The formula for
$S_n$ and the formula for general Coxeter groups both center around the number
of parabolic double cosets over rafts, subject to different boundary conditions.
The boundary conditions do not change the underlying recurrence, only its
initial conditions. The number of parabolic double cosets for a raft of size
$n$, ignoring boundary conditions, is the same as the number of parabolic
double cosets in $S_{n+1}$ whose minimal element is the identity.  Boundary
conditions amount to enumerating parabolic double cosets with presentations
$W_I e W_J$ where simple reflections $s_1$ and $s_n$ may be forbidden from
belonging to $I$ or $J$.

This suggests looking at the problem of enumerating parabolic double cosets
$W_I w W_J$ where certain simple reflections are not allowed to belong to $I$
or $J$. It turns out that the characterization of lex-minimal elements in
Theorem~\ref{T:minimal} also applies to this more general problem. By working
in this framework, we get an intriguing structural explanation for our
enumerative formulas: the set of parabolic double cosets with fixed minimal
element $w$ is in bijection with a restricted set of parabolic double cosets of
the identity in a larger Coxeter group. This suggests that our enumerative
formulas are somewhat natural, despite their apparent complexity.

\begin{defn}
    Fix subsets $X_L, X_R \subseteq S$. A presentation $C = W_I w
    W_J$ of a parabolic double coset \emph{avoids} $X_L$ and $X_R$ if $I \cap
    X_L = \emptyset$ and $J \cap X_R = \emptyset$. A parabolic
    double coset $C$ \emph{avoids} $X_L$ and $X_R$ if it has a presentation
    that avoids $X_L$ and $X_R$.
\end{defn}
In other words, a parabolic double coset $C$ avoids $X_L$ and $X_R$ if $C$ has
a presentation which does not use any elements of $X_L$ on the left, nor any
elements of $X_R$ on the right.

The natural question to ask is then: given an element $w \in W$, and sets $X_L,
X_R \subseteq S$, how many parabolic double cosets with minimal element $w$
avoid $X_L$ and $X_R$? (To make the question interesting, we can assume that
$X_L$ and $X_R$ are subsets of the left and right ascent set of $w$,
respectively.)

Even if a parabolic double coset $C$ avoids $X_L$ and $X_R$, this does not mean
that every presentation of $C$ avoids $X_L$ and $X_R$, nor even that the
lex-minimal presentation avoids these sets. For instance, the parabolic double
coset $C = W$ has two minimal presentations, $C = W_S  e
W_{\emptyset} = W_{\emptyset} e W_S$, with the latter being
lex-minimal. If we set $X_L = \emptyset$ and $X_R = \{s\}$ for any
$s \in S$, then $W_{\emptyset} e W_S$ does not avoid $X_L$ and
$X_R$. On the other hand, the former presentation, $W_S e W_{\emptyset}$, is now
lex-minimal among all presentations that avoid $X_L$ and $X_R$.

Fortunately, if $C$ avoids $X_L$ and $X_R$, then it clearly has some minimal
presentation (in the sense of Definition~\ref{D:minimal}) that avoids $X_L$
and $X_R$, and we can characterize the unique lex-minimal presentation of this
form.

\begin{prop}\label{P:res_minimal}
    Let $X_L$ and $X_R$ be subsets of the left and right ascent sets of $w \in
    W$, respectively. Let $C$ be a parabolic double coset with minimal element $w$ and a
    presentation $C = W_I w W_J$ that avoids $X_L$ and $X_R$. This
    presentation is lex-minimal among all $X_L$- and $X_R$-avoiding
    presentations for $C$ if and only if
    \begin{enumerate}[(a)]
        \item no connected component of $J$ is contained within $(w^{-1} I w) \cap S$, and
        \item if a connected component $I_0$ of $I$ is contained in $w S w^{-1}$, then either
            some element of $w^{-1} I_0 w$ is contained in $X_R$, or
            some element of $w^{-1} I_0 w$ is adjacent to but not contained in $J$.
    \end{enumerate}
    Furthermore, every parabolic double coset avoiding $X_L$ and $X_R$ has a unique
    presentation that is lex-minimal among the coset's $X_L$- and $X_R$-avoiding presentations.
\end{prop}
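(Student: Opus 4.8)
The plan is to mirror the derivation of Theorem~\ref{T:minimal} from Proposition~\ref{P:minimal}, but to keep track of $X_R$-avoidance when performing slide moves (compare Corollary~\ref{cor:lex-min-fillings}). Throughout, $(I,J)$ ranges over presentations with $w$ minimal, $I\cap X_L=\emptyset$, and $J\cap X_R=\emptyset$; I call these \emph{avoiding} presentations. Among the plank moves of Definition~\ref{defn:plank.moves}, I single out the three \emph{reducing moves} an avoiding presentation might admit: a left contraction $(I,J)\to(I\setminus I_0,J)$ and a right contraction $(I,J)\to(I,J\setminus J_0)$, both of which preserve avoidance automatically, and an \emph{avoiding} left slide $(I,J)\to(I\setminus I_0,\,J\cup w^{-1}I_0 w)$, which is permitted only when $w^{-1}I_0 w$ is disjoint from and non-adjacent to $J$ \emph{and} $w^{-1}I_0 w\cap X_R=\emptyset$. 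Each reducing move strictly decreases $(|I|,|J|)$ lexicographically (the two left moves drop $|I|$; the right contraction drops $|J|$ while fixing $|I|$), so repeated reduction terminates.

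First I would record a purely local translation: an avoiding presentation admits no reducing move if and only if conditions (a) and (b) hold. Condition (a) is exactly the statement that no right contraction is available. For (b), writing $J_0=w^{-1}I_0 w$ for a small left component $I_0\subseteq wSw^{-1}$, the absence of a left contraction at $I_0$ says $J_0\not\subseteq J$, and the absence of an avoiding left slide at $I_0$ says $J_0$ meets or is adjacent to $J$, or else $J_0\cap X_R\neq\emptyset$. A short case check (splitting on whether $J_0\cap X_R$ is empty, and using connectedness of $J_0$ to locate a vertex of $J_0\setminus J$ adjacent to $J$) shows the conjunction of these two statements is equivalent to condition (b); large left components can never move and impose no condition. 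In particular, (a) and (b) force $(I,J)$ to be a minimal presentation in the sense of Definition~\ref{D:minimal}.

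Next I would prove that at most one avoiding presentation of $C$ satisfies (a) and (b). Given two such, $(I,J)$ and $(I',J')$, both are minimal, so Proposition~\ref{P:minimal} supplies small components $I_1,\dots,I_m$ of $I$ and $J_1,\dots,J_n$ of $J$ expressing $I'=(I\naunion wJ_1w^{-1}\naunion\cdots\naunion wJ_nw^{-1})\setminus\bigcup_i I_i$ and $J'=(J\naunion w^{-1}I_1 w\naunion\cdots\naunion w^{-1}I_m w)\setminus\bigcup_j J_j$. If some $I_i$ occurred, then $w^{-1}I_i w\subseteq J'$ would be disjoint from and non-adjacent to $J$ and, since $J'$ avoids $X_R$, would satisfy $w^{-1}I_i w\cap X_R=\emptyset$; thus $I_i$ would be avoiding-slidable in $(I,J)$, contradicting (b), so $m=0$. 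The case $n=0$ is the main obstacle, precisely because the conditions are asymmetric (we forbid left slides, not right slides): here I would read the relation backwards, observing that each $wJ_j w^{-1}$ is a small left component of $I'$ whose image $J_j$ is disjoint from and non-adjacent to $J'=J\setminus\bigcup_j J_j$ and avoids $X_R$, hence is avoiding-slidable in $(I',J')$, contradicting (b) for $(I',J')$. Therefore $m=n=0$ and $(I,J)=(I',J')$.

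Finally, I would assemble the statement. For the forward implication, if $(I,J)$ is lex-minimal among avoiding presentations then it can admit no reducing move, since any would produce a lexicographically smaller avoiding presentation; by the local translation it satisfies (a) and (b). For the converse and the uniqueness clause, starting from any avoiding presentation of $C$ (one exists by hypothesis) and applying reducing moves until none remain yields, by the translation, an avoiding presentation satisfying (a) and (b); since the reductions only lower $(|I|,|J|)$ lexicographically, this terminal presentation is lexicographically no larger than the one we began with. By the uniqueness just established, the terminal presentation is the same for every starting point, so it is the lexicographic minimum and is the unique avoiding presentation satisfying (a) and (b). This simultaneously gives the ``if'' direction and the final sentence of the proposition.
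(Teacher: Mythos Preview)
Your proof is correct and follows the same underlying strategy as the paper: both reduce the question to Proposition~\ref{P:minimal}, which says that all minimal presentations differ by sliding connected components between sides, and then observe that $X_R$-avoidance pins down which side each movable component must sit on. The paper's proof is little more than a one-sentence gesture at this idea, whereas you have written out the argument in full---the local translation between conditions (a),(b) and the absence of reducing moves, the uniqueness argument via the explicit form of Proposition~\ref{P:minimal}, and the termination/assembly step. Your handling of the asymmetry (using $(I',J')$ to rule out $n>0$ after $m=0$ is established) is exactly the subtlety the paper's sketch elides.
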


\begin{proof}
    By Proposition~\ref{P:minimal}, the minimal presentations of $C$ differ only by
    switching the sides of certain connected components. If $I_0$ is a connected
    component of $I$ with $I \cap (w X_R w^{-1}) \neq \emptyset$, then $I_0$ must appear
    on the left in an $X_L$- and $X_R$-avoiding presentation.
\end{proof}

It is much easier to work with the criteria in Theorem~\ref{T:minimal} and
Proposition~\ref{P:res_minimal} if $w$ is the identity. By allowing restricted
simple reflections, we can expand the Coxeter graph to reduce to this case.
To explain how this works, suppose we are given some element $w \in W$, and let
$G$ be the Coxeter graph of $W$. Now proceed as follows.

\begin{enumerate}
    \item Make a new Coxeter graph $G_L \sqcup G_R$, where $G_L$ and $G_R$ are
        each isomorphic to $G$, and $\sqcup$ refers to the disjoint
        union of graphs. The new Coxeter graph has vertex set $S_L \sqcup
        S_R$, where $S_L$ (respectively, $S_R$) is the vertex set of $G_L$ (respectively, $G_R$).
        Each set $S_L$ and $S_R$ is canonically identified with $S$ via bijections
        $\phi^L : S \longrightarrow S_L$ and $\phi^R : S \longrightarrow S_R$.
    \item Delete the vertices in $S_L$ (respectively, $S_R$) corresponding to
        left (respectively, right) descents of $w$. The functions $\phi^L$ and $\phi^R$ are
        now defined only on the left and right ascent sets $\asc_L(w)$ and
        $\asc_R(w)$ of $w$, respectively.
    \item If $s$ is a left ascent of $w$ such that $w^{-1} s w$ is a right ascent
        of $w$ (which happens if and only if $w^{-1} s w \in S$), then identify
        the vertices $\phi^L(s)$ and $\phi^R(w^{-1} s w)$. Call the resulting graph
        $\overline{G}$. The induced functions $\overline{\phi^L}$ and $\overline{\phi^R}$
        are still injective, but their images are no longer necessarily disjoint.
    \item Given sets $X_L \subseteq \asc_L(w)$ and $X_R \subseteq \asc_R(w)$, set
        \begin{align*}
            \overline{X}_L & := \overline{\phi^L}(X_L) \cup \left( \overline{\phi^R}(\asc_R(w))
                \setminus \overline{\phi^L}(\asc_L(w)) \right) \text{ and } \\
            \overline{X}_R & := \overline{\phi^R}(X_R) \cup \left( \overline{\phi^L}(\asc_L(w))
                \setminus \overline{\phi^R}(\asc_R(w)) \right).
        \end{align*}
        In other words, any right ascent of $w$ that is not conjugate to a left ascent
        is not allowed to act on the left, and vice versa.
\end{enumerate}

\begin{prop}\label{P:dynkin}
    Given an element $w \in W$ and sets $X_L \subseteq \asc_L(w)$ and $X_R \subseteq \asc_R(w)$, define
    $\overline{G}$, $\overline{\phi^L}$, $\overline{\phi^R}$, $\overline{X}_L$, and
    $\overline{X}_R$ as above. Let $\overline{W}$ be the Coxeter group with Coxeter graph
    $\overline{G}$. If $I \subseteq \asc_L(w)$ and $J \subseteq \asc_R(w)$, then $C = W_I w
    W_J$ is lex-minimal among $X_L$- and $X_R$-avoiding presentations if and only if
    $(\overline{W}_{\overline{\phi^L}(I)}) e (\overline{W}_{\overline{\phi^R}(J)})$
    is lex-minimal among $\overline{X}_L$- and $\overline{X}_R$-avoiding presentations.
\end{prop}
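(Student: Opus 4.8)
The plan is to apply Proposition~\ref{P:res_minimal} to each side of the asserted equivalence and verify that its two defining conditions correspond to one another under the maps $\overline{\phi^L}$ and $\overline{\phi^R}$. First I would check the preconditions so that the proposition is applicable on both sides. Since $I \subseteq \asc_L(w)$, $J \subseteq \asc_R(w)$ and the presentation $W_I w W_J$ avoids $X_L$ and $X_R$ (so $I \cap X_L = \emptyset$ and $J \cap X_R = \emptyset$), injectivity of $\overline{\phi^L}, \overline{\phi^R}$ together with the definitions of $\overline{X}_L, \overline{X}_R$ give $\overline{\phi^L}(I) \cap \overline{X}_L = \emptyset$ and $\overline{\phi^R}(J) \cap \overline{X}_R = \emptyset$; here one uses that $\overline{\phi^L}(I) \subseteq \overline{\phi^L}(\asc_L(w))$ is disjoint from the forced part of $\overline{X}_L$. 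Thus $\overline{W}_{\overline{\phi^L}(I)} e \overline{W}_{\overline{\phi^R}(J)}$ is a genuine $\overline{X}_L$-, $\overline{X}_R$-avoiding presentation with minimal element $e$, and Proposition~\ref{P:res_minimal} applies to it (taking $w = e$) and to $W_I w W_J$.

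The core of the proof is a combinatorial dictionary for $\overline{G}$. By construction $\overline{\phi^L}$ is a graph isomorphism from the induced subgraph of $G$ on $\asc_L(w)$ onto the induced subgraph of $\overline{G}$ on $\overline{\phi^L}(\asc_L(w))$, and likewise $\overline{\phi^R}$ on the right; the two images are glued exactly along the small-ascent bijection, so $\overline{\phi^L}(s) = \overline{\phi^R}(t)$ if and only if $s$ is a small left ascent and $t = w^{-1}sw$. Consequently the connected components of $I$ (resp. $J$) in $G$ correspond to those of $\overline{\phi^L}(I)$ (resp. $\overline{\phi^R}(J)$) in $\overline{G}$, and $\overline{\phi^R}(t) \in \overline{\phi^L}(I)$ precisely when $t \in (w^{-1}Iw)\cap S$. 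I would also record that the vertices of $\overline{\phi^L}(\asc_L(w)) \setminus \overline{\phi^R}(\asc_R(w))$, which by definition lie in $\overline{X}_R$, are exactly the images $\overline{\phi^L}(s)$ of the large left ascents, together with the edge-preserving fact that conjugation by $w$ identifies $G$-adjacencies among small ascents.

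Given this dictionary, condition (a) translates immediately: a component $J_0$ of $J$ satisfies $J_0 \subseteq (w^{-1}Iw)\cap S$ if and only if $\overline{\phi^R}(J_0) \subseteq \overline{\phi^L}(I)$, which is condition (a) for the identity presentation since $(e^{-1}\overline{\phi^L}(I)e)\cap S_{\overline{W}} = \overline{\phi^L}(I)$. The step I expect to be the main obstacle is matching condition (b), because on the $\overline{W}$ side the hypothesis ``a component lies in $eS_{\overline{W}}e^{-1}$'' holds for every component, while on the $W$ side (b) is vacuous for components of $I$ meeting a large left ascent. This is reconciled precisely by $\overline{X}_R$: if a component $\overline{\phi^L}(I_0)$ contains the image of a large left ascent, that image lies in $\overline{X}_R$, so the first disjunct of (b) for $\overline{W}$ holds automatically, matching the vacuity on the $W$ side. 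For a component $I_0$ of small left ascents only, $\overline{\phi^L}(I_0) = \overline{\phi^R}(w^{-1}I_0w)$ avoids the forced part of $\overline{X}_R$, so the first disjunct reduces to $(w^{-1}I_0w)\cap X_R \neq \emptyset$, and the adjacency correspondence shows the second disjunct ``some vertex of $\overline{\phi^R}(w^{-1}I_0w)$ is adjacent to but not in $\overline{\phi^R}(J)$'' is equivalent to ``some element of $w^{-1}I_0w$ is adjacent to but not in $J$''.

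The delicate point here, which I would spell out carefully, is that the identifications create extra $\overline{G}$-edges from a merged small-ascent vertex to pure left-image large-ascent vertices; I must verify these never yield spurious adjacencies into $\overline{\phi^R}(J)$, which holds because such large-ascent vertices lie in $\overline{X}_R$ and hence not in $\overline{\phi^R}(J)$, while the left-image small-ascent neighbors already correspond to right-image neighbors via $w$-conjugation. Combining the equivalences (a)$\Leftrightarrow$(a$'$) and (b)$\Leftrightarrow$(b$'$) with Proposition~\ref{P:res_minimal} on both sides then proves the proposition.
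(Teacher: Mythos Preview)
Your proposal is correct and follows the same approach as the paper: verify that the avoidance conditions transfer under $\overline{\phi^L}$ and $\overline{\phi^R}$, then invoke Proposition~\ref{P:res_minimal} on both sides. The paper's proof simply asserts that the remainder ``follows immediately from Proposition~\ref{P:res_minimal}'' after the avoidance check, whereas you unpack that step by explicitly matching conditions (a) and (b) via the combinatorial dictionary and handling the large-ascent case through the forced part of $\overline{X}_R$; this is a faithful (and more careful) elaboration of what the paper leaves implicit.
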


\begin{proof}
    The image of $\overline{\phi^L}(\asc_L(w))$ avoids $\overline{\phi^R}(\asc_R(w))
    \setminus \overline{\phi^L}(\asc_L(w))$, so $I$ avoids $X_L$ if and only if
    $\overline{\phi^L}(I)$ avoids $\overline{X}_L$. Similarly $J$ avoids $X_R$
    if and only if $\overline{\phi^R}(I)$ avoids $\overline{X}_R$. The remainder
    of the proposition follows immediately from Proposition~\ref{P:res_minimal}.
\end{proof}

\begin{cor}\label{C:dynkin}
    There is a bijection between $X_L$- and $X_R$-avoiding parabolic double cosets in
    $W$ with minimal element $w$, and $\overline{X}_L$-
    and $\overline{X}_R$-avoiding parabolic double cosets in $\overline{W}$ with minimal
    element $e$.
\end{cor}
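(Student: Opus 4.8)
The plan is to deduce Corollary~\ref{C:dynkin} as a direct consequence of Proposition~\ref{P:dynkin}, since essentially all the substantive work has already been done there. The goal is to exhibit an explicit bijection between the two collections of cosets, so the first thing I would do is fix the map on lex-minimal presentations and then argue that it descends to a bijection of cosets.

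First I would recall that by Proposition~\ref{P:res_minimal}, every $X_L$- and $X_R$-avoiding parabolic double coset $C$ in $W$ with minimal element $w$ has a \emph{unique} presentation $C = W_I w W_J$ that is lex-minimal among its $X_L$- and $X_R$-avoiding presentations. Thus the set of such cosets is in canonical bijection with the set of pairs $(I,J)$ with $I \subseteq \asc_L(w)$, $J \subseteq \asc_R(w)$, $I \cap X_L = \emptyset$, $J \cap X_R = \emptyset$, satisfying conditions (a) and (b) of Proposition~\ref{P:res_minimal}. Likewise, on the $\overline{W}$ side, the $\overline{X}_L$- and $\overline{X}_R$-avoiding cosets with minimal element $e$ are in canonical bijection with pairs $(\overline{I},\overline{J})$ of subsets of $\asc_L(e) = \asc_R(e) = S(\overline{W})$ that avoid $\overline{X}_L,\overline{X}_R$ and satisfy the analogous lex-minimality conditions.

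The central step is then to define the candidate bijection on these pairs via the injections $\overline{\phi^L}$ and $\overline{\phi^R}$, sending $(I,J) \mapsto (\overline{\phi^L}(I), \overline{\phi^R}(J))$. I would verify that this map is well-defined and invertible as a map on pairs: $\overline{\phi^L}$ and $\overline{\phi^R}$ are injective by construction in step (3), so the map is injective, and its image is exactly the pairs of subsets of $S(\overline{W})$ that avoid the ``forbidden'' vertices coming from ascents on one side not conjugate to ascents on the other (which is precisely what $\overline{X}_L$ and $\overline{X}_R$ encode in step (4)). Proposition~\ref{P:dynkin} then says precisely that $(I,J)$ is lex-minimal among $X_L$- and $X_R$-avoiding presentations if and only if its image is lex-minimal among $\overline{X}_L$- and $\overline{X}_R$-avoiding presentations. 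Combining this equivalence with the two canonical identifications of cosets with lex-minimal pairs yields the desired bijection of cosets.

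The main obstacle, such as it is, is purely bookkeeping rather than mathematical: one must confirm that the vertex identification in step (3) does not cause collisions that break injectivity, and that the definitions of $\overline{X}_L$ and $\overline{X}_R$ make the avoidance conditions correspond exactly, so that the map is a genuine bijection onto the full target set rather than merely an injection into it. The surjectivity direction requires checking that \emph{every} lex-minimal $\overline{X}_L$- and $\overline{X}_R$-avoiding presentation in $\overline{W}$ with minimal element $e$ arises from some valid $(I,J)$ in $W$; this follows because a subset of $S(\overline{W})$ avoiding $\overline{X}_L$ automatically lies in the image of $\overline{\phi^L}$ (the complementary vertices being forbidden), and similarly on the right, so the preimage $(I,J)$ exists and is $X_L$- and $X_R$-avoiding. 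Once these set-theoretic correspondences are pinned down, the proof is immediate from Proposition~\ref{P:dynkin}.
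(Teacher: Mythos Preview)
Your proposal is correct and follows essentially the same approach as the paper's proof: both identify cosets with their unique lex-minimal avoiding presentations via Proposition~\ref{P:res_minimal}, use the injections $\overline{\phi^L}$ and $\overline{\phi^R}$ to set up a bijection of pairs $(I,J)$, and invoke Proposition~\ref{P:dynkin} to transfer lex-minimality. The paper's version is simply more terse, noting only that $\overline{\phi^L}$ induces a bijection between subsets of $\asc_L(w)$ and subsets of $S(\overline{W})$ avoiding $\overline{\phi^R}(\asc_R(w)) \setminus \overline{\phi^L}(\asc_L(w))$ (and similarly for $\overline{\phi^R}$), and then citing the two propositions.
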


\begin{proof}
    $\overline{\phi^L}$ induces a bijection between subsets of $\asc_L(w)$ and subsets of the simple
    reflections of $\overline{W}$ that avoid
    $$\left(\overline{\phi^R}(\asc_R(w)) \setminus \overline{\phi^L}(\asc_L(w))\right).$$
    A similar statement can be made for $\overline{\phi^R}$. Thus
    the corollary follows from Propositions~\ref{P:res_minimal} and
   ~\ref{P:dynkin}.
\end{proof}
\begin{example}
    Consider the permutation $w = 13542 \in
    S_5$, which has reduced
    expression $s_2 s_3 s_4 s_3$. The left and right descent sets of $w$ are
    $\{s_2, s_4\}$ and $\{s_3,s_4\}$ respectively, and there are no simple
    reflections $s$ such that $w s w^{-1}$ is also simple. Thus the Coxeter graph
    $\overline{G}$ consists of two copies of the Coxeter graph of $S_5$, with the descent sets
    deleted from each respective copy.

\[
\begin{tikzpicture}[xscale=1.5]
 \draw (1,1) -- (2,1);
 \draw (1,1) node[above] {$(R,s_1)$};
 \draw (2,1) node[above] {$(R,s_2)$};
 \draw (1,0) node[below] {$(L,s_1)$};
 \draw (3,0) node[below] {$(L,s_3)$};
 \foreach \x in {1,...,4}
     {\draw  (\x,1) node[fill=white,draw=black,circle,inner sep=.5ex] {};
     \draw  (\x,0) node[fill=white,draw=black,circle,inner sep=.5ex] {};}
 \draw (2,0) node[cross=.75ex]{};
 \draw (4,0) node[cross=.75ex]{};
 \draw (3,1) node[cross=.75ex]{};
 \draw (4,1) node[cross=.75ex]{};
\end{tikzpicture}
\]

    In this diagram, the vertices from $\overline{\phi^L}(\asc_L(w))$ are labeled
    by $(L,s_i)$ and the vertices from $\overline{\phi^R}(\asc_R(w))$ are labeled
    by $(R,s_i)$. After deleting descents, the nodes that remain are isomorphic to the Coxeter graph $\overline{G}$ of $\overline{W}\cong S_3 \times S_2 \times S_2$.

    If we start with $X_L = X_R = \emptyset$, then $\overline{X}_L
    = \{(R,s_1), (R,s_2)\}$, $\overline{X}_R = \{(L,s_1), (L,s_3)\}$.
    Thus the total number of parabolic double cosets with minimal element $w$ is equal to the
    total number of parabolic double cosets $\overline{W}_I  e  \overline{W}_J$ where
    $I \subseteq \{(L,s_1), (L,s_3)\}$ and $J \subseteq \{(R,s_1), (R,s_2)\}$.
    Every such choice of $I$ and $J$ gives a distinct parabolic double coset, so there are
    $16$ such cosets.
\end{example}

\begin{example}
    Consider another permutation $w = 13425 \in
    S_5$, this one with
    reduced expression $w = s_2 s_3$. The left and right descent sets of $w$ are
    $\{s_2\}$ and $\{s_3\}$ respectively. Among the left ascents $w^{-1} s_3 w = s_2$,
    so the Coxeter graph for $\overline{G}$ consists of two copies of the Coxeter
    graph of $S_5$, with $s_2$ deleted from the left copy, $s_3$ deleted from
    the right copy, and $s_3$ from the left copy identified with $s_2$ in the
    right copy:
\[
\begin{tikzpicture}[xscale=1.5]
 \draw (1,1) -- (2,1)--(3,0)--(4,0);
 \draw[color=white!80!black,cap=round,line width=15] (2,1)--(3,0);
 \draw (1,1) node[above] {$(R,s_1)$};
 \draw (2,1) node[above] {$(R,s_2)$};
 \draw (4,1) node[above] {$(R,s_4)$};
 \draw (1,0) node[below] {$(L,s_1)$};
 \draw (3,0) node[below] {$(L,s_3)$};
 \draw (4,0) node[below] {$(L,s_4)$};
 \foreach \x in {1,...,4}
     {\draw  (\x,1) node[fill=white,draw=black,circle,inner sep=.5ex] {};
     \draw  (\x,0) node[fill=white,draw=black,circle,inner sep=.5ex] {};}
 \draw (2,0) node[cross=.75ex]{};
 \draw (3,1) node[cross=.75ex]{};
 \end{tikzpicture}
\]

    In this case, then, $\overline{W} \cong S_4 \times S_2 \times S_2$. If we start with $X_L = X_R = \emptyset$, then we must avoid $\overline{X}_L = \{(R,s_1), (R,s_4)\}$
    and $\overline{X}_R = \{(L,s_1), (L,s_4)\}$ in the new group.
\end{example}

This correspondence also preserves the Bruhat order on each individual coset.

\begin{prop}\label{P:bruhat}
    Let $C$ be a parabolic double coset in $W$ with minimal element $w$, and let
    $\overline{C}$ be the corresponding parabolic double coset in $\overline{W}$. Then
    $C$ and $\overline{C}$ are isomorphic as posets in Bruhat order.
\end{prop}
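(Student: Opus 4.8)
The plan is to build an explicit rank-preserving bijection $\Phi \colon C \to \overline{C}$ from the double parabolic decomposition and then show it is an order isomorphism using Lemma~\ref{lem:b.order} in both $W$ and $\overline{W}$. Fix a presentation $C = W_I w W_J$ with $w \in {}^IW^J$ (so $w$ is minimal), and set $H = I \cap (wJw^{-1}) \cap S$ as in Corollary~\ref{C:parabolic}. First I would record two structural facts about the construction preceding Proposition~\ref{P:dynkin}. Because the identifications only merge $\phi^L(s)$ with $\phi^R(w^{-1}sw)$ at small ascents, and small ascents satisfy $m_{st} = m_{s't'}$, the maps $\overline{\phi^L}$ and $\overline{\phi^R}$ restrict to isomorphisms of Coxeter systems $W_I \cong \overline{W}_{\overline{\phi^L}(I)}$ and $W_J \cong \overline{W}_{\overline{\phi^R}(J)}$ (no new edges or label conflicts appear on $\overline{\phi^L}(I)$ or $\overline{\phi^R}(J)$); in particular each preserves length and Bruhat order. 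Moreover $\overline{\phi^L}(H) = \overline{\phi^R}(w^{-1}Hw) = \overline{\phi^L}(I)\cap\overline{\phi^R}(J) =: \overline{H}$, which is exactly the subgroup playing the role of ``$H$'' for the presentation $\overline{C} = \overline{W}_{\overline{\phi^L}(I)}\, e\, \overline{W}_{\overline{\phi^R}(J)}$ (here $\overline{w} = e$, so $\overline{H} = \overline{I}\cap\overline{J}$). Using Corollary~\ref{C:parabolic} to write each element of $C$ uniquely as $uwv$ with $u \in W_I^H$ and $v \in W_J$, I define $\Phi(uwv) = \overline{\phi^L}(u)\,\overline{\phi^R}(v)$; the same corollary identifies $\overline{C}$ with $\overline{W}_{\overline{\phi^L}(I)}^{\overline{H}} \times \overline{W}_{\overline{\phi^R}(J)}$, so $\Phi$ is a bijection.

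The key technical step --- and the main obstacle --- is to show that length-additivity transfers through $\Phi$: for arbitrary $u \in W_I$ and $v \in W_J$, the product $uwv$ is reduced (i.e.\ $\ell(uwv) = \ell(u) + \ell(w) + \ell(v)$) if and only if $\overline{\phi^L}(u)\,\overline{\phi^R}(v)$ is reduced in $\overline{W}$, and in that case $\Phi(uwv) = \overline{\phi^L}(u)\,\overline{\phi^R}(v)$. To prove this I would decompose $u = \tilde u h$ with $\tilde u \in W_I^H$, $h \in W_H$ and length-additive, so that $uwv = \tilde u\, w\,(w^{-1}hw)v$. Since $\tilde u \in W_I^H$ and $(w^{-1}hw)v \in W_J$, Corollary~\ref{C:parabolic} gives $\ell(uwv) = \ell(\tilde u) + \ell(w) + \ell((w^{-1}hw)v)$, so $uwv$ is reduced exactly when $(w^{-1}hw, v)$ is length-additive in $W_J$. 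The crucial point is that the identification forces $\overline{\phi^L}(h) = \overline{\phi^R}(w^{-1}hw)$ for every $h \in W_H$ (apply the vertex identification letter by letter to a reduced word for $h$). Hence $\overline{\phi^L}(u)\,\overline{\phi^R}(v) = \overline{\phi^L}(\tilde u)\,\overline{\phi^R}\big((w^{-1}hw)v\big)$ with $\overline{\phi^L}(\tilde u) \in \overline{W}_{\overline{I}}^{\overline{H}}$, and Corollary~\ref{C:parabolic} in $\overline{W}$ shows this is reduced exactly when $\big(\overline{\phi^R}(w^{-1}hw), \overline{\phi^R}(v)\big)$ is length-additive --- the same condition, transported by the isomorphism $\overline{\phi^R}$. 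This simultaneously verifies both the length criterion and the formula for $\Phi$, and shows $\Phi$ shifts rank by the constant $\ell(w)$; note that the well-definedness of $\Phi$ independent of the chosen factorization also drops out, since $\overline{\phi^L}(h) = \overline{\phi^R}(w^{-1}hw)$ is exactly the relation by which two length-additive factorizations of the same element differ.

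Finally I would deduce that $\Phi$ is an order isomorphism, handling both directions symmetrically via Lemma~\ref{lem:b.order}. For the forward direction, if $x \le y$ in $C$, choose length-additive factorizations $y = uwv$ and $x = u'wv'$ with $u' \le u$ in $W_I$ and $v' \le v$ in $W_J$ as provided by Lemma~\ref{lem:b.order}; applying $\overline{\phi^L}$ and $\overline{\phi^R}$ preserves these inequalities and, by the transfer step, yields reduced factorizations $\Phi(y) = \overline{\phi^L}(u)\,\overline{\phi^R}(v)$ and $\Phi(x) = \overline{\phi^L}(u')\,\overline{\phi^R}(v')$, so the implication (iii)$\Rightarrow$(i) of Lemma~\ref{lem:b.order} in $\overline{W}$ gives $\Phi(x) \le \Phi(y)$. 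For the reverse direction, start from $\Phi(x) \le \Phi(y)$ in $\overline{C}$, extract length-additive factorizations in $\overline{W}$ from Lemma~\ref{lem:b.order}, pull them back through the inverse isomorphisms, invoke the transfer step to see the pulled-back products are reduced and equal to $x$ and $y$ (using injectivity of $\Phi$), and apply Lemma~\ref{lem:b.order} in $W$. Since $\Phi$ is a bijection that is order-preserving in both directions, it is the desired isomorphism of Bruhat posets; no finiteness of $C$ is needed, as each principal order ideal of $C$ is the finite graded interval $[w, y]$ by Lemma~\ref{lem:b.order}.
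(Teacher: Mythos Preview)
Your proof is correct and follows the same strategy as the paper: define the bijection $uwv \mapsto \overline{\phi^L}(u)\,\overline{\phi^R}(v)$ via the double parabolic decomposition of Corollary~\ref{C:parabolic} together with the Coxeter isomorphisms $W_I \cong \overline{W}_{\overline{\phi^L}(I)}$ and $W_J \cong \overline{W}_{\overline{\phi^R}(J)}$. The paper's argument is very terse and cites Proposition~\ref{P:standard1}(b) for the order-isomorphism step, whereas you carry out that step in full via Lemma~\ref{lem:b.order} and the length-additivity transfer; your version is the more complete of the two.
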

\begin{proof}
    Continuing with the notation above, the parabolic subgroup $W_I$ of $W$ is
    isomorphic to the parabolic subgroup $\overline{W}_{\overline{\phi^L}(I)}$
    of $\overline{W}$, and similarly with $W_J$. If $C = W_I w W_J$ for
    $w \in {}^I W^J$, then $\overline{C} = (W_{\overline{\phi^L}(I)}) e
    (W_{\overline{\phi^R(I)}})$. (Note that this does not depend on whether $C = W_I w W_J$
    is a lex-minimal presentation, but we can choose a lex-minimal presentation
    if we wish to do so.) By the construction of $\overline{G}$ (specifically, the vertex
    identification), there is a well-defined bijection $C \longrightarrow \overline{C}$ sending
    $u w v$ to $\overline{u} w \overline{v}$, where $\overline{u}$ is the
    element of $\overline{W}_{\overline{\phi^L}(I)}$ corresponding to $u$, and
    similarly for $v$ and $\overline{v}$. By Proposition~\ref{P:standard1}(b), this bijection is an order isomorphism with respect to Bruhat order.
\end{proof}

\begin{question}
    If $C$ is finite, then $\overline{C}$ is the Bruhat interval between the
    identity and the maximal element of $\overline{C}$. Thus $\overline{C}$
    can be considered as the $1$-skeleton of a Schubert variety. Is there a
    geometric version of Proposition~\ref{P:bruhat}?
\end{question}

\section*{Acknowledgments}
Many thanks to Edward Richmond, Milen Yakimov, Tewodros Amdeberhan,
Victor Reiner, Richard Stanley, and Joshua Swanson for helpful discussions.

\newpage
\section{Appendix}
The $b$-sequences for the symmetric groups were defined in
\eqref{eq:b.seq} on Page~\pageref{eq:b.seq}.  Each such sequence is
denoted by a superscript 4-tuple $(I_1,I_2,I_3,I_4) \in
\{\{0\},\{1\},\{0,1\}\}$.  The initial values for all 81 $b$-sequences
are given below starting at $m=0$.  Note, there are only 27 distinct
sequences due to the symmetries among the $a$-sequences.

For example, the 4-tuple $(\{0,1\}, \{1\}, \{0\}, \{1\})$, abbreviated
$(01,1,0,1)$, corresponds with the sequence $b_m^{(I_1, I_2,I_3,I_4)}$
with $I_1 =\{0,1\}$, $I_2 =\{1\}$, $I_3 =\{0\}$, $I_4 =\{1\}$.  This
sequence expands in terms of the $a$-sequences as $b_m^{(01,1,0,1)}
=a_m(\xx,\xo)+ a_m(\xo,\xo)$.  This sequence has initial values
$2,7,25,83,267,854,2734$ starting at $m=0$ so for example
$b_1^{(01,1,0,1)}=a_1(\xx,\xo)+ a_1(\xo,\xo)=4+3=7$ using
Table~\ref{table:4x4.table solved} and the symmetry property of the
$a$-sequences.

\begin{small}
$$
  \begin{array}{ll}
    \text{Initial Values} & \text{4-tuples}\\ \hline
1,2,6,20,66,214,688:& (0,0,0,0)\\
1,3,9,28,89,285,914:& (1,0,0,0)(0,1,0,0)(0,0,1,0)(0,0,0,1)\\
1,3,11,37,119,380,1216:& (1,0,1,0)(0,1,0,1)\\
1,4,12,36,112,356,1140:& (1,1,0,0)(0,0,1,1)\\
1,4,12,37,118,379,1216:& (0,1,1,0)(1,0,0,1)\\
1,4,14,46,148,474,1518:& (1,1,1,0)(1,1,0,1)(1,0,1,1)(0,1,1,1)\\
1,4,16,56,184,592,1896:& (1,1,1,1)\\
2,5,15,48,155,499,1602:& (01,0,0,0)(0,01,0,0)(0,0,01,0)(0,0,0,01)\\
2,6,20,65,208,665,2130:& (01,0,1,0)(1,0,01,0)(0,01,0,1)(0,1,0,01)\\
2,7,21,64,201,641,2054:& (01,1,0,0)(1,01,0,0)(0,0,01,1)(0,0,1,01)\\
2,7,21,65,207,664,2130:& (0,01,1,0)(0,1,01,0)(01,0,0,1)(1,0,0,01)\\
2,7,25,83,267,854,2734:& (1,01,1,0)(01,1,0,1)(0,1,01,1)(1,0,1,01)\\
2,8,26,82,260,830,2658:& (1,1,01,0)(01,0,1,1)(0,01,1,1)(1,1,0,01)\\
2,8,26,83,266,853,2734:& (01,1,1,0)(1,01,0,1)(1,0,01,1)(0,1,1,01)\\
2,8,30,102,332,1066,3414:& (01,1,1,1)(1,01,1,1)(1,1,01,1)(1,1,1,01)\\
4,11,35,113,363,1164,3732:& (01,0,01,0)(0,01,0,01)\\
4,12,36,112,356,1140,3656:& (01,01,0,0)(0,0,01,01)\\
4,12,36,113,362,1163,3732:& (0,01,01,0)(01,0,0,01)\\
4,14,46,147,468,1495,4788:& (1,01,01,0)(0,01,01,1)(01,1,0,01)(01,0,1,01)\\
4,14,46,148,474,1518,4864:& (01,01,1,0)(01,01,0,1)(1,0,01,01)(0,1,01,01)\\
4,15,47,147,467,1494,4788:& (01,1,01,0)(01,0,01,1)(1,01,0,01)(0,01,1,01)\\
4,15,55,185,599,1920,6148:& (01,1,01,1)(1,01,1,01)\\
4,16,56,184,592,1896,6072:& (01,01,1,1)(1,1,01,01)\\
4,16,56,185,598,1919,6148:& (1,01,01,1)(01,1,1,01)\\
8,26,82,260,830,2658,8520:& (01,01,01,0)(01,01,0,01)(01,0,01,01)(0,01,01,01)\\
8,30,102,332,1066,3414,10936:& (01,01,01,1)(01,01,1,01)(01,1,01,01)(1,01,01,01)\\
16,56,184,592,1896,6072,19456:& (01,01,01,01)
\end{array}
$$
\end{small}

\end{document}